\DeclareFontFamily{U}{BOONDOX-calo}{\skewchar\font=45 }
\DeclareFontShape{U}{BOONDOX-calo}{m}{n}{
  <-> s*[1.05] BOONDOX-r-calo}{}
\DeclareFontShape{U}{BOONDOX-calo}{b}{n}{
  <-> s*[1.05] BOONDOX-b-calo}{}
\DeclareMathAlphabet{\mathcalbis}{U}{BOONDOX-calo}{m}{n}
\SetMathAlphabet{\mathcalbis}{bold}{U}{BOONDOX-calo}{b}{n}
\DeclareMathAlphabet{\mathbcalboondox}{U}{BOONDOX-calo}{b}{n}
\providecommand{\abs}[1]{\lvert#1\rvert}
\renewcommand{\Re}{\operatorname{Re}}
\newcommand{\Z}{\mathbb{Z}}
\renewcommand{\P}{\mathbb{P}}
\newcommand{\E}{\mathbb{E}}
\def\U{\mathbb{U}}
\def\N{\mathbb{N}}
\newcommand{\R}{\mathbb{R}}
\newcommand{\D}{\mathbb{D}}
\newcommand{\CC}{\mathbb{C}}
\def\S{\mathbb{S}}
\def\e{{\rm e}}
\def\d{{\rm d}}
\def\eps{\varepsilon}
\def\PPP{\mathcal P}
\def\FFF{\mathcal F}
\def\TTT{\mathcal T}
\def\EEE{\mathcal E}
\def\tmub{\widetilde{\mu}_{\bullet}}
\def\Vc{\mathcal{V}}
\def\Vcd{\mathcal{V}^{\mathsf{d}}}
\def\i{\mathrm{i}}
\def\FKn{\mathcal{F}^{(n)}_{K_{n}}}
\def\PKn{\mathcal{P}^{(n)}_{K_{n}}}
\def\dFKn{\dot{\mathcal{F}}^{(n)}_{K_{n}}}
\def\dPKn{\dot{\mathcal{P}}^{(n)}_{K_{n}}}
\def\dFKphin{\dot{\mathcal{F}}^{(\phi(n))}_{K_{\phi(n)}}}
\def\dPKphin{\dot{\mathcal{F}}^{(\phi(n))}_{K_{\phi(n)}}}
\def\dFKphin{\dot{\mathcal{P}}^{(\phi(n))}_{K_{\phi(n)}}}
\def\dPKphin{\dot{\mathcal{P}}^{(\phi(n))}_{K_{\phi(n)}}}
\def\dFn{\dot{\mathcal{F}}_{n}}
\def\dPn{\dot{\mathcal{P}}_{n}}
\def\ab{a_{\bullet}}
\def\ac{a_{\circ}}
\def\bb{b_{\bullet}}
\def\bc{b_{\circ}}
\def\sc{\sigma_{\circ}}
\def\sb{\sigma_{\bullet}}
\def\abn{a_{\bullet}^{n}}
\def\acn{a_{\circ}^{n}}
\def\bbn{b_{\bullet}^{n}}
\def\bcn{b_{\circ}^{n}}
\def\mub{\mu_{\bullet}}
\def\muc{\mu_{\circ}}
\def\mubn{\mu_{\bullet}^{n}}
\def\mucn{\mu_{\circ}^{n}}
\def\mbn{m_{\bullet}^{n}}
\def\mcn{m_{\circ}^{n}}
\def\scn{\sigma_{\circ}^{n}}
\def\sbn{\sigma_{\bullet}^{n}}
\def\Sbn{S^{\bullet,n}}
\def\Scn{S^{\circ,n}}
\def\phibn{\phi^{\bullet}_{n}}
\def\phicn{\phi^{\circ}_{n}}
\def\DbnN{D^{\bullet}_{n,N}}
\def\Dbn{D^{\bullet}_{n}}
\def\Dcn{D^{\circ}_{n}}
\def\nc{n_{\circ}}
\def\nb{n_{\bullet}}
\def\oB{\overline{B}}
\def\hB{\widehat{B}}
\def\oH{\overline{H}}
\def\hH{\widehat{H}}
\def \Xexc{X^{\mathrm{exc}}}
\def \Xbr{X^{\mathrm{br}}}
\def\tTs{\tilde{\Ts}}
\def \Tsb{\Ts^{\bullet}}
\def \Tsc{\Ts^{\circ}}
\def \Tsbn{\Ts^{\bullet,n}}
\def \Tscn{\Ts^{\circ,n}}
\newcommand\BGW{\textrm{BGW}}
\def\tb{\mathcalbis{t}}
\def\Ts{\mathscr{T}}
\renewcommand{\epsilon}{\varepsilon}
\newcommand\Es[1]{\mathbb{E}\left[#1\right]}
\newcommand\Esb[1]{\mathbb{E}\big[#1\big]}
\renewcommand\Pr[1]{\mathbb{P}\left(#1\right)}
\newcommand\tPr[1]{\tilde{\mathbb{P}}\left(#1\right)}
\newcommand\Prb[1]{\mathbb{P}\big(#1\big)}
\def\llbracket{[\hspace{-.10em} [ }
\def\rrbracket{ ] \hspace{-.10em}]}
\def\build#1_#2^#3{\mathrel{
\mathop{\kern 0pt#1}\limits_{#2}^{#3}}}
\def\One{\mathbbm{1}}
\author[\initial{V.} \lastname{F\'eray}]{\firstname{Valentin} \lastname{F\'eray}}
\address{Universit\"at Z\"urich,
Institut f\"ur Mathematik,
Winterthurerstr. 190,
CH-8057 Z\"urich}
\email{valentin.feray@math.uzh.ch}
\author[\initial{I.} \lastname{Kortchemski}]{\firstname{Igor} \lastname{Kortchemski}}
\address{CNRS \& CMAP, \'Ecole polytechnique, route de Saclay
F-91128 Palaiseau Cedex}
\email{igor.kortchemski@math.cnrs.fr}
\title[Random minimal factorizations of a long cycle]{The geometry of random minimal factorizations
of a long cycle\\
via\\
biconditioned bitype random trees}
\begin{abstract}
We study random typical minimal factorizations of the $n$-cycle into transpositions, which are factorizations of $(1, \ldots,n)$ as a product of $n-1$ transpositions. By viewing transpositions as chords of the unit disk and by reading them one after the other, one obtains a sequence of increasing laminations of the unit disk (i.e.\ compact subsets of the unit disk made of non-intersecting chords). 

When an order of $\sqrt{n}$ consecutive transpositions have been read, we establish, roughly speaking, that a phase transition occurs and that the associated laminations converge  to a new one-parameter family of random laminations,  constructed from excursions of specific Lévy processes.

Our main tools involve coding random minimal factorizations by conditioned two-type Bienaym\'e--Galton--Watson trees.
We establish in particular limit theorems for two-type BGW trees 
conditioned on having given numbers of vertices of both types,
and with an offspring distribution depending on the conditioning size.
We believe that this could be of independent interest. 
\end{abstract}
\subjclass[2010]{60C05, 60B15}
\keywords{Permutation factorisation, random trees, non-crossing partitions, Lévy processes, Brownian triangulation}
\begin{document}

\maketitle
\bigskip

\begin{figure}[h!]
 \centering
    \makeatletter\edef\animcnt{\the\@anim@num}\makeatother
\animategraphics[label=myAnim,scale=0.4]{20}{film/image}{0}{100} 
   \mediabutton[jsaction={anim.myAnim.playFwd();}]{\scalebox{1.5}[1.2]{\strut $\vartriangleright$}}
   \mediabutton[jsaction={anim.myAnim.pause();}]{\scalebox{1.5}[1.2]{\strut $\shortparallel$}}
    \caption{For $n=10000$, we pick a minimal factorization $(\tb^{(n)}_{1}, \tb^{(n)}_{2}, \ldots, \tb^{(n)}_{n-1})$
    of the $n$-cycle into transpositions, uniformly at random.
    With each transposition $\tau=(j,j')$, we associate the chord $[e^{-2 \pi i\, j/n}, e^{-2 \pi i\, j'/n} ]$ of the unit disk.
    For $c>0$, we consider the union of the chords associated with the first $\lfloor c \sqrt{n} \rfloor$
    transpositions in our factorization.
    The above animation (played with Acrobat Reader) shows the resulting picture for various values of $c$.}
\end{figure}

\clearpage

\section{Introduction}

\subsection{Model and motivation}
We are interested in the geometric structure of  typical minimal factorizations of the $n$-cycle as $n \rightarrow \infty$, seen as compact subsets of the unit disk. More precisely,  for an integer $n \geq 1$, we denote by $ \mathfrak{S}_{n}$  the symmetric group acting on $[n] \coloneqq \{1,2, \ldots,n\}$ and we let $ \mathfrak{T}_{n}$ be the set of all transpositions of $ \mathfrak{S}_{n}$. We denote by $(1,2, \ldots,n)$ the $n$-cycle which maps $i$ to $i+1$ for $1 \leq i \leq n-1$. The elements of the set
\[ \mathfrak{M}_{n} \coloneqq  \left\{ (\tau_{1}, \ldots, \tau_{n-1}) \in \mathfrak{T}_{n}^{n-1} : \tau_{1} \tau_{2} \cdots \tau_{n-1}= (1,2, \ldots,n)  \right\}\]
are called \emph{minimal factorizations of  $(1,2, \ldots,n)$ into transpositions} (since, as is easily seen, at least $n-1$ transpositions are required to factorize a $n$-cycle). To simplify, in the sequel, elements of $ \mathfrak{M}_{n}$ will be simply called \emph{minimal factorizations of size $n$}.  It is known since D\'enes \cite{Den59} that
\begin{equation}
\label{eq:cardFn}  |\mathfrak{M}_{n}|  =n^{n-2},
\end{equation}
and bijective proofs were given by Moszkowski \cite{Mos89},
Goulden \& Pepper \cite{GP93}, Goulden \& Yong \cite{GY02} and Biane \cite{Biane2004}.
Biane also gave an interpretation of minimal factorizations
as maximal chains in the non-crossing partition lattice \cite{Biane1997}
and an elementary bijection with parking functions \cite{Biane2002}.
It is also worth noticing that many variants of this simple enumeration problem
have been considered, going in multiple directions.
Here is a non-exhaustive list of references:
\begin{itemize}
  \item using different sets of generators, such as adjacent transpositions \cite{Sta84,EG87},
    star transpositions \cite{Pak1999,IrvingRattan2009,GouldenJackson2009,Fer12} or cycles of given length \cite{Biane2004};
  \item considering non-minimal factorizations and/or factorizations of more general
    permutations than the full cycle (usually with an extra transitivity assumption).
    Then the numbers of such factorizations are the celebrated Hurwitz numbers,
    which also 
    count ramified coverings of the sphere \cite{Hur91}.
    Various methods have been used to evaluate these numbers:
    the cut-and-join recurrence \cite{GJ99A,GJ99B},
    representation theory \cite{Jac88,SSV97}, enumerative geometry 
    (the ELSV formula expresses them as integral over moduli spaces \cite{ELSV})
    and more recently the frameworks of integrable hierarchy
    (an appropriate generating function of Hurwitz numbers
    is a solution of the so-called KP-hierarchy \cite{Oko00})
    and topological recursion (as developed by Eynard and Orantin, see \cite{ACEH16}
    and references therein).
\end{itemize}

In this article, we focus on the model of minimal factorizations of a cycle and investigate 
the asymptotic behaviour of a large minimal factorizations taken uniformly at random.

Products of random transpositions have been studied in various contexts:
\begin{itemize}
\item generation of a random permutation with random transpositions \cite{DS81,Scr05,DMZZ04} and random transposition random walks \cite{Tot93,BD06,Ber11}.
\item In a direction more closely related to our work, Angel, Holroyd, Romik \& Vir\'ag \cite{AHRV07} have initiated the study of large uniform random factorizations of the reverse permutation $\rho$ (defined by $\rho(i)=n+i-1$ for $1 \leq i \leq n$) by using only nearest-neighbor transpositions:
  \hbox{$\rho=s_1 \dots s_N$},
  where $N=\binom{n}{2}$ is the minimal possible number of factors in such a factorization.
They conjectured a formula for the limiting process of partial products $s_1 \cdots s_{\lfloor c N \rfloor}$,
the proof of which was recently announced in \cite{dauvergne2018archimedean}.
Note that both models of minimal factorizations are combinatorially very different and
that the work of Angel, Holroyd, Romik \& Vir\'ag only serves as inspiration in ours.
\end{itemize}

We study the asymptotic behavior of a typical minimal factorization $ \mathscr{F}^{(n)}$ in two different, but related, directions. We first analyze several general properties of $ \mathscr{F}^{(n)}$
(in particular, the laws of single factors and of partial products)
and then study geometrically the structure of the first $K_{n}$ transpositions of $ \mathscr{F}^{(n)}$
(with $K_{n} \rightarrow \infty$).
Local properties of $\mathscr{F}^{(n)}$, in particular, the trajectory
of a given $i$ in a random minimal factorization,
are investigated in a companion article \cite{FK18}.
\smallskip

\paragraph*{Important convention.} Throughout the article, the multiplication $\sigma\, \tau$ of permutations
should be understood as the composition $\tau \circ \sigma$, {\em i.e.} 
we apply first $\sigma$ and then $\tau$.
In other words, we apply permutations from left to right.
This convention has no incidence on the general shape of our results.

\subsection{An explicit formula for partial products}
As suggested above, we denote by \hbox{$\mathscr{F}^{(n)}= (\tb_{1}^{(n)}, \ldots, \tb_{n-1}^{(n)})$}
a uniform random minimal factorization of size $n$.
We also fix some positive integer $k \le n$ and consider the partial product
$\tb_{1}^{(n)} \cdots \tb_k^{(n)}$.
It turns out that the law of $\tb_{1}^{(n)} \cdots \tb_k^{(n)}$ can be determined explicitly
and that this is a cornerstone for all results in this paper.

To present this explicit formula, we need to introduce some terminology.
 Recall that a \emph{partition} of $[n] \coloneqq \{1,2, \ldots,n\}$ is a collection of (pairwise) disjoint subsets,
 called \emph{blocks}, whose union is $[n]$.
 The \emph{size} of a block $B$, denoted by $|B|$, is its number of elements.
 A \emph{non-crossing partition} of $[n]$ is a partition of the vertices of a regular $n$-gon (labeled by the set $[n]$ in clockwise order) with the property that the convex hulls of its blocks are pairwise disjoint (see the right hand-side of Figure~\ref{fig:observation} for an example).
 We let $ \mathfrak{P}_{n}$ be the set of all non-crossing partitions of $[n]$.
 If $P$ is a non-crossing partition of $[n]$, we denote by $ \mathcal{K}(P)$ the \emph{Kreweras complement} of $P$, see Section~\ref{sec:ncp} below for a definition. 

By considering the blocks obtained from the cycle decomposition of a permutation,
one can naturally associate with $\sigma \in \mathfrak{S}_{n}$ a partition $\PPP(\sigma)$ of $[n]$.
Of course, this map is in general  not injective, but when we restrict it to partial products of minimal factorizations,
it becomes injective. Determining the law of $\tb_{1}^{(n)} \cdots \tb_k^{(n)}$ is therefore equivalent to determining
the law of $\PPP(\tb_{1}^{(n)} \cdots \tb_{k}^{(n)})$.
It is well known (see e.g.\ \cite[Theorem 1]{Biane1997}) that if $(\tau_{1}, \ldots, \tau_{n-1})  \in \mathfrak{M}_{n}$ is a minimal factorization, then for every $1 \leq k \leq n-1$, 
the partition $\PPP(\tau_{1} \tau_{2} \cdots \tau_{k})$ 
has $n-k$ blocks and is non-crossing. 

\begin{proposition}
\label{prop:lawproduct}Fix $1 \leq k \leq n-1$ and let $P \in \mathfrak{P}_{n}$ be a non-crossing partition with $n-k$ blocks. Then 
\[\Pr{\PPP(\tb_{1}^{(n)} \tb_{2}^{(n)} \cdots \tb_{k}^{(n)})=P}= \frac{k! (n-k-1)!}{n^{n-2}}  \cdot  \left( \prod_{B \in P} \frac{|B|^{|B|-2}}{(|B|-1)!} \right) \cdot \left(  \prod_{B \in  \mathcal{K}(P)} \frac{|B|^{|B|-2}}{(|B|-1)!} \right).\]
\end{proposition}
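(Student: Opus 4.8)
The plan is to count, for a fixed non-crossing partition $P$ with $n-k$ blocks, the number of minimal factorizations $(\tau_1,\ldots,\tau_{n-1})\in\mathfrak{M}_n$ such that $\PPP(\tau_1\cdots\tau_k)=P$, and then divide by $|\mathfrak{M}_n|=n^{n-2}$ from \eqref{eq:cardFn}. Such a factorization splits into two independent pieces: a ``prefix'' $(\tau_1,\ldots,\tau_k)$ whose product $\sigma\coloneqq\tau_1\cdots\tau_k$ satisfies $\PPP(\sigma)=P$ and lies on a maximal chain up to height $k$ in the non-crossing partition lattice, and a ``suffix'' $(\tau_{k+1},\ldots,\tau_{n-1})$ whose product must equal $\sigma^{-1}(1,2,\ldots,n)$. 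First I would recall Biane's correspondence between minimal factorizations and maximal chains in $\mathfrak{P}_n$ \cite{Biane1997}: a minimal factorization of $(1,\ldots,n)$ is the same data as a maximal chain $\hat 0=P_0\lessdot P_1\lessdot\cdots\lessdot P_{n-1}=\hat 1$ in the non-crossing partition lattice, where $P_j=\PPP(\tau_1\cdots\tau_j)$. So the count becomes: (number of length-$k$ saturated chains from $\hat 0$ to $P$) $\times$ (number of length-$(n-1-k)$ saturated chains from $P$ to $\hat 1$).

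The second factor is handled by the standard isomorphism between the interval $[P,\hat 1]$ in $\mathfrak{P}_n$ and the product $\prod_{B\in\mathcal K(P)}\mathfrak{P}_{|B|}$, together with the dual statement that $[\hat 0,P]\cong\prod_{B\in P}\mathfrak{P}_{|B|}$. Thus the number of saturated chains from $\hat 0$ to $P$ is the number of ways to interleave maximal chains in each $\mathfrak{P}_{|B|}$, $B\in P$, and similarly for $[P,\hat 1]$ via the Kreweras complement. The number of maximal chains in $\mathfrak{P}_m$ is $m^{m-2}$ by \eqref{eq:cardFn} (equivalently, Biane's theorem again), and a maximal chain in $\mathfrak{P}_m$ has length $m-1$. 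Interleaving: a length-$k$ saturated chain from $\hat 0$ to $P$ corresponds to choosing a maximal chain in each $\mathfrak{P}_{|B|}$ (contributing $\prod_{B\in P}|B|^{|B|-2}$) and a way to shuffle these $|B|-1$-step chains into one sequence of $k=\sum_{B\in P}(|B|-1)$ steps, i.e.\ a multinomial coefficient $\binom{k}{(|B|-1)_{B\in P}}=k!/\prod_{B\in P}(|B|-1)!$. The same computation for $[P,\hat 1]$ — whose maximal chains have total length $\sum_{B\in\mathcal K(P)}(|B|-1)=n-1-k$ — gives $(n-1-k)!/\prod_{B\in\mathcal K(P)}(|B|-1)!$ times $\prod_{B\in\mathcal K(P)}|B|^{|B|-2}$. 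Multiplying the two factors and dividing by $n^{n-2}$ yields exactly the claimed formula, after noting $(n-1-k)!=(n-k-1)!$.

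The main obstacle is making the interleaving bijection precise and verifying the two lattice isomorphisms $[\hat 0,P]\cong\prod_{B\in P}\mathfrak{P}_{|B|}$ and $[P,\hat 1]\cong\prod_{B\in\mathcal K(P)}\mathfrak{P}_{|B|}$ are exactly compatible with the chain-counting: one must check that a saturated chain in a product of posets is the same as a choice of saturated chain in each factor together with a shuffle, which is routine but needs the fact that each cover relation in the product changes exactly one coordinate by a cover relation. These interval-decomposition facts are classical for the non-crossing partition lattice (going back to Kreweras and used extensively by Biane), so I would cite them rather than reprove them; alternatively, one can bypass the lattice language entirely and argue directly with transpositions: conditionally on $\PPP(\tau_1\cdots\tau_k)=P$, the prefix restricted to each block $B$ is a minimal factorization of the corresponding cycle of $\sigma$ (an $|B|$-cycle), these are independent across blocks, and they are read in an arbitrary interleaved order; symmetrically the suffix decomposes along the blocks of the Kreweras complement. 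Either route gives the same count. I would also double-check the normalization by testing $k=n-1$ (then $P=\hat 1$ has singleton blocks only, all factors $|B|^{|B|-2}/(|B|-1)!=1$, $\mathcal K(P)=\hat 0$ is one block of size $n$, and the formula reduces to $(n-1)!\cdot 1\cdot n^{n-2}/(n-1)!\,/\,n^{n-2}=1$, consistent) and $k=1$ (then $\mathcal K(P)$ has singletons, $P$ has one block of size $2$ and $n-2$ singletons, giving probability $1\cdot(n-2)!/n^{n-2}\cdot 1\cdot 1 = (n-2)!/n^{n-2}$ per such $P$, summing to the right total).
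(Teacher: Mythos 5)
Your argument is correct, and the ``direct'' route you sketch at the end (condition on $\tau_1\cdots\tau_k=\sigma$, factor the prefix along the blocks of $P$ and the suffix along the blocks of $\mathcal K(P)$, count each by D\'enes's formula and shuffle) is precisely the paper's proof via Lemma~\ref{lem:minfact}. Your primary lattice-chain formulation, via the interval isomorphisms $[\hat 0,P]\cong\prod_{B\in P}\mathfrak{P}_{|B|}$ and $[P,\hat 1]\cong\prod_{B\in\mathcal K(P)}\mathfrak{P}_{|B|}$, is an equivalent repackaging of the same bijection; the paper stays at the level of permutations and proves the needed shuffle count (Lemma~\ref{lem:minfact}) directly from \eqref{eq:cardFn}, which is marginally more self-contained but not substantively different.

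One correction to your $k=1$ sanity check: $\mathcal K(P)$ is \emph{not} all singletons. Since $|P|+|\mathcal K(P)|=n+1$ and $|P|=n-1$, the complement $\mathcal K(P)$ has two blocks, of sizes $i$ and $n-i$ when $\tau_1=(a,a+i)$. The probability is therefore $\tfrac{(n-2)!}{n^{n-2}}\cdot\tfrac{i^{i-2}}{(i-1)!}\cdot\tfrac{(n-i)^{n-i-2}}{(n-i-1)!}$, which is exactly \eqref{eq:lawT1} in Corollary~\ref{cor:marginals}(ii), not the constant $(n-2)!/n^{n-2}$ you wrote (which, summed over all $\binom{n}{2}$ such $P$, would not give $1$). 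You also swapped the verbal descriptions of $\hat 0$ and $\hat 1$ in the $k=n-1$ check, though the arithmetic there was right. Neither slip affects the main argument.
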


The proof of this proposition is rather elementary by combining simple results on minimal
factorizations and formula \eqref{eq:cardFn}; see Section~\ref{ssec:ncp}.
As a by-product, we can deduce several properties of the law of $ \mathscr{F}^{(n)}$,
namely a stationarity property and the law of the first component.
(Similar results have been obtained for random minimal factorizations of the reverse permutation
into adjacent transpositions in \cite{AHRV07}.)
\begin{corollary}\label{cor:marginals} Fix $n \geq 3$. The following assertions hold.
\begin{enumerate}
\item[(i)] The two random variables $(\tb_{1}^{(n)}, \ldots, \tb_{n-2}^{(n)})$ and $(\tb_{2}^{(n)}, \ldots, \tb^{(n)}_{n-1})$ have the same distribution.
\item[(ii)] We have, for $1 \leq a  \leq n-1$ and $ 1 \leq i \leq n-a$:
\begin{equation}
\label{eq:lawT1}\Pr{\tb_{1}^{(n)}=(a,a+i)}= \frac{(n-2)!}{n^{n-2}} \cdot \frac{i^{i-2}}{(i-1)!} \cdot \frac{(n-i)^{(n-i-2)}} {(n-i-1)!}.	
\end{equation}
In particular,  if $\tb_{1}^{(n)}=(a_{n},b_{n})$  with $a_{n}<b_{n}$, then for every $i \geq 1$,
\begin{equation}
  \Pr{b_{n}-a_{n}=i}  \quad \mathop{\longrightarrow}_{n \rightarrow \infty} \quad  \frac{i^{i-2}}{(i-1)!} e^{-i}.
  \label{eq:limit_length-first_transpo}
\end{equation}
In addition, $a_{n}/n$ converges in distribution to a uniform random variable on $[0,1]$.
\end{enumerate}	
\end{corollary}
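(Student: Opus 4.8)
The plan is to obtain~(i) from an explicit cyclic‑shift bijection of $\mathfrak{M}_n$ onto itself, and~(ii) by specialising Proposition~\ref{prop:lawproduct} to $k=1$ and then running an elementary asymptotic analysis. For~(i), I would set $c \coloneqq (1,2,\ldots,n)$ and introduce the map $\Phi\colon \mathfrak{M}_n \to \mathfrak{M}_n$ given by $\Phi(\tau_1,\ldots,\tau_{n-1}) \coloneqq (\tau_2,\ldots,\tau_{n-1},\, c^{-1}\tau_1 c)$. Since $c^{-1}\tau_1 c$ is again a transposition and, using $\tau_1^{-1}=\tau_1$, one has $\tau_2\cdots\tau_{n-1}\,(c^{-1}\tau_1 c) = (\tau_1^{-1}c)(c^{-1}\tau_1 c) = c$, the image lies in $\mathfrak{M}_n$; moreover $\Phi$ is a bijection, with inverse sending $(\sigma_1,\ldots,\sigma_{n-1})$ to $(c\,\sigma_{n-1}\,c^{-1}, \sigma_1,\ldots,\sigma_{n-2})$ (a routine check using $\sigma_1\cdots\sigma_{n-2} = c\,\sigma_{n-1}^{-1}$). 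As $\mathscr{F}^{(n)}$ is uniform on $\mathfrak{M}_n$, so is $\Phi(\mathscr{F}^{(n)})$, hence the two sequences have the same distribution; reading off the first $n-2$ coordinates then gives the asserted equality in law between $(\tb_1^{(n)},\ldots,\tb_{n-2}^{(n)})$ and $(\tb_2^{(n)},\ldots,\tb_{n-1}^{(n)})$.

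For the formula in~(ii), fix $1 \le a \le n-1$ and $1 \le i \le n-a$, and let $P$ be the non-crossing partition of $[n]$ whose only non-singleton block is $\{a, a+i\}$. It has $n-1$ blocks, so Proposition~\ref{prop:lawproduct} applies with $k=1$, and the event $\{\tb_1^{(n)} = (a,a+i)\}$ coincides with $\{\PPP(\tb_1^{(n)}) = P\}$. The product over the blocks of $P$ equals $1$ (each singleton contributes $1^{-1}/0! = 1$, and the size-$2$ block contributes $2^{0}/1! = 1$), while $\mathcal{K}(P)$ is made of the two arcs cut out by the chord $\{a,a+i\}$, namely one block of size $i$ and one of size $n-i$; hence the product over $\mathcal{K}(P)$ equals $\tfrac{i^{i-2}}{(i-1)!}\cdot\tfrac{(n-i)^{n-i-2}}{(n-i-1)!}$, and plugging these into Proposition~\ref{prop:lawproduct} produces exactly \eqref{eq:lawT1}. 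Pinning down $\mathcal{K}(P)$ rigorously from the definition of the Kreweras complement in Section~\ref{sec:ncp} is the one step I would treat with some care.

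For the limits in~(ii), I would sum \eqref{eq:lawT1} over $a \in \{1,\ldots,n-i\}$, its summand being independent of $a$, to get
\[
  \Pr{b_n - a_n = i} \;=\; \frac{i^{i-2}}{(i-1)!}\cdot \frac{(n-2)!\,(n-i)^{n-i-1}}{n^{n-2}\,(n-i-1)!}.
\]
Writing $\tfrac{(n-2)!}{(n-i-1)!}$ as a product of $i-1$ consecutive integers close to $n$, and $\tfrac{(n-i)^{n-i-1}}{n^{n-2}} = n^{1-i}(1-i/n)^{n-i-1}$, the second fraction on the right equals $(1+o(1))(1-i/n)^{n-i-1}$, which tends to $e^{-i}$; this yields \eqref{eq:limit_length-first_transpo}, and one checks that $\tfrac{i^{i-2}}{(i-1)!}e^{-i} = \tfrac{i^{i-1}}{i!}e^{-i}$ is the mass function of the Borel distribution of parameter $1$, so the limits do sum to $1$. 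Finally, \eqref{eq:lawT1} also shows that, conditionally on $b_n - a_n = i$, the variable $a_n$ is uniform on $\{1,\ldots,n-i\}$, whence $\Pr{a_n \le nt \mid b_n - a_n = i} \to t$ for each fixed $i$ and each $t \in (0,1)$. Combining this with \eqref{eq:limit_length-first_transpo} and the fact that the limiting masses sum to $1$ — so that $(b_n - a_n)_n$ is tight — a routine truncation argument gives $\Pr{a_n \le nt} \to t$ for all $t\in(0,1)$, i.e.\ $a_n/n$ converges in distribution to the uniform law on $[0,1]$. Beyond the identification of $\mathcal{K}(P)$, the only genuine subtlety is this last interchange of limit and summation, which tightness handles.
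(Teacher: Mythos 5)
Your proposal is correct and takes essentially the same approach as the paper: part~(ii) is, as in the paper, a direct specialization of Proposition~\ref{prop:lawproduct} to $k=1$ (the paper leaves the asymptotics to the reader, which you fill in correctly via the $\frac{n^{n-2}}{(n-1)!}\sim e^n/n^{3/2}$-type estimates and a tightness/truncation argument), and part~(i) relies on the same observation that conjugation by the long cycle $c$ preserves $\mathfrak{T}_n$, merely repackaged as an explicit cyclic-shift bijection of $\mathfrak{M}_n$ onto itself rather than the paper's direct comparison of the two probability mass functions via the indicator $\mathbbm{1}_{C^{-1}t_1\cdots t_{n-2}\in\mathfrak{T}_n}$.
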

Corollary~\ref{cor:marginals} is proved in Section~\ref{ssec:ncp} as well.
By stationarity, (ii) also holds for $\tb_{k}^{(n)}$ for any $1 \le k \le n-1$.
Moreover, note that the limiting probability distribution 
\eqref{eq:limit_length-first_transpo} is the Borel distribution of parameter $1$;
in particular $b_n-a_n$ converges in distribution.

Proposition~\ref{prop:lawproduct} will also be an important tool for the geometric  results of the next section.
A key observation is the following: it is known that non-crossing partitions are in bijection with plane trees;
using this bijection, Proposition~\ref{prop:lawproduct} tells us that 
$\PPP(\tb_{1}^{(n)} \tb_{2}^{(n)} \cdots \tb_{k}^{(n)})$ can be encoded by 
a conditioned
bi-type Bienaym\'e--Galton--Watson tree.
We will discuss this further in the description of the proof strategy in the next section.

\subsection{Minimal factorizations seen as compact subsets of the unit disk}
\label{sec:introcompacts}

We denote by $\D = \{z \in \CC : |z| < 1\}$ the open unit disk of the complex plane, by $\S = \{z \in \CC : |z| = 1\}$ the unit circle and by $\overline{\D} = \D \cup \S$ the closed unit disk. For every $x, y \in \S$, we write $[x,y]$ for the line segment, or chord, between $x$ and $y$ in $\overline{\D}$, with the convention $[x,x] = \{x\}$ (which is a chord by convention).   
With every graph $G$ with vertex set $[n]$ and edge set $E_G$, we associate a compact subset $\dot{G}$ of  $\overline{\D}$ 
defined as
\[\dot{G} \coloneqq \bigcup_{ \{j,j'\} \in E_G}[e^{2 \pi i\, j/n}, e^{2 \pi i\, j'/n} ].\]
In words, each edge is represented by a chord in $\dot{G}$.
We take the convention that isolated vertices of $G$ do not appear in $\dot{G}$.
 
We can now present two natural ways to see a minimal factorization $(\tau_1,\dots,\tau_{n-1})$ as 
{\em a process of subsets of $\overline{\D}$}.
In both representations, at ``time'' $k$ we will consider the $k$ first factors 
$(\tau_1,\dots,\tau_k)$ of the minimal factorization
and associate with this data a subset of $\overline{\D}$.

The first way simply consists  in interpreting each transposition as an edge of a graph.
More formally,
if $\tau_{1}, \tau_{2}, \ldots, \tau_{k} \in \mathfrak{S}_{n}$ are transpositions, we denote by $\FFF(\tau_{1} ,\tau_{2} ,\ldots ,\tau_{k})$ the graph with vertex set $[n]$ and edge set $\{\tau_{1}, \tau_{2}, \ldots, \tau_{k}\}$.
Then $\dot{\FFF}(\tau_{1} ,\tau_{2} ,\ldots ,\tau_{k})$ is its associated compact subset of $\overline{\D}$
(see the left part of Figure~\ref{fig:observation} for an example). 
When $(\tau_{1}, \ldots, \tau_{n-1})$ is a minimal factorization,
it is well-known (see e.g.\ \cite[Theorem 2.2]{GY02}) that $\FFF(\tau_{1} ,\tau_{2} ,\ldots ,\tau_{k})$ is a non-crossing forest 
(in the sense that its connected components are trees and that edges do not cross in  $\dot{\FFF}(\tau_{1} ,\tau_{2} ,\ldots ,\tau_{k})$). 
For $k=n-1$, note that $\FFF(\tau_{1}, \tau_{2}, \ldots, \tau_{n-1})$ is connected and is therefore a non-crossing tree.

\begin{figure}[t] 
\begin{scriptsize}
\[
\begin{tikzpicture}[scale=.8]
\draw[thin, dashed]	(0,0) circle (2);
\foreach \x in {1, 2, ..., 12}
	\coordinate (\x) at (-\x*360/12 : 2);
\foreach \x in {1, 2, ..., 12}
	\draw
	(-\x*360/12 : 2*1.1) node {\x}
;
\draw	
(1) -- (3)
(1) -- (5)
(6) -- (12)
(7) -- (12)
(9) -- (10)
(11) -- (12);
\end{tikzpicture}
\qquad \qquad
\begin{tikzpicture}[scale=.8]
\draw[thin, dashed]	(0,0) circle (2);
\foreach \x in {1, 2, ..., 12}
	\coordinate (\x) at (-\x*360/12 : 2);
\foreach \x in {1, 2, ..., 12}
	\draw
	(-\x*360/12 : 2*1.1) node {\x}
;
\draw	(1) -- (3) -- (5) -- cycle
	(6) -- (7) -- (11) -- (12) -- cycle
	(9) -- (10)
;
\end{tikzpicture}\]
\[(t_{1}^{(12)}, \ldots, t_{11}^{(12)})=\big( (1, 3), (6, 12), (1, 5), (7, 12), (9, 10), (11, 12), (2, 3), (4,  5), (1, 6), (8, 11), (9, 11)\big) \in \mathfrak{M}_{12}.\]
\end{scriptsize}
\caption{\label{fig:observation} In this example, we take $n=12$, $k=6$ and $(t_i^{(12)})_{i \le 11}$ as above.
We have $ t_{1}^{(12)} t_{2}^{(12)} \cdots t_{6}^{(12)}=(1,3,5)(6,7,11,12)(9,10)$ (recall that we multiply from left to right), so that  $\PPP_{6}= \{  \{1,3,5\}, \{2\}, \{4\}, \{6,7,11,12\}, \{8\}, \{9,10\} \} $.
On the left, we have represented $\dot{\FFF}_{6}$, and on the right $\dot{\PPP}_{6}$.}
\end{figure}
There is a second way to represent the transpositions 
$\tau_{1}, \tau_{2}, \ldots, \tau_{k} \in \mathfrak{S}_{n}$ as a compact subset of $\overline{\D}$.
We consider the product  $\sigma_k=\tau_{1} \tau_{2}\cdots \tau_{k}$ of these transpositions
and its associated non-crossing partition $ \PPP(\tau_{1} \tau_{2}\cdots \tau_{k})$.
We denote by $ \dot{\PPP}(\tau_{1} \tau_{2}\cdots \tau_{k})$ its associated  compact subset of $\overline{\D}$, defined as the union of chords $[{\rm e}^{-2{\rm i}\pi \ell/n}, {\rm e}^{-2{\rm i}\pi \ell'/n}]$ whenever $\ell, \ell' \in [n]$ are two consecutive elements of the same block of the partition (where the smallest and the largest element of a block are consecutive by convention).
Note that by definition singleton blocks do not appear in $ \dot{\PPP}(\tau_{1} \tau_{2}\cdots \tau_{k})$;
see the right part of Figure~\ref{fig:observation} for an example.  

(This point of view is closer to the one used in \cite{AHRV07} on random minimal factorizations
of the decreasing permutation through adjacent transpositions.
Indeed, in the latter reference, the authors also study the product $\sigma_k$ of the $k$ first factors, for various $k$ depending on $n$.
Then they represent this product as a set of dots $(\tfrac{i}{n},\tfrac{\sigma_k(i)}{n})$ in the square,
while in our case, the geometric representation as a non-crossing partition seems more natural.)

The second representation carries less information:
$\FFF(\tau_{1}, \tau_{2}, \ldots, \tau_{k})$ cannot be recovered from $\PPP(\tau_{1} \tau_{2}\cdots \tau_{k})$
since the same permutation $\sigma_k$ can be factorized as a product $\tau_{1} \tau_{2}\cdots \tau_{k}$ in different ways.
Conversely, despite the fact that $\FFF(\tau_{1}, \tau_{2}, \ldots, \tau_{k})$ forgets about
the order of the transpositions $\tau_1,\dots,\tau_k$ it is possible to reconstruct $\PPP(\tau_{1} \tau_{2}\cdots \tau_{k})$
from it.
Indeed,
it is easily seen that $ \dot{\PPP}(\tau_{1} \tau_{2}\cdots \tau_{k})$ is obtained from
$\dot{\FFF}(\tau_{1} ,\tau_{2} ,\ldots ,\tau_{k})$ by replacing each connected component by the convex hull of its vertices (see Figure~\ref{fig:observation} again for an example).

We now consider a uniform random minimal factorization of size $n$, that we denote by
$(\tb_{1}^{(n)}, \ldots, \tb_{n-1}^{(n)})$.
To simplify notation, set, for $1 \leq k \leq n-1$,
\[ \FFF^{(n)}_{k}= \FFF ( \tb_{1}^{(n)}, \ldots, \tb_{k}^{(n)} ), \qquad  \PPP^{(n)}_{k}= \PPP ( \tb_{1}^{(n)}  \cdots \tb_{k}^{(n)} ).\]
Recall that  $\dot{\FFF}^{(n)}_{k}$ and $ \dot{\PPP}^{(n)}_{k}$ denote their associated compact subsets of $\overline{\D}$.

Our main result deals with the limit in distribution of these objects in the space  $(\mathbb{K},d_{H})$, where  $\mathbb{K}$ is the space of all compact subsets of $\overline{\D}$  equipped with the Hausdorff distance $d_{H}$. It is standard that $(\mathbb{K},d_{H})$ is compact.

\begin{theorem}\label{thm:cvlam} 
  Fix $c \in [0,\infty]$. There exists a random compact subset $\mathbf{L}_{c}$ of $\overline{\mathbb{D}}$ such that the following holds. Let $(K_n)_{n \geq 1}$ be a sequence of positive integers with $K_n \le n-1$ for every $n \geq 1$.
\begin{enumerate}
\item[(i)] Assume that  $K_{n} \rightarrow \infty$ and $ \frac{K_{n}}{\sqrt{n}} \rightarrow c$ as $n \rightarrow \infty$, with $c < \infty$. Then
 the following convergence holds jointly in distribution in  $\mathbb{K}^{2}$:
 \[ \big( \dFKn  ,  \dPKn \big)   \quad \mathop{\longrightarrow}^{(d)}_{n \rightarrow \infty} \quad ( \mathbf{L}_{c},\mathbf{L}_{c}).\]
\item[(ii)] Assume that  $ \frac{K_{n}}{\sqrt{n}} \rightarrow \infty$ and that $ \frac{n-K_{n}}{\sqrt{n}} \rightarrow \infty$ as $n \rightarrow \infty$. Then
 the following convergence holds jointly in distribution in  $\mathbb{K}^{2}$:
 \[ \big(\dFKn  , \dPKn \big)   \quad \mathop{\longrightarrow}^{(d)}_{n \rightarrow \infty} \quad ( \mathbf{L}_{\infty},\mathbf{L}_{\infty}).\]
\item[(iii)] Assume that $ \frac{n-K_{n}}{\sqrt{n}} \rightarrow c$ as $n \rightarrow \infty$, with $c <\infty$. Then the following convergences  hold in distribution in $\mathbb{K}$:
\[ \dFKn    \quad \mathop{\longrightarrow}_{n \rightarrow \infty}^{(d)} \quad \mathbf{L}_{\infty}, \qquad  \dPKn \quad \mathop{\longrightarrow}^{(d)}_{n \rightarrow \infty} \quad \mathbf{L}_{c}.\]
\end{enumerate}
\end{theorem}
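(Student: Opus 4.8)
The plan is to reduce everything to a single combinatorial object — a conditioned two-type Bienaymé–Galton–Watson tree — and to establish a scaling limit for that tree, from which all three regimes of the theorem follow. First I would make precise the bijection alluded to after Proposition~\ref{prop:lawproduct}: a non-crossing partition $P \in \mathfrak{P}_n$ with $n-k$ blocks corresponds to a plane tree, and more canonically to a two-type plane tree whose vertices of the first type encode blocks of $P$ and whose vertices of the second type encode blocks of the Kreweras complement $\mathcal{K}(P)$ (the faces of the associated dissection of the disk). Under this encoding, the explicit formula of Proposition~\ref{prop:lawproduct} — a product of local weights $|B|^{|B|-2}/(|B|-1)!$ over blocks of $P$ and of $\mathcal{K}(P)$ — becomes exactly the law of a two-type BGW tree in which each type-$\bullet$ vertex has offspring weight $\propto m^{m-2}/(m-1)!$ for $m$ children (Borel-type weights), conditioned on having $n-k$ vertices of one type and $k$ vertices of the other. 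Reading the factorization up to time $K_n$ then amounts to looking at this tree with $k=K_n$, so the regime $K_n \asymp \sqrt{n}$ corresponds to conditioning sizes $(n-K_n, K_n)$ with one coordinate of order $n$ and the other of order $\sqrt n$ — a highly asymmetric biconditioning.

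Next I would carry out the asymptotic analysis of this biconditioned bitype tree. The key point is that when one conditioning size is of order $\sqrt{n}$, the offspring distribution must itself depend on $n$ (a tilting of the Borel weights whose parameter is tuned so that the mean number of ``large'' type-$\bullet$ vertices per type-$\circ$ generation is calibrated), which is why the paper advertises limit theorems for BGW trees ``with an offspring distribution depending on the conditioning size.'' I would show, via a local limit theorem / cycle lemma argument for the two-type Łukasiewicz-type encoding, that the contour or height function of this tree, suitably rescaled in time by $K_n$ and in space by $\sqrt{n}/K_n$ (equivalently $K_n^2/n$ vertices of the abundant type sit ``under'' each macroscopic branch), converges to the excursion of a spectrally positive Lévy process whose Lévy measure is read off from the (limiting, tilted) Borel offspring weights, with the parameter $c$ entering through $K_n/\sqrt n \to c$. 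This yields a scaling limit of the discrete trees towards a Lévy tree, and hence, transporting through the non-crossing-partition bijection and using that chords of $\dot{\PPP}^{(n)}_{K_n}$ are the convex hulls of the blocks, convergence of $\dot{\PPP}^{(n)}_{K_n}$ in $(\mathbb{K}, d_H)$ to the random lamination $\mathbf{L}_c$ coded by that Lévy excursion (the lamination being obtained from the excursion exactly as in the Brownian-triangulation / stable-lamination constructions of Aldous and of Kortchemski).

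Then I would handle the three cases. For (i), the above gives $\dot{\PPP}^{(n)}_{K_n} \to \mathbf{L}_c$; to get the joint convergence with $\dot{\FFF}^{(n)}_{K_n}$ to the \emph{same} limit, I would use the deterministic relation recalled in the excerpt — $\dot{\PPP}^{(n)}_{K_n}$ is obtained from $\dot{\FFF}^{(n)}_{K_n}$ by replacing each tree component by the convex hull of its vertices — together with a diameter estimate showing that, in this regime, every connected component of $\FFF^{(n)}_{K_n}$ has vanishing diameter on the circle (its vertices occupy an arc of length $o(1)$), so $d_H(\dot{\FFF}^{(n)}_{K_n}, \dot{\PPP}^{(n)}_{K_n}) \to 0$ in probability. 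For (ii), $K_n/\sqrt n \to \infty$ with $n - K_n$ still of order $\gg \sqrt n$: here one conditions a tree whose dominant type is still abundant but the ``rare'' type also grows, the excursion limit degenerates to the one with $c = \infty$ (a Brownian-type excursion after the appropriate rescaling, giving the Brownian lamination $\mathbf{L}_\infty$), and the same convex-hull comparison gives the joint statement. Case (iii) is dual to (i) under the Kreweras/time-reversal symmetry: $n - K_n \to c$ in the $\sqrt n$ scale means the roles of $P$ and $\mathcal{K}(P)$ are swapped, so $\dot{\PPP}^{(n)}_{K_n} \to \mathbf{L}_c$ by applying case (i) to the complement, while $\dot{\FFF}^{(n)}_{K_n}$ — which is almost the full non-crossing tree minus $n-K_n = O(1)$ edges — still converges to $\mathbf{L}_\infty$, and crucially here the convex-hull comparison \emph{fails} (the components of $\FFF^{(n)}_{K_n}$ are macroscopic), which is exactly why the two limits differ and the convergence is only stated marginally.

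The main obstacle is the scaling-limit analysis of the biconditioned bitype BGW tree with $n$-dependent offspring law: standard results on conditioned (mono- or multi-type) BGW trees condition on a single size with a fixed critical offspring distribution, whereas here we simultaneously fix the numbers of vertices of both types in a strongly unbalanced ratio $\sqrt n : n$ and must let the offspring weights drift with $n$ so that the joint conditioning is non-degenerate. Establishing the requisite two-dimensional local limit theorem for the associated random walk / Łukasiewicz path (with careful control of the tilting parameter and of the contribution of ``microscopic'' subtrees of the abundant type, which collapse to the Lévy measure in the limit) is the technical heart of the argument; once that is in place, the passage to laminations and the three-case bookkeeping are comparatively routine, relying only on the Hausdorff-continuity of the lamination-coding map and on the elementary convex-hull relation between $\dot\FFF$ and $\dot\PPP$.
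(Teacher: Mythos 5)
Your overall architecture --- coding $\PKn$ by a biconditioned bitype BGW tree with $n$-dependent offspring, proving an invariance principle for its \L{}ukasiewicz-type path toward a spectrally positive L\'evy excursion, transporting through the lamination-coding map, and handling case (iii) via Kreweras duality --- matches the paper's plan. But the step you give for the joint convergence in case (i)$_{c>0}$ is wrong. You claim that $d_H(\dFKn,\dPKn)\to 0$ because ``every connected component of $\FKn$ has vanishing diameter on the circle.'' If that were true, both $\dFKn$ and $\dPKn$ would converge to the circle $\S=\mathbf{L}_0$, which only happens when $c=0$; for $c>0$ the limit $\mathbf{L}_c$ almost surely has chords of positive length, and these arise precisely as limits of the convex hulls of \emph{macroscopic} components of $\FKn$. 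In general, replacing a tree component by the convex hull of its vertices can change the Hausdorff distance by an amount of order one (a tripod joining three spread-out vertices has a macroscopic triangle as convex hull). The comparison actually hinges on two degree controls on the dual tree $T_n=\mathcal{T}(\PKn)$: with high probability no black vertex of $T_n$ has three or more children (so each block of $\PKn$ is an edge or a triangle), and no black vertex has two children each with at least $\eps n$ descendants (so each triangle has one ``thin'' side, subtending an arc of length $O(\eps)$). Only under these controls does removing one edge per triangle change $\dPKn$ by $O(\eps)$ in Hausdorff distance; this is exactly Lemmas~\ref{lem:AtMost2}, \ref{lem:two} and \ref{lem:proche}, and you would need to supply them.

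There is also a gap in your bookkeeping. You do not separate (i)$_{c=0}$ from (i)$_{c>0}$, but for $\dPKn$ when $c=0$ the degree controls above are irrelevant (the claim is that $\dPKn$ has \emph{no} macroscopic chords at all), and the paper's route is circuitous: prove $\dFKn\to\S$ directly, then prove case (ii), deduce $\dFKn\to\mathbf{L}_\infty$ in case (iii) by maximality, and only then show $\dPKn\to\S$ in case (i)$_{c=0}$ by combining the small-chord estimate for $\dFKn$ with the convergence of $\dot{\mathcal{F}}^{(n)}_{n-1}$ to the Brownian triangulation. Similarly, in case (ii) when $K_n/n\to\gamma\in(0,1)$, blocks have unbounded size and no analogue of the degree control holds; there the paper compares $\dFKn$ and $\dPKn$ via the maximality of the Brownian triangulation (Lemma~\ref{lem:reduction_FtoK}), a distinct mechanism. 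Your single diameter-based heuristic papers over three genuinely different arguments, only one of which it approximates and none of which it gets right.
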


 \begin{figure}[thb]
 \begin{center}
 \includegraphics[height=4cm]{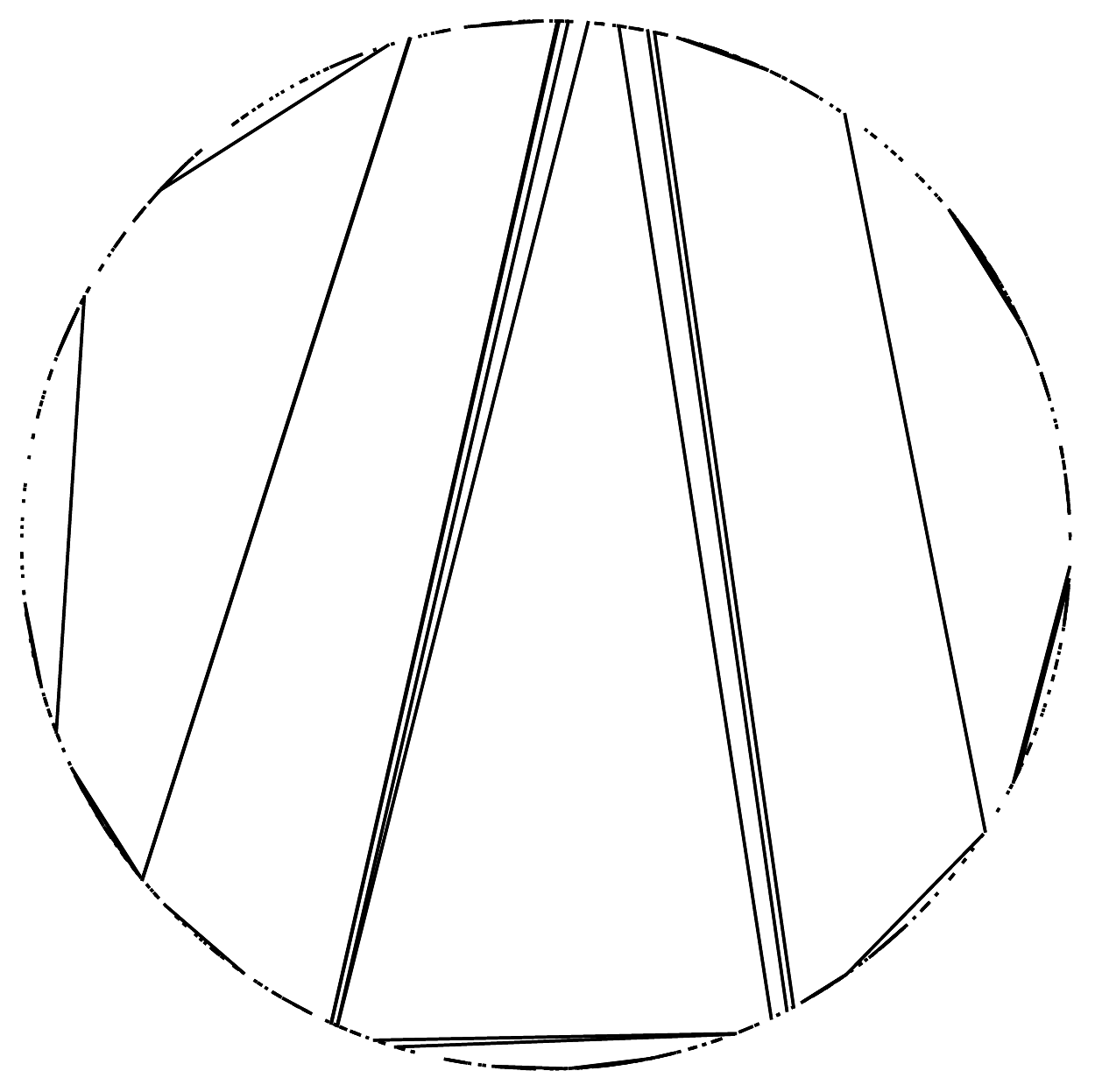} \qquad   
  \includegraphics[height=4cm]{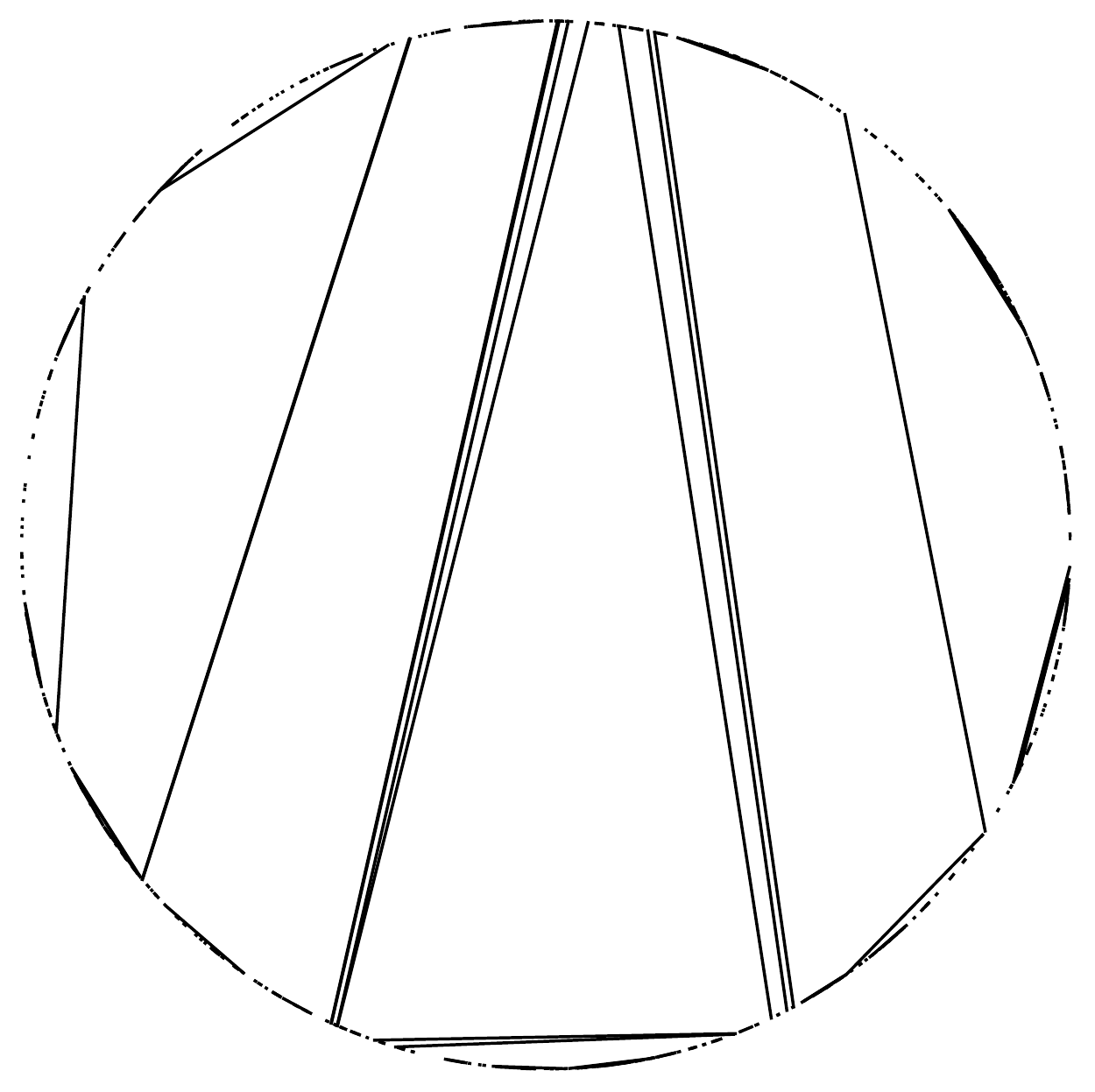} \quad
  \includegraphics[height=4cm]{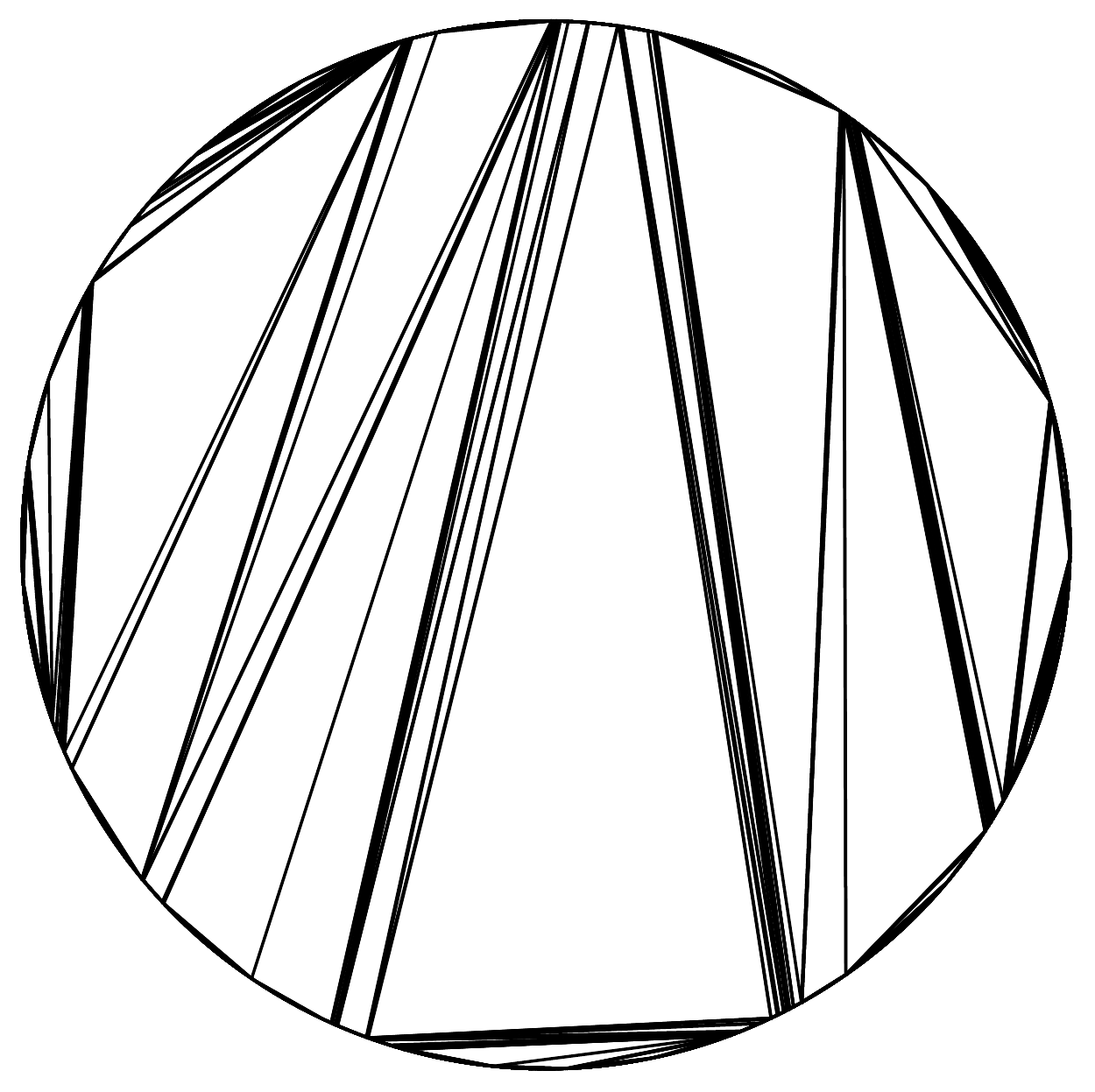}
  \caption{\label{fig:lam}For $n=10000$ and $ K_{n} =500=5\sqrt{n}$
  we have taken uniformly at random a minimal factorization $(\tb_{1}^{(n)}, \ldots, \tb_{n-1}^{(n)})$
  of size $n$ and we have represented from left to right:  
  the non-crossing partition $ \dot{\PPP}^{(n)}_{K_{n}}$,
  the non-crossing forest $\dot{\FFF}^{(n)}_{{K_{n}}}$
  and the non-crossing tree $\dot{\FFF}^{(n)}_{n-1}$.
  Our limit theorem asserts that the first two pictures are approximations of the same realization
  of some random lamination $\mathbf{L}_5$
  (we indeed observe that they look similar to each other),
  while the last one approximates the Brownian triangulation $\mathbf{L}_\infty$.}
 \end{center}
 \end{figure}

We observe that a phase transition occurs when $K_n$ is of order $\sqrt{n}$.
Let us give a heuristic explanation of this fact.
For each $n$, the transpositions $\tb_{1}^{(n)}, \tb_{2}^{(n)}, \dots$ are identically distributed (Corollary~\ref{cor:marginals})
and the limiting distribution of their ``length'' (see \eqref{eq:limit_length-first_transpo}) has a heavy tail with index $1/2$.
If $\tb_{1}^{(n)}, \ldots, \tb_{ K_{n}}^{(n)}$ were independent and distributed as \eqref{eq:limit_length-first_transpo},
one would have to observe $\sqrt{n}$ such independent random variables to have one of length of order $n$.
We therefore expect that the subsets $\dFKn$ and $\dPKn$ become non trivial for $K_n$ of order $\sqrt{n}$.
\medskip

The limiting object $\mathbf{L}_{c}$  is, in all cases, a \emph{lamination},
that is a compact subset of $\overline{\D}$ which is  the union of non-crossing chords. Let us mention that $\mathbf{L}_{0}$ is simply the unit circle $\mathbb{S}$, that $\mathbf{L}_{\infty}$ is the so-called Brownian triangulation (see below), and that $(\mathbf{L}_{c})_{c  > 0}$ is a new one-parameter family of random laminations. See Section~\ref{sec:deflam} for  precise definitions. 

Note that case (i) can be subdivided in two cases, denoted by (i)$_{c=0}$ and  (i)$_{c>0}$. In the first one, the distributional limit is deterministic, while in the second one it is random.
In contrast with (i) and (ii),  observe that in case (iii), 
$\dFKn$ and $\dPKn$ do \emph{not} converge to the same limit.
This can be  explained by the fact that while $k \mapsto \dot{\FFF}^{(n)}_{k}$ is increasing,
$k \mapsto \dot{\PPP}^{(n)}_{k}$ is, roughly speaking, asymptotically increasing, then decreasing. 
We cannot prove any joint convergence results in this case
(the limit should be a coupling of $\mathbf{L}_{c}$ and $\mathbf{L}_{\infty}$
such that $\mathbf{L}_{c} \subset \mathbf{L}_{\infty}$ a.s.,
which excludes the possibility of an independent coupling).

The idea of viewing random graphs as random compact subsets of the unit disk and of studying their convergence in the space $(\mathbb{K},d_{H})$ goes back to Aldous \cite{Ald94a,Ald94b}, who constructed the Brownian triangulation and showed that it is the limit of random uniform triangulations of the $n$-gon.
Since then, the Brownian triangulation has been showed to be the universal limit of various non-crossing discrete structures \cite{Kor14,CK14,KM17,Bet17} and has appeared in the context of random planar maps \cite{LGP08}.
Other random laminations, mostly arising as limits of different natural discrete combinatorial structures, have been constructed in \cite{CLGrecursive,CWmht,Kor14,KM16}.
As in \cite{Kor14}, our random laminations are obtained by applying a deterministic functional  to some random excursion
(here, an excursion is a nonnegative valued càdlàg function on $[0,1]$).
But the random excursions we are starting from are different from the ones in \cite{Kor14}:
in our case, they correspond to a normalized excursion of a Lévy process with
an explicit characteristic function which is not stable, see Eq.~\eqref{eq:Levyc} below.
Let us mention that the laminations appearing in \cite{CLGrecursive,CWmht,KM16} are constructed in a different way.

\paragraph*{Strategy of the proof.} Let us now comment on the strategy of the proof of  Theorem~\ref{thm:cvlam}. We first establish, in case (i)$_{c=0}$, the convergence $\dFKn \rightarrow \mathbf{L}_{0}$ as an elementary consequence of Corollary~\ref{cor:marginals} and Proposition~\ref{prop:lawproduct} (Section~\ref{ssec:partialproof}). The proof follows the above given heuristic to explain that the phase transition occurs at $ K_{n}$ of order $\sqrt{n}$.

We then concentrate on the difficult cases  (i)$_{c>0}$ in Section~\ref{ssec:cv_pos} and (ii) in Section~\ref{sec:Linfty}. In these cases, we first show that  $  \dPKn  \rightarrow \mathbf{L}_{c}$, and then deduce by a short argument, based on that fact that $\dFKn$ and $\dPKn$ are close for the Hausdorff distance, that the convergence $(\dFKn,\dPKn) \rightarrow (\mathbf{L}_{c},\mathbf{L}_{c})$ holds jointly.  The avantage of working with $\PKn$ is that its law is well understood, unlike $\FKn$. Indeed, thanks to Proposition~\ref{prop:lawproduct},  $\PKn$ may be coded (Proposition~\ref{prop:bitype}) by a two-type  alternating Bienaym\'e--Galton--Watson tree conditioned on having $n-K_{n}$ vertices at even generation and $K_{n}+1$ vertices at odd generation, with offspring distribution depending on $n$ (to be really precise, the root has a slightly different offspring distribution from other vertices).  
We are therefore led to understand the structure of two-type BGW trees
conditioned on having a given number of vertices of both types with varying offspring distribution.
Unfortunately, the results on scaling limits of multi-type BGW trees in the literature (see e.g.\ \cite{MM07,Mie08b,Ber16})
consider either trees conditioned on having a total fixed size, or one type of fixed size.
Neither trees conditioned to having, for each type, a fixed number of individuals
nor situations where the offspring distribution varies with $n$,
seem to have been considered.
This creates some specific difficulties, that we have to overcome in Section~\ref{sec:limitheorems}.

We next show that the convergence $\dFKn \rightarrow \mathbf{L}_{\infty}$  in case (iii) simply follows from (ii) by a maximality argument (Section~\ref{sssec:iiiF}).

Then,  more interestingly, we establish, in case (i)$_{c=0}$, the  convergence $\dPKn \rightarrow \mathbf{L}_{0}$ by using, surprisingly, the convergence $\dFKn \rightarrow \mathbf{L}_{\infty}$ of case (iii) (Section~\ref{sssec:P0}).

We conclude in Section~\ref{sssec:iiiP} that   the last missing convergence $\dPKn \rightarrow \mathbf{L}_{\infty}$ in case (iii) holds by combining a short symmetry argument with again the convergence $ \dFKn \rightarrow \mathbf{L}_{\infty}$  established in Section~\ref{sssec:iiiF}.

\paragraph*{Perspective.} We believe that the increasing lamination-valued process $ c \mapsto \dot{\FFF}^{(n)}_{\lfloor c\sqrt{n} \rfloor}$ converges in distribution in the space of càdlàg lamination-valued processes to a limit which geometrically describes a planar version of the standard additive coalescent \cite{AP98} (the same should hold for $ c \mapsto \dot{\PPP}^{(n)}_{\lfloor c\sqrt{n} \rfloor}$,
obviously).
Note that, since $ c \mapsto \dot{\FFF}^{(n)}_{\lfloor c\sqrt{n} \rfloor}$ is increasing,
the limiting process should be an increasing coupling of the limiting laminations $(\mathbf L_c)_{c \in [0,+\infty]}$.
This is investigated in \cite{Thev18}.
In this direction, Theorem~\ref{thm:cvlam} (i) may be viewed as a one-dimensional convergence statement.

Also, the technique developed here to study alternating two-type BGW trees could be of independent interest.
Indeed, alternating two-type BGW trees (and more general multi-type BGW trees) have recently appeared in the context of random planar maps thanks to the Bouttier-Di Francesco-Guitter bijection \cite{BDFG04,MM07,LGM09} and in the context of looptrees \cite{CK15}.

\paragraph*{Notation.} As much as possible, we stick to the following convention: the style fonts $\mathsf{mathfrak}$, $\mathsf{mathcal}$, $\mathsf{mathscr}$ will be respectively used for sets, bijections and random variables.

\paragraph*{Acknowledgements.} I.K. thanks Jean Bertoin for an opportunity to stay at the University of Z\"urich, where this work was initiated. V.F. and I.K. are grateful to the thematic trimester ``Combinatorics and interactions'' at Institut Henri Poincaré, where part of this work has been completed, and to the referee for a careful reading.

V.F. is partially supported by the Swiss National Science Fundation, under the grant agreement nb 200020\_172515.

\setcounter{tocdepth}{2}
\tableofcontents

\section{Minimal factorizations and non-crossing partitions}
\label{sec:ncp}

In this section, we present a useful connection between minimal factorizations and non-crossing partitions.

\begin{table}[htbp]\caption{Table of the main notation and symbols appearing in Section~\ref{sec:ncp}.}
\centering
\begin{tabular}{c c p{0.8\linewidth} }
\toprule
$\mathfrak{T}_{n}$ & & The set of all transpositions of $\mathfrak{S}_{n}$. \\
$\mathfrak{M}_{n}$ & & The set of all minimal factorizations of  $(1,2, \ldots,n)$ into transpositions. \\
$\mathfrak{P}_{n}$ & & The set of all non-crossing partitions of $[n]$. \\
$|B|$ & & The size of a block $B$ of a non-crossing partition. \\
$ \mathcal{K}(P)$ &  & The Kreweras complement of a non-crossing partition $P$.\\
$\PPP(\sigma)$ & & The partition corresponding to the cycle decomposition of a permutation $\sigma$.\\
$ \mathcal{T}(P)$ & & The dual two-type plane tree  associated with a non-crossing partition $P$. \\
\bottomrule
\end{tabular}
\label{tab:secncp}
\end{table}

\subsection{Non-crossing partitions}
\label{ssec:ncp}

A permutation $\sigma \in \mathfrak{S}_{n}$ of size $n$ will be called a {\em geodesic permutation} if
there exists a minimal factorization $(\tau_1,\tau_2,\cdots,\tau_{n-1}) \in \mathfrak{M}_{n}$
and an integer $k \ge 1$ such that $\sigma=\tau_1 \cdots \tau_k$.
Geometrically, this means that $\sigma$ is on a geodesic path from the identity to the cycle
$(1,2,\ldots,n)$ in the Cayley graph of the symmetric group (using all transpositions as generators).
It was already mentioned in the Introduction that, if $\sigma$ is a geodesic permutation,
then the partition $\PPP(\sigma)$ of $[n]$ into cycles of $\sigma$ is a non-crossing partition.
In fact, $\PPP$ realizes a bijection between the set of all  geodesic permutations of size $n$
and the set of non-crossing partitions of size $n$ \cite[Theorem 1]{Biane1997}.
The pre-image of a non-crossing partition $P$ is the unique permutation $\sigma$
whose partitions in cycles is given by $P$ and such that each cycle is {\em increasing}, 
in the sense that it can be written 
$(c_1,\ldots,c_\ell)$ with $c_1<\cdots<c_\ell$.
As an example, both permutations $(1,3,4)(2)(5,6)$ and $(1,4,3)(2)(5,6)$ are associated
with the non-crossing set partition $\{\{1,3,4\},\{2\},\{5,6\}\}$,
but only the first one is geodesic, since all its cycles are increasing.

The Kreweras complement $ \mathcal{K}(P)$ of a non-crossing partition $P$ of $[n]$ will play an important role.
A first way of defining the Kreweras complement is the following (we refer to \cite[Sec.~9 and Sec.~18]{NS06} for details).
Observe that $\sigma \mapsto   (1,2, \ldots,n) \sigma^{-1} $
defines a bijective map from geodesic permutations to themselves.
Then the \emph{Kreweras complement} is the corresponding map on non-crossing partitions
using the bijection $\PPP$.
Namely, it is defined as follows: for any geodesic permutation $\sigma$ of size $n$,
\begin{equation}
\label{eq:K}\mathcal{K}\big(\PPP(\sigma)) \big)= \PPP \big( (1,2, \ldots,n) \sigma^{-1} \big).
\end{equation}
 The Kreweras complementation can alternatively be visualized as follows: consider the representation of $P \in \mathfrak{P}_n$ in the unit disk where blocks are colored in black; invert the colors and rotate the vertices of the regular $n$-gon by an angle $-\pi/n$; then the blocks of $ \mathcal{K}(P)$ are given by the vertices lying in the same ``colored'' component. See Figure~\ref{fig:complement_Kreweras} for an illustration,
 where $P=
\{1, 3, 5\}, \{2\}, \{4\}, \{6, 7, 11, 12\}, \{8\}, \{9, 10\}\}$ 
and $\mathcal{K}(P)=\{\{1, 2\}, \{3, 4\}, \{5, 12\}, \{6\}, \{7, 8, 10\}, \{11\}\}$.

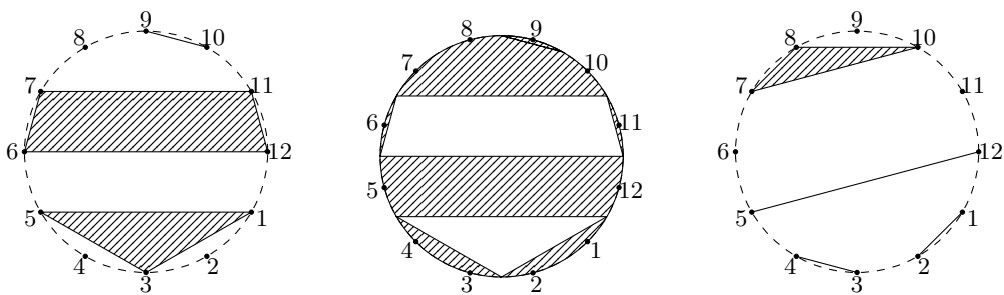
\begin{figure}[ht] \centering
\begin{scriptsize}
\begin{tikzpicture}[scale=.8]
\draw[thin, dashed]	(0,0) circle (2);
\foreach \x in {1, 2, ..., 12}
	\coordinate (\x) at (-\x*360/12 : 2);
\foreach \x in {1, 2, ..., 12}
	\draw
	[fill=black]	(\x) circle (1pt)
	(-\x*360/12 : 2*1.1) node {\x}
;
\filldraw[pattern=north east lines]
	(1) -- (3) -- (5) -- cycle
	(6) -- (7) -- (11) -- (12) -- cycle
	(9) -- (10)
;
\end{tikzpicture}
\qquad
%
\begin{tikzpicture}[scale=.8]
\draw[thin, dashed]	(0,0) circle (2);
\foreach \x in {1, 2, ..., 12}
	\coordinate (\x) at (-\x*360/12 : 2);
\foreach \x in {1, 2, ..., 12}
	\draw
[fill=black]	(-360/24-\x*360/12 : 2) circle (1pt)
	(-360/24-\x*360/12 : 2*1.1) node {\x}
;
\filldraw[pattern=north east lines]
	(0:2) arc (0:-360/12:2) -- (-5*360/12:2) arc (-5*360/12:-6*360/12:2) -- cycle
	(-360/12:2) arc (-360/12:-3*360/12:2) -- (1)
	(-3*360/12:2) arc (-3*360/12:-5*360/12:2) -- (3)
	(-6*360/12:2) arc (-6*360/12:-7*360/12:2) -- (6)
	(-9*360/12:2) arc (-9*360/12:-10*360/12:2) -- (9)
(-7*360/12:2) arc (-7*360/12:-9*360/12:2) -- (-10*360/12:2) arc (-10*360/12:-11*360/12:2) -- cycle
	(-11*360/12:2) arc (-11*360/12:-12*360/12:2) -- (11)
;
\draw 	(-9*360/12:2) arc (-9*360/12:-10*360/12:2);
\end{tikzpicture}
\qquad
%
\begin{tikzpicture}[scale=.8]
\draw[thin, dashed]	(0,0) circle (2);
\foreach \x in {1, 2, ..., 12}
	\coordinate (\x) at (-\x*360/12 : 2);
\foreach \x in {1, 2, ..., 12}
	\draw
	[fill=black]	(-\x*360/12 : 2) circle (1pt)
	(-\x*360/12 : 2*1.1) node {\x}
;
\filldraw[pattern=north east lines]
	(1) -- (2)
(3) -- (4)
	(5) -- (12)
	(7) -- (8) -- (10) -- cycle
;
\end{tikzpicture}
\end{scriptsize}
\caption{Example of a non-crossing partition (left) and its Kreweras complement (middle and right).}
\label{fig:complement_Kreweras}
\end{figure}

 The notion of minimal factorization of a cycle is naturally
 extended to the notion of minimal factorization
 of a general permutation (we do not impose any transitivity condition here);
 the minimal number of transpositions needed to obtain $\sigma$
 is then $n-|\PPP(\sigma)|$, where $|\PPP(\sigma)|$ is the number of blocks of $\PPP(\sigma)$,
 that is the number of cycles in the disjoint cycle decomposition of $\sigma$.
 The following simple combinatorial lemma extends \eqref{eq:cardFn} by counting the number of minimal factorizations of a general permutation.
It is probably well-known but we could not locate it in the literature. We give its proof since it is short.

\begin{lemma}\label{lem:minfact}
Let $\sigma \in \mathfrak{S}_{n}$ be a permutation and let $P=\PPP(\sigma)$ be its associated partition. Let $k$ be the number of blocks of $P$. Then the number of minimal factorizations of $\sigma$ is
\[ \left( n- k \right) !  \cdot\prod_{B \in P}    \frac{|B|^{|B|-2}}{(|B|-1)!}.\]
\end{lemma}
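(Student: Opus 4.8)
The plan is to reduce the count for a general $\sigma$ to the known count $m^{m-2}$ of minimal factorizations of an $m$-cycle (that is, \eqref{eq:cardFn}), by showing that a minimal factorization of $\sigma$ is nothing but an arbitrary ``shuffle'' of minimal factorizations of the cycles of $\sigma$.

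\emph{Step 1: no transposition of a minimal factorization straddles two blocks.} Here I would use the reflection length $\ell(\pi) := n - |\PPP(\pi)|$ and recall two standard facts: for a transposition $\tau$ one has $\ell(\pi\tau) = \ell(\pi) \pm 1$ (multiplying by a transposition either splits one cycle in two or merges two cycles into one), and $\ell$ is subadditive. Given a minimal factorization $\sigma = \tau_1 \cdots \tau_{n-k}$, the $n-k$ successive increments $\ell(\tau_1\cdots \tau_j) - \ell(\tau_1\cdots\tau_{j-1})$ lie in $\{-1,+1\}$ and sum to $\ell(\sigma) = n-k$, so each equals $+1$. Hence every $\tau_j = (a_j,b_j)$ merges the two distinct cycles of $\tau_1\cdots\tau_{j-1}$ containing $a_j$ and $b_j$; since cycles only get merged as $j$ grows, $a_j$ and $b_j$ lie in the same block of the final partition $P = \PPP(\sigma)$. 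Thus each $\tau_j$ is supported on a single block of $P$.

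\emph{Step 2: the unshuffle bijection.} Since transpositions with disjoint supports commute, reordering $\tau_1\cdots\tau_{n-k}$ so as to group together the transpositions supported on each block $B$ (keeping their relative order within a block) does not change the product. This gives $\sigma = \prod_{B\in P}\pi_B$ with $\pi_B$ supported on $B$, forcing $\pi_B = \sigma|_B$, the $|B|$-cycle induced by $\sigma$ on $B$. If $r_B$ transpositions land in block $B$, then $r_B \ge |B|-1$ (that many are needed to express an $|B|$-cycle), while $\sum_{B} r_B = n-k = \sum_{B}(|B|-1)$, so $r_B = |B|-1$ for every $B$, and the subsequence of transpositions supported on $B$ is a minimal factorization of the cycle $\sigma|_B$. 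Conversely, specifying for each $B$ a minimal factorization of $\sigma|_B$, together with a partition of the positions $\{1,\dots,n-k\}$ into parts of sizes $(|B|-1)_{B\in P}$ prescribing how the factorizations are interleaved, recombines into a word of length $n-k$ with product $\prod_{B} \sigma|_B = \sigma$, hence a minimal factorization of $\sigma$. These two constructions are mutually inverse.

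\emph{Step 3: counting.} The number of interleaving patterns is $(n-k)!/\prod_{B\in P}(|B|-1)!$, and the number of minimal factorizations of $\sigma|_B$ equals that of the standard cycle $(1,2,\dots,|B|)$ — conjugate a factorization by any permutation carrying one cycle to the other — hence equals $|B|^{|B|-2}$ by \eqref{eq:cardFn}. Multiplying these counts yields $(n-k)!\,\prod_{B\in P}|B|^{|B|-2}/(|B|-1)!$, which is the claim. I expect Step 1 to be the only point of real substance, since it is where the $\pm 1$ behaviour of $\ell$ is genuinely used; everything afterwards is the commutation of disjoint transpositions together with elementary bookkeeping.
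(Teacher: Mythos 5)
Your proof is correct and follows essentially the same route as the paper's: reduce to the cycle count $m^{m-2}$ by identifying minimal factorizations of $\sigma$ with shuffles of minimal factorizations of its cycles, then count shuffles via a multinomial coefficient. The paper states the converse direction (Steps 1 and 2 in your write-up) with "it can be easily shown that all minimal factorizations of $\sigma$ are obtained in this way," and your reflection-length increment argument is exactly the standard way to fill in that gap.
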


\begin{proof}
Let $\sigma=C_1  \cdots C_k$ be the disjoint cycle decomposition of $\sigma$.
With each cycle $C_i$ is associated a block $B_i$ of $P=\PPP(\sigma)$.
By taking for each $i$ a minimal factorization of $C_i$ and then by shuffling their factors in any possible way, one gets a minimal factorization of $\sigma$. Conversely, it can be easily shown
that all minimal factorizations of $\sigma$ are obtained in this way.
 By \eqref{eq:cardFn}, the number of minimal factorizations of the cycle $C_i$  
 is $|C_i|^{|C_i|-2}=|B_i|^{|B_i|-2} $.
 The number of possible shuffles is 
$ \binom{|B_{1}|+\cdots+|B_{k}|-k}{|B_{1}|-1, \ldots,|B_{k}|-1 }$
(a minimal factorization of $|C_{i}|$ is a $|B_i|-1$ letter word).
The desired result follows by multiplying all these factors and by rearranging the terms.
\end{proof}

We use this lemma to establish Proposition~\ref{prop:lawproduct} (given in the Introduction),
that we restate here for the reader's convenience:
if $1 \leq k \leq n -1$ are positive integers and if
$\mathscr{F}^{(n)}= (\tb_{1}^{(n)}, \ldots, \tb_{n-1}^{(n)})$ is a random uniform element of $ \mathfrak{M}_{n}$
and $P$ a non-crossing partition of $[n]$ with $n-k$ blocks,
then
\[\Pr{\PPP(\tb_{1}^{(n)} \tb_{2}^{(n)} \cdots \tb_{k}^{(n)})=P}= \frac{k! (n-k-1)!}{n^{n-2}}  \cdot  \left( \prod_{B \in P} \frac{|B|^{|B|-2}}{(|B|-1)!} \right) \cdot \left(  \prod_{B \in  \mathcal{K}(P)} \frac{|B|^{|B|-2}}{(|B|-1)!} \right).\]

\begin{proof}[Proof of Proposition~\ref{prop:lawproduct}]
  We use the above notation and 
let in addition $\sigma \in \mathfrak{S}_{n}$ be the geodesic permutation such that $\PPP(\sigma)=P$.  Since $ | \mathfrak{M}_{n}|  =n^{n-2}$, it is enough to determine the cardinality of the set
\[\left\{ (\tau_{1}, \ldots, \tau_{n-1}) \in \mathfrak{T}_{n}^{n-1} : \tau_{1} \tau_{2} \cdots \tau_{n-1}= (1,2, \ldots,n) \textrm{ and } \tau_{1} \tau_{2} \cdots \tau_{k}= \sigma \right\}.\]
Elements of this set can be seen as pairs $\big( (\tau_1,\dots,\tau_k),(\tau_{k+1},\dots,\tau_{n-1}) \big)$
of minimal factorizations of $\sigma$
and of $(1,2, \ldots,n) \sigma^{-1}$.
The desired result then follows from Lemma~\ref{lem:minfact} 
since $ \PPP \big(  (1,2, \ldots,n) \sigma^{-1} \big) =\mathcal{K}(\PPP(\sigma))$.
\end{proof}

We are now in position to establish Corollary~\ref{cor:marginals}.

\begin{proof}[Proof of Corollary~\ref{cor:marginals}]
We start with (i). Consider $n-2$ transpositions $t_{1}, \ldots,t_{n-2} \in \mathfrak{T}_{n}$. To simplify notation, set $C=(1,2,\ldots,n)$. Then
\[\hspace{-3mm}\Pr{(\tb_{1}^{(n)}, \ldots, \tb_{n-2}^{(n)})=(t_{1}, \ldots,t_{n-2})}= \frac{\big| \{t \in \mathfrak{T}_{n} : t_{1}t_{2} \cdots t_{n-2} t= C\}\big| }{n^{n-2}}= \frac{\mathbbm{1}_{C^{-1} t_{1}t_{2} \cdots t_{n-2} \in \mathfrak{T}_{n}} }{n^{n-2}}.\]
Similarly,
\[\hspace{-3mm}\Pr{(\tb_{2}^{(n)}, \ldots, \tb_{n-1}^{(n)})=(t_{1}, \ldots,t_{n-2})}= \frac{\big| \{t \in \mathfrak{T}_{n} : t \, t_{1}t_{2} \cdots t_{n-2}= C\}\big| }{n^{n-2}}= \frac{\mathbbm{1}_{ t_{1}t_{2} \cdots t_{n-2}  C^{-1}\in \mathfrak{T}_{n}} }{n^{n-2}}.\]
Since $t_{1}t_{2} \cdots t_{n-2}  C^{-1}$ and $ C^{-1} t_{1}t_{2} \cdots t_{n-2}$ belong to the same conjugacy class, one is a transposition if and only if the other is. It follows that $(\tb_{1}^{(n)}, \ldots, \tb_{n-2}^{(n)})$ and $(\tb_{2}^{(n)}, \ldots, \tb^{(n)}_{n-1})$ have the same distribution.

Let us now determine the law of $ \tb_{1}^{(n)}$.   For $1 \leq a  \leq n-1$ and $ 1 \leq i \leq n-a$, if $\tau=(a,a+i)$, then all the blocks of $ \PPP(\tau)$ have size one, except one which has size two, and $ \mathcal{K}( \PPP(\tau))$ has two blocks of sizes $i$ and $n-i$. Therefore, by Proposition~\ref{prop:lawproduct},
\[\Pr{\tb_{1}^{(n)}=(a,a+i)}= \frac{1!(n-2)!}{n^{n-2}} \cdot 1 \cdot \frac{i^{i-2}}{(i-1)!} \cdot \frac{(n-i)^{(n-i-2)}}{(n-i-1)!}.\]
The identity \eqref{eq:lawT1} immediately follows. The other assertions  of (ii) are then easy consequences, and are left to the reader.
\end{proof}

\subsection{Partial proof of Theorem~\ref{thm:cvlam} (i)$_{c=0}$.}
\label{ssec:partialproof}

In this section, we prove the convergence of the first coordinate in Theorem~\ref{thm:cvlam} (i) when $c=0$,
namely that $\FKn$ tends to $\mathbf L_0$ when $K_n/\sqrt{n}$ tends to $0$.
We start with a simple observation, whose proof is straightforward.

\begin{lemma}
\label{lem:observation}
Let $(\tau_{1}, \ldots, \tau_{n-1}) \in \mathfrak{M}_{n}$ be a minimal factorization.
Fix a positive integer $k \leq n-1$, set  $\PPP_{k}= \PPP(\tau_{1} \tau_{2}\cdots \tau_{k})$ and $ \FFF_{k}=\FFF(\tau_{1} ,\tau_{2} ,\ldots ,\tau_{k})$. Then the blocks of $\PPP_{k}$ are the connected components of $\FFF_{k}$. In particular, $\S \cap \dot{\PPP}_{k}=\S \cap \dot{\FFF}_{k}$.
\end{lemma}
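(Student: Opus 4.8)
The plan is to unwind the definitions and relate the cycle structure of a partial product to the connected components of the associated forest. First I would recall that for a minimal factorization $(\tau_1,\dots,\tau_{n-1}) \in \mathfrak{M}_n$, the graph $\FFF_k = \FFF(\tau_1,\dots,\tau_k)$ is a non-crossing forest (cited from \cite{GY02} in the Introduction), so its connected components are trees on disjoint vertex sets whose union is $[n]$. The claim to establish is that these vertex sets coincide, as a set partition of $[n]$, with the blocks of $\PPP_k = \PPP(\tau_1\tau_2\cdots\tau_k)$, i.e.\ with the cycles of the permutation $\sigma_k := \tau_1\cdots\tau_k$.

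The key step is the following elementary fact about products of transpositions: if $\sigma = \tau_1\cdots\tau_k$ and each $\tau_i = \{a_i,b_i\}$ is viewed as an edge, then every cycle of $\sigma$ is contained in a single connected component of the graph $\FFF_k$ with edge set $\{\tau_1,\dots,\tau_k\}$ (because successively composing transpositions can only ever move a point $j$ to points reachable from $j$ by a path of edges among $\tau_1,\dots,\tau_k$). Hence the partition into cycles of $\sigma_k$ is a refinement of the partition into connected components of $\FFF_k$. For the reverse inclusion I would use a counting/minimality argument: $\FFF_k$ being a non-crossing forest with $n$ vertices and $k$ edges has exactly $n-k$ connected components; on the other hand, by the cited result of Biane \cite{Biane1997} (already recalled in the Introduction), $\PPP_k$ has exactly $n-k$ blocks. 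A refinement of a partition that has the same number of blocks must equal it, so the two partitions coincide.

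With the equality of partitions in hand, the statement about the circle is immediate from the construction of the compact subsets $\dot\PPP_k$ and $\dot\FFF_k$. Both $\dot\FFF_k$ and $\dot\PPP_k$ are unions of chords joining points $e^{2\pi i\, \ell/n}$, and a point $e^{2\pi i\, \ell/n} \in \S$ lies in $\dot\FFF_k$ (resp.\ in $\dot\PPP_k$) if and only if $\ell$ is not an isolated vertex of $\FFF_k$ (resp.\ not a singleton block of $\PPP_k$) --- equivalently, if and only if the part of $\ell$ in the common partition has size at least two. Since the partitions agree, these conditions on $\ell$ agree, so $\S \cap \dot\PPP_k = \S \cap \dot\FFF_k$.

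The main obstacle --- though it is genuinely mild here --- is the reverse inclusion between the two partitions, i.e.\ ruling out that a connected component of $\FFF_k$ splits into several cycles of $\sigma_k$. The cleanest route is the block-count argument above, which sidesteps any delicate tracking of how composition acts inside a component; alternatively one could argue directly that for a \emph{minimal} factorization of a single cycle, each $\FFF_k$ is in fact a non-crossing forest whose components are exactly the increasing cycles of $\sigma_k$, as can be read off from the bijective results of \cite{GY02,Biane1997}. Either way, once the two partitions are identified the rest is a direct check against the definitions, which is why the proof is described as straightforward.
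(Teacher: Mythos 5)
Your proof is correct. The paper declares the proof of this lemma ``straightforward'' and omits it, so there is nothing to compare against; your argument (each cycle of $\sigma_k$ lies in a single component of $\FFF_k$ because the orbit of a point under $\tau_1,\dots,\tau_k$ traces a walk in the graph, so the cycle partition refines the component partition; both have $n-k$ parts by [GY02] and [Biane1997] respectively, so they coincide) is a clean and complete way to make the statement rigorous, and the final step about $\S\cap\dot\PPP_k=\S\cap\dot\FFF_k$ follows correctly once the two set partitions are identified.
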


The reader can back look at Figure~\ref{fig:observation} for an example.
On this figure, the blocks of $\PPP_{6}$ are indeed the connected components of $\FFF_{6}$, 
and  \[\S \cap \dot{\PPP}_{6}=\S \cap \dot{\FFF}_{6} =  \{ e^{- 2 \textrm{i} \pi {k}/{12}}; k \in \{1,3,5,6,7,9,10,11,12\} \}.\]

Corollary~\ref{cor:marginals} and Proposition~\ref{prop:lawproduct} allow us to establish half of Theorem~\ref{thm:cvlam} (i)$_{c=0}$.

\begin{proof}[Partial proof of Theorem~\ref{thm:cvlam} (i)$_{c=0}$] We assume  that $ \frac{K_{n}}{\sqrt{n}} \rightarrow 0$ and $K_{n} \rightarrow \infty$ as $n \rightarrow \infty$, and we shall show that $\dFKn \rightarrow \mathbf{L}_{0}=\mathbb{S}$. In order to show that $\dFKn $ converges in distribution to $ \S$, it is enough to show that for every fixed $\epsilon>0$: (a) the probability that there exists a chord of Euclidean length at least $\epsilon$ tends to $0$ as $n \rightarrow \infty$ and (b) the convergence $\S \cap \dFKn \rightarrow \S$ holds in distribution as $n \rightarrow \infty$.

For (a), write $ \Phi( (u,v))=\min(v-u,n-v+u)$ for a transposition $(u,v) \in \mathfrak T_n$ with $u<v$.
It is enough to show that $\Prb{\max_{1 \leq i \leq K_{n}} \Phi(\tb_{i}^{(n)}) > \epsilon n} \rightarrow 0$ as $n \rightarrow \infty$. By Corollary~\ref{cor:marginals} (i), for every $1 \leq i<j \leq n-1$, $\tb_{i}^{(n)}$ and $\tb_{j}^{(n)}$ have the same distribution, so that
$$\Prb{\max_{1 \leq i \leq K_{n}} \Phi(\tb_{i}^{(n)}) > \epsilon n} \leq K_{n} \Prb{\Phi(\tb_{1}^{(n)})>\epsilon n}.$$
Therefore, combining Corollary~\ref{cor:marginals} (ii) with the fact that  $ \frac{n^{n-2}}{(n-1)!} \sim  \tfrac{e^{n}}{n^{3/2}}$ as $ n \rightarrow \infty$, there exists a constant $C>0$ such that
\begin{align*}
K_{n} \Pr{\Phi(\tb_{1}^{(n)})>\epsilon n} & \leq  n K_{n} \sum_{i=  \epsilon n}^{ (1-\epsilon)n}  \frac{(n-2)!}{n^{n-2}} \cdot \frac{i^{i-2}}{(i-1)!} \cdot \frac{(n-i)^{(n-i-2)}}{(n-i-1)!}\\
& \leq  C  K_{n}  \sum_{i=  \epsilon n}^{ (1-\epsilon)n}   \frac{n^{3/2}}{e^{n}} \cdot  \frac{e^{i}}{i^{3/2}} \cdot  \frac{e^{n-i}}{(n-i)^{3/2}} \\
& =  C\, \frac{K_{n}}{\sqrt{n}} \cdot \frac{1}{n}    \sum_{i=  \epsilon n}^{ (1-\epsilon)n}  \frac{1}{ \left( \frac{i}{n} \cdot \left( 1- \frac{i}{n} \right)   \right) ^{3/2}},
\end{align*}
which tends to $0$ as $ n \rightarrow \infty$ by recognizing a Riemann sum and by using the fact that $\tfrac{K_{n}}{\sqrt{n}} \rightarrow 0$.

We now turn to (b):
fix an arc of $\S$ of length $\eps$, we will show that the probability
that there is a chord of $\dFKn$ with one endpoint in this arc tends to $1$.
If we prove this, then any limit point of $\S \cap \dFKn$ should intersect any arc of $\S$
and thus has to be $\S$ itself (we work with the Hausdorff topology on compact subsets of $\overline{\D}$, so limit points are necessarily closed sets).
By compactness this entails the convergence $\S \cap \dFKn \to \S$.

By rotational invariance, it is enough to show that the probability that there is no chord of $\dFKn$ adjacent to an element of the set $ \{e^{-2 \i \pi {j}/{n}} : 1 \leq j \leq \lfloor \epsilon n \rfloor \}$ tends to $0$ as $n \rightarrow \infty$. By construction of $\PPP^{(n)}_{K_{n}}$, there is no chord of $\dFKn$ adjacent to an element of the set $ \{e^{-2 \i \pi {j}/{n}} : 1 \leq j \leq \lfloor \epsilon n \rfloor \}$ if and only if all the elements of  the set $ \{e^{-2 \i \pi {j}/{n}} : 1 \leq j \leq \lfloor \epsilon n \rfloor \}$ are singleton blocks in $\PPP^{(n)}_{K_{n}}$. In turn, it is therefore enough to check that
\[ \P \big( \forall 1 \leq j \leq \lfloor \epsilon n \rfloor, \{j\} \textrm{ is a block of } \PPP^{(n)}_{K_{n}}  \big)  \quad \mathop{\longrightarrow}_{n \rightarrow \infty} \quad 0.\]

To this end, observe that a non-crossing partition $P \in \mathfrak{P}_{n}$ with $n-K_{n}$ blocks such that $\{j\}$ is a block of $P$ for every $1 \leq j \leq \lfloor \epsilon n \rfloor$ can be seen as a non-crossing partition  $P' \in \mathfrak{P}_{n- \lfloor \epsilon n \rfloor}$  with $n- \lfloor \epsilon n \rfloor-K_{n}$ blocks simply by erasing the blocks $\{j\}$ for every $1 \leq j \leq \lfloor \epsilon n \rfloor$ and by subtracting $\lfloor \epsilon n \rfloor$ to everyone. Then the sizes of the blocks of size at least $2$ of $P'$ are those of $P$, and the the sizes of the blocks of size at least $2$ of $\mathcal{K}(P')$ are those of $\mathcal{K}(P)$, except that the size of the block of  $\mathcal{K}(P)$ containing $n$ is the size of the block of  $\mathcal{K}(P')$ containing $n- \lfloor \epsilon n \rfloor$ plus $\lfloor \epsilon n \rfloor$ (see Figure~\ref{fig:chirurgie}).
 \begin{figure}[t]
 \begin{center}
 \includegraphics[scale=0.15]{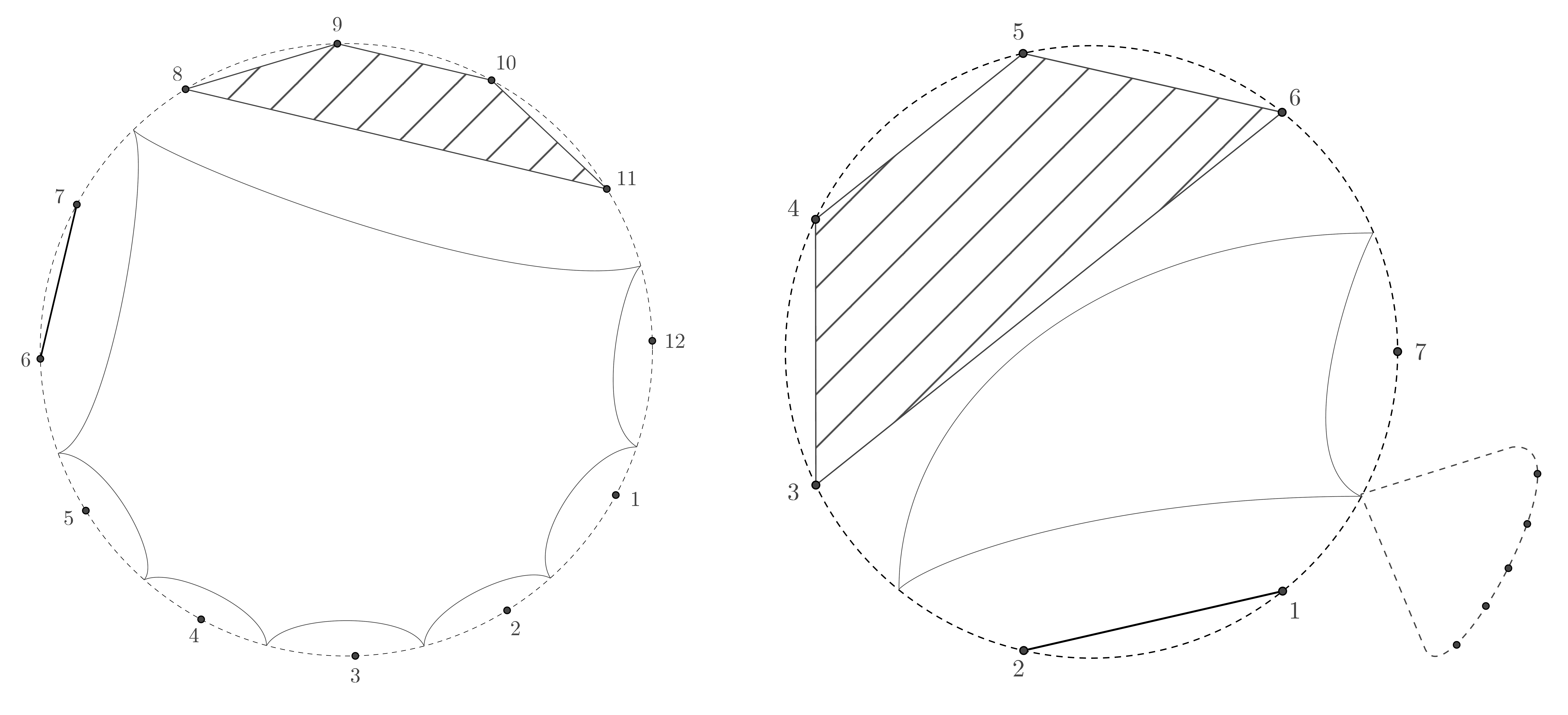}
 \caption{\label{fig:chirurgie}Illustration of the erasing procedure: with a non-crossing partition $P \in \mathfrak{P}_{12}$ such that  $\{j\}$ is a block of $P$ for every $1 \leq j \leq 5$ one associated a non-crossing partition $P' \in \mathfrak{P}_{12-5}$. The curved lines represent the block containing $12$ in $ \mathcal{K}(P)$ (which has size $8$) and $7$ in $ \mathcal{K}(P')$ (which has indeed size $8-5=3$).}
 \end{center}
 \end{figure}

By  Proposition~\ref{prop:lawproduct}, the quantity $\P \big( \forall 1 \leq j \leq \lfloor \epsilon n \rfloor, \{j\} \textrm{ is a block of } \PPP^{(n)}_{K_{n}}  \big)$ is therefore equal to
\begin{multline*}
  \frac{K_{n}! (n-K_{n}-1)!}{n^{n-2}}  \sum_{\substack{P' \in \mathfrak{P}_{n- \lfloor \epsilon n \rfloor} \\  \textrm{with } n- \lfloor \epsilon n \rfloor-K_{n} \textrm{ blocks}}}  \left( \prod_{B \in P'} \frac{|B|^{|B|-2}}{(|B|-1)!} \right) \cdot \left(  \prod_{B \in  \mathcal{K}(P')} \frac{|B|^{|B|-2}}{(|B|-1)!} \right) \\
  \cdot  \frac{(|\hat{B}|-1)!}{|\hat{B}|^{|\hat{B}|+2}} \cdot\frac{(|\hat{B}|+\lfloor \epsilon n \rfloor)^{|\hat{B}|+\lfloor \epsilon n \rfloor-2}}{(|\hat{B}|+\lfloor \epsilon n \rfloor-1)!}.
\end{multline*}
where $\hat{B}$ denotes the block of $\mathcal{K}(P')$ containing $n- \lfloor \epsilon n \rfloor$. Now, since $ \frac{n^{n-2}}{(n-1)!} \sim  \tfrac{e^{n}}{n^{3/2}}$ as $ n \rightarrow \infty$, there exists a constant $C>0$ (whose value will change from line to line) such that $ \tfrac{1}{C}  \tfrac{e^{n}}{n^{3/2}} \leq  \frac{n^{n-2}}{(n-1)!}  \leq C  \tfrac{e^{n}}{n^{3/2}}$ for every $n \geq 1$.  Therefore
\[
 \frac{(|\hat{B}|-1)!}{|\hat{B}|^{|\hat{B}|+2}} \cdot\frac{(|\hat{B}|+\lfloor \epsilon n \rfloor)^{|\hat{B}|+\lfloor \epsilon n \rfloor-2}}{(|\hat{B}|+\lfloor \epsilon n \rfloor-1)!} \leq   C \frac{|\hat{B}|^{3/2}}{e^{|\hat{B}|}} \cdot  \frac{e^{|\hat{B}|+\lfloor \epsilon n \rfloor}}{(|\hat{B}|+\lfloor \epsilon n \rfloor)^{3/2}} \leq C e^{\lfloor \epsilon n \rfloor}.
\]
Hence, using the fact that the probabilities in Proposition~\ref{prop:lawproduct} sum up to one, we get 
\begin{multline*}
  P \big( \forall 1 \leq j \leq \lfloor \epsilon n \rfloor, \{j\} \textrm{ is a block of } \PPP^{(n)}_{K_{n}}  \big) \\
  \leq  C  \, \frac{K_{n}! (n-K_{n}-1)!}{n^{n-2}} \cdot \frac{(n-\lfloor \epsilon n \rfloor)^{(n-\lfloor \epsilon n \rfloor-2)}}{K_{n}! (n-\lfloor \epsilon n \rfloor  -K_{n}-1)!} \cdot e^{\lfloor \epsilon n \rfloor}.
\end{multline*}

Again, it is a simple matter to check that there exists a constant $C>0$ such that we have $ \tfrac{1}{C}  \tfrac{ e^{n}}{n^{3/2-k}} \leq  \tfrac{n^{n-2}}{(n-k-1)!} \leq C \tfrac{ e^{n}}{n^{3/2-k}}$ for every $n \geq 1$ and $1 \leq k \leq  \sqrt{n}$. Thus, since $K_{n}/\sqrt{n} \rightarrow 0$, for $n$ sufficiently large we have
\begin{align*}
\P \big( \forall 1 \leq j \leq \lfloor \epsilon n \rfloor, \{j\} \textrm{ is a block of } \PPP^{(n)}_{K_{n}}  \big) &\leq C \frac{n^{3/2-K_{n}}}{e^{n}} \cdot  \frac{e^{n-\lfloor \epsilon n \rfloor}}{(n-\lfloor \epsilon n \rfloor)^{(3/2-K_{n})}} \cdot e^{\lfloor \epsilon n \rfloor}\\
&= C \left( 1- \frac{\lfloor \epsilon n \rfloor}{n} \right)^{K_{n}-3/2}
\end{align*}
which tends to $0$ since $K_{n} \rightarrow \infty$ as $n \rightarrow \infty$.
\end{proof}

We conclude this Section with a symmetry lemma, which will be useful later.

\begin{lemma}
\label{lem:symmetry}
Fix $1 \leq k \leq n-1$. The two random non-crossing partitions
$\PPP\big(\tb_{1}^{(n)} \tb_{2}^{(n)} \cdots \tb_{n-k-1}^{(n)}\big)$
and $\mathcal{K}\big( \PPP\big(\tb_{1}^{(n)} \tb_{2}^{(n)} \cdots \tb_{k}^{(n)}\big)\big)$ have the same distribution.
\end{lemma}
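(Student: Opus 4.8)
The statement to prove is Lemma~\ref{lem:symmetry}: for fixed $1 \le k \le n-1$, the random non-crossing partitions $\PPP\big(\tb_{1}^{(n)} \cdots \tb_{n-k-1}^{(n)}\big)$ and $\mathcal{K}\big(\PPP\big(\tb_{1}^{(n)} \cdots \tb_{k}^{(n)}\big)\big)$ have the same law. The plan is to compare the two laws directly using the explicit formula of Proposition~\ref{prop:lawproduct}. First I would record that $\PPP\big(\tb_{1}^{(n)} \cdots \tb_{j}^{(n)}\big)$ is supported on non-crossing partitions of $[n]$ with $n-j$ blocks; hence $\PPP\big(\tb_{1}^{(n)} \cdots \tb_{n-k-1}^{(n)}\big)$ is supported on partitions with $k+1$ blocks. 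On the other hand, for a non-crossing partition $P$ with $n-k$ blocks, its Kreweras complement $\mathcal{K}(P)$ has $n - (n-k) = k$ blocks — wait, one must be careful: the standard fact (recalled implicitly via \eqref{eq:K} and the fact that $\mathcal{K}$ corresponds to $\sigma \mapsto (1,\ldots,n)\sigma^{-1}$ on geodesic permutations, together with $|\PPP(\sigma)| + |\PPP((1,\ldots,n)\sigma^{-1})| = n+1$) gives that $|\mathcal K(P)| = n+1-|P|$. So if $P$ has $n-k$ blocks then $\mathcal K(P)$ has $k+1$ blocks, matching the support of the other random partition. Good.

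Next, I would take an arbitrary non-crossing partition $Q$ of $[n]$ with $k+1$ blocks and compute both sides. By Proposition~\ref{prop:lawproduct} applied with the index $n-k-1$ in place of $k$,
\[
\Pr{\PPP\big(\tb_{1}^{(n)} \cdots \tb_{n-k-1}^{(n)}\big)=Q}= \frac{(n-k-1)!\,k!}{n^{n-2}} \left( \prod_{B \in Q} \frac{|B|^{|B|-2}}{(|B|-1)!} \right)\left(  \prod_{B \in  \mathcal{K}(Q)} \frac{|B|^{|B|-2}}{(|B|-1)!} \right).
\]
For the other side, since $\mathcal{K}$ is a bijection on $\mathfrak P_n$, the event $\{\mathcal K(\PPP(\tb_{1}^{(n)}\cdots\tb_{k}^{(n)}))=Q\}$ equals $\{\PPP(\tb_{1}^{(n)}\cdots\tb_{k}^{(n)})=\mathcal K^{-1}(Q)\}$, where $\mathcal K^{-1}(Q)$ has $n-k$ blocks. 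Applying Proposition~\ref{prop:lawproduct} with index $k$ and partition $P=\mathcal K^{-1}(Q)$,
\[
\Pr{\PPP\big(\tb_{1}^{(n)}\cdots\tb_{k}^{(n)}\big)=\mathcal K^{-1}(Q)} = \frac{k!\,(n-k-1)!}{n^{n-2}} \left( \prod_{B \in \mathcal K^{-1}(Q)} \frac{|B|^{|B|-2}}{(|B|-1)!} \right)\left(  \prod_{B \in Q} \frac{|B|^{|B|-2}}{(|B|-1)!} \right),
\]
using $\mathcal K(\mathcal K^{-1}(Q)) = Q$. The prefactors $\frac{(n-k-1)!\,k!}{n^{n-2}}$ agree, and the remaining factor in the first expression is $\big(\prod_{B\in Q}\cdots\big)\big(\prod_{B\in\mathcal K(Q)}\cdots\big)$ while in the second it is $\big(\prod_{B\in\mathcal K^{-1}(Q)}\cdots\big)\big(\prod_{B\in Q}\cdots\big)$.

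So the identity reduces to the claim $\prod_{B \in \mathcal K(Q)} \frac{|B|^{|B|-2}}{(|B|-1)!} = \prod_{B \in \mathcal K^{-1}(Q)} \frac{|B|^{|B|-2}}{(|B|-1)!}$, i.e.\ that this particular product over block sizes is invariant under $\mathcal K$ and $\mathcal K^{-1}$ alike — equivalently, that $\mathcal K$ and $\mathcal K^{-1}$ produce the same multiset of block sizes. The cleanest way to get this is to invoke the well-known \emph{rotational self-duality} of the Kreweras complement: $\mathcal K^{-1}(Q)$ is the image of $\mathcal K(Q)$ under the rotation $j \mapsto j+1 \pmod n$ of $[n]$, which is a symmetry of the set of non-crossing partitions and in particular preserves block sizes. (This follows from $\mathcal K^2 = $ rotation, a classical identity of Kreweras; alternatively from the visual ``invert-colours-and-rotate'' description recalled after \eqref{eq:K}.) I expect this final step — justifying cleanly that $\mathcal K(Q)$ and $\mathcal K^{-1}(Q)$ have the same block-size multiset — to be the only genuine obstacle; everything else is bookkeeping with Proposition~\ref{prop:lawproduct}. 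If one prefers to avoid citing Kreweras' $\mathcal K^2$ identity, one can instead argue at the level of permutations: $\mathcal K^{-1}(\PPP(\sigma)) = \PPP(\sigma^{-1}(1,\ldots,n))$ for geodesic $\sigma$, and $\sigma^{-1}(1,\ldots,n)$ is conjugate to $(1,\ldots,n)\sigma^{-1}$, so their cycle-type multisets — hence the block-size multisets of $\mathcal K^{-1}(\PPP(\sigma))$ and $\mathcal K(\PPP(\sigma))$ — coincide, which is exactly what is needed.
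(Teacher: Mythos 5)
Your proposal is correct and takes essentially the same route as the paper's proof: both compare the two laws via Proposition~\ref{prop:lawproduct} and reduce the matching to the fact that the Kreweras square (equivalently, conjugation by the long cycle) is a rotation of $[n]$, hence preserves block-size multisets. The only difference is a cosmetic change of variable — you work with $Q$ and $\mathcal K^{-1}(Q)$, whereas the paper phrases the block-size identity at the level of $\sigma$ and $(1,\ldots,n)\sigma^{-1}$ — but the key input ($\mathcal K^2$ = rotation) and the use of Proposition~\ref{prop:lawproduct} are identical.
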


\begin{proof}
  First note that $P$ has $n-k$ blocks if and only if $\mathcal{K}(P)$ has $k+1$ blocks,
  so that both random non-crossing partitions have the same support (non-crossing partitions of $n$
  with $k+1$ blocks).
For every geodesic permutation $\sigma \in \mathfrak{S}_{n}$, we have the following equality of multisets:
\begin{multline*}
\{|B| ; B \in \mathcal{P}(\sigma) \} \cup  \{|B| ; B \in \mathcal{K}( \mathcal{P}(\sigma)) \}\\
  =\{|B| ; B \in \mathcal{P}((1,2, \ldots,n) \sigma^{-1} ) \} \cup  \{|B| ; B \in \mathcal{K}( \mathcal{P}((1,2, \ldots,n) \sigma^{-1} )) \}.
\end{multline*}
Indeed, by definition we have $\mathcal{P}( (1,2, \ldots,n) \sigma^{-1} )=\mathcal{K}( \mathcal{P}(\sigma))$,
while the non-crossing partition $\mathcal{K}( \mathcal{P}( (1,2, \ldots,n) \sigma^{-1} ))=\mathcal K(\mathcal K( \mathcal{P}(\sigma)))$ is obtained from $ \mathcal{P}(\sigma)$
by a rotation of angle $\tfrac{2\pi}{n}$, which keeps block sizes invariant.
The lemma then follows from Proposition~\ref{prop:lawproduct}.
\end{proof}

\subsection{Minimal factorizations and trees}
\label{sec:trees}

In order to study properties of large random minimal factorizations, it will be useful to code their associated non-crossing partitions with trees.

\subsubsection{Plane trees.} We use Neveu's formalism \cite{Nev86} to define plane trees: let $\N = \{1, 2, \dots\}$ be the set of all positive integers, set $\N^0 = \{\varnothing\}$ and consider the set of labels $\U = \bigcup_{n \ge 0} \N^n$. For $u = (u_1, \dots, u_n) \in \U$, we denote by $|u| = n$ the length of $u$; if $n \ge 1$, we define $pr(u) = (u_1, \dots, u_{n-1})$ and for $i \ge 1$, we let $ui = (u_1, \dots, u_n, i)$; more generally, for $v = (v_1, \dots, v_m) \in \U$, we let $uv = (u_1, \dots, u_n, v_1, \dots, v_m) \in \U$ be the concatenation of $u$ and $v$. We endow $\U$ with the lexicographical order: given $v,w \in \U$, if $z \in \U$ is their longest common prefix, so that $v = z(v_1, \dots, v_n)$, $w = z(w_1, \dots, w_m)$ with $v_1 \ne w_1$), then $v \prec w$ if $v_1 < w_1$.

A (locally finite) \emph{plane tree} is a nonempty  subset $\tau \subset \U$ such that (i) $\varnothing \in \tau$; (ii)~if $u \in \tau$ with $|u| \ge 1$, then $pr(u) \in \tau$; (iii)  if $u \in \tau$, then there exists an integer $k_u(\tau) \ge 0$ such that $ui \in \tau$ if and only if $1 \le i \le k_u(\tau)$.

We may view each vertex $u$ of a tree $\tau$ as an individual of a population for which $\tau$ is the genealogical tree. For $u,v \in \tau$, we let  $\llbracket u, v \rrbracket$ be the vertices belonging to the shortest path from $u$ to $v$.
Accordingly, we use $\llbracket u, v \llbracket$ for the same set, excluding $v$.
The vertex $\varnothing$ is called the \emph{root} of the tree and for every $u \in \tau$, $k_u(\tau)$ is the number of children of $u$, $|u|$ is its \emph{generation}, $pr(u)$ is its \emph{parent} and more generally, the vertices $u, pr(u), pr \circ pr (u), \dots, pr^{|u|}(u) = \varnothing$ belonging to $ \llbracket \varnothing, u \rrbracket$ are its \emph{ancestors}. 
The descendants of $u$ are all the vertices $v \neq u$ such that $u$ is an ancestor of $v$.
A vertex with no children (i.e. with $k_u(\tau) = 0$)
is called a \emph{leaf}, other vertices are called \emph{internal}.
To simplify, we will sometimes write $k_{u}$ instead of $k_{u}(\tau)$.

We denote by $\mathbb{A}$ the countable set of all finite plane trees and by $\mathbb{A}^{\infty}$   the  set of all (finite or infinite) plane trees
(equipped with the smallest $\sigma$-algebra such that 
the projections consisting in forgetting all vertices after generation $k$
are measurable for every $k \geq 1$).
For every  $n \geq 1$, we let $\mathbb{A}_n$ 
be the set of plane trees with $n$ vertices.

We will also consider bicolored trees:
except specific mention, our bicolored trees have a black root and alternating colors, 
i.e.\ the children of a black vertex are white and vice-versa.
Given a plane tree, such a coloring is of course unique;
if $\tau \in \mathbb{A}$ is a tree, we denote by $\bullet_{\tau}$ the set of all vertices of $\tau$ at even generation (called \emph{black vertices}) and by $\circ_{\tau}$ the set of all vertices of $\tau$ at odd generation (called \emph{white vertices}).

\subsubsection{Coding non-crossing partitions with trees.} 
\label{sec:coding}
Given a non-crossing partition $P$, we associate with $P$ a plane rooted tree $\TTT(P)$ as follows.
The tree $\TTT(P)$ has one black vertex for each block of $P$, and one white vertex for each block of $\mathcal K(P)$.
Recall that blocks of $\mathcal K(P)$ correspond to the white regions between the blocks of $P$
in the graphical representation of $P$.
We then connect vertices corresponding to neighbour regions.
This gives a plane tree which, using the terminology of planar graphs,
could be called the {\em dual tree} of $P$.
This tree is however not rooted and by convention we  root it at the black vertex
corresponding to the block of $P$ containing $n$, at the corner corresponding to $n$
(note that in order to root a planar tree  one needs not only to choose a root vertex but also a root corner).
This construction is best understood on an example, see Figure~\ref{fig:nc}.

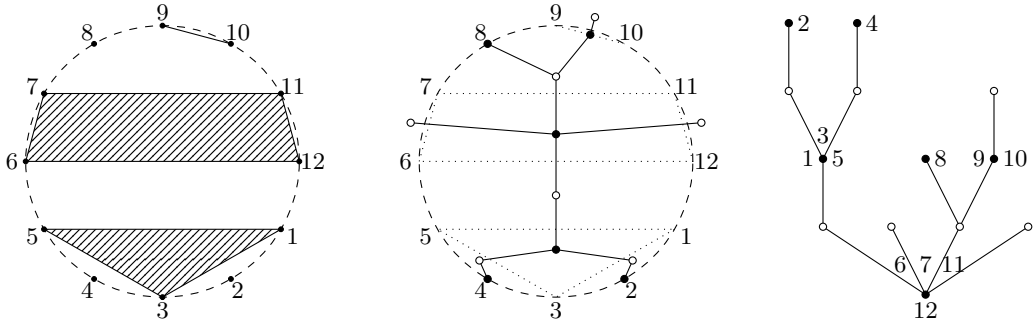
\begin{figure}[t] \centering
\begin{scriptsize}
\begin{tikzpicture}[scale=.9]
\draw[thin, dashed]	(0,0) circle (2);
\foreach \x in {1, 2, ..., 12}
	\coordinate (\x) at (-\x*360/12 : 2);
\foreach \x in {1, 2, ..., 12}
	\draw
	[fill=black]	(\x) circle (1pt)
	(-\x*360/12 : 2*1.1) node {\x}
;
\filldraw[pattern=north east lines]
	(1) -- (3) -- (5) -- cycle
	(6) -- (7) -- (11) -- (12) -- cycle
	(9) -- (10)
;
\end{tikzpicture}
\qquad
	%
\begin{tikzpicture}[scale=.9]
\draw[thin, dashed]	(0,0) circle (2);
\foreach \x in {1, 2, ..., 12}
	\coordinate (\x) at (-\x*360/12 : 2);
\foreach \x in {1, 2, ..., 12}
	\draw	(-\x*360/12 : 2*1.1) node {\x}
;
\draw[dotted]
	(1) -- (3) -- (5) -- cycle
	(6) -- (7) -- (11) -- (12) -- cycle
	(9) -- (10)
;
\coordinate (A) at (0, .4);
\coordinate (B) at (0, -1.3);
\coordinate (C) at (2);
\coordinate (D) at (4);
\coordinate (E) at (8);
\coordinate (F) at ($(9)!0.5!(10)$);
\coordinate (A') at (0, -.5);
\coordinate (B') at (360/48-2*360/12 : 2*.92);
\coordinate (C') at (3*360/48-5*360/12 : 2*.92);
\coordinate (D') at (360/24-7*360/12 : 2*1.1);
\coordinate (E') at (0, 1.25);
\coordinate (F') at (360/24-10*360/12 : 2*1.1);
\coordinate (G') at (360/24-12*360/12 : 2*1.1);
\draw
		(A) -- (A') -- (B)
		(B) -- (B') -- (C)
		(B) -- (C') -- (D)
		(A) -- (D')
		(A) -- (E')
		(E') -- (E)
		(E') -- (F) -- (F')
		(A) -- (G')
;
\foreach \x in {A, B, ..., F}
		\draw[fill=black]	(\x) circle (1.5pt)
;
\foreach \x in {A, B, ..., G}
		\draw[fill=white]	(\x') circle (1.5pt)
;
\end{tikzpicture}
  \qquad 
  \begin{tikzpicture}[scale=.9]
\coordinate (0) at (0,0);
	\coordinate (1) at (-1.5,1);
		\coordinate (11) at (-1.5,2);
			\coordinate (111) at (-2,3);
						\coordinate (1111) at (-2,4);
			\coordinate (112) at (-1,3);
					\coordinate (1121) at (-1,4);
	\coordinate (2) at (-.5,1);
	\coordinate (3) at (.5,1);
		\coordinate (31) at (0,2);
		\coordinate (32) at (1,2);
			\coordinate (321) at (1,3);
	\coordinate (4) at (1.5,1);
\draw
	(0) -- (1)	(0) -- (2)	(0) -- (3)	(0) -- (4)
	(1) -- (11)
	(11) -- (111) -- (1111)	(11) -- (112) -- (1121)
	(3) -- (31)	(3) -- (32) -- (321)
;
\draw[fill=black]
	(0) circle (1.5pt)
	(11) circle (1.5pt)
	(31) circle (1.5pt)
	(32) circle (1.5pt)
	(1111) circle (1.5pt)
	(1121) circle (1.5pt)
;
\draw[fill=white]
	(1) circle (1.5pt)
	(2) circle (1.5pt)
	(3) circle (1.5pt)
	(4) circle (1.5pt)
	(111) circle (1.5pt)
	(112) circle (1.5pt)
	(321) circle (1.5pt)
;

\draw
	(11) node[left] {  $1$}
	(1111) node[right] {$2$}
	(11)++(0,.1) node[above] {$3$}
	(1121) node[right] {$4$}
	(11) node[right] {$5$}
	(0)++(-.15,.2) node[above left] {$6$}
	(0)++(0,.2) node[above] {$7$}
	(31) node[right] {$8$}
	(32) node[left] {$9$}
	(32) node[right] {$10$}
	(0)++(.1,.2) node[above right] {$11$}
	(0) node[below] {$12$}
;
\end{tikzpicture}
\end{scriptsize}
\caption{The partition $\{\{1, 3, 5\}, \{2\}, \{4\}, \{6, 7, 11, 12\}, \{8\}, \{9, 10\}\}$, its dual plane two-type tree and its associated plane rooted tree (with the black corners listed in the contour order). }
\label{fig:nc}
\end{figure}

It is easy to see that the resulting graph is a bicolored tree with black root and alternating colors.
Moreover, the degree of a vertex corresponds to the number of elements in the corresponding block,
either of $P$ or $\mathcal K(P)$. (The degree of the root is its number of children, while degree
of other vertices are their number of children plus one; this subtlety will be of importance later.)

The map $\TTT$ defines a bijection between non-crossing partitions of size $n$
and rooted plane trees with $n+1$ vertices (taken with their canonical bicoloration as above), see, {\em e.g.}, \cite{KM17}. 
In view of future use, let us  explain how to recover the non-crossing partition from the associated plane rooted tree $T$. Define the contour sequence $(u_0, u_1, \dots ,u_{2n})$ of a tree $T$  with $n+1$ vertices as follows: $u_0 = \varnothing$ and for each $i \in \{0, \dots, 2n-1\}$, $u_{i+1}$ is either the first child of $u_i$ which does not appear in the sequence $(u_0, \dots, u_i)$, or the parent of $u_i$ if all its children already appear in this sequence. Now, a corner of a vertex $v \in T$ is a sector around $v$ delimited by two consecutive edges in the contour sequence (with the convention that the first and last edge in the contour sequence are adjacent). We index from $1$ to $n$ the corners of (only) the black vertices of $T$, following the contour sequence and ending at the root corner (see Figure~\ref{fig:nc}). Then the non-crossing partition $P$ whose blocks are the labels adjacent to corners of black vertices satisfies $\mathcal{T}(P)=T$.

Observe that if $P$ is a non-crossing partition of size $n$ with $n-k$ blocks (with $1 \leq k \leq n-1$), then $ \mathcal{T}(P)$ is a tree with $n+1$ vertices,  $n-k$ black vertices and $k+1$ white vertices.

We now state a simple lemma, which allows to approximately reconstruct  the non-crossing partition from its associated tree. Denote by $\abs{\bullet_{T}}$  (resp.~$\abs{\circ_{T}}$)  the number of black (resp.~white) vertices of a plane rooted tree $T$.

\begin{lemma}
  \label{lem:OnBlockLabels}
Let $T$ be a rooted plane tree, taken with its canonical bicoloration.  Let $ (v^{\bullet}_{i})_{0 \leq i \leq \abs{\bullet_{T}}-1 }$ be the list of the black vertices of $T$ in lexicographical order.
  Fix $ 0 < i \leq \abs{\bullet_{T}}-1$. Let $x^{\bullet}_{i}$ (resp.~$y^{\bullet}_{i}$) be the index of the first (resp.~last) black corner of $v^{\bullet}_i$ visited by the contour sequence. Denote by $n_{i}$ the total number of  descendants of $v^{\bullet}_{i}$.
 \begin{enumerate}
 \item[(i)] We have   $y^{\bullet}_{i}=x^{\bullet}_{i}+n_{i}$.
  \item[(ii)] Let $\ell_{i}^{\bullet}$ denote the number of black corners  branching on the right of  $\llbracket \emptyset, v^{\bullet}_{i} \llbracket$ (that is corners  adjacent to black vertices of $\llbracket \emptyset, v^{\bullet}_{i} \llbracket$  which are visited after $v^{\bullet}_{i}$ in the contour sequence). Then
   \[ x^{\bullet}_{i}= i+\sum_{j=0}^{i-1} k_{j} - \ell_{i}^{\bullet},\]
   where $k_{j}$ is the number of children of $v^{\bullet}_{j}$.
  \item[(iii)] We have $ i \leq x^{\bullet}_{i} \leq i+ \abs{\circ_{T}}$ and $ i +n^{\bullet}_{i}\leq y^{\bullet}_{i} \leq i+n^{\bullet}_{i}+ 2 \abs{\circ_{T}}$, where  $n^{\bullet}_{i}$ is the total number of black descendants of $v^{\bullet}_{i}$.
 \end{enumerate}
  \end{lemma}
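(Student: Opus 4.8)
The plan is to establish (i) and (ii) as combinatorial facts about the contour exploration of $T$, and then to read off (iii) from them. The one elementary ingredient used throughout is a \emph{corner count}: in any bicoloured plane tree with black root, the number of black corners is $\abs{\circ_T}+\abs{\bullet_T}-1$, since the root carries $k_\varnothing$ corners, every other black vertex $w$ carries $k_w+1$ corners, and $\sum_{w\in\bullet_T}k_w=\abs{\circ_T}$ because each white vertex has exactly one (black) parent. Applied to $T$ itself this just re-proves that the black corners are indexed by $\{1,\dots,n\}$; applied to subtrees it is what drives the argument.

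For (i), I would first note that since $v^\bullet_i$ is black all of its corners are black, so $x^\bullet_i$ and $y^\bullet_i$ are simply the indices of the first and last corner of $v^\bullet_i$. Between these two instants the contour performs a complete traversal of the subtree $T_i$ rooted at $v^\bullet_i$, hence the black corners with index in $\{x^\bullet_i,\dots,y^\bullet_i\}$ are exactly the corners of $v^\bullet_i$ together with the black corners of its strict descendants. The corner count applied to $T_i$ (a standalone tree with black root) gives $n_i$ such corners inside $T_i$, but inside $T$ the vertex $v^\bullet_i$ has one extra corner (coming from its parent edge), so there are $n_i+1$ consecutive black corners in the window; this is $y^\bullet_i=x^\bullet_i+n_i$.

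For (ii), I would interpret $x^\bullet_i$ as the number of black corners visited up to and including the first visit of $v^\bullet_i$. Since the contour discovers vertices in lexicographic order, at that moment the black vertices split into three groups: the proper black ancestors of $v^\bullet_i$ (the black vertices of $\llbracket\varnothing,v^\bullet_i\llbracket$), each only partially explored; the $v^\bullet_j$ with $j<i$ that are not ancestors, each fully explored and thus contributing all of their $k_j+1$ corners; and the $v^\bullet_j$ with $j\ge i$, none of whose corners has been seen yet, apart from the first corner of $v^\bullet_i$ itself. A proper black ancestor $w$ contributes all of its corners except those visited after $v^\bullet_i$, and the latter are precisely the ones counted by $\ell^\bullet_i$. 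Summing the three contributions, using that $\{v^\bullet_0,\dots,v^\bullet_{i-1}\}$ is the disjoint union of the proper black ancestors and the non-ancestor $v^\bullet_j$'s with $j<i$, one gets $x^\bullet_i=i+\sum_{j=0}^{i-1}k_j-\ell^\bullet_i$. The place to be careful is the root: it has $k_\varnothing$ corners rather than $k_\varnothing+1$, but this discrepancy is absorbed because the missing corner is the root corner, which by convention is visited last and is therefore always among those counted in $\ell^\bullet_i$.

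Finally, (iii) is immediate: from (ii) and $\ell^\bullet_i\ge 0$ one gets $x^\bullet_i\le i+\sum_{j=0}^{i-1}k_j\le i+\abs{\circ_T}$ (the children of $v^\bullet_0,\dots,v^\bullet_{i-1}$ are distinct white vertices), while $\ell^\bullet_i\le\sum_{j=0}^{i-1}k_j$ (each proper black ancestor contributes to $\ell^\bullet_i$ at most its number of children) yields $x^\bullet_i\ge i$; combining with (i) in the form $y^\bullet_i=x^\bullet_i+n^\bullet_i+n^\circ_i$, where $n^\circ_i\le\abs{\circ_T}$ is the number of white strict descendants of $v^\bullet_i$, gives $i+n^\bullet_i\le y^\bullet_i\le i+n^\bullet_i+2\abs{\circ_T}$. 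I expect the bookkeeping in (ii) — the correct three-way partition of the black vertices and the single off-by-one at the root — to be the only real difficulty; parts (i) and (iii) are then short.
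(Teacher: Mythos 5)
Your proof is correct, and for parts (ii) and (iii) your approach coincides with the paper's: count the black corners adjacent to $v^\bullet_0,\dots,v^\bullet_{i-1}$ that the contour has seen before reaching $v^\bullet_i$, then bound $\ell^\bullet_i$ between $0$ and $\sum_{j<i}k_j$ and bound $n_i-n^\bullet_i$ by $\abs{\circ_T}$. Where you diverge is part (i): the paper simply states it as a fact "checked by induction on the size of a tree", whereas you give a direct argument by applying the black-corner count $\abs{\circ_{T_i}}+\abs{\bullet_{T_i}}-1=n_i$ to the subtree $T_i$ hanging at $v^\bullet_i$ and then accounting for the one extra corner $v^\bullet_i$ acquires from its parent edge when $T_i$ is embedded in $T$. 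That is a clean, non-inductive derivation and arguably more informative than the paper's one-liner. One small caveat on your write-up of (ii): your side remark that "the missing corner is the root corner, which by convention is visited last and is therefore always among those counted in $\ell^\bullet_i$" is phrased confusingly — the root corner is a genuine corner of $\emptyset$ (one of its $k_\emptyset$ real corners), not a missing one, so there is no phantom to absorb. The clean way to say it, as the paper does, is simply that the total number of black corners among $v^\bullet_0,\dots,v^\bullet_{i-1}$ is $\sum_{j=0}^{i-1}(k_j+1)-1$ because the root has $k_\emptyset$ corners rather than $k_\emptyset+1$; once that is substituted into your three-way decomposition the stated formula falls out without any cancellation trick. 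Your final formula and the rest of the argument are nonetheless correct.
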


\begin{proof} The first assertion is a simple combinatorial fact that can be checked by induction on the size of a tree.
  For (ii),  observe that $\sum_{j=0}^{i-1}(k_{j}+1)-1$ is the total number of black corners adjacent to $v^{\bullet}_{0}, v^{\bullet}_{1}, \ldots, v^{\bullet}_{i-1}$ (the root $v^{\bullet}_0$ has $k_0$ corners, while the vertex $v^{\bullet}_j$ for $j>1$ has $k_j+1$ corners).
  When visiting $v^{\bullet}_{i}$ for the first time, among these black corners, the contour sequence has not yet visited those branching on the right of  $\llbracket \emptyset, v^{\bullet}_{i} \llbracket$. For (iii), if $u \in  \llbracket \emptyset, v^{\bullet}_{i} \llbracket$ is a black vertex,  the number of black corners  branching on the right of $\llbracket \emptyset, v^{\bullet}_{i} \llbracket$ and adjacent to $u$ is less than or equal to $k_{u}$. Therefore $\sum_{j=0}^{i-1} k_{j} \geq  \ell_{i}^{\bullet}$, which shows that $i \leq x^{\bullet}_{i}$. The inequality $x^{\bullet}_{i} \leq i+ \abs{\circ_{T}}$ comes from the fact that that $\sum_{j=0}^{i-1} k_{j}$  is the total number of (white) children of the  (black) vertices $v^{\bullet}_0$,\dots,$v^{\bullet}_{i-1}$, which is therefore less than or equal to $ \abs{\circ_{T}}$. The inequalities concerning $y^{\bullet}_{i}$ readily follow from (i) and the fact that $ n^{\bullet}_{i} \leq n_{i} \leq n_{i}^{\bullet}+\abs{\circ_{T}}$. This completes the proof.
    \end{proof}

\subsubsection{Random BGW trees.} Let $\mu$ be a  probability measure on $\Z_+$ (called the \emph{offspring distribution}) such that $\mu(0) > 0$, $\mu(0)+\mu(1)<1$ (to avoid trivial cases). The {\BGW} measure with offspring distribution $\mu$ is a probability measure $\mathrm{BGW}^\mu$ on $\mathbb{A}^{\infty}$ such that  for every $\tau \in \mathbb{A}$,
\begin{equation}\label{eq:def_GW}
\mathrm{BGW}^\mu(\tau) = \prod_{u \in \tau} \mu(k_u),
\end{equation}
see e.g.~\cite[Prop.~1.4]{LG05}.

It is well-known that $\mathrm{BGW}^\mu(\mathbb{A})=1$ if and only if $\sum_{i \geq 0} i \mu (i) \leq 1$.
Now, given a subset $\mathbb{B}$ of $\mathbb A$ of positive probability with respect to  $\mathrm{BGW}^\mu$,
 the conditional probability measure  $\mathrm{BGW}^\mu$  given $\mathbb{B}$ is the probability measure $\mathrm{BGW}^\mu_{\mathbb B}$ 
defined by
\[ \mathrm{BGW}^\mu_{\mathbb B}(\tau)= \frac{\mathrm{BGW}^\mu( \tau)}{\mathrm{BGW}^\mu(\mathbb{B})}, 
\qquad \tau \in \mathbb{B}.\]

This framework is readily adapted to  multi-type BGW trees.
In this case, we consider trees where each vertex has a type (belonging to some finite set).
The probability of a finite tree $\tau$ is still given by product over its vertices as in \eqref{eq:def_GW},
but the factors depend on the type of the vertex and on its number of children of each type.
In the sequel we focus on  a very special case of such multi-type BGW trees, which we call  {\em two-type alternating BGW trees}. In this special case, given two offspring distributions $\mub$ and $\muc$,
the $\mathrm{BGW}$ measure $\BGW^{\mub,\muc}$ is a probability measure on $\mathbb{A} ^{\infty}$ satisfying
\begin{equation}
\label{eq:def_GW2}
\BGW^{\mub,\muc}(\tau) = \left(  \prod_{u \in \bullet_{\tau}} \mub(k_u) \right) \cdot \left(  \prod_{u \in \circ_{\tau}} \muc(k_u) \right), \qquad \tau \in \mathbb{A}.
\end{equation}
In words, black vertices (i.e.\ vertices at even generation) have offspring distribution $\mub$,
while white vertices (i.e.\ vertices at odd generation) have offspring distribution $\muc$.
As before, for $\mathbb B \subseteq \mathbb A$ with $\BGW^{\mub,\muc}(\mathbb B)>0$, we can define
the conditional probability  measure $\BGW^{\mub,\muc}$ given  $\mathbb{B}$.

We will need a slight variation of the previous definition:
if $\tmub$, $\mub$ and $\muc$ are three offspring distributions,
we will consider a probability measure ${\BGW}^{\tmub,\mub,\muc}$ on $\mathbb{A} ^{\infty}$ 
such that
\begin{equation}
  \label{eq:BGWtilde}
  {\BGW}^{\tmub,\mub,\muc}(\tau)=\tmub(k_\emptyset) \, \left(  \prod_{u \in \bullet_{\tau},u\neq \emptyset} \mub(k_u) \right) \cdot \left(  \prod_{u \in \circ_{\tau}} \muc(k_u) \right), \qquad \tau \in \mathbb{A}.
\end{equation}
In words, the only modification with the previous case is that the
root has offspring distribution $\tmub$ and not $\mub$ as the other black vertices.

\subsubsection{Representation of $\PKn$ as a $\BGW$ tree.}
As before, let $(\tb_{1}^{(n)}, \ldots, \tb_{n-1}^{(n)})$ denote a random uniform minimal factorization of length $n$
and we are interested in the behaviour of the non-crossing partition
$\PPP(\tb_{1}^{(n)} \tb_{2}^{(n)} \cdots \tb_{K_{n}}^{(n)})$ as $n \rightarrow \infty$.
By applying the bijection $\mathcal T$, we get a random tree.
A key idea in this work is to identify this random tree 
with a conditioned two-type alternating BGW tree.

\begin{proposition}\label{prop:bitype}
Fix $n \geq 1$ and $1 \leq K_{n} \leq n-1$ . Let $\mub$ and $\muc$ be the two following offspring distributions:
\[\mub(i)= \ab \cdot  \bb^{i} \cdot \frac{(i+1)^{i-1}}{i!}  \quad (i \geq 0), \qquad \muc(i)= \ac \cdot  (\bc)^{i} \cdot \frac{(i+1)^{i-1}}{i!}  \quad (i \geq 0),\]
where $\ab,\bb,\ac,\bc$ are positive parameters (which may depend on $n$)
such that $\mub$ and $\muc$ are probability distributions.
Define a third probability distribution $\widetilde{\mu}_{\bullet}$
 by $\widetilde{\mu}_{\bullet}(i)=\mub(i-1)$ for $i \geq 1$
 and let $ \tTs$ be a random tree distributed as ${\BGW}^{\tmub,\mub,\muc}$, 
 where ${\BGW}^{\tmub,\mub,\muc}$ is defined in \eqref{eq:BGWtilde}.

Then $ \mathcal{T}( \PPP( \tb_{1}^{(n)} \tb_{2}^{(n)} \cdots \tb_{K_{n}}^{(n)}))$ has the same distribution as $ \tilde{\mathscr{T}}$, conditioned on having $n-K_{n}$ black vertices and $K_{n}+1$ white vertices. 
\end{proposition}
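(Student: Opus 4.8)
The plan is to compute, for an arbitrary fixed rooted plane tree $T$ with $n+1$ vertices having $n-K_n$ black vertices and $K_n+1$ white vertices, the probability $\BGW^{\tmub,\mub,\muc}(\tTs = T)$ conditioned on this event, and to match it term by term with the probability $\Pr{\PPP(\tb_1^{(n)}\cdots\tb_{K_n}^{(n)}) = P}$ from Proposition~\ref{prop:lawproduct}, where $P = \mathcal T^{-1}(T)$ is the non-crossing partition with $\mathcal T(P) = T$. Since $\mathcal T$ is a bijection between non-crossing partitions of size $n$ with $n-K_n$ blocks and rooted plane trees with $n+1$ vertices, $n-K_n$ black and $K_n+1$ white, it suffices to check that the two probability measures agree on every such tree.

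First I would write out $\BGW^{\tmub,\mub,\muc}(\tTs = T)$ using \eqref{eq:BGWtilde}. Recalling from Section~\ref{sec:coding} that the degree of a non-root vertex equals (number of children)$\,+1$ while the degree of the root equals its number of children, and that degrees of black vertices are exactly the block sizes of $P$ and degrees of white vertices are exactly the block sizes of $\mathcal K(P)$, the factor $\tmub(k_\emptyset) = \mub(k_\emptyset - 1)$ combines with the factors $\mub(k_u)$ over non-root black vertices $u$ into a single product $\prod_{B \in P}\mub(|B|-1)$, and the white factors give $\prod_{B \in \mathcal K(P)}\muc(|B|-1)$. Plugging in the explicit formulas $\mub(i) = \ab\,\bb^i\,(i+1)^{i-1}/i!$ and $\muc(i) = \ac\,\bc^i\,(i+1)^{i-1}/i!$ and substituting $i = |B|-1$ turns each factor into $\ab\,\bb^{|B|-1}\,|B|^{|B|-2}/(|B|-1)!$ (respectively with $\ac,\bc$). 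So
\[
\BGW^{\tmub,\mub,\muc}(\tTs = T) = \ab^{\,n-K_n}\,\ac^{\,K_n+1}\,\bb^{\sum_{B\in P}(|B|-1)}\,\bc^{\sum_{B\in\mathcal K(P)}(|B|-1)}\cdot \Bigl(\prod_{B\in P}\tfrac{|B|^{|B|-2}}{(|B|-1)!}\Bigr)\Bigl(\prod_{B\in\mathcal K(P)}\tfrac{|B|^{|B|-2}}{(|B|-1)!}\Bigr).
\]
The key simplification is that the exponents of $\bb$ and $\bc$ are constant over the conditioning event: since $P$ has $n-K_n$ blocks, $\sum_{B\in P}(|B|-1) = n - (n-K_n) = K_n$, and since $\mathcal K(P)$ has $K_n+1$ blocks, $\sum_{B\in\mathcal K(P)}(|B|-1) = n - (K_n+1)$. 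Hence the prefactor $\ab^{n-K_n}\ac^{K_n+1}\bb^{K_n}\bc^{n-K_n-1}$ does not depend on $T$, only on $n$ and $K_n$. Consequently, conditioning on the event (black count $= n-K_n$, white count $= K_n+1$) simply divides by the sum of these equal-prefactor terms, and $\BGW^{\tmub,\mub,\muc}(\tTs = T \mid \cdot)$ is proportional to $\bigl(\prod_{B\in P}\tfrac{|B|^{|B|-2}}{(|B|-1)!}\bigr)\bigl(\prod_{B\in\mathcal K(P)}\tfrac{|B|^{|B|-2}}{(|B|-1)!}\bigr)$, with a normalizing constant independent of $T$.

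To finish, I would observe that Proposition~\ref{prop:lawproduct} states exactly that $\Pr{\PPP(\tb_1^{(n)}\cdots\tb_{K_n}^{(n)}) = P}$ equals $\tfrac{K_n!(n-K_n-1)!}{n^{n-2}}$ times the same product over $P$ and $\mathcal K(P)$, i.e.\ it too is proportional to that product with a constant independent of $P$ (equivalently of $T = \mathcal T(P)$). Two probability measures on the same finite set that are proportional to the same function must coincide, so $\mathcal T(\PPP(\tb_1^{(n)}\cdots\tb_{K_n}^{(n)}))$ and $\tTs$ conditioned on having $n-K_n$ black and $K_n+1$ white vertices have the same law. Along the way one should also check that the conditioning event has positive $\BGW^{\tmub,\mub,\muc}$-probability (which is clear since the trees $\mathcal T(P)$ for $P$ a non-crossing partition with $n-K_n$ blocks are finite and receive positive weight). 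The main — indeed essentially the only — point requiring care is the bookkeeping of vertex degrees versus numbers of children, namely the fact that the root contributes $\mub(k_\emptyset - 1) = \tmub(k_\emptyset)$ so that, together with the shift built into $\tmub$, every black block $B$ of $P$ of any position uniformly contributes the factor indexed by $i = |B|-1$; this is precisely why the modified root offspring distribution $\tmub$ is introduced, and getting this alignment right is what makes the two formulas match on the nose.
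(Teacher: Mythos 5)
Your proposal is correct and follows essentially the same approach as the paper's own proof: both translate the tree probability into a product over blocks of $P$ and $\mathcal K(P)$ via the degree–block-size correspondence (with the crucial off-by-one bookkeeping at the root that motivates the modified distribution $\widetilde\mu_\bullet$), invoke Proposition~\ref{prop:lawproduct}, and use that the $\ab,\bb,\ac,\bc$ prefactor is constant on the conditioning event. The only cosmetic difference is that the paper concludes by summing the identity $\Pr{\mathcal T(\PPP(\mathscr S_{K_n}))=\tau}=C_{n,K_n}^{-1}\,\P(\tilde{\mathscr T}=\tau)$ over $\tau$ to identify $C_{n,K_n}$ as the conditioning probability, whereas you phrase the final step as two probability measures proportional to the same function coinciding; these are the same argument.
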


\begin{proof}
To simplify notation, set $\mathscr{S}_{K_n}= \tb_{1}^{(n)} \tb_{2}^{(n)} \cdots \tb_{K_{n}}^{(n)}$.
First note that $\mathscr{S}_{K_n}$ has $n-K_{n}$ cycles in its cycle decomposition, so that $ \PPP( \mathscr{S}_{K_n})$ is a non-crossing partition of size $n$ with $n-K_{n}$ blocks, implying that
$ \mathcal{T}( \PPP( \mathscr{S}_{K_n}))$ has $n-K_{n}$ black vertices and $K_{n}+1$ white vertices.
Now, let $\tau$ be a tree with $n-K_{n}$ black vertices and $K_{n}+1$ white vertices,
and let $P$ be the non-crossing partition such that $\tau= \mathcal{T}(P)$.
From Proposition~\ref{prop:lawproduct}, we have
\begin{multline*}
\Pr{\mathcal{T}( \PPP(\mathscr{S}_{K_n}))=\tau}= \Pr{ \PPP(\mathscr{S}_{K_n})= P} \\
=  \frac{K_{n}! (n-K_{n}-1)!}{n^{n-2}}  \cdot  \left( \prod_{B \in P} \frac{|B|^{|B|-2}}{(|B|-1)!} \right) \cdot \left(  \prod_{B \in  \mathcal{K}(P)} \frac{|B|^{|B|-2}}{(|B|-1)!} \right).
\end{multline*}
Using the identities $|P| =n-K_n$,
\[ \ |\mathcal K(P)|=K_n+1, \quad  \sum_{B \in P} (|B|-1)=n-(n-K_{n})=K_{n},  \quad 
\sum_{B \in  \mathcal{K}(P)} (|B|-1)=n-K_{n}-1,\] 
we can rewrite the above probability as 
\begin{align*}
  \hspace{-1mm}\Pr{\mathcal{T}( \PPP(\mathscr{S}_{K_n}))=\tau}
&= \frac{1}{C_{n,K_n}} \!\cdot\! \left( \prod_{B \in P} \ab \bb^{|B|-1}\frac{|B|^{|B|-2}}{(|B|-1)!} \right)\! \cdot\! \left(  \prod_{B \in  \mathcal{K}(P)} \ac \bc^{|B|-1} \frac{|B|^{|B|-2}}{(|B|-1)!} \right),\\
\text{with }
C_{n,K_n} & =\frac{n^{n-2} \cdot \ab^{n-K_{n}} \bb^{K_{n}} \ac^{K_{n}+1} \bc^{n-K_{n}-1}}{K_{n}! (n-K_{n}-1)!}.
\end{align*}
 Note that black vertices of $\tau$ with $i$ children are in bijection with blocks of $P$ of size $i+1$ (except the root, which is in bijection with a block of size $i$ if it has $i$ children) and white vertices of $\tau$ with $i$ children are in bijection with blocks of $ \mathcal{K}(P)$ of size $i+1$. Therefore, the above product correspond to the right-hand side
 of \eqref{eq:BGWtilde} for the measures $\tmub$, $\mub$ and $\muc$ defined in the proposition,
 and we have
 \[\Pr{\mathcal{T}( \PPP(\mathscr{S}_{K_n}))=\tau}=\frac{1}{C_{n,K_n}} \cdot \P( \tilde{ \mathscr{T} }=\tau).\]
 We sum this equality over all trees $\tau$  that have $n-K_{n}$ black vertices and $K_{n}+1$ white vertices.
 The left-hand side sums to $1$ and we get that $C_{n,K_n}$
 is the probability that $\tilde{ \mathscr{T} }$
 has $n-K_n$ black vertices and $K_n+1$ black vertices.
 The desired result follows.
\end{proof}

\section{Limit theorems for bi-conditioned bi-type BGW trees}
\label{sec:limitheorems}

\begin{table}[htbp]\caption{Table of the main notation and symbols appearing in Section~\ref{sec:limitheorems}.}
\centering
\begin{tabular}{c c p{0.75 \linewidth} }
\toprule
$X, \Xbr,\Xexc$ && The Lévy process characterized by \eqref{eq:Levyc},  its associated  bridge process going from $0$ to $0$ in a unit time and its associated excursion process.\\
$\mubn$, $\mucn$&& The offspring distributions defined by \eqref{eq:mubn_mucn}.\\
$\mathscr{T}_{n}$ & & An alternating two-type BGW tree, with offspring distributions $\mubn$ and $\mucn$ conditioned on having $n-K_{n}$ black vertices and $K_{n}+1$ white vertices. 
 \\
 $\mathscr{T}^{n}$ & & An alternating two-type BGW tree, with offspring distributions $\mubn$, $\mucn$.\\ 
 $\mbn,(\sbn)^{2}$ & & Resp.~the mean and variance of $\mubn$.\\
 $ \mcn,(\scn)^{2}$ & & Resp.~the mean and variance of $\mucn$.\\ 
 $\Scn_{k}$ , $\Sbn_{k}$ && Resp.~the sum of $k$ i.i.d.\ random variables distributed according to $\mucn,\mubn$.\\
\bottomrule
\end{tabular}
\label{tab:seclim}
\end{table}

In this Section, we establish functional limit theorems for two-type alternating BGW trees conditioned on having a fixed number of vertices of both types.
These limit theorems are a key step in the proof of Theorem~\ref{thm:cvlam} (i)$_{c>0}$.
Let us mention that even though we focus on  specific offspring distributions, 
we believe that our methods are robust and may allow to tackle the general study of such BGW trees under this new conditioning.

The section is organized as follows.
In Section~\ref{ssec:Levy} we start by defining the continuous-time processes
that appear in the limiting objects.
Then, in Section~\ref{ss:coding}, we discuss encodings of general two-type BGW trees.
We then establish functional scaling limit results for encodings 
of particular two-type BGW trees Section~\ref{ss:inv}.
This is based on technical uniform local limit lemmas,
that are proved in Section~\ref{ss:llt}.
In the mid-time, in Section~\ref{ss:descendants},
we present some useful results, as a preparation for the next section.

\subsection{Lévy processes, bridge and excursions}
\label{ssec:Levy}
We first start by presenting some material about Lévy processes, bridge and excursions that will be involved in the limiting processes (this is mostly standard, see e.g.~\cite{Ber96,Kyp06} for details).
These processes will also be used to construct the lamination $\mathbf{L}_{c}$. We shall work with the space ${\D}([0, 1], \R)$ of real-valued càdlàg functions on $[0,1]$ equipped with the Skorokhod $J_{1}$ topology (see Chapter VI in \cite{JS03} for background).

A \emph{Lévy process}  $(Z_t)_{t \ge 0}$ is a random càdlàg process with independent, stationary increments. For $t \geq 0$, the characteristic function of $Z_{t}$ has the form
\[\Es{e^{\textrm{i}\theta Z_{t}}} = \exp\left[ t \left( a\textrm{i} \theta -\tfrac{1}{2}\sigma^2 \theta^2 + \int_{\R}
(e^{\textrm{i}\theta x} -1 +\textrm{i}\theta x \mathbbm{1}_{[|x| <1]}) \Pi({\d}x)\right) \right], \qquad \theta \in \R,\]
where $a \in \R$ and $\sigma^2$ is the variance of the Brownian part.
The sigma-finite measure $\Pi({\d}x)$ on $\R^{*}$ satisfies $\int_{\R} (1 \wedge x^{2}) \Pi({\d}x) < \infty$ and is called the {Lévy measure} of $Z$. Roughly speaking, it encodes the rate of arrival of jumps. It is well known (see \cite[Theorem 3.6]{Kyp06}) that if $\Pi$ is supported on $\R_{+}$, then $\Es{e^{-\lambda X_{t}}}<\infty$ for every $t, \lambda \geq 0$.

\paragraph*{A particular Lévy process.} In the following, we are interested in a particular Lévy process $(X_t)$,
whose Laplace transform at time $t$ is given by
\begin{equation}
  \Es{e^{-\lambda X_t}}=e ^{-t \Phi(\lambda)}=e^{t \, c^{2}   \left( 1- \sqrt{1+ \frac{2 \lambda}{c} } \right) +t\, \lambda c }, \qquad \lambda \geq 0.
    \label{eq:Levyc}
\end{equation}
The fact that this indeed defines a Lévy process is not clear a priori, but this readily follows from  \cite[Sec.1.2.5]{Kyp06}
(see the first item below).
From the latter, we also infer the following properties:
\begin{itemize}
  \item The process $Y^c_t \coloneqq X_t +ct$ coincides with the so-called \emph{inverse Gaussian process},
    defined by the following distributional equality 
    \[(Y^c_t)_{t \ge 0} \mathop{=}^{(d)} \left(  \inf \{u>0 : W_{u}+\sqrt{c} \cdot u > c^{3/2} t \}  \right) _{t \geq 0},\]
    where $(W_{t})_{t \geq 0}$ is a standard Brownian motion.
    (The comparison of \eqref{eq:Levyc} with the formula for the characteristic exponent
    of inverse Gaussian processes given in \cite[Sec.1.2.5]{Kyp06} shows the equality in distribution for any given $t$;
    since both are Lévy processes, this is enough to conclude that the processes
    $(X_t +ct)_{t \ge 0}$ and $\left(  \inf \{u>0 : W_{u}+\sqrt{c} \cdot u > c^{3/2} t \}  \right) _{t \geq 0}$
    indeed coincide in distribution.)
    This implies that $(X_t +ct)_{t \geq 0}$ is almost surely increasing, i.e.\ is a {\em subordinator}.
    The process $(X_t)$ has therefore paths of bounded variation.

We also note for future reference that the subordinator $Y^{c}_t$ has zero drift
(see \cite[Eq.~(2.21)]{Kyp06} for the definition of the drift of a process with bounded variation,
and \cite[Exercise 2.11]{Kyp06} for a means to compute this drift),
so that $(X_t)$ has negative drift $-c$.
  \item The Lévy measure $\Pi$ and the density $d_t$ of $X_t$ are given by (see \cite[Exercice 1.6]{Kyp06}):
  \begin{equation}
  \label{eq:densiteXt}
  \Pi({\d}x)= \mathbbm{1}_{x>0} \cdot \frac{c^{3/2}}{\sqrt{2 \pi x^{3}}} \cdot e^{- \frac{c }{2} \cdot x} \ \mathrm{d}x, \qquad   d_{t}(x) =  \sqrt{ \frac{c^3 t^2}{2 \pi (x+ct)^{3}}} \cdot e^{- \frac{c x^{2}}{2(x+ct)}}  \mathbbm{1}_{x \geq -ct}.
  \end{equation}

   There is no Gaussian component. Note that $\Pi({\d}x)$ is supported on $\R_+$, so that $(X_t)$ is {\em spectrally positive} (i.e.\ makes only positive jumps).
    On the other hand $\Pi(\R_+)=\infty$, so that $(X_t)$ makes a.s. infinitely many jumps in every non-empty interval.    
    \item  Since $(X_{t})$ has paths of bounded variation, by \cite[Section 8.1, \emph{Regularity}]{Kyp06} (applied with the spectrally negative process $-X$),  $0$ is regular for $(-\infty,0)$ but irregular for $(0,\infty)$, meaning that
\begin{equation}
\label{eq:regularity}\Pr{\inf \{t \geq 0 : X_{t}<0 \}=0}=1, \qquad \Pr{\inf \{t \geq 0 : X_{t}>0 \}=0}=0.
\end{equation}
\end{itemize}

In the following, $(X_t)$ will always refer to this specific Lévy process
(we drop the dependence in $c$ to simplify notation).
A simulation of $(X_t)$ when $c=5$ is given in Figure~\ref{fig:X} (left image).
\begin{figure}[t]
\begin{center}
 \includegraphics[height=2.6cm]{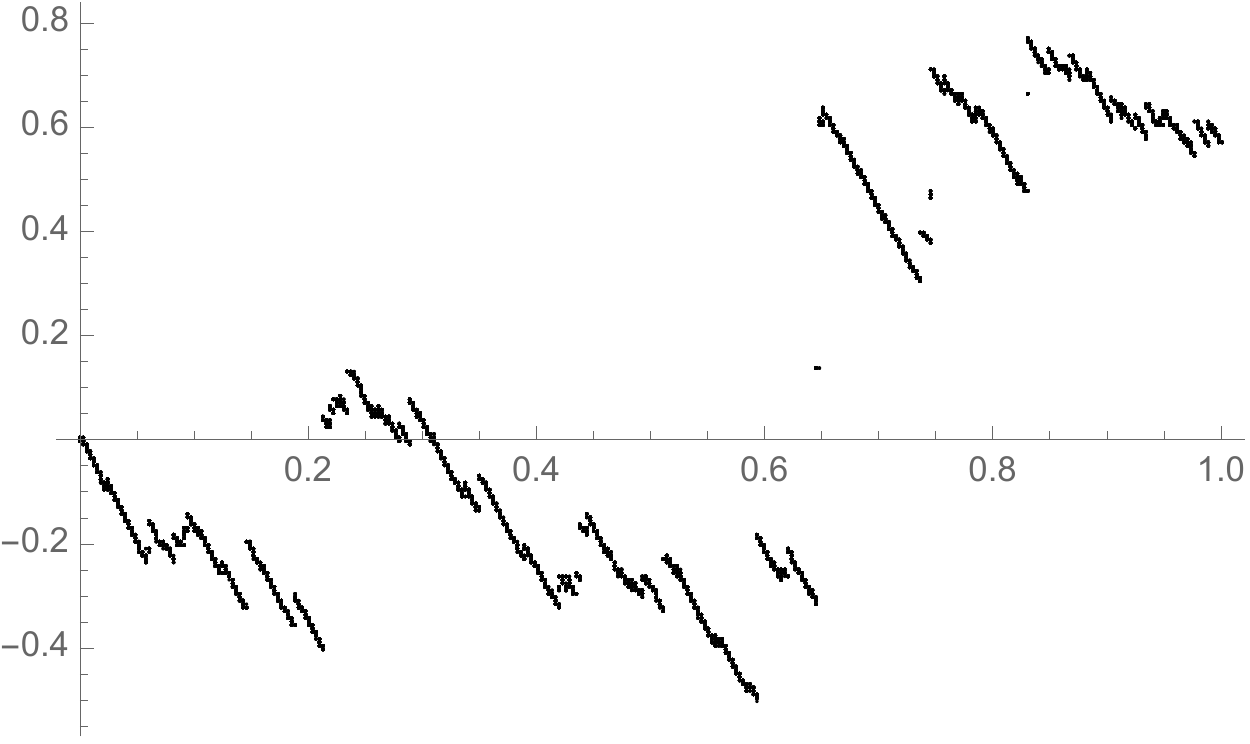}\qquad 
 \includegraphics[height=2.6cm]{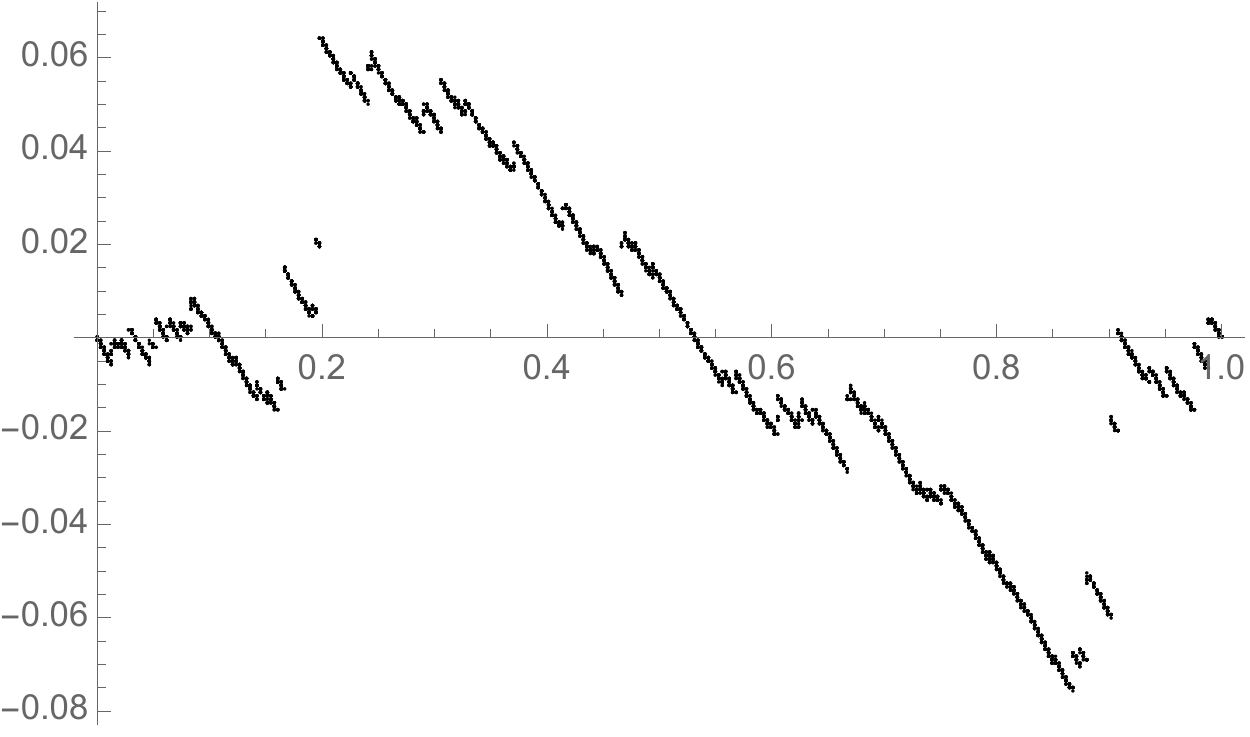}\qquad 
 \includegraphics[height=2.6cm]{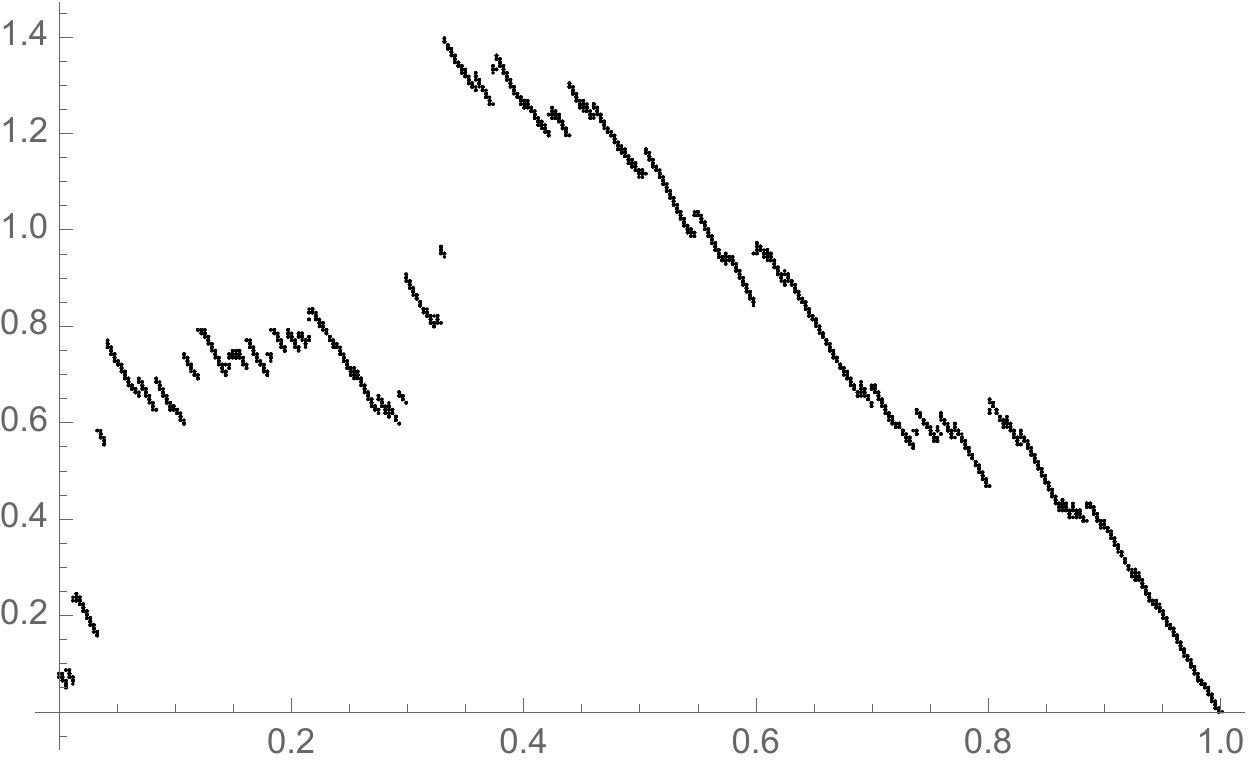}
 \caption{\label{fig:X}For $c=5$, from left to right simulations of :  $(X_{t})_{0 \leq t \leq 1}$, $\Xbr$ and $\Xexc$ (obtained as the Vervaat transform of $\Xbr$).}
\end{center}
\end{figure}

\paragraph*{Bridges and excursions of Lévy processes.}  We can construct the associated Lévy bridge process $\Xbr$
going from $0$ to $0$ in a unit time.
Specifically, by  \cite[Theorem 5]{CUB11} (see also the note added in proof in \cite{UB14}, and \cite{Kal81} for less general results), we  can construct the associated \emph{Lévy bridge} process $\Xbr$
going from $0$ to $0$ in a unit time (indeed, in the notation of  \cite{UB14}, the hypotheses (H1), (H2) and (H3) are satisfied, see Section 2.1 in \cite{CUB11}, since denoting $\Psi(x)=\Phi(-ix)$ the characteristic exponent of $X$, $\exp(-t \Psi)$ is integrable for every $t>0$).
Roughly speaking, $\Xbr$ has the law of $(X_t)_{0 \leq t \leq 1}$ conditioned to return to $0$ at time $1$.
More formally, its law is characterized by the fact that,
for every $0<u<1$ and every bounded continuous functional $F: \mathbb{D}([0,u]) \rightarrow \R$,
\begin{equation}
\label{eq:lawbridge}\Es{F \left( \Xbr_{s} : 0 \leq s \leq u  \right) } = \Es{F \left( X_{s} : 0 \leq s \leq u  \right) \frac{d_{1-u}(-X_{u})}{d_{1}(0)} },
\end{equation}
where we recall that $d_{t}$ is the density of $X_{t}$ (see \eqref{eq:densiteXt} for its explicit value). 
We refer to Figure~\ref{fig:X} (middle image) for a simulation of $\Xbr$  when $c=5$.

 Finally, we will also consider the associated \emph{excursion process} $\Xexc$.
Roughly speaking, $\Xexc$   is obtained by conditioning on the event
\[\{X_{1}=0, X_{t} \geq 0 \textrm{ for every } 0 \leq t \leq 1\}.\] 
Formally, following Miermont \cite[Definition~1]{Mie01}, we define the Vervaat transform $\mathcal V\, f$ of a bridge $f$
(i.e.\ a function $f \in \D([0,1])$ with $f(0)=f(1-)=f(1)$) as follows:
let $t_{\min}$ be the location of the right-most minimum of $f$, that is, the largest $t$ such
that $f(t-) \wedge f (t) = \inf\, f$, then, for $t \in [0,1)$ set 
\[\mathcal V\, f (t) =f(t + t_{\min} [\mathrm{mod}\ 1]) - \inf_{[0,1]} f, \]
and $\mathcal V\, f (1) = \lim_{t \to 1^-} \mathcal V\, f (t)$.
We then set $\Xexc= \mathcal V\, \Xbr$.
See Figure~\ref{fig:X} (right image) for a simulation  when $c=5$.

It is important to observe that $\Xexc_{0}>0$ almost surely. Indeed, since $0$ is irregular for $(0,\infty$) (see \eqref{eq:regularity}), by \cite[Theorem 3.1 (b)]{Mil77}, the Lévy process $X$ jumps when it reaches its infimum on any fixed interval, and this property also holds for $\Xbr$ by absolute continuity \eqref{eq:lawbridge}.  The fact that $\Xexc_{0}>0$ then follows from the construction $\Xexc= \mathcal V\, \Xbr$ (see also Proposition 4 in \cite{Mie01}).

\subsection{Coding bi-conditioned bi-type BGW trees}
\label{ss:coding}

In this section, inspired by \cite{CL16}, we present general results concerning  bi-conditioned bi-type BGW trees, which extend the usual coding of one-type BGW trees using a random walk (see in particular \cite[Sec.~1.1]{LG05} for the definition of the  \emph{{\L}ukasiewicz path}, or equivalently the \emph{depth-first search}, of a plane tree, which we will use).  If $\tau \in \mathbb{A}$ is a plane rooted  tree,  recall that $\bullet_{\tau}$ denotes the set  of all black vertices of $\tau$ (at even generation) and by  $\circ_{\tau}$ the set of all white vertices of $\tau$ (at odd generation). Denote by $(v^{\bullet}_{i}(\tau))_{0 \leq i < \abs{\bullet_{\tau}}} $ the black vertices of $\tau$ listed in lexicographical order.  For $1 \leq i \leq \abs{\bullet_{\tau}}$, let $H_{i}(\tau)$ be the number of white children of $v^{\bullet}_{i-1}(\tau)$ and let $B_{i}(\tau)$ be the number of black grandchildren of $v^{\bullet}_{i-1}(\tau)$. Finally,  set $\oH_{0}(\tau)=\oB_{0}(\tau)=0$, and for $1 \leq i \leq \abs{\bullet_{\tau}}$ 
\[\oH_{i}(\tau)=H_{1}(\tau)+ \cdots+H_{i}(\tau), \qquad  \oB_{i}(\tau)=B_{1}(\tau)+ \cdots+B_{i}(\tau)-i.\]
Alternatively, consider the \emph{reduced black subtree} of $\tau$, that is the tree obtained by keeping only black vertices
with the same ancestor/descendant relations. 
Then $(\overline{B}_i(\tau))_{0 \le i \le |\bullet_\tau|}$ is its {\L}ukasiewicz path.
This explains the notation $B$, referring to {\em black} vertices;
the notation $H$ comes from the word \emph{hidden},
since white vertices disappear in the reduction operation.

 We consider two non-degenerate offspring distributions $\mub$ and $\muc$ on $\Z_{+}$ and we denote by $\Ts$ a random alternating two-type BGW tree with distribution given by \eqref{eq:def_GW2}. We let $S^{\circ}_{k}$ and $S^{\bullet}_{k}$ respectively denote the sum of $k$ i.i.d.\ random variables distributed according to $\muc$ and $\mub$.
 Let $(H,B)$ be a random variable such that 
 \begin{equation}
    \Pr{H=i,B=j}=\mub(i) \, \Pr{S^{\circ}_{i}=j}, \qquad  i, j \geq 0.
    \label{eq:HB}
  \end{equation}
Let $(H_{i},B_{i})_{i \geq 1}$ be a sequence of i.i.d.\ random variables distributed as $(H,B)$. Set $\oH_{i}=H_{1}+H_{2}+ \cdots+H_{i}$ and  $\oB_{i}=B_{1}+B_{2}+ \cdots+B_{i}-i$ for $i \geq 1$.

\begin{lemma}\label{lem:codeRW} Fix $\nb \geq 1$ and $\nc \geq 0$. The following assertions hold.
\begin{enumerate}
\item[(i)]We have \[\Pr{\ \abs{\bullet_{\Ts}} =\nb,\abs{\circ_{\Ts}}=\nc}=\Pr{\oH_{\nb}=\nc,\oB_{\nb}=-1 \textrm{ and } \oB_{i} \geq 0 \textrm{ for every } 1 \leq i < \nb}.\]
\item[(ii)] If $\Prb{ \abs{\bullet_{\Ts}} =\nb,\abs{\circ_{\Ts}}=\nc}>0$, let $\Ts_{\nb,\nc}$ be a random tree distributed as $\Ts$ conditioned on the event $\{ \abs{\bullet_{\Ts}} =\nb,\abs{\circ_{\Ts}}=\nc \}$. Then the pair of paths $(H_{i}(\Ts_{\nb,\nc}) ,B_{i}(\Ts_{\nb,\nc}))_{1 \leq i \leq \nb}$ has the same distribution as $(H_{i},B_{i})_{1 \leq i \leq \nb}$ conditioned on the event 
  \[\{\oH_{\nb}=\nc,\oB_{\nb}=-1 \textrm{ and } \oB_{i} \geq 0 \textrm{ for every } 1 \leq i < \nb\}.\]
\end{enumerate}
\end{lemma}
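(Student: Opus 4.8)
The plan is to prove both parts simultaneously by a bijective/Markovian unfolding of the tree, exploring black vertices in lexicographic (equivalently, depth-first) order. Recall that $(v^{\bullet}_i(\tau))_{0 \le i < |\bullet_\tau|}$ lists the black vertices of $\tau$ in lexicographic order, and that $H_i(\tau), B_i(\tau)$ record respectively the number of white children and black grandchildren of $v^{\bullet}_{i-1}(\tau)$. The key point is that the reduced black subtree of $\Ts$ (keeping only black vertices, with the induced ancestor/descendant relation) is itself a one-type BGW tree whose offspring distribution is exactly that of $B$ in \eqref{eq:HB}: indeed a black vertex $v$ has a number of white children distributed as $\mub$, and each of those white children independently has a number of black children distributed as $\muc$, so the total number of black grandchildren of $v$ is $S^{\circ}_{H}$ where $H \sim \mub$, which is precisely the law of $B$. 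Moreover, conditionally on the number $H$ of white children and on the resulting number $B$ of black grandchildren, the ``decoration'' $H$ attached to $v$ is determined by \eqref{eq:HB}, and these decorations are independent across black vertices.

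First I would make this precise. Writing $\Ts$ as $\BGW^{\mub,\muc}$, I would decompose a tree $\tau$ into (a) its reduced black subtree $R(\tau)$, a plane tree on the $|\bullet_\tau|$ black vertices, together with (b) for each black vertex $v^{\bullet}_{i-1}$, the integer $H_i(\tau)$ counting its white children and the ordered way those white children's black children are distributed. A straightforward computation from \eqref{eq:def_GW2} shows that
\[
\BGW^{\mub,\muc}(\tau) = \left(\prod_{v \in \bullet_\tau} \mub(H_v(\tau))\right)\left(\prod_{w \in \circ_\tau}\muc(k_w)\right),
\]
and summing over all ways of interleaving white children among black grandchildren (a multinomial count that is exactly the number of plane-tree structures compatible with the reduced black subtree $R(\tau)$ and the data $(H_i,B_i)$) yields that the pair of sequences $(H_i(\Ts),B_i(\Ts))_{1 \le i \le |\bullet_\Ts|}$ has the law of the {\L}ukasiewicz-type exploration of a BGW tree with offspring law $B$, each step $i$ additionally carrying an independent mark $H_i$ with the conditional law $\Pr{H=\cdot \mid B = B_i}$ — in other words, the i.i.d.\ sequence $(H_i,B_i)_{i \ge 1}$ of \eqref{eq:HB}, run until the first time the associated {\L}ukasiewicz walk $\oB$ hits $-1$.

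From here both claims follow from the classical cycle-lemma / depth-first-search coding of one-type BGW trees applied to the reduced black subtree. For (i): the event $\{|\bullet_\Ts| = \nb, |\circ_\Ts| = \nc\}$ is the event that the reduced black subtree has exactly $\nb$ vertices and that the total number of white vertices — which is $H_1(\Ts) + \cdots + H_{\nb}(\Ts)$, since every white vertex is a child of exactly one black vertex — equals $\nc$. Now $R(\Ts)$ has $\nb$ vertices iff $\oB_{\nb} = -1$ and $\oB_i \ge 0$ for $1 \le i < \nb$ (the standard characterization of the {\L}ukasiewicz path of a size-$\nb$ plane tree), and on this event $\oH_{\nb} = \nc$ is exactly the white-vertex count condition; this gives the displayed equality in (i). For (ii): conditioning the BGW tree $\Ts$ on $\{|\bullet_\Ts| = \nb, |\circ_\Ts| = \nc\}$ corresponds, under the coding just established, to conditioning the i.i.d.\ sequence $(H_i,B_i)_{1 \le i \le \nb}$ on the event $\{\oH_{\nb} = \nc,\ \oB_{\nb} = -1,\ \oB_i \ge 0 \text{ for } 1 \le i < \nb\}$, which is precisely the assertion.

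The main obstacle I anticipate is purely bookkeeping: verifying that the combinatorial factor obtained when one sums the BGW weight of $\tau$ over all plane trees with a prescribed reduced black subtree and prescribed $(H_i,B_i)$ data is exactly $\prod_i \Pr{S^{\circ}_{H_i} = B_i}/(\text{normalization})$, i.e.\ that the interleaving of white children and black-subtree branches contributes the right multinomial coefficients so that the marks $H_i$ become independent with the conditional law in \eqref{eq:HB}. This is the step where one must be careful about the plane (ordered) structure and about the distinction between the number of children and the out-degree; once the identity $\BGW^{\mub,\muc}(\tau)$ is rewritten in terms of the $H_i$ and the white-vertex degrees and one recognizes the sum over interleavings as a product of convolution probabilities $\Pr{S^{\circ}_{i} = j}$, everything collapses and (i)–(ii) are immediate. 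Everything else is the standard depth-first-search encoding of a single-type BGW tree, which I would simply cite (e.g.\ \cite[Sec.~1.1]{LG05}).
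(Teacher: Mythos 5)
Your proposal is correct and, modulo packaging, coincides with the paper's own argument: the paper encodes a tree by the pair of sequences $(\bm H,\bm W)$ via the bijection $\Phi$ and then performs exactly the change of variables $w_{h_1+\cdots+h_{i-1}+1}+\cdots+w_{h_1+\cdots+h_i}=b_i$ and the multinomial summation you describe, which is the analytic content of your ``reduced black subtree with i.i.d.\ marks'' picture. The factorization you flag as the ``main obstacle'' is precisely the computation in the paper's proof of (i), so no genuine gap remains.
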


\begin{proof}
Let $\tau$ be a finite tree. For $u \in \tau$, recall that $pr(u)$ is the parent of $u$ (when $u \neq \emptyset$). Denote by $\prec_{\bullet}$ the lexicographical order on the black vertices of $\tau$. We define a total order on the white vertices of $\tau$ as follows. For $u,v \in \circ_{\tau}$, if $pr(u) \neq pr(v)$, we set $u \prec_{\circ} v$ if and only if $pr(u) \prec _{\bullet} pr(v)$; if $pr(u)=pr(v)$, we set  $u \prec_{\circ} v$ if and only if $u$ is less than $v$ for the lexicographical order.
Then denote by $(v^{\circ}_{i}(\tau))_{0 \leq i < \abs{\circ_{\tau}}} $ the white vertices of $\tau$ listed in increasing $\prec_{\circ}$ order (we warn the reader that in this proof the vertices 
$(v^{\circ}_{i}(\tau))_{0 \leq i < \abs{\circ_{\tau}}}$ are \emph{not} ordered in lexicographical order,  unlike in all other places in the article).
An example is given on Figure~\ref{fig:bijection}.

\begin{figure}[ht] 
\begin{center}
\begin{scriptsize} 
\begin{tikzpicture}
\coordinate (0) at (0,0);
	\coordinate (1) at (-1.5,1);
		\coordinate (11) at (-1.5,2);
			\coordinate (111) at (-2,3);
				\coordinate (1111) at (-2,4);
					\coordinate (11111) at (-2,5);
			\coordinate (112) at (-1,3);
				\coordinate (1121) at (-1,4);
	\coordinate (2) at (-.5,1);
	\coordinate (3) at (.5,1);
		\coordinate (31) at (0,2);
		\coordinate (32) at (1,2);
			\coordinate (321) at (1,3);
	\coordinate (4) at (1.5,1);
\draw
	(0) -- (1)	(0) -- (2)	(0) -- (3)	(0) -- (4)
	(1) -- (11)
	(11) -- (111) -- (1111) -- (11111)	(11) -- (112) -- (1121)
	(3) -- (31)	(3) -- (32) -- (321)
;
\draw[fill=black]
	(0) circle (1.5pt)
	(11) circle (1.5pt)
	(31) circle (1.5pt)
	(32) circle (1.5pt)
	(1111) circle (1.5pt)
	(1121) circle (1.5pt)
;
\draw[fill=white]
	(1) circle (1.5pt)
	(2) circle (1.5pt)
	(3) circle (1.5pt)
	(4) circle (1.5pt)
	(111) circle (1.5pt)
	(112) circle (1.5pt)
	(321) circle (1.5pt)
	(11111) circle (1.5pt)
;
\draw
	(11) node[left] {  $v^{\bullet}_{1}$}
	(1111) node[left] {$v^{\bullet}_{2}$}
	(1121) node[left] {$v^{\bullet}_{3}$}
	(31) node[left] {$v^{\bullet}_{4}$}
	(32) node[left] {$v^{\bullet}_{5}$}
	(0) node[below] {$v^{\bullet}_{0}$}
	(1) node[left] {$v^{\circ}_{0}$}
	(2) node[left] {$v^{\circ}_{1}$}
	(3) node[left] {$v^{\circ}_{2}$}
	(4) node[left] {$v^{\circ}_{3}$}
	(111) node[left] {$v^{\circ}_{4}$}
	(112) node[left] {$v^{\circ}_{5}$}
	(11111) node[left] {$v^{\circ}_{6}$}
	(321) node[left] {$v^{\circ}_{7}$}
;
\end{tikzpicture}
\end{scriptsize}
\end{center}
\caption{An example of a tree $\tau$, with its $6$ black vertices listed according to $\prec_{\bullet}$ and its $8$ white vertices according to $\prec_{\circ}$. Here, the associated paths are $\Phi(\tau)=\big((4,2,1,0,0,1), (1,0,2,0,1,0,1,0) \big)$.
}
\label{fig:bijection} \end{figure}
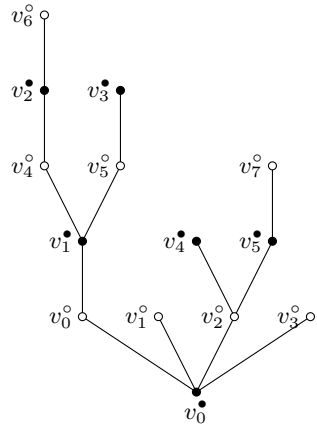

For $1 \leq i \leq \abs{\circ_{\tau}}$, we let $W_{i}(\tau)$ be the number of children of $v^{\circ}_{i-1}(\tau)$. 
It is a simple matter to check that the mapping
\[\begin{array}{ccccc}
\Phi& : & \mathbb{A} & \longrightarrow & \mathbb{V} \\
 & & \tau & \longmapsto &  \left(  (H_{i}(\tau))_{1 \leq i \leq \abs{\bullet_{\tau}}} ,   (W_{i}(\tau))_{1 \leq i \leq \abs{\circ_{\tau}}} \right) \\
\end{array}\]
is a bijection, where
\begin{equation*}
\! \!\mathbb{V}= \bigcup_{\nb \geq 1, \nc \geq 0} \left\{ \begin{split} &
  \left(  (h_{1},h_{2}, \ldots,h_{\nb}), (w_{1}, w_{2}, \ldots, w_{\nc}) \right) \in \Z_{+}^{\nb} \times \Z_{+}^{\nc} :  \\
  &  \qquad\qquad\qquad h_{1}+ \cdots+h_{\nb}=\nc, w_{1}+ \cdots+w_{\nc}=\nb-1 \\
     & \qquad\qquad \textrm{and } w_{1}+ w_{2}+ \cdots+w_{h_{1}+ \cdots+h_{i}} \geq i \textrm{ for every } 1 \leq i \leq \nb-1 \end{split}\right\}.
\end{equation*}
See again Figure~\ref{fig:bijection} for an example.
In addition, this bijection has the following property: if $(\bm{H},\bm{W})= \left(  (h_{1},h_{2}, \ldots,h_{\nb}), (w_{1}, w_{2}, \ldots, w_{\nc}) \right)$ and if $\tau$ is a tree such that $\phi(\tau)=(\bm{H},\bm{W})$, then $\tau$ has $\nc$ white vertices and $\nb$ black vertices and $\Pr{ \Ts=\tau}= \prod_{i=1} ^{\nb} \mu_{\bullet}(h_{i}) \cdot \prod_{j=1}^{\nc}\mu_{\circ}(w_{j})$.
Also, setting $h_{0}=0$ and by using the change of variables $b_{i}=w_{h_{1}+ \cdots+h_{i-1}}+ w_{h_{1}+ \cdots+h_{i-1}+1}+ \cdots+w_{h_{1}+ \cdots+h_{i}}$, $\overline{b}_{i}= b_1+\dots+b_i-i$ and $\overline{h}_{i}=h_{1}+ \cdots+h_{i}$, we see that
\[
\Pr{\ \abs{\bullet_{\Ts}} =\nb,\abs{\circ_{\Ts}}=\nc}= \sum_{\substack{ h_{1}, \cdots, h_{\nb} \geq 0\\ b_{1}, \cdots, b_{\nb} \geq 0 :  \\ \overline{h}_{\nb}=\nc, \overline{b}_{\nb}=-1 \\ \overline{b}_{i} \geq 0 \textrm{ for every } 1 \leq i  < \nb}}  \prod_{i=1} ^{\nb} \mu_{\bullet}(h_{i}) \cdot \sum_{x_{1}+\cdots+x_{h_{i}}= b_{i}} \prod_{j=1}^{h_{i}}\mu_{\circ}(x_{j}).
\]
But \[ \mu_{\bullet}(h_{i}) \cdot \sum_{x_{1}+\cdots+x_{h_{i}}= b_{i}} \prod_{j=1}^{h_{i}}\mu_{\circ}(x_{j})= \Pr{H=h_{i},B=b_{i}}.\]
This readily implies (i). The second assertion is established by similar arguments, and we leave the details to the reader.
\end{proof}

As in the monotype case, the probability in Lemma~\ref{lem:codeRW} (i)
can be computed through a variant of the cyclic lemma.
For this, we need to introduce the notion of Vervaat transform of a bivariate sequence (with respect to the second variable).
First, if $\mathbf{x}=(a_{k},b_{k})_{1 \leq k \leq m}$ is a sequence of integer couples and $i \in \Z/m\Z$, we define the cyclic shift $\mathbf{x}^{(i)}$ by $x^{(i)}_k=(a_{k+i \mod m},b_{k+i \mod m})$ for $1 \leq k \leq m$ (where representatives modulo $m$ are chosen in $ \{1,2, \ldots,m\}$).  We then define the (discrete) Vervaat transform $ \Vcd(\mathbf{x})$ as follows (we introduce the superscript $\mathsf{d}$ to underline the fact that this transformation acts on discrete sequences). Let $i_{*}(\mathbf{x})$ be defined by
\[i_{*}(\mathbf{x}) = \min \left\{ j  \in \{1,2, \ldots,m\} ; b_{1}+b_{2}+\cdots+b_{j}= \min_{1 \leq i \leq m} (b_{1}+b_{2}+\cdots+b_{i}) \right\}.\]
Then $\Vcd(\mathbf{x}) \coloneqq \mathbf{x}^{(i_{*}(\mathbf{x}))}$.
  
\begin{lemma}\label{lem:vervaat}
The following assertions hold for every integers $\nb \geq 1$ and $\nc \geq 0$.
\begin{enumerate}
\item[(i)] We have
  \[\hspace{-5mm}\Pr{\oH_{\nb}=\nc, \oB_{\nb}=-1 \textrm{ and } \oB_{i} \geq 0 \textrm{ for every } 1 \leq i <\nb }= \frac{1}{\nb} \Pr{\oH_{\nb}=\nc, \oB_{\nb}=-1 }.\]
\item[(ii)] The Vervaat transform of $(H_{i},B_{i})_{1 \leq i \leq \nb}$ conditionally given the event
\[ \{\oH_{\nb}=\nc, \oB_{\nb}=-1 \}\]
has the same distribution of $(H_{i},B_{i})_{1 \leq i \leq \nb}$ conditionally given the event
\[ \{\oH_{\nb}=\nc, \oB_{\nb}=-1, \oB_{i} \geq 0 \textrm{ for every } 1 \leq i <\nb \}.\]
\end{enumerate}
\end{lemma}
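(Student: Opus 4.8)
The plan is to deduce both assertions from a bivariate version of the Dvoretzky--Motzkin cycle lemma, exactly in the spirit of the classical coding of one-type {\BGW} trees by {\L}ukasiewicz paths alluded to just after Lemma~\ref{lem:codeRW}. The relevant structural facts are: the walk $(\oB_i)_{0\le i\le\nb}$ has increments $B_i-1$, which are all $\ge-1$ since $B_i\ge0$ (see \eqref{eq:HB}); on the event $\{\oB_\nb=-1\}$ this walk goes from $0$ to $-1$ in $\nb$ steps; and $\oH_\nb=H_1+\cdots+H_\nb$ is invariant under cyclic permutations of the indices $\{1,\dots,\nb\}$. In particular, the conditioning event $\{\oH_\nb=\nc,\ \oB_\nb=-1\}$ is stable under cyclic shifts of the pair-sequence $(H_i,B_i)_{1\le i\le\nb}$.

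First I would isolate the purely combinatorial core. Fix a finite sequence $\mathbf{x}=(h_i,b_i)_{1\le i\le\nb}$ of couples of non-negative integers with $h_1+\cdots+h_\nb=\nc$ and $b_1+\cdots+b_\nb=\nb-1$, so that its increments $b_1-1,\dots,b_\nb-1$ are $\ge-1$ and sum to $-1$. The classical cycle lemma then gives that exactly one of the $\nb$ cyclic shifts $\mathbf{x}^{(0)},\dots,\mathbf{x}^{(\nb-1)}$ satisfies the excursion condition (the partial sums of its increments being $\ge0$ at steps $1,\dots,\nb-1$ and $=-1$ at step $\nb$), and that this distinguished shift is exactly $\Vcd(\mathbf{x})$ --- namely the one obtained by cutting $\mathbf{x}$ immediately after the first time the associated walk attains its overall minimum. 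I would also record a periodicity remark: if such an $\mathbf{x}$ were $d$-periodic with $d\mid\nb$ and $d<\nb$, then $-1=\oB_\nb=(\nb/d)\,\oB_d$ would be a nonzero multiple of $\nb/d\ge2$, which is impossible; hence the $\nb$ cyclic shifts of any such $\mathbf{x}$ are pairwise distinct, and $\Vcd$ fixes every sequence that already satisfies the excursion condition (for such a sequence the walk attains its minimum only at step $\nb$).

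Granting this, the rest is bookkeeping. Write $E=\{\oH_\nb=\nc,\ \oB_\nb=-1\}$ and $E'=E\cap\{\oB_i\ge0\text{ for }1\le i<\nb\}$, viewed as events under the law of the i.i.d.\ pair-sequence $(H_i,B_i)_{1\le i\le\nb}$, which is invariant under the $\Z/\nb\Z$-action by cyclic shifts. Since $E$ is shift-invariant and, by the periodicity remark, the action on $E$ is free, $E$ splits into orbits of cardinality exactly $\nb$ whose elements are equiprobable, and by the cycle lemma each such orbit meets $E'$ in exactly one point. Summing probabilities over orbits yields $\Pr{E'}=\tfrac1\nb\Pr{E}$, which is assertion (i). For assertion (ii), fix any event $A\subseteq E'$; since $\Vcd$ sends each point of $E$ to the unique point of its orbit lying in $E'$, the event $\{\mathbf{x}\in E:\Vcd(\mathbf{x})\in A\}$ is the disjoint union of the orbits of the elements of $A$, hence has probability $\nb\,\Pr{A}$, and it is contained in $E$ by shift-invariance. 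Therefore
\[
\Pr{\Vcd\big((H_i,B_i)_{1\le i\le\nb}\big)\in A \mid E}=\frac{\nb\,\Pr{A}}{\Pr{E}}=\frac{\nb\,\Pr{A}}{\nb\,\Pr{E'}}=\Pr{(H_i,B_i)_{1\le i\le\nb}\in A \mid E'},
\]
using (i) in the middle step; since $\Vcd$ maps $E$ into $E'$ and $A\subseteq E'$ was arbitrary, this is assertion (ii).

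The main obstacle --- essentially the only step carrying genuine content --- is the combinatorial core of the second paragraph: setting up the bivariate cycle lemma carefully, checking that the unique excursion among the cyclic shifts is precisely the Vervaat transform $\Vcd$ (cutting at the \emph{first}, not the last, minimum of the $\oB$-walk), and verifying that on $E$ all orbits have full size $\nb$. Everything else is the same orbit-counting argument that underlies the coding of one-type {\BGW} trees, and it is the discrete analogue of the identity, at the level of processes, between the Vervaat transform of the bridge $\Xbr$ and the excursion $\Xexc$ introduced in Section~\ref{ssec:Levy}.
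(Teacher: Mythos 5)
Your proof is correct and follows exactly the route the paper sketches: reduce to the bivariate cycle lemma for the increments $B_i-1$ of $\oB$, show the cut point is the first minimum of $\oB$, note the aperiodicity so orbits have full size $\nb$, and finish by cyclic exchangeability. The paper leaves these details to the reader, so your write-up is simply the fleshed-out version of the same argument (including the correct reading of $\Vcd$ as cutting at the first minimum of the walk $\oB_j=B_1+\cdots+B_j-j$, rather than of $B_1+\cdots+B_j$).
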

\begin{proof}This follows from a simple extension of the so-called cyclic lemma (see e.g.~\cite[Lemma 6.1]{Pit06}). Indeed, if $\mathbf{x}=(a_{i},b_{i})_{1 \leq i \leq m}$ are pairs of integers such that $b_{1}+b_{2}+\cdots+b_{m}=-1$, then there is a unique $j \in \Z/m\Z$ such that the cyclic shift $\mathbf{x}^{(j)}=(a^{(j)}_{i},b^{(j)}_{i})_{1 \leq i \leq m}$ fulfills $b^{(j)}_{1}+ \cdots +b^{(j)}_{i} \geq 0$ for every $1 \leq i<m$. It is then standard to obtain the desired results by an exchangeability argument; we leave details to the reader. 
\end{proof}

The following lemma will allow, roughly speaking, to decouple the dependence between the random variables $(H_{i})$ and $(B_{i})$. It is a particular case of \cite[Theorem 1.2]{CL16}. However, in our setting it is elementary, so we provide a proof.

\begin{lemma}\label{lem:magique}
We have, for every integers $n_{1},n_{2},n_{3} \geq 0$,
\[\Pr{\oH_{n_{1}}=n_{2},\oB_{n_{1}}=n_{3}}=\Pr{S^{\bullet}_{n_{1}}=n_{2}} \Pr{S^{\circ}_{n_{2}}=n_{1}+n_{3}}.\]
\end{lemma}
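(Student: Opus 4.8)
The plan is to condition on the white‑child counts $(H_i)_{1\le i\le n_1}$ and then exploit the convolution structure of $\muc$. First I would recall from \eqref{eq:HB} that, under the law in force, the pairs $(H_i,B_i)$ are i.i.d., each $H_i$ has law $\mub$, and conditionally on $H_i=h_i$ the variable $B_i$ is distributed as $S^{\circ}_{h_i}$ (itself a sum of $h_i$ i.i.d.\ copies of $\muc$), with the $B_i$ independent given $(H_i)$. Writing the event $\{\oB_{n_1}=n_3\}$ as $\{B_1+\cdots+B_{n_1}=n_1+n_3\}$ and decomposing over the possible values of $(H_1,\dots,H_{n_1})$, I would obtain
\[
\Pr{\oH_{n_1}=n_2,\oB_{n_1}=n_3}
=\sum_{\substack{h_1,\dots,h_{n_1}\ge 0\\ h_1+\cdots+h_{n_1}=n_2}}\Bigl(\prod_{i=1}^{n_1}\mub(h_i)\Bigr)\,\Pr{T_1+\cdots+T_{n_1}=n_1+n_3},
\]
where $T_1,\dots,T_{n_1}$ are taken independent with $T_i$ distributed as $S^{\circ}_{h_i}$.

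The key observation, which I would isolate as a one–line remark, is that $T_1+\cdots+T_{n_1}$ is by construction a sum of $h_1+\cdots+h_{n_1}=n_2$ i.i.d.\ copies of $\muc$, hence has the law of $S^{\circ}_{n_2}$; in particular $\Pr{T_1+\cdots+T_{n_1}=n_1+n_3}=\Pr{S^{\circ}_{n_2}=n_1+n_3}$, and this quantity depends on $(h_i)$ only through its sum. Pulling this factor out of the sum leaves $\sum_{h_1+\cdots+h_{n_1}=n_2}\prod_{i=1}^{n_1}\mub(h_i)=\Pr{S^{\bullet}_{n_1}=n_2}$, which is exactly the asserted factorization.

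I do not expect any genuine obstacle; the only point that deserves a careful sentence is the interchange between the two descriptions of each $B_i$ — as a single draw from the law of $S^{\circ}_{h_i}$ versus as an honest sum of $h_i$ i.i.d.\ $\muc$-variables — which is immediate from independence together with the fact that the law of $S^{\circ}_{a}$ convolved with that of $S^{\circ}_{b}$ is the law of $S^{\circ}_{a+b}$. I would also record the conventions $S^{\circ}_0=S^{\bullet}_0=0$, under which both sides stay correct in degenerate cases (for instance $n_2=0$ forces all $h_i=0$, and the identity collapses to $\mub(0)^{n_1}\,\One_{\,n_1+n_3=0}$ on each side). As an equally short alternative I could argue via generating functions: writing $f_\mu$ for the p.g.f.\ of $\mu$, the bivariate p.g.f.\ of $(H,B)$ is $f_{\mub}\bigl(x\,f_{\muc}(y)\bigr)$, so that of $(\oH_{n_1},B_1+\cdots+B_{n_1})$ is $f_{\mub}\bigl(x\,f_{\muc}(y)\bigr)^{n_1}$; since $f_{\mub}(z)^{n_1}$ is the p.g.f.\ of $S^{\bullet}_{n_1}$, extracting the coefficient of $x^{n_2}$ produces $\Pr{S^{\bullet}_{n_1}=n_2}\,f_{\muc}(y)^{n_2}$, and $f_{\muc}(y)^{n_2}$ is the p.g.f.\ of $S^{\circ}_{n_2}$, which gives the claim after reading off the coefficient of $y^{\,n_1+n_3}$.
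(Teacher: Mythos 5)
Your main argument is essentially the paper's own proof: condition on the white-child counts $(H_i)$, observe that conditionally on $\sum_i H_i = n_2$ the sum $\sum_i B_i$ is distributed as $S^{\circ}_{n_2}$ by the convolution property, and pull this factor out of the remaining sum, which is $\Pr{S^{\bullet}_{n_1}=n_2}$. The generating-function variant you sketch at the end is a pleasant alternative packaging of the same computation, but the core decomposition is the one the paper uses.
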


\begin{proof}
We have
\begin{align*}
\Pr{\oH_{n_{1}}=n_{2},\oB_{n_{1}}=n_{3}}&=  \sum_{\substack{h_{1}+ \cdots+h_{n_{1}} =n_{2}  \\ b_{1}+ \cdots +b_{n_{1}} =n_{1}+n_{3} }} \prod_{i=1}^{n_{1}} \mub(h_{i}) \, \Pr{S^{\circ}_{h_{i}} =b_{i}} \\
&=  \sum_{h_{1}+ \cdots+h_{n_{1}} =n_{2}  }    \left( \prod_{i=1}^{n_{1}} \mub(h_{i})  \right) \left( \sum_{ b_{1}+ \cdots b_{n_{1}} =n_{1}+n_{3}}  \prod_{i=1}^{n_{1}}\Pr{S^{\circ}_{h_{i}} =b_{i}}  \right)\\
&= \sum_{h_{1}+ \cdots+h_{n_{1}} =n_{2}}  \left( \prod_{i=1}^{n_{1}} \mub(h_{i}) \right) \Pr{S^{\circ}_{n_{2}}=n_{1}+n_{3}}\\
&=\Pr{S^{\bullet}_{n_{1}}=n_{2}} \Pr{S^{\circ}_{n_{2}}=n_{1}+n_{3}}.
\end{align*}
This completes the proof.
\end{proof}

In particular, note that 
\[\Pr{\oH_{\nb}=\nc, \oB_{\nb}=-1}=\Pr{S^{\bullet}_{\nb}=\nc} \Pr{S^{\circ}_{\nc}=\nb-1}.\]

By combining  Lemmas~\ref{lem:codeRW} and \ref{lem:magique}, we can express the probability that a two-type alternating random BGW tree has a given number of vertices of both types.
In view of future use, we also consider two-type alternating random BGW trees
with {\em with white root} (and offspring distributions $\muc,\mub$).
We keep the condition that colors should be alternating so that,
in this setting, all vertices at even heights are white and have offspring distribution $\muc$,
while vertices at odd heights are black and have offspring distribution $\mub$.
Such a two-type alternating random BGW tree with white root will be denoted $\Tsc$.
In contrast, we denote by  $ \Tsb$ the two-type alternating random BGW tree where the root is black,
which was simply denoted by $\Ts$ before.

\begin{corollary}
  \label{cor:probaGivenNumber}
For every $\nb \geq 1$ and $\nc \geq 0$, we have
\begin{equation}
\Pr{
\abs{\bullet_{\Tsb}}=n_\bullet,\, \abs{\circ_{\Tsb}}=n_\circ } = \frac{1}{n_\bullet} \Pr{S^{\bullet}_{n_\bullet}=n_\circ} \Pr{S^{\circ}_{n_\circ}=n_\bullet-1}
\label{eq:probaGivenNumberVertexRootBlack}
\end{equation}
and
\begin{equation}
\Pr{
\abs{\bullet_{\Tsc}}=n_\bullet,\, \abs{\circ_{\Tsc}}=n_\circ } = 
\frac{1}{n_\circ} \Pr{S^{\bullet}_{n_\bullet}=n_\circ-1} \Pr{S^{\circ}_{n_\circ}=n_\bullet}.
\label{eq:probaGivenNumberVertexRootWhite}
\end{equation}
\end{corollary}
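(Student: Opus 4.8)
The plan is to obtain both formulas by chaining the three lemmas just proved, with essentially no new computation. For \eqref{eq:probaGivenNumberVertexRootBlack}, recall that $\Tsb$ is exactly the tree $\Ts$ of Lemma~\ref{lem:codeRW}. So I would first apply Lemma~\ref{lem:codeRW}(i) to rewrite $\Pr{\abs{\bullet_{\Tsb}}=n_\bullet,\,\abs{\circ_{\Tsb}}=n_\circ}$ as the probability that the bivariate walk $(\oH_i,\oB_i)$ satisfies $\oH_{n_\bullet}=n_\circ$, $\oB_{n_\bullet}=-1$, and $\oB_i\ge 0$ for $1\le i<n_\bullet$. Then I would strip the positivity constraint using the cyclic lemma (Lemma~\ref{lem:vervaat}(i)), which costs a factor $1/n_\bullet$, and finally factor the remaining probability with Lemma~\ref{lem:magique} applied at $(n_1,n_2,n_3)=(n_\bullet,n_\circ,-1)$ --- equivalently the displayed identity stated just after its proof --- to get $\Pr{S^\bullet_{n_\bullet}=n_\circ}\,\Pr{S^\circ_{n_\circ}=n_\bullet-1}$. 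This is \eqref{eq:probaGivenNumberVertexRootBlack}.

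For \eqref{eq:probaGivenNumberVertexRootWhite}, I would invoke the colour symmetry of the whole construction of Section~\ref{ss:coding} rather than redo anything. A white-rooted alternating BGW tree in which the root-colour vertices reproduce according to $\muc$ and the other-colour vertices according to $\mub$ is, after renaming the two colours, the same random object as the black-rooted tree $\Ts$ of Lemma~\ref{lem:codeRW} built from the offspring pair $(\muc,\mub)$ instead of $(\mub,\muc)$; nothing in Lemmas~\ref{lem:codeRW}, \ref{lem:vervaat}, \ref{lem:magique} uses the labels ``black''/``white'' beyond the alternation. Applying those three lemmas to that recoloured tree, with the substitutions $\mub\leftrightarrow\muc$, $S^\bullet\leftrightarrow S^\circ$ and $n_\bullet\leftrightarrow n_\circ$, then yields $\Pr{\abs{\bullet_{\Tsc}}=n_\bullet,\,\abs{\circ_{\Tsc}}=n_\circ}=\frac{1}{n_\circ}\Pr{S^\circ_{n_\circ}=n_\bullet}\,\Pr{S^\bullet_{n_\bullet}=n_\circ-1}$, where the factor $1/n_\circ$ comes from Lemma~\ref{lem:vervaat}(i) and the two factors from Lemma~\ref{lem:magique} at $(n_1,n_2,n_3)=(n_\circ,n_\bullet,-1)$. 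This is \eqref{eq:probaGivenNumberVertexRootWhite}.

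There is no real obstacle here; the only point that deserves a line of justification is precisely this colour symmetry --- that the bijection $\Phi$ of Lemma~\ref{lem:codeRW} and the subsequent sums are insensitive to which colour is attached to the root, as long as colours alternate. (Should one prefer to avoid the symmetry argument altogether, an alternative for \eqref{eq:probaGivenNumberVertexRootWhite} is to remove the root of $\Tsc$, decompose into the i.i.d.\ black-rooted subtrees hanging from its children, and sum \eqref{eq:probaGivenNumberVertexRootBlack} over the number of children; but this introduces a convolution and is strictly longer.)
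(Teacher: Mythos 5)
Your argument is exactly the paper's: the authors state that \eqref{eq:probaGivenNumberVertexRootBlack} is an immediate consequence of Lemmas~\ref{lem:codeRW} and \ref{lem:magique} and that \eqref{eq:probaGivenNumberVertexRootWhite} follows by symmetry, which is precisely the chain codeRW(i) $\to$ cyclic lemma $\to$ magique, then recolouring. You are right that Lemma~\ref{lem:vervaat}(i) is also needed to strip the positivity constraint and produce the $1/n_\bullet$ factor; the paper does not cite it explicitly but it is clearly implied, and your version makes this step explicit.
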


The identity \eqref{eq:probaGivenNumberVertexRootBlack} is an immediate consequence of  Lemmas~\ref{lem:codeRW} and \ref{lem:magique},  and \eqref{eq:probaGivenNumberVertexRootWhite}  follows by symmetry. We will need (in the proof of the forthcoming Lemma~\ref{lem:root}) an extension to forests with a fixed number of components: if $ \mathscr{F}^{\circ}_{j}$ denotes a collection of $j$ i.i.d.\ BGW trees distributed as  $ \Tsc$ (with a white root), then
\begin{equation}
\label{eq:forestc}
\Pr{\abs{\bullet_{\mathscr{F}^{\circ}_{j}}}=n_\bullet,\, \abs{\circ_{\mathscr{F}^{\circ}_{j}}}=n_\circ } = 
\frac{j}{n_\circ} \Pr{S^{\bullet}_{n_\bullet}=n_\circ-j} \Pr{S^{\circ}_{n_\circ}=n_\bullet}.
\end{equation}
Also,  if $ \mathscr{F}^{\bullet}_{j}$ denotes a collection of $j$ i.i.d.\ BGW trees distributed as  $ \Tsb$ (with a black root), then
\begin{equation}
\label{eq:forestb}
\Pr{\abs{\bullet_{\mathscr{F}^{\bullet}_{j}}}=n_\bullet,\, \abs{\circ_{\mathscr{F}^{\bullet}_{j}}}=n_\circ } = 
\frac{j}{n_\bullet} \Pr{S^{\bullet}_{n_\bullet}=n_\circ} \Pr{S^{\circ}_{n_\circ}=n_\bullet-j}.
\end{equation}
The proofs are similar, and we leave the details to the reader.

\subsection{A functional invariance principle for alternating BGW trees}
\label{ss:inv}

It is now time to consider specific offspring distributions.
These distributions will be chosen of the form
\begin{equation}
  \mubn(i)= \abn \cdot  (\bbn)^{i} \cdot \frac{(i+1)^{i-1}}{i!}  \quad (i \geq 0), \qquad \mucn(i)= \acn \cdot  (\bcn)^{i} \cdot \frac{(i+1)^{i-1}}{i!}  \quad (i \geq 0).
  \label{eq:mubn_mucn}
\end{equation}
Indeed, according to Proposition~\ref{prop:bitype}, 
alternating BGW trees with these distributions are connected to minimal factorizations
(for the moment, let us forget that, in Proposition~\ref{prop:bitype}, the root has a different offspring distribution).
We start by specifying the choices of the parameters
$\abn$, $\bbn$, $\acn$ and $\bcn$.
(Note that the chosen parameters and hence the offspring distributions depend on $n$.)

\begin{lemma}\label{lem:exist} For every $n \ge 1$,  fix $K_{n} \in \{1,\dots,n-1\}$.
\begin{enumerate}
  \item[(i)] We may choose positive parameters  $\abn,\bbn,\acn,\bcn$ in a unique way  such that \eqref{eq:mubn_mucn}
defines probability distributions with respective means
\[m^{n}_{\bullet}  \, \coloneqq  \, \sum_{i=0}^{\infty} i \cdot \mubn(i)= \frac{K_{n}+1}{n-K_{n}}, \qquad m^{n}_{\circ} \, \coloneqq \, \sum_{i=0}^{\infty} i \cdot \mucn(i)= \frac{n-K_{n}}{K_{n}+1}.\]
\item[(ii)] Assume that $K_{n} \rightarrow \infty$. If $\tfrac{K_{n}}{n} \rightarrow 0$ as $n \rightarrow \infty$, then
\[(\sbn)^{2} \sim \frac{K_{n}}{n}, \quad  \bbn \sim \frac{K_{n}}{n}  \qquad \text{ and } \qquad  (\scn)^{2} \sim   \big( \frac{n}{K_{n}} \big)^{3},\]
where $(\sbn)^{2}$ and $(\scn)^{2}$ denote respectively the variance of $\mubn$ and $\mucn$.
In particular, if $c>0$ is fixed and $\tfrac{K_{n}}{\sqrt{n}} \rightarrow c$ as $n \rightarrow \infty$, then   $ (\sbn)^{2} \sim \tfrac{c}{\sqrt{n}}$ and $(\scn)^{2} \sim  \frac{n^{3/2}}{c^{3}}$.
\end{enumerate}
\end{lemma}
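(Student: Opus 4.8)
\emph{Plan of proof.} I would carry the whole argument through the \emph{tree function} $T(z)=\sum_{k\ge1}\frac{k^{k-1}}{k!}z^{k}$, the unique power series vanishing at the origin that solves $T(z)=z\,e^{T(z)}$. It has radius of convergence $e^{-1}$, and $z\mapsto T(z)$ is an increasing bijection from $[0,e^{-1}]$ onto $[0,1]$ with $T(e^{-1})=1$; moreover $T(z)/z=\sum_{i\ge0}\frac{(i+1)^{i-1}}{i!}z^{i}$. Consequently the probability generating function of the law $\mubn$ of \eqref{eq:mubn_mucn} is $g(z)=\abn\,T(\bbn z)/(\bbn z)$, well defined near $1$ whenever $\bbn<e^{-1}$, and likewise for $\mucn$. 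Differentiating the functional equation once gives $T'(z)=T(z)/(z(1-T(z)))$, and differentiating that identity and simplifying with $T'(z)\,z\,(1-T(z))=T(z)$ gives $T''(z)=T(z)^{2}(2-T(z))/(z^{2}(1-T(z))^{3})$; these two identities are all that will be needed.

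For (i): the condition $g(1)=1$ forces $\abn=\bbn/T(\bbn)$, hence $g(z)=T(\bbn z)/(z\,T(\bbn))$, and then the identity for $T'$ gives $\mbn=g'(1)=T(\bbn)/(1-T(\bbn))$. Thus requiring $\mubn$ to be a probability distribution with mean $(K_{n}+1)/(n-K_{n})$ is equivalent to requiring $T(\bbn)=(K_{n}+1)/(n+1)$, a number in $(0,1)$ since $1\le K_{n}\le n-1$; by bijectivity of $T$ on $[0,e^{-1}]$ this determines $\bbn\in(0,e^{-1})$ uniquely, and then $\abn$, and conversely these choices manifestly do the job. The same reasoning for $\mucn$ yields $T(\bcn)=(n-K_{n})/(n+1)$ and $\acn=\bcn/T(\bcn)$; this proves (i) (note the incidental identity $T(\bbn)+T(\bcn)=1$).

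For (ii): writing $\tau\coloneqq T(\bbn)$, the identities for $T'$ and $T''$ give the second factorial moment $g''(1)=\tau^{2}(3-2\tau)/(1-\tau)^{3}$, whence
\[(\sbn)^{2}=g''(1)+\mbn-(\mbn)^{2}=\frac{\tau^{2}(3-2\tau)}{(1-\tau)^{3}}+\frac{\tau}{1-\tau}-\frac{\tau^{2}}{(1-\tau)^{2}}=\frac{T(\bbn)}{(1-T(\bbn))^{3}},\]
the last step being an algebraic simplification in which the numerator collapses to $\tau$; by symmetry $(\scn)^{2}=T(\bcn)/(1-T(\bcn))^{3}$, and from $T(z)=z\,e^{T(z)}$ one has $\bbn=T(\bbn)e^{-T(\bbn)}$. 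Now substitute $T(\bbn)=(K_{n}+1)/(n+1)$ and $T(\bcn)=(n-K_{n})/(n+1)=1-(K_{n}+1)/(n+1)$: if $K_{n}\to\infty$ and $K_{n}/n\to0$, then $T(\bbn)\to0$, so $(\sbn)^{2}\sim T(\bbn)\sim K_{n}/n$ and $\bbn\sim T(\bbn)\sim K_{n}/n$, while $1-T(\bcn)=(K_{n}+1)/(n+1)\sim K_{n}/n$ gives $(\scn)^{2}\sim(1-T(\bcn))^{-3}\sim(n/K_{n})^{3}$. Finally $K_{n}/\sqrt n\to c$ forces $K_{n}/n\to0$ and $K_{n}\sim c\sqrt n$, so the previous asymptotics specialize to $(\sbn)^{2}\sim c/\sqrt n$ and $(\scn)^{2}\sim n^{3/2}/c^{3}$.

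The only point requiring care is the manipulation of the functional equation to obtain the closed forms for $T'$ and $T''$ and then the cancellation yielding $(\sbn)^{2}=T(\bbn)/(1-T(\bbn))^{3}$; beyond that, everything reduces to elementary asymptotics.
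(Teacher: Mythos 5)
Your proof is correct, and it takes a genuinely different and cleaner route than the paper. The paper works with $F(z)=\sum_{k\ge0}\frac{(k+1)^{k-1}}{k!}z^k$ and the mean function $G(z)=zF'(z)/F(z)$, showing $G$ is an increasing bijection from $[0,1/e)$ onto $[0,\infty)$ to get (i). For part (ii) the paper obtains $\bbn\sim K_n/n$ from the expansion $G(z)=z+o(z)$, but the asymptotics of $(\scn)^2$ are handled by singular analysis at the radius of convergence: the identity $F(z)=-W(-z)/z$ with the Lambert $W$ function, the Puiseux expansion $F(1/e-z)=e-\sqrt{2}e^{3/2}\sqrt{z}+o(\sqrt z)$, and singular differentiation (Flajolet--Sedgewick) to transfer the expansion to $F'$ and $F''$. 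You instead exploit the functional equation $T(z)=ze^{T(z)}$ (with $F(z)=T(z)/z$), which yields the \emph{exact} closed forms $m^n_\bullet=\tau/(1-\tau)$ and $(\sbn)^2=\tau/(1-\tau)^3$ with $\tau=T(\bbn)$, and reduces (i) to the linear equation $T(\bbn)=(K_n+1)/(n+1)$, $T(\bcn)=(n-K_n)/(n+1)$ (so $T(\bbn)+T(\bcn)=1$). All the asymptotics in (ii) then drop out immediately from these exact formulas, with no singular analysis needed for $(\scn)^2$: one simply uses $1-T(\bcn)=(K_n+1)/(n+1)\sim K_n/n$. I checked the algebraic steps: from $T'=T/(z(1-T))$ one indeed gets $T''=T^2(2-T)/(z^2(1-T)^3)$, hence $g''(1)=\tau^2(3-2\tau)/(1-\tau)^3$, and the variance $g''(1)+g'(1)-g'(1)^2$ collapses to $\tau/(1-\tau)^3$. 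Your approach trades a transcendental singular expansion for an exact algebraic identity, which is both shorter and more transparent; it also makes the uniqueness in (i) obvious from the bijectivity of $T:[0,e^{-1}]\to[0,1]$.
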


\begin{proof}Let $F$ be the power series defined by
\begin{equation}
\label{eq:F}F(z)= \sum_{k \geq 0} \frac{(k+1)^{k-1}}{k!} z^{k}.
\end{equation}
It is elementary to check that $G(z)= \frac{z F'(z)}{F(z)}$ defines a continuous increasing function on $[0,1/e)$ and that $\lim_{z \rightarrow 0+} G(z)=0$, $\lim_{z \rightarrow 1/e-} G(z)=\infty$. One then chooses $\bbn, \bcn$ such that respectively $G(\bbn)=\frac{K_{n}+1}{n-K_{n}}$, $G(\bcn)= \frac{n-K_{n}}{K_{n}+1}$, and then $\abn=1/F(\bbn)$, $\acn=1/F(\bcn)$.
This proves (i).

For (ii), since $G(z)=z+o(z)$ as $z \rightarrow 0$ and since $G(\bbn)= \frac{K_{n}+1}{n-K_{n}}$, we get that $\bbn \sim \frac{K_{n}}{n}$ as $n \rightarrow \infty$.
The estimate for the variance $(\sbn)^{2}$ is then obtained by using the expression
\hbox{$(\sbn)^{2}= \frac{(\bbn)^{2} F''(\bbn)}{F(\bbn)}+\mbn-(\mbn)^{2}$:}
the dominant term when $n  \rightarrow \infty$ is $\mbn$
and we have \hbox{$(\sbn)^{2}\sim \frac{K_{n}}{n}$}.

The estimate concerning $(\scn)^{2}$ is more subtle, as it involves the behavior of $F$ near its radius of convergence $1/e$.
We can analytically extend $F$
on a complex split neighborhood on $1/e$ (i.e.\ on a complex neighborhood of $1/e$, without the real half-line $[1/e,+\infty)$).
Indeed we have $F(z)=-W(-z)/z$, where $W$ is the Lambert function (see \cite[Eq~3.1]{CGHJK96}).
Using \cite[Eq~4.22]{CGHJK96}, as $z \rightarrow 0$, we have
\begin{equation}
\label{eq:devF}F \left( \frac{1}{e}-z \right)=e- \sqrt{2} e^{3/2} \sqrt{z}+ o(\sqrt{z}),
\end{equation}
where $z$ is a complex number avoiding the negative real line
(throughout the paper, we use the principal determination of 
$\sqrt{z}$ when $z$ is in $\CC \backslash \R_{<0}$).
By singular differentiation \cite[Theorem VI.8 p. 419]{FS09},
we get expansions for $F' \big( \tfrac{1}{e}-z \big)$ and $F'' \big( \tfrac{1}{e}-z \big)$
by differentiating the right-hand side of \eqref{eq:devF}.
Hence $G(1/e-z) \sim \frac{1}{\sqrt{2e}} \cdot \frac{1}{\sqrt{z}}$ as $z \rightarrow 0$.
By definition $\bcn$ is the solution of $G(\bcn)=\tfrac{n-K_n}{K_n+1}$, so that 
\begin{equation}
  1/e-\bcn  \quad \mathop{\sim}_{n \rightarrow \infty} \quad  \frac{1}{2e} \left( \frac{K_n}{n-K_n}  \right)^{2}  \quad \mathop{\sim}_{n \rightarrow \infty} \quad  \frac{1}{2e} \left( \frac{K_n}{n}  \right)^{2}. 
  \label{eq:est_bcirc}
\end{equation}
We again use the expression \hbox{$(\scn)^{2}= \frac{(\bcn)^{2} F''(\bcn)}{F(\bcn)}+m_{\circ}-m_{\circ}^{2}$}
to estimate the variance.
An easy computation gives $(\scn)^{2}\sim ( {n}/{K_{n}} )^{3}$.
(This time, the dominant term is $\frac{(\bcn)^{2} F''(\bcn)}{F(\bcn)}$.)
\end{proof}

We denote by $ \Ts_{n}$ an alternating two-type BGW tree (with black root), with offspring distributions $\mubn$ and $\mucn$ conditioned on having $n-K_{n}$ black vertices and $K_{n}+1$ white vertices.  Recall from the beginning of Section~\ref{ss:coding} the definition of the path $(B_{i}(\Ts_{n}))_{1 \leq i \leq n-K_{n}}$.
We are aiming at a functional invariance theorem for a renormalized version of $(B_{i}(\Ts_{n}))_{1 \leq i \leq n-K_{n}}$.
In the sequel, $X$, $\Xbr$ and $\Xexc$ are as in Section~\ref{ssec:Levy}:
$X$ is the Lévy process with the specific characteristic exponent given in \eqref{eq:Levyc},
and $\Xbr$ and $\Xexc$ are the associated bridge and excursion processes.

Instead of working as usual with $\D([0,1],\R)$, we will work with $\D([-1,1],\R)$
by extending our function with value $0$ on $[-1,0)$.
The reason for that is that our limiting process $\Xexc$ almost surely
takes a positive value in $0$ (it ``starts with a jump''),
while $(\oB_{i}(\Ts_{n}))_{1 \leq i \leq n-K_{n}}$ stays small for a small amount of time.
Formally, we set $\Xexc_{t}=0$ for $t<0$, 
and $\overline{B}^{n}_{i}(\Ts_n)=0$ for $i \leq 0$ or $i>n-K_{n}$.

\begin{theorem}\label{thm:cvXexc} Assume that $ \tfrac{K_{n}}{\sqrt{n}} \rightarrow c>0$ as $n \rightarrow \infty$. The following assertions hold.
\begin{enumerate}
\item[(i)] The convergence \[  \left(  c \frac{\oB_{\lfloor u (n-K_{n}) \rfloor}(\Ts_{n})}{ n} : -1 \leq u \leq 1\right)  \quad \mathop{\longrightarrow}^{(d)}_{n \rightarrow \infty} \quad (\Xexc_{u}: -1 \leq u \leq 1)\]
holds in distribution.
\item[(ii)]
  Let $\widetilde{\mu}^{n}_{\bullet}$ be the probability distribution defined by
  $\widetilde{\mu}^{n}_{\bullet}(i)=\mubn(i-1)$ for $i \geq 1$
  and consider a random tree $\tilde{\Ts}_{n}$ with distribution ${\BGW}^{\tmub,\mub,\muc}$,
  conditioned on having $n-K_{n}$ black vertices and $K_{n}+1$ white vertices
  (the distribution ${\BGW}^{\tmub,\mub,\muc}$ is defined in \eqref{eq:BGWtilde}; informally this is an alternating two-type BGW tree with black root, offspring distributions $\mubn$ and $\mucn$ except the root which has offspring distribution $\widetilde{\mu}^{n}_{\bullet}$).
  Then the convergence 
  \[  \left(  c \frac{\oB_{\lfloor u (n-K_{n}) \rfloor}(\tilde{\Ts}_{n})}{ n} : -1 \leq u \leq 1\right)  \quad \mathop{\longrightarrow}^{(d)}_{n \rightarrow \infty} \quad (\Xexc_{u}: -1 \leq u \leq 1)\]
holds in distribution.
\end{enumerate}
\end{theorem}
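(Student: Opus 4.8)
The plan is to first prove an \emph{unconditioned} functional invariance principle for the walk $\oB$, then to transfer it to a bridge by an absolute‑continuity argument fed by uniform local limit estimates, and finally to an excursion via the continuity of the Vervaat transform; part~(ii) will follow from part~(i) by showing that the modified root offspring distribution is asymptotically invisible. Write $n_{\bullet}=n-K_{n}$, $n_{\circ}=K_{n}+1$, and let $(H_{i},B_{i})_{i\geq1}$ be i.i.d.\ with the law \eqref{eq:HB} attached to the distributions $\mubn,\mucn$ of \eqref{eq:mubn_mucn}. By Lemma~\ref{lem:codeRW}, $(\oB_{i}(\Ts_{n}))_{1\leq i\leq n_{\bullet}}$ is distributed as $(\oB_{i})_{1\leq i\leq n_{\bullet}}$ conditioned on $\{\oH_{n_{\bullet}}=n_{\circ},\ \oB_{n_{\bullet}}=-1,\ \oB_{i}\geq0\ \text{for}\ 1\leq i<n_{\bullet}\}$, and by Lemma~\ref{lem:vervaat} this equals the discrete Vervaat transform $\Vcd$ of $(H_{i},B_{i})_{1\leq i\leq n_{\bullet}}$ conditioned only on the bridge event $\{\oH_{n_{\bullet}}=n_{\circ},\ \oB_{n_{\bullet}}=-1\}$. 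Since $X$ is spectrally positive with $\Pi(\R_{+})=\infty$ and $0$ is irregular for $(0,\infty)$ (see \eqref{eq:regularity}), it jumps strictly downwards at its running infimum (\cite[Thm.~3.1(b)]{Mil77}), so $\Xbr$ a.s.\ attains its infimum at a unique time and the Vervaat transform is a.s.\ continuous at $\Xbr$ (as in \cite{Mie01, Kor14}). Hence it suffices to prove that $(\tfrac{c}{n}\oB_{\lfloor u\,n_{\bullet}\rfloor})_{0\leq u\leq1}$, under the bridge conditioning, converges in distribution to $\Xbr$; passing to Vervaat transforms then gives Theorem~\ref{thm:cvXexc}(i), the extension of all paths to $[-1,1]$ by $0$ on $[-1,0)$ being only a device providing the room, in the Skorokhod topology, to absorb the short ($o(n)$‑step) initial phase of the discrete excursion before its first macroscopic jump, matching the jump of $\Xexc$ at $0$.

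\textbf{The unconditioned invariance principle.} First I would show that, without any conditioning, $\big(\tfrac{c}{n}\oB_{\lfloor u\,n_{\bullet}\rfloor}:0\leq u\leq1\big)\to(X_{u}:0\leq u\leq1)$ in $\D([0,1],\R)$. The increments $B_{i}-1$ of $\oB$ are i.i.d.\ with mean $\mbn\mcn-1=0$ by the choice of means in Lemma~\ref{lem:exist}(i), and their law depends on $n$, so this is a convergence of row sums of a triangular array to a Lévy process; by the criteria of \cite[Ch.~VII]{JS03} it reduces to (a) asymptotic negligibility of the rescaled increments, $\Prb{|B_{1}-1|>\varepsilon n/c}\to0$, which holds since $\Prb{H_{1}\geq1}=O(1/\sqrt n)$ and $\mucn$ is asymptotically negligible at scale $n$, and (b) convergence of the Laplace exponent, $n_{\bullet}\log\Esb{e^{-\frac{\lambda c}{n}(B_{1}-1)}}\to-\Phi(\lambda)$ with $\Phi$ as in \eqref{eq:Levyc}. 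For (b) I would use that the probability generating functions of $\mubn$ and $\mucn$ are $F(\bbn s)/F(\bbn)$ and $F(\bcn s)/F(\bcn)$ with $F$ the series \eqref{eq:F} (satisfying $F(z)=e^{zF(z)}$, i.e.\ $zF(z)=-W(-z)$), so that by \eqref{eq:HB}
\[
\Esb{e^{-\lambda B_{1}}}=\frac{1}{F(\bbn)}\,F\!\left(\bbn\,\frac{F(\bcn e^{-\lambda})}{F(\bcn)}\right);
\]
inserting $\bbn\sim c/\sqrt n$ and $1/e-\bcn\sim c^{2}/(2en)$ from Lemma~\ref{lem:exist}(ii) and \eqref{eq:est_bcirc}, together with the singular expansion \eqref{eq:devF} of $F$ at $1/e$, yields after a routine but delicate computation exactly the exponent $\Phi$ of \eqref{eq:Levyc}. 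The same manipulations give, in parallel, the invariance principle $\tfrac{c}{n}\big(\Scn_{\lfloor v\,n_{\circ}\rfloor}-\lfloor v\,n_{\circ}\rfloor\mcn\big)\to X_{v}$ and identify the marginal of $\tfrac{c}{n}\Scn_{n_{\circ}}$ as that of $Y^{c}_{1}$ with density \eqref{eq:densiteXt}.

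\textbf{From the walk to the bridge.} The central step is to upgrade this to the bridge‑conditioned walk. It is convenient to condition in two stages. Conditioning on $\{\oH_{n_{\bullet}}=n_{\circ}\}$ alone makes $(H_i)$ a random composition of the light‑tailed law $\mubn$; the process $(\oH_{\lfloor u\,n_{\bullet}\rfloor})$ then concentrates, with $\oH_{\lfloor u\,n_{\bullet}\rfloor}/n_{\circ}\to u$ in probability with $o(n_{\circ})$ fluctuations, so it acts only as an asymptotically‑identity time change. Since $\oB_{m}+m=\Scn_{\oH_{m}}$ in distribution (pooling the white children group by group), the remaining conditioning $\{\oB_{n_{\bullet}}=-1\}$ is exactly the conditioning $\{\Scn_{n_{\circ}}=n_{\bullet}-1\}$, i.e.\ the $\Scn$‑walk is conditioned to return near its mean; its rescaled version then converges to the Lévy bridge $\Xbr$ by a standard bridge‑from‑walk argument, and composing with the time change gives $(\tfrac{c}{n}\oB_{\lfloor u\,n_{\bullet}\rfloor})\to\Xbr$. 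Equivalently, one checks directly that, writing $m=\lfloor u\,n_{\bullet}\rfloor$ and using the Markov property and Lemma~\ref{lem:magique}, the Radon--Nikodym derivative of the conditioned law of $(\tfrac{c}{n}\oB_{\lfloor v\,n_{\bullet}\rfloor})_{0\leq v\leq u}$ with respect to its unconditioned law, evaluated at $\oH_{m}=h$, $\oB_{m}=b$, equals
\[
\frac{\Prb{\Sbn_{n_{\bullet}-m}=n_{\circ}-h}\,\Prb{\Scn_{n_{\circ}-h}=n_{\bullet}-m-1-b}}{\Prb{\Sbn_{n_{\bullet}}=n_{\circ}}\,\Prb{\Scn_{n_{\circ}}=n_{\bullet}-1}},
\]
and that, after a careful treatment of the joint behaviour of $(\oH,\oB)$, this converges (uniformly over the relevant range) to the $h$‑transform density $d_{1-u}(-X_{u})/d_{1}(0)$ of \eqref{eq:lawbridge}; letting $u\uparrow1$, plus a tightness estimate near the right endpoint or a time‑reversal symmetry, yields the bridge convergence. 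The inputs are the uniform local limit lemmas of Section~\ref{ss:llt}: a local central limit theorem for the light‑tailed sums $\Sbn_{k}$ (variance $\sim k\,c/\sqrt n$), and — the decisive one — a local limit theorem for the heavy‑tailed sums $\Scn_{k}$ towards the density $d_{t}$ of \eqref{eq:densiteXt}, valid uniformly on a macroscopic window of values and in $t$.

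\textbf{The modified root, and the main obstacle.} For part~(ii), the tree $\tilde{\Ts}_{n}$ differs from $\Ts_{n}$ only in that its root has offspring law $\widetilde{\mu}^{n}_{\bullet}(i)=\mubn(i-1)$ (see \eqref{eq:BGWtilde}); I would show this perturbation does not affect the scaling limit of $\oB$, either by coupling $\tilde{\Ts}_{n}$ with $\Ts_{n}$ up to a surgery at the root whose effect on $\oB$ is bounded by the size of one extra white child of the root together with its black grandchildren — of order $\sqrt n$ in probability, hence $o(n)$ — or by an absolute‑continuity argument, the density of the law of $\tilde{\Ts}_{n}$ with respect to that of $\Ts_{n}$ depending only on the root degree, which tends to $0$ in probability since $\mbn\to0$; the re‑conditioning of the vertex counts is then routine bookkeeping. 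The genuine difficulty of the whole proof lies in the local limit theorem for $\Scn_{k}$ in Section~\ref{ss:llt}: the summands are drawn from a distribution $\mucn$ that itself varies with $n$ (a triangular array) and is heavy‑tailed with a tempered $x^{-3/2}$ tail whose temperation rate $1/e-\bcn$ vanishes like $1/n$, so that the limiting marginal is the non‑stable inverse Gaussian law rather than a stable one, and one needs \emph{local} (not merely integral) control, uniform over a window of order $n$ and over the time parameter — which does not follow from off‑the‑shelf local limit theorems and has to be proved by hand, inspired by \cite{CL16}.
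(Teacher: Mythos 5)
Your proposal follows essentially the same route as the paper: an unconditioned functional invariance principle, upgraded to a bridge statement by an absolute-continuity computation fed by uniform local limit theorems for $\Sbn$ and $\Scn$, then the discrete Vervaat transform to pass to the excursion (with the correct observation that one must work on $[-1,1]$ because $\Xbr$ jumps at its minimum, so Vervaat is only continuous into $\D([-1,1])$), and finally a coupling argument to absorb the modified root law for part (ii). You also correctly isolate the key technical input — the uniform local limit theorem for the heavy-tailed, $n$-dependent law $\mucn$ — as the hard part.

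Two intermediate steps are treated slightly differently, though not in a way that changes the substance. For the unconditioned step you compute the Laplace exponent of $B_1$ from the compositional identity $\Esb{e^{-\lambda B_1}}=F\bigl(\bbn F(\bcn e^{-\lambda})/F(\bcn)\bigr)/F(\bbn)$ together with the singular expansion \eqref{eq:devF}, whereas the paper deduces the one-dimensional limit of the \emph{pair} $(\hat H^{(n)}_1,\hat B^{(n)}_1)$ from the local limit Lemmas \ref{lem:ll1}–\ref{lem:ll2} and lifts to the functional statement via Kallenberg; the latter also immediately supplies the joint convergence to $(W,X)$ with $W\perp X$, which is needed for the $\psi$-density in the bridge argument, so it is worth proving the joint statement explicitly rather than only the $\oB$ marginal. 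You also float a time-change shortcut for the bridge step based on $\oB_m+m=\Scn_{\oH_m}$ — an appealing use of Lemma~\ref{lem:magique}'s factorization — but carrying it out still requires controlling the random time-change $\oH$ jointly with the bridge version of $\Scn$, so it is not obviously lighter than the direct Radon–Nikodym computation you also sketch (and which is the paper's route). Finally, for (ii) the paper's actual argument is that both $\Ts_n$ and $\tilde\Ts_n$ have root degree exactly one with probability tending to one, and given this their conditional laws coincide, yielding $d_{\mathrm{TV}}(\Ts_n,\tilde\Ts_n)\to0$ (Lemma~\ref{lem:root}); your phrasing ``the density ... tends to $0$'' is imprecise, and the ``surgery at the root'' picture is not needed once the total-variation bound is in hand.
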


The main difficulty is (i). Indeed, in the forthcoming Lemma~\ref{lem:root} (iii),
implies $\Ts_{n}$ and $\tTs _{n}$ can be coupled so that $\Ts_{n}=\tTs_{n}$
with probability tending to $1$ as $ n \rightarrow \infty$ (see e.g.~\cite{Lin92}).
Therefore, (ii) follows from (i).

From Lemma~\ref{lem:codeRW}, $(\oB_i(\Ts_{n}))_{i \ge 1}$ has the distribution of a conditioned random walk,
so that Theorem~\ref{thm:cvXexc} (i) is in fact an invariance principle for a conditioned random walk.
Thanks to Lemma~\ref{lem:vervaat}, 
we can consider a simpler conditioning of the form ``bridge'' instead of ``excursion''.

To state the invariance principle with this simpler conditioning, let us
recall some notation from Section~\ref{ss:coding}, adding an exponent $n$ to keep in mind that the
chosen offspring distributions do depend on $n$.
First, we  denote by $\Sbn$ and $\Scn$ the random walks with respective jump distributions given by $\mubn$ and $\mucn$.
Then we let $(H^{n}_{k},B^{n}_{k})_{k \geq 1}$ be a sequence of i.i.d.~random variables with distribution 
given by: for $i, j \geq 0$,  \[ \Prb{H^{n}=i,B^{n}=j}=\mubn(i) \, \Prb{\Scn_{i}=j}.\]
Finally, for $i \ge 1$, we set $\overline{H}^{n}_{i}=H^{n}_{1}+H^{n}_{2}+ \cdots+H^{n}_{i}$ and  $\oB^{n}_{i}=B^{n}_{1}+B^{n}_{2}+ \cdots+B^{n}_{i}-i$.

\begin{proposition}
\label{prop:cvbridge}
Assume that $ \tfrac{K_{n}}{\sqrt{n}} \rightarrow c>0$ as $n \rightarrow \infty$. Conditionally given the event
$ \{\overline{H}^{n}_{n-K_{n}}=K_{n}+1, \oB^{n}_{n-K_{n}}=-1 \}$, the convergence
\[  \left(   c \cdot  \frac{\oB^{n}_{\lfloor u (n-K_{n}) \rfloor}}{ n} \right)_{0 \leq u \leq 1}  \quad \mathop{\longrightarrow}^{(d)}_{n \rightarrow \infty} \quad \Xbr\]
holds in distribution.
\end{proposition}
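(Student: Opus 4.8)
\smallskip
\noindent The plan is to first use Lemma~\ref{lem:magique} to recast the two-part conditioning as a pair of \emph{independent} bridge conditionings, then to handle the genuinely non-trivial one (a bridge of the $\mucn$-walk) by the usual ``functional invariance principle $+$ local limit theorem'' scheme, the other one contributing only through a law of large numbers. Concretely, I would start from the following coupling. Let $(\xi_m)_{m\ge1}$ be i.i.d.\ with law $\mucn$, write $\Scn_m=\xi_1+\cdots+\xi_m$, let $(H^n_k)_{k\ge1}$ be an independent i.i.d.\ sequence with law $\mubn$ and $\oH^n_i=H^n_1+\cdots+H^n_i$, and set $B^n_k=\xi_{\oH^n_{k-1}+1}+\cdots+\xi_{\oH^n_k}$. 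Then $(H^n_k,B^n_k)_{k\ge1}$ has the law prescribed in the statement (cf.\ \eqref{eq:HB}) and, crucially, $\oB^n_i=\Scn_{\oH^n_i}-i$ for every $i$. Hence, with $N:=n-K_n$, the conditioning event $\{\oH^n_N=K_n+1,\ \oB^n_N=-1\}$ equals $\{\oH^n_N=K_n+1\}\cap\{\Scn_{K_n+1}=N-1\}$, and these two events live on the two \emph{independent} families $(H^n_k)_k$ and $(\xi_m)_m$; so, under the conditioning, $(\oH^n_i)_{0\le i\le N}$ is a bridge of the $\mubn$-walk from $0$ to $K_n+1$ in $N$ steps and $(\Scn_m)_{0\le m\le K_n+1}$ an independent bridge of the $\mucn$-walk from $0$ to $N-1$. (This is exactly where Lemma~\ref{lem:magique} is used: it says that the two constraints factorize.)

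Next I would establish the \emph{unconditioned} invariance principle $\big(\tfrac{c}{n}(\Scn_{\lfloor s(K_n+1)\rfloor}-\lfloor s(K_n+1)\rfloor\,\mcn)\big)_{0\le s\le1}\to(X_s)_{0\le s\le1}$ in $\D([0,1],\R)$. For each fixed $n$ this is a sum of i.i.d.\ triangular-array increments with $K_n+1\to\infty$ summands, so one applies the classical criteria for convergence of random walks to a Lévy process (e.g.\ \cite{JS03,Kal81}): one must check that $(K_n+1)\,\Pr{\tfrac{c}{n}(\xi_1-\mcn)\in\cdot\,}$ converges vaguely on $\R\setminus\{0\}$ to the Lévy measure $\Pi$ of \eqref{eq:densiteXt}, that the truncated variances $(K_n+1)\,\mathrm{Var}\big(\tfrac{c}{n}(\xi_1-\mcn)\mathbbm{1}_{|\cdot|\le\varepsilon}\big)$ converge to $\int_{(0,\varepsilon]}x^2\,\Pi(\d x)$ (so in particular there is no Gaussian part), and that the truncated drift converges --- the last point being automatic here since the increments are centered and $X$ is centered with $\int x\,\Pi(\d x)=c$, and since $\tfrac{c}{n}(\xi_1-\mcn)\ge-\tfrac{c}{n}\mcn\to0$ forces the limit to make only positive jumps. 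All the needed estimates are read off from Lemma~\ref{lem:exist}: from $1/e-\bcn\sim\tfrac{1}{2e}(K_n/n)^2$ and $(j+1)^{j-1}/j!\sim e^j/\sqrt{2\pi j^3}$ one gets $\mucn(j)\sim\tfrac{1}{\sqrt{2\pi}}\,j^{-3/2}e^{-c^2j/(2n)}$ uniformly in the relevant range, whence the limits above are identified by Riemann-sum computations with the quantities built from $\Pi(\d x)=\mathbbm{1}_{x>0}\tfrac{c^{3/2}}{\sqrt{2\pi x^3}}e^{-cx/2}\,\d x$. (As a by-product, combining this with a law of large numbers giving $\oH^n_{\lfloor uN\rfloor}=u(K_n+1)+o_{\mathbb{P}}(\sqrt n)$ uniformly in $u$ and the identity $\oB^n_i=\Scn_{\oH^n_i}-i$, one also recovers the \emph{unconditioned} analogue of the proposition, with $X$ replacing $\Xbr$.)

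To pass from the walk to the bridge I would use the discrete analogue of the absolute-continuity relation \eqref{eq:lawbridge}: for $0<u<1$, the law of $(\Scn_m)_{0\le m\le\lfloor u(K_n+1)\rfloor}$ under $\{\Scn_{K_n+1}=N-1\}$ is that of the unconditioned walk reweighted by $\Pr{\Scn_{K_n+1-\lfloor u(K_n+1)\rfloor}=N-1-\Scn_{\lfloor u(K_n+1)\rfloor}}\big/\Pr{\Scn_{K_n+1}=N-1}$. Here one needs a \emph{uniform local limit theorem} for $\Scn$, of the form $\tfrac{n}{c}\,\Pr{\Scn_m=j}\to d_s(y)$ uniformly whenever $\tfrac{m}{K_n+1}\to s\in(0,1]$ and $\tfrac{c}{n}(j-m\,\mcn)\to y$ (together with the aperiodicity of $\mucn$), $d_s$ being the density of $X_s$ from \eqref{eq:densiteXt}; this is precisely the object of Section~\ref{ss:llt}. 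Granting it, the reweighting factor converges to $d_{1-u}(-X_u)/d_1(0)$, and combining this with the finite-dimensional convergence and the tightness furnished by the unconditioned invariance principle yields $\big(\tfrac{c}{n}(\Scn_{\lfloor s(K_n+1)\rfloor}-sN)\big)_{0\le s\le1}\to\Xbr$ under $\{\Scn_{K_n+1}=N-1\}$. It then remains to transfer this to $\oB^n$: under the full conditioning the $\mubn$-bridge $(\oH^n_i)$ is independent of the $\mucn$-bridge (first step), and the classical $L^2$-bound $\mathbb{E}\big[(\oH^n_i-\tfrac{i}{N}(K_n+1))^2\big]=\tfrac{i(N-i)}{N}(\sbn)^2=O(\sqrt n)$ together with a maximal inequality gives $\sup_{0\le u\le1}\big|\oH^n_{\lfloor uN\rfloor}-u(K_n+1)\big|=o_{\mathbb{P}}(\sqrt n)$, i.e.\ the (monotone) time-change $u\mapsto\oH^n_{\lfloor uN\rfloor}/(K_n+1)$ tends uniformly in probability to the identity; writing $\tfrac{c}{n}\oB^n_{\lfloor uN\rfloor}=\tfrac{c}{n}\Scn_{\oH^n_{\lfloor uN\rfloor}}-\tfrac{c}{n}\lfloor uN\rfloor$ and invoking the continuity of composition in the $J_1$ topology by a time-change converging uniformly to the identity, one concludes $\big(\tfrac{c}{n}\oB^n_{\lfloor uN\rfloor}\big)_{0\le u\le1}\to\Xbr$.

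The step I expect to be the real obstacle is the uniform local limit theorem invoked above: the offspring law $\mucn$ is heavy-tailed with a slowly receding exponential cut-off (its variance diverges, $(\scn)^2\sim n^{3/2}/c^3$), so neither the Gnedenko-type local CLT nor a stable local limit theorem applies off the shelf, and one must control $\Pr{\Scn_m=j}$ uniformly both near and far from the mean, keeping track of the contribution of the $O(1)$ macroscopic jumps --- this is the delicate analysis of Section~\ref{ss:llt}. A secondary point requiring care is that $\Xbr$ (like $\Xexc$) is a genuine jump process, so the time-change argument in the last step must be run in the $J_1$ topology rather than uniformly.
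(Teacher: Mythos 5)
Your proposal is correct and takes a genuinely different route from the paper's. The paper's proof works directly with the pair process $(\oH^{n}_i,\oB^{n}_i)$: it applies the Markov property at time $\lfloor u(n-K_n)\rfloor$, expresses the conditional expectation as an unconditioned one reweighted by a Radon--Nikodym factor $\psi$ (Eq.~\eqref{eq:def_psi}), factorizes this weight into an $\Sbn$-part and an $\Scn$-part via Lemma~\ref{lem:magique}, estimates each factor by Lemmas~\ref{lem:ll1}--\ref{lem:ll2}, and so shows the weight converges to $\tfrac{p_{1-u}(\hH^{(n)}_u)}{p_1(0)}\tfrac{d_{1-u}(\hB^{(n)}_u)}{d_1(0)}$. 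You instead exhibit the factorization at the level of random variables, not of densities: your coupling $\oB^n_i=\Scn_{\oH^n_i}-i$ with $(H^n_k)$ independent of $(\xi_m)$ makes the conditioning event $\{\oH^n_N=K_n+1,\oB^n_N=-1\}$ a product event on two independent families, so under it $(\oH^n)$ and $(\Scn)$ are two \emph{independent} bridges. This is conceptually cleaner, and your coupling in fact gives a one-line proof of Lemma~\ref{lem:magique} itself. Both proofs rely on the same hard input (the uniform local limit theorems, which you correctly identify as the crux). The trade-off is at the end: the paper proves the joint bridge convergence $(\hH^{(n)},\hB^{(n)})\to(W^{\mathrm{br}},\Xbr)$ directly and never leaves the pair process, while you must transfer the $\Scn$-bridge limit to $\oB^n$ through the random time change $\lambda_n(u)=\oH^n_{\lfloor uN\rfloor}/(K_n+1)$. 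You are right that this must be run in $J_1$; note also that $\lambda_n$ is a step function, so one should first verify that $\max_k H^n_k/(K_n+1)=o_{\P}(1)$ (true here because $\mubn$ has rapidly decaying tails, cf.\ the estimate \eqref{eq:tail_mub}) in order to replace $\lambda_n$ by a nearby increasing homeomorphism before invoking the continuity of composition. Finally, for the claim $\sup_u|\oH^n_{\lfloor uN\rfloor}-u(K_n+1)|=o_{\P}(\sqrt n)$: the formula $\Var(S_i\mid S_N)=\tfrac{i(N-i)}{N}\sigma^2$ is exact only for Gaussian walks, and a maximal inequality for the bridge is not immediate; the cleanest fix is to bound the Radon--Nikodym derivative of the first half of the bridge against the unconditioned walk by $O(1)$ (using Lemma~\ref{lem:ll1}) and apply Doob's inequality there, then time-reverse for the second half --- the same device the paper uses to get tightness.
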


Before proving Proposition~\ref{prop:cvbridge}, we explain how it implies Theorem~\ref{thm:cvXexc}. 

\begin{proof}[Proof of Theorem~\ref{thm:cvXexc} (i) using Proposition~\ref{prop:cvbridge}] Recall that $\Vc$ denotes the Vervaat transform that has been introduced in the end of Section~\ref{ssec:Levy}. From Lemma~\ref{lem:vervaat} (ii) and Lemma~\ref{lem:codeRW} (ii), it is elementary to get that
\[ \Vc \left(  \left(  \frac{\oB^{n}_{\lfloor u (n-K_{n}) \rfloor}}{ \sqrt{ (\scn)^{2} K_{n} }}  \right) _{{0 \leq u \leq 1}} \right) \quad \mathop{=}^{(d)}  \quad  \left( \frac{\oB_{\lfloor u (n-K_{n}) \rfloor}(\Ts_{n})}{ \sqrt{ (\scn)^{2} K_{n} }}  \right)_{ 0 \leq u \leq 1}.\]
Note that $\Xbr$ reaches its infimum at a unique time almost surely:
indeed, this is true for the unconditioned process $X$ on every fixed interval, and transfers to $\Xbr$ by the absolute continuity relation \eqref{eq:lawbridge}.
Therefore
$\Vc$ is almost surely continuous at $\Xbr$, and it follows that
\begin{equation}
\left( \frac{\oB_{\lfloor u (n-K_{n}) \rfloor}(\Ts_{n})}{ \sqrt{ (\scn)^{2} K_{n} }}  \right)_{ -1 \leq u \leq 1} \quad \mathop{\longrightarrow}_{n \rightarrow \infty}^{(d)} \quad (\Xexc_{u})_{-1 \leq u \leq 1}.
   \label{eq:cvHB}
 \end{equation}
 (Since $\Xbr$ jumps a.s. at its minimum, $\Vc$ is not a.s. continuous at $\Xbr$ when seen as a functional 
 $\D([0,1]) \to \D([0,1])$, but only as a functional $\D([0,1]) \to \D([-1,1])$;
 details are left to the reader.)
This completes the proof.
\end{proof}

\begin{remark} It is possible to strengthen the previous results by considering a bivariate Vervaat transform and to establish that
\[   \left( \frac{\overline{H}_{\lfloor u (n-K_{n}) \rfloor}(\Ts_{n})-u K_{n}}{ \sqrt{ (\sbn)^{2} n} }, \frac{\oB_{\lfloor u (n-K_{n}) \rfloor}(\Ts_{n})}{ \sqrt{ (\scn)^{2} K_{n} }}  \right)_{ -1 \leq u \leq 1}   \quad \mathop{\longrightarrow}_{n \rightarrow \infty}^{(d)} \quad  (W^{\mathrm{br}},\Xexc),\]
where $ W^{\mathrm{br}}$ is a Brownian bridge independent of $\Xexc$ (with $W^{\mathrm{br}}_{u}=0$ for $u<0$).
However, since we do not need this bivariate convergence for our initial goal (proving Theorem~\ref{thm:cvlam} (i)$_{c>0}$),
we focus here on the convergence of the second component only.
\end{remark}

The strategy of the proof of Proposition~\ref{prop:cvbridge} consists of two steps: first we establish a convergence of the unconditioned processes (Lemma~\ref{lem:cvnoncond}), and then we show Proposition~\ref{prop:cvbridge} by writing an analogue of the absolute continuity relation \eqref{eq:lawbridge} in the discrete setting and by passing to the limit. We will  heavily rely on the following uniform local limit estimates concerning the random walks $\Sbn$ and $\Scn$ (the proofs, very technical, are postponed to Section~\ref{ss:llt}).

\begin{lemma} \label{lem:ll1}
  Fix $0<u \leq 1$. For $n,N \geq 1$,  set  $\DbnN= \sqrt{ (\sbn)^{2}  N}$. Assume that $ \tfrac{K_{n}}{n} \rightarrow 0$ and that $K_{n} \rightarrow \infty$ as $n \rightarrow \infty$. 
  We have     
  \[  \sup_{un \leq N  \leq n} \sup_{k \in \Z  } \left|  \DbnN  \cdot \Pr{\Sbn_N=k} 
  -   p \left( \frac{k- N\tfrac{K_n+1}{n-K_n}}
  {  \DbnN  }\right)\right|  \quad \mathop{\longrightarrow}_{n \rightarrow \infty} \quad 0,\] 
  where $p(x)= \frac{1}{\sqrt{2 \pi}} e^{- \frac{x^{2}}{2}}$ is the standard Gaussian density.
  \label{lem:ll1Uniforme}
\end{lemma}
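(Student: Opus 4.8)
The statement is a quantitative local limit theorem for the random walk $\Sbn$, and I would run the classical Fourier-analytic proof of the local limit theorem, keeping every constant explicit because the offspring law $\mubn$ degenerates as $n\to\infty$ — by Lemma~\ref{lem:exist}, its mean $\mbn=\tfrac{K_n+1}{n-K_n}$, its variance $(\sbn)^2$ and the parameter $\bbn$ are all of order $K_n/n\to0$ — and because uniformity over $un\le N\le n$ is required. Write $\widehat{\mubn}(\theta)=\Es{e^{i\theta\xi_n}}$ with $\xi_n\sim\mubn$; since $\mubn(i)>0$ for every $i\ge0$, $\mubn$ has span $1$. Using the inversion formula, the Fourier representation $p(x)=\tfrac{1}{2\pi}\int_\R e^{-ixt}e^{-t^2/2}\,\d t$, and $\mbn=\tfrac{K_n+1}{n-K_n}$, one obtains after the substitution $\theta=t/\DbnN$, uniformly in $k\in\Z$,
\[
  \Big|\,\DbnN\,\Pr{\Sbn_N=k}-p\Big(\tfrac{k-N\mbn}{\DbnN}\Big)\Big|
  \;\le\;\frac{1}{2\pi}\int_{-\pi\DbnN}^{\pi\DbnN}\Big|\,\widehat{\mubn}\big(\tfrac{t}{\DbnN}\big)^{\!N}-e^{iN\mbn t/\DbnN}e^{-t^2/2}\Big|\,\d t\;+\;\frac{1}{2\pi}\int_{|t|>\pi\DbnN}e^{-t^2/2}\,\d t .
\]
Since $\DbnN=\sqrt{(\sbn)^2N}\ge\sqrt{(\sbn)^2\,un}\to\infty$ (here $(\sbn)^2\sim K_n/n$ and $K_n\to\infty$ are used), the last term goes to $0$ uniformly, and the plan is to bound the first integral by splitting $[-\pi\DbnN,\pi\DbnN]$ at $\pm\theta_0\DbnN$ for a small fixed $\theta_0$.

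Two uniform-in-$n$ estimates on $\widehat{\mubn}$ are needed, both deduced from
\[
  1-|\widehat{\mubn}(\theta)|^2=\Es{1-\cos\big(\theta(\xi_n-\xi_n')\big)}\;\ge\;2\,\mubn(0)\mubn(1)\,(1-\cos\theta),
\]
with $\xi_n,\xi_n'$ i.i.d.\ of law $\mubn$, combined with $\mubn(0)\mubn(1)=(\abn)^2\bbn\asymp(\sbn)^2$ (as $\abn\to1$ and, by Lemma~\ref{lem:exist}, $\bbn\asymp(\sbn)^2$): namely $|\widehat{\mubn}(\theta)|\le 1-c(\sbn)^2$ for $\theta_0\le|\theta|\le\pi$, and $|\widehat{\mubn}(\theta)|\le 1-c(\sbn)^2\theta^2$ for $|\theta|\le\theta_0$, for $n$ large. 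On the outer region $\theta_0\DbnN\le|t|\le\pi\DbnN$ the first bound yields $|\widehat{\mubn}(t/\DbnN)^N|\le e^{-cN(\sbn)^2}\le e^{-c'K_n}$ (using $N\ge un$), so this part contributes at most $2\pi\DbnN\,e^{-c'K_n}+\int_{|t|>\theta_0\DbnN}e^{-t^2/2}\,\d t\lesssim\sqrt{K_n}\,e^{-c'K_n}\to0$, uniformly over $N\in[un,n]$.

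On the inner region $|t|\le\theta_0\DbnN$ I would Taylor-expand $\psi_n:=\log\widehat{\mubn}$ at $0$ to third order (legitimate since $\widehat{\mubn}\to1$ uniformly on compacts, hence $\widehat{\mubn}$ stays bounded away from $0$): $\psi_n(\theta)=i\mbn\theta-\tfrac12(\sbn)^2\theta^2+r_n(\theta)$ with $|r_n(\theta)|\le C\bbn|\theta|^3$ for $|\theta|\le\theta_0$, the bound on $|\psi_n'''|$ following from $|\widehat{\mubn}^{(k)}(\theta)|\le\Es{\xi_n^k}\asymp\bbn$ for $k=1,2,3$ — a direct computation from the explicit form of $\mubn$ using $\bbn\to0$. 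Setting $\theta=t/\DbnN$ and using $(\DbnN)^2=N(\sbn)^2$ together with $\bbn\asymp(\sbn)^2$, one gets $N\psi_n(t/\DbnN)=iN\mbn t/\DbnN-\tfrac12t^2+R$ with $|R|\le C'|t|^3/\DbnN$; combined with $\Re\big(N\psi_n(t/\DbnN)\big)\le-\tfrac{c}{2}t^2$ and the elementary inequality $|e^a-e^b|\le|a-b|\,e^{\max(\Re a,\Re b)}$, the inner integrand is at most $C'\tfrac{|t|^3}{\DbnN}e^{-\frac{c}{2}t^2}$, so the inner contribution is $\le C''/\DbnN\to0$, again uniformly over $N\in[un,n]$.

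The delicate point — and the reason the conditioning sizes must be comparable to $n$ — is that the per-step Lyapunov ratio $\mu_3^n/(\sbn)^3\asymp1/\sqrt{\bbn}=\sqrt{n/K_n}$ (with $\mu_3^n:=\Es{|\xi_n-\mbn|^3}\asymp\bbn$) diverges; what rescues the argument is that only the aggregated quantity $N\mu_3^n/(\DbnN)^3=\mu_3^n/\big((\sbn)^3\sqrt N\big)\asymp1/\DbnN$ enters, and this tends to $0$ precisely because $N\ge un$ forces $(\DbnN)^2=(\sbn)^2N\gtrsim K_n\to\infty$. I expect the main obstacle to be keeping all constants uniform in $n$ as $\mubn$ collapses toward $\delta_0$ — especially the two-sided estimate $(\sbn)^2\asymp\bbn$ and the lower bound $\mubn(0)\mubn(1)\asymp(\sbn)^2$ coming from Lemma~\ref{lem:exist} — after which uniformity over $N\in[un,n]$ is automatic, since all final bounds depend on $N$ only through $\DbnN\in\big[\sqrt{(\sbn)^2un},\sqrt{(\sbn)^2n}\big]$ and through $K_n$.
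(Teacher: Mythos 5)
Your proof is correct and follows the same Fourier-inversion strategy as the paper: write the difference as $\frac{1}{2\pi}\int_{-\pi\DbnN}^{\pi\DbnN}$ of the gap between $\widehat{\mubn}(t/\DbnN)^N$ (suitably recentered) and the Gaussian characteristic function, split the integral, control the inner part by a Taylor expansion of the log-characteristic function, and kill the outer part with a uniform decay bound on $|\phibn|$.

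The one genuine difference is how the decay bound on $|\phibn|$ is obtained. You use the universal probabilistic inequality $1-|\phibn(\theta)|^2 = \Es{1-\cos(\theta(\xi_n-\xi_n'))} \ge 2\,\mubn(0)\mubn(1)\,(1-\cos\theta)$ combined with the identification $\mubn(0)\mubn(1)\asymp\bbn\asymp(\sbn)^2$, which handles all of $[-\pi,\pi]$ in one stroke. The paper instead devotes a separate lemma (Lemma~\ref{lem:phib}) to establishing $\ln|\phibn(t)|\le-\tfrac{K_n}{8n}t^2$ on $[-\pi,\pi]$, by splitting into $|t|\le1$ (third-order Taylor expansion of $|\phibn|^2$ with remainder control through $\Es{(\Sbn_1)^3}$) and $1\le|t|\le\pi$ (expansion $F(z)=1+z+o(z)$ to get $|\phibn(t)|^2 = 1+2\bbn(\cos t-1)+o(\bbn)$). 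Your route is shorter and sidesteps the explicit analysis of the generating function $F$; the paper's route yields a single quadratic bound on all of $[-\pi,\pi]$, which allows it to avoid the inner/outer split at $\theta_0\DbnN$ and use instead a fixed cutoff $A$ with three pieces $I^{(1)},I^{(2)},I^{(3)}$ and a soft $\eps$-argument. Your inner bound $C'\,|t|^3\,e^{-ct^2}/\DbnN$ also yields a quantitative rate $O(1/\DbnN)$; the paper is content with a non-quantitative statement. Both arguments are valid, and your reduction of the problem to $N(\sbn)^2\gtrsim K_n\to\infty$ and $N\mu_3^n/(\DbnN)^3\asymp1/\DbnN\to0$ is exactly the right diagnosis of where $N\ge un$ is needed.
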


\begin{lemma}\label{lem:ll2}
 Fix $u>0$. Assume that $ \tfrac{K_{n}}{\sqrt{n}} \rightarrow c>0$ as $n \rightarrow \infty$.  We set $\Dcn=\sqrt{ (\scn)^{2} K_{n} } \sim n /c$. 
 Then it holds that
\[ \sup_{|j| \leq n^{3/8}} \sup_{k \in \Z} \left|  \Dcn \cdot \Pr{\Scn_{ uK_{n}  + j }=k} -  q_{u} \left( \frac{k}{  \Dcn }\right)\right|  \quad \mathop{\longrightarrow}_{n \rightarrow \infty} \quad 0,\]
where
\begin{equation}
\label{eq:defqu}q_{u}(x)=\left( \frac{u^{2} c^{3}}{2 \pi x^{3}} \right)^{1/2} \exp \left(  - \frac{c (x-uc)^{2} }{2x} \right) \mathbbm{1}_{x>0}.
\end{equation}\end{lemma}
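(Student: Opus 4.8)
The plan is to obtain a closed expression for the law of $\Scn_m$ and then to prove the local estimate by a direct asymptotic analysis of it. Recall that the power series $F$ of \eqref{eq:F} satisfies $F(z)=e^{zF(z)}$ (equivalently $F(z)=T(z)/z$, where $T(z)=ze^{T(z)}$, so that $F(z)=-W(-z)/z$ as used in the proof of Lemma~\ref{lem:exist}). The probability generating function of $\mucn$ being $s\mapsto\acn F(\bcn s)$ with $\acn=1/F(\bcn)$, that of $\Scn_m$ is $s\mapsto\acn^m F(\bcn s)^m$, and Lagrange inversion applied to $F$ yields $[z^k]F(z)^m=\tfrac{m(k+m)^{k-1}}{k!}$. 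Hence, for every $k\ge 0$ (and $\Pr{\Scn_m=k}=0$ for $k<0$, matching the support of $q_u$),
\[ \Pr{\Scn_m=k}=(\acn)^m\,(\bcn)^k\,\frac{m\,(k+m)^{k-1}}{k!}. \]
This reduces the statement to a uniform local limit estimate for an explicit (Borel--Tanner type) distribution, with $m=uK_n+j$ and $|j|\le n^{3/8}$.

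Write $D=\Dcn\sim n/c$ and fix constants $0<a<A<\infty$. The strategy is to treat separately the bulk $\{aD\le k\le AD\}$ and the two tails. On the bulk one takes logarithms in the closed formula, applies Stirling's expansion $\log k!=k\log k-k+\tfrac12\log(2\pi k)+O(1/k)$ (the error being $o(1)$ uniformly since $k\ge aD\to\infty$), expands $k\log(1+m/k)=m-\tfrac{m^2}{2k}+O(m^3/k^2)$ and $\log(k+m)=\log k+O(m/k)$ (note that $m^2/k=\Theta(1)$ while $m^3/k^2=o(1)$ in this regime), and substitutes the asymptotics of $\acn$ and $\bcn$ supplied by Lemma~\ref{lem:exist}, namely $\acn=1/F(\bcn)$ and $\tfrac1e-\bcn\sim\tfrac1{2e}(K_n/n)^2$, together with the singular expansion $F(\tfrac1e-z)=e-\sqrt2\,e^{3/2}\sqrt z+o(\sqrt z)$ of \eqref{eq:devF}. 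A (lengthy but routine) computation then gives
\[ \log\!\big(D\cdot\Pr{\Scn_m=k}\big)-\log q_u(k/D)\;\longrightarrow\;0 \]
uniformly over $aD\le k\le AD$ and $|j|\le n^{3/8}$; the role of $j$ is only through $m$, and its contribution is negligible because $n^{3/8}=o(K_n)$, so that $m=uc\sqrt n\,(1+o(1))$ with enough precision. Since $q_u$ is bounded away from $0$ and $\infty$ on $[a,A]$, this proves the estimate of the lemma on the bulk.

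For the two tails one exploits the explicit formula through the ratio of consecutive weights,
\[ \frac{\Pr{\Scn_m=k+1}}{\Pr{\Scn_m=k}}=\bcn\,\frac{k+m}{k+1}\,\Big(1+\frac1{k+m}\Big)^{k}. \]
Because $m=uK_n+j\to\infty$, an elementary expansion of this ratio shows it is $\ge 1$ for all $k\le aD$ provided $a$ is small enough (in terms of $u$ and $c$) and $n$ is large, and $\le 1$ for all $k\ge AD$ provided $A>uc$; in other words $k\mapsto\Pr{\Scn_m=k}$ is nondecreasing on $\{0,\dots,\fl{aD}\}$ and nonincreasing on $\{\ce{AD},\ce{AD}+1,\dots\}$. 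Therefore $\max_{0\le k\le aD}D\,\Pr{\Scn_m=k}=D\,\Pr{\Scn_m=\fl{aD}}$ and $\max_{k\ge AD}D\,\Pr{\Scn_m=k}=D\,\Pr{\Scn_m=\ce{AD}}$, and the same expansion as on the bulk (applied at the single points $\fl{aD}$ and $\ce{AD}$) shows these converge to $q_u(a)$ and $q_u(A)$ respectively. Since $q_u$ is increasing near $0$ and decreasing near $\infty$, one has $\sup_{0<x\le a}q_u(x)=q_u(a)\to 0$ as $a\to 0^+$ and $\sup_{x\ge A}q_u(x)=q_u(A)\to 0$ as $A\to\infty$; hence choosing $a$ small and $A$ large makes both $\sup_{k\le aD}|D\,\Pr{\Scn_m=k}-q_u(k/D)|$ and $\sup_{k\ge AD}|D\,\Pr{\Scn_m=k}-q_u(k/D)|$ arbitrarily small for $n$ large (the value $k=0$ being covered by $\Pr{\Scn_m=0}=(\acn)^m$, which is super-exponentially small). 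Combining the bulk and tail estimates yields the lemma.

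The main obstacle is the uniformity on the bulk: one has to keep every error term $o(1)$ simultaneously over a window $[aD,AD]$ of width of order $n$ and over $|j|\le n^{3/8}$. This is what forces the use of the refined singular expansion of $F$ at its dominant singularity (the mere value $F(1/e)=e$ is insufficient) and the careful treatment of the cross term $m^3/k^2$, which is precisely why the statement restricts to $|j|\le n^{3/8}$ (any $j=o(n^{1/2})$ would do; the exact exponent is immaterial). The monotonicity argument for the tails is a shortcut; alternatively one could push the Stirling estimates directly into the tails, or argue Fourier-analytically with the characteristic function $s\mapsto F(\bcn e^{is})/F(\bcn)$, but the route above is the shortest.
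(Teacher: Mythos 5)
Your proof is correct, and it takes a genuinely different route from the paper's. The paper proves this local limit theorem by the standard Fourier-analytic method: it first establishes the characteristic-function bound of Lemma~\ref{lem:technique}, then uses Fourier inversion and splits the resulting integral into a bulk term $I^{(1)}_A$ (handled by a uniform pointwise convergence of $\phicn(t/\Dcn)^{uK_n+j}$ to the Fourier transform of $q_u$) and tail terms $I^{(2)},I^{(3)},I^{(4)}$ (handled by the characteristic-function bound and the non-lattice property). You instead exploit the fact that the offspring distribution $\mucn$ is an exponential tilt of the Borel distribution, so that $F$ satisfies $F(z)=e^{zF(z)}$ and Lagrange inversion yields the exact Borel--Tanner formula $\Pr{\Scn_m=k}=(\acn)^m(\bcn)^k\,m(k+m)^{k-1}/k!$; you then get the bulk estimate by Stirling together with the singular expansion \eqref{eq:devF} of $F$ at $1/e$ (which, reassuringly, is also what the paper uses to compute $\bcn$ and $(\scn)^2$), and you localize via the ratio $\Pr{\Scn_m=k+1}/\Pr{\Scn_m=k}=\bcn\,\frac{k+m}{k+1}(1+\tfrac{1}{k+m})^k$, whose unimodality replaces the Fourier tail bounds. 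The bookkeeping checks out: the threshold where this ratio crosses $1$ is the mode of the inverse Gaussian $q_u$, which is $<uc$, so your choice $A>uc$ is safe; the contribution of $j$ enters only through $m=uK_n+j$ and cancels at first order between $m\log\acn$ and $(k-1)\log(k+m)$, leaving an $O(n^{-1/8})$ error on the logarithmic scale, consistent with the restriction $|j|\le n^{3/8}$. Your approach is shorter and more elementary here because it leans on the algebraic specificity of the tree function; the paper's characteristic-function method is more robust (it is reused almost verbatim in Lemma~\ref{lem:locallimitbis} for the different scaling regime, and the estimate on $|\phicn(t)|$ feeds directly into the absolute-continuity argument of Proposition~\ref{prop:cvbridge}), which is likely why the authors chose it.
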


We have chosen the exponent $3/8 \in (1/4,1/2)$ in view of specific future use,  but as the proof will show we can replace $3/8$ by  any positive value less than $1/2$.

   \begin{lemma}\label{lem:cvnoncond}
  Assume that $ \tfrac{K_{n}}{\sqrt{n}} \rightarrow c>0$ as $n \rightarrow \infty$.
  Consider sequences $(\oH^{n}_i)_{i \ge 1}$ and $(\oB^{n}_i)_{i \ge 1}$ as defined before Proposition~\ref{prop:cvbridge}.
  Without conditioning, the following convergence holds in distribution in $\mathbb{D}(\R_{+},\R^{2})$:
  \[ \left(  \frac{\oH^{n}_{ \lfloor u (n-K_{n}) \rfloor } - u K_{n} }{\sqrt{ (\sbn)^{2} n} },  \frac{\oB^{n}_{ \lfloor u (n-K_{n}) \rfloor }}{\sqrt{ (\scn)^{2} K_{n}}} \right)_{u \geq 0}  \quad \mathop{\longrightarrow}_{n \rightarrow \infty} \quad  (W_{u},X_{u})_{u \geq 0},\]
 where $W$ is a standard Brownian motion, independent from the Lévy process $X$.
  \end{lemma}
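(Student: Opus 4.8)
The plan is to deduce Lemma~\ref{lem:cvnoncond} from the factorization identity of Lemma~\ref{lem:magique} combined with the uniform local limit theorems of Lemmas~\ref{lem:ll1} and~\ref{lem:ll2}. Throughout, write $N_n = \lfloor u(n-K_n) \rfloor$, $A_n = \sqrt{(\sbn)^2 n}$ and $D_n = \sqrt{(\scn)^2 K_n}$; by Lemma~\ref{lem:exist} one has $A_n \sim \sqrt c\, n^{1/4}$, $D_n \sim n/c$, $N_n/D_n \to uc$ and $N_n \mbn = N_n \tfrac{K_n+1}{n-K_n} = uK_n + O(1)$, while $\oH^n_{N_n}$, being a sum of $N_n\sim un$ i.i.d.\ copies of $\mubn$ (mean and variance both $\sim K_n/n$), concentrates in a window $uK_n + O(\sqrt{K_n}) = uK_n + O(n^{1/4})$. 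I would first prove convergence of one-time marginals. Applying Lemma~\ref{lem:magique} with offspring distributions $\mubn,\mucn$ gives, for all integers $n_2,n_3$,
\[ \Pr{\oH^n_{N_n} = n_2,\ \oB^n_{N_n} = n_3} = \Pr{\Sbn_{N_n} = n_2}\cdot\Pr{\Scn_{n_2} = N_n + n_3}. \]

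On the concentration window $n_2 = uK_n + j$ with $|j| \le C\sqrt{K_n}$, Lemma~\ref{lem:ll1} yields $\Pr{\Sbn_{N_n} = n_2} = \tfrac{1}{\sqrt u\,A_n}\big(p(\tfrac{n_2 - uK_n}{\sqrt u\,A_n}) + o(1)\big)$ uniformly (using $A_n\to\infty$ to absorb the $O(1)$ shift in the centering), and Lemma~\ref{lem:ll2} yields $\Pr{\Scn_{n_2} = N_n + n_3} = \tfrac1{D_n}\big(q_u(\tfrac{N_n + n_3}{D_n}) + o(1)\big)$ uniformly. The crucial point is that this is exactly where the latitude $|j|\le n^{3/8}$ (with $3/8>1/4$) in Lemma~\ref{lem:ll2} is used, and that the limiting density $q_u$ does \emph{not} depend on $j$: thus the joint probability asymptotically factorizes as a function of $n_2$ times a function of $n_3$. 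Multiplying the two estimates and summing over the relevant lattice (the tails $|j|>C\sqrt{K_n}$ and $|n_3|>LD_n$ contributing $o(1)$ for $L$ large, $q_u$ being a probability density), I obtain that $\big(\tfrac{\oH^n_{N_n} - uK_n}{A_n},\tfrac{\oB^n_{N_n}}{D_n}\big)$ converges in law to a pair with product density $x\mapsto\tfrac1{\sqrt u}p(x/\sqrt u)$ times $y\mapsto q_u(uc+y)$ (using $N_n+n_3 = n_3 + uc\,D_n + o(D_n)$). A direct computation using \eqref{eq:defqu} and the explicit density $d_u$ from \eqref{eq:densiteXt} identifies $\tfrac1{\sqrt u}p(\cdot/\sqrt u)$ as the $\mathcal N(0,u)$ density and $q_u(uc+\cdot)$ as $d_u$; hence the limit is $(W_u,X_u)$ with $W_u$ and $X_u$ \emph{independent}. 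For finite-dimensional convergence I would use that $\big((\oH^n_i,\oB^n_i)\big)_i$ is an $\R^2$-valued random walk with i.i.d.\ increments $(H^n_k, B^n_k-1)$: its increments over disjoint blocks $(\lfloor u_j(n-K_n)\rfloor,\lfloor u_{j+1}(n-K_n)\rfloor]$ are independent, each is again a sum of $\sim (u_{j+1}-u_j)(n-K_n)$ i.i.d.\ copies of $(H^n,B^n-1)$, and the previous argument applies verbatim with $u$ replaced by $u_{j+1}-u_j$ (and $q_u$ by $q_{u_{j+1}-u_j}$). Since $(W,X)$ has independent increments and $W\perp X$, this gives convergence of all finite-dimensional marginals.

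It remains to prove tightness in $\mathbb{D}(\R_+,\R^2)$. The first coordinate is $C$-tight, its increments $H^n_k/A_n$ being uniformly infinitesimal since $\mubn$ has uniformly summable exponential tails ($\bbn e\to 0$). For the second coordinate I would verify Aldous's criterion: by the strong Markov property, for bounded stopping times $\tau_n$ and $\delta_n\downarrow 0$ the increment $\oB^n_{\tau_n+\delta_n}-\oB^n_{\tau_n}$ is distributed as a fresh walk run for $\sim\delta_n(n-K_n)$ steps, and the local estimates of Lemmas~\ref{lem:ll1}–\ref{lem:ll2} give $\Pr{|\oB^n_{\lfloor \delta_n(n-K_n)\rfloor}| > \eta D_n}\to 0$; tightness of the pair then follows from tightness of both marginals together with the continuity of the limiting first coordinate. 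Combining these steps gives the claimed convergence.

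The main obstacle is the one-time estimate: one must show that on the fluctuation window of $\oH^n_{N_n}$ (of width only of order $n^{1/4}$) the conditional law of $\oB^n_{N_n}$ is insensitive to the exact value of $\oH^n_{N_n}$ — precisely the decoupling built into the uniformity over $|j|\le n^{3/8}$ in Lemma~\ref{lem:ll2} — and then to carry out carefully the routine but delicate summation turning the two uniform local limit approximations into a convergence of joint laws, including the identity $q_u(uc+\cdot)=d_u$. The uniform local limit theorems themselves, established in Section~\ref{ss:llt}, are of course the technical heart, but they may be taken as given here.
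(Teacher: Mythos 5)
Your proposal is correct and relies on the same essential ingredients as the paper's proof — the factorization identity from Lemma~\ref{lem:magique} together with the uniform local limit theorems Lemmas~\ref{lem:ll1} and~\ref{lem:ll2}, and the identification $q_u(uc+\,\cdot\,)=d_u$ — but the route from the one-time local limit estimate to the functional convergence is genuinely different. The paper invokes a single citation, \cite[Theorem~16.14]{Kal02}, which states that for row-wise i.i.d.\ triangular arrays the functional convergence to a L\'evy process is \emph{equivalent} to one-dimensional convergence at a single time; consequently it only needs to verify the $u=1$ marginal (with the constant centering $K_n+1$ and the fixed densities $p$, $q_1$, $d_1$), and then one line finishes. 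You instead re-derive the content of that theorem by hand: first the one-time marginal for a general $u$, then finite-dimensional convergence via independence of increments over disjoint blocks, then tightness via uniform infinitesimality of the first-coordinate increments plus Aldous's criterion for the second coordinate, plus the observation that joint tightness follows because the first coordinate has a continuous limit. This is entirely viable and more elementary, at the cost of being considerably longer; the paper's approach buys a short proof by pushing the functional analysis into a black-box theorem. The only place where your sketch is a bit loose is the $C$-tightness of the first coordinate: uniform infinitesimality of increments ensures that any limit point is continuous, but tightness itself needs a maximal inequality (e.g.\ Ottaviani/L\'evy) or a modulus-of-continuity bound from the uniform exponential tails; worth spelling out if you carry this version through.
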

  
 Aiming for Proposition~\ref{prop:cvbridge}, the convergence of the first component seems to be superfluous.
 However,  its behavior has to be controlled in order to obtain a limit theorem for the second component
 in conditioned setting.

\begin{proof}
In virtue of \cite[Theorem 16.14]{Kal02}, it is enough to check that the one-dimensional convergence holds for $u=1$. Recall that $p$ is the density of the standard Gaussian distribution and that
\[q_{1}(x)=\left( \frac{ c^{3}}{2 \pi x^{3}} \right)^{1/2} e ^{  - \frac{c (x-c)^{2} }{2x} } \mathbbm{1}_{x>0}, \qquad d_{1}(x) =  \sqrt{ \frac{c^3}{2 \pi (x+c)^{3}}} \cdot e^{- \frac{c x^{2}}{2(x+c)}}  \mathbbm{1}_{x \geq -c}\]
are respectively the density appearing in \eqref{eq:defqu} and the density \eqref{eq:densiteXt} of $X_{1}$. In particular, note that $q_{1}(x+c)=d_{1}(x)$ for $x \in \R$.

For fixed $x,y \in \R$, by Lemma~\ref{lem:magique}, we have
\begin{multline*}
  \Prb{\overline{H}^{n}_{  n-K_{n} }= \lfloor x\sqrt{ (\sbn)^{2} n} +K_{n} \rfloor ,\oB^{n}_{  n-K_{n}}= \lfloor y \sqrt{ (\scn)^{2} K_{n}} \rfloor} \\
=\Pr{ \Sbn_{ n-K_{n} }=  \lfloor x\sqrt{ (\sbn)^{2} n} +K_{n} \rfloor} \Pr{\Scn_{ \lfloor x\sqrt{ (\sbn)^{2} n} +K_{n} \rfloor }= n-K_{n}  + \lfloor y \sqrt{ (\scn)^{2} K_{n}} \rfloor}.
\end{multline*}
Note that $\sqrt{ (\sbn)^{2} n} \sim \sqrt{c} \, n^{1/4}$. By the local limit  Lemmas~\ref{lem:ll1} and \ref{lem:ll2}, as $n \rightarrow \infty$, using the fact that $\sqrt{ (\sbn)^{2} n} \sim \sqrt{c}\cdot n^{1/4}=o(n^{3/8})$, this quantity is asymptotic to
\[ \frac{1}{ \sqrt{ (\sbn)^{2} n}  } \cdot   p \left(x\right)  \cdot  \frac{1}{\sqrt{ (\scn)^{2} K_{n}}} q_{1} \left( y+c \right)=\frac{1}{ \sqrt{ (\sbn)^{2} n}  } \cdot   p \left(x\right)  \cdot  \frac{1}{\sqrt{ (\scn)^{2} K_{n}}} d_{1}(y).\]
It is then standard (see e.g.~\cite[Theorem 7.8]{Bil68}) that this implies that the convergence
  \[ \left(  \frac{\oH^{n}_{   n-K_{n} } -  K_{n} }{\sqrt{ (\sbn)^{2} n} },  \frac{\oB^{n}_{   n-K_{n}  }}{\sqrt{ (\scn)^{2} K_{n}}} \right)  \quad \mathop{\longrightarrow}^{(d)}_{n \rightarrow \infty} \quad  (W_{1},X_{1})\]
  holds in distribution. This completes the proof.\end{proof}

We are now in position to prove Proposition~\ref{prop:cvbridge}.

\begin{proof}[Proof of Proposition~\ref{prop:cvbridge}] To simplify notation, set, for $0 \leq u \leq 1$,
\[\hH^{(n)}_{u}= \frac{\oH^{n}_{ \lfloor u (n-K_{n}) \rfloor } - u K_{n} }{\sqrt{ (\sbn)^{2} n} }, \qquad \hB^{(n)}_{u}= \frac{\oB^{n}_{\lfloor u (n-K_{n}) \rfloor}}{ \sqrt{ (\scn)^{2} K_{n} }} .\]
Recall that $\sqrt{ (\scn)^{2} K_{n}} \sim {n}/{c}$ and $\sqrt{ (\sbn)^{2} n} \sim \sqrt{c}\cdot n^{1/4}$. 
It is therefore enough to show that conditionally given the event
$ \mathcal{C}_{n} \coloneqq \{\oH^{n}_{n-K_{n}}=K_{n}+1, \oB^{n}_{n-K_{n}}=-1 \}$, 
the following (stronger) convergence in distribution holds jointly
\[  \left(  \hH^{(n)}_{u},  \hB^{(n)}_{u} \right)_{0 \leq u \leq 1}  \quad \mathop{\longrightarrow}^{(d)}_{n \rightarrow \infty} \quad (W^{\mathrm{br}},\Xbr),\]
where $ W^{\mathrm{br}}$ is a Brownian bridge independent of $\Xbr$.  Recall that $d_{u}$ denotes the density of $X_{u}$ and that $p_{u}$ denotes the density of standard Brownian motion at time $u$.

Fix $0<u<1$ and $F : \D([0,u],\R^{2}) \rightarrow \R$ a bounded continuous functional. For $\delta \geq 0$, we define $F_{\delta}$ by $F_{\delta}(f,g)=F(f,g) \mathbbm{1}_{|f(u)|< \frac{1}{\delta}}$. We fix $\delta>0$ and consider the quantity
\[A^{\delta}_{n}= \Es{ \left. F_{\delta} \left(    \hH^{(n)}_{s},  \hB^{(n)}_{s}: 0 \leq s \leq u \right)
\right| \mathcal{C}_{n}}.\]
We define the following quantity:
\begin{equation}
  \hspace{-3mm} \psi \left( \oH^{n}_{ \lfloor u (n-K_{n}) \rfloor },\oB^{n}_{ \lfloor u (n-K_{n}) \rfloor } \right) \coloneqq \frac{\phi_{n-K_{n}- \lfloor u (n-K_{n}) \rfloor} (K_{n}+1-\oH^{n}_{ \lfloor u (n-K_{n}) \rfloor } ,-1-\oB^{n}_{\lfloor u (n-K_{n}) \rfloor})}{\phi_{n-K_{n}} (K_{n}+1,-1) }
  \label{eq:def_psi}
\end{equation}
with $\phi_{i}(a,b)=\Prb{\oH^{n}_{i}=a,\oB^{n}_{i}=b}$.
By applying the Markov property at time $\lfloor u (n-K_{n}) \rfloor$, we have
\begin{equation}
  A^{\delta}_{n}= \Es{  F_{\delta} \left(   \hH^{(n)}_{s},  \hB^{(n)}_{s}: 0 \leq s \leq u \right) \psi \left( \oH^{n}_{ \lfloor u (n-K_{n}) \rfloor },\oB^{n}_{ \lfloor u (n-K_{n}) \rfloor } \right)  } .
  \label{eq:Adn}
\end{equation}
We observe that $n-K_{n}- \lfloor u (n-K_{n}) \rfloor= (1-u)n+\mathcal O(\sqrt{n})$ (with a deterministic $\mathcal O(\sqrt{n})$). 
Besides, on the event $ \big| \hH^{(n)}_{u} \big| < \tfrac{1}{\delta}$, 
we have $K_{n}-\oH^{n}_{ \lfloor u (n-K_{n}) \rfloor}=(1-u)K_{n}+J_{n}$ with $|J_{n}|= \mathcal{O}(n^{1/4})$.
Using Lemma~\ref{lem:magique}, we get that
\begin{multline}
 \phi_{n-K_{n}- \lfloor u (n-K_{n}) \rfloor} (K_{n}+1-\oH^{n}_{ \lfloor u (n-K_{n}) \rfloor } ,-1-\oB^{n}_{\lfloor u (n-K_{n}) \rfloor})\\
   = \tPr{\Sbn_{(1-u)n+\mathcal O(\sqrt{n})}=K_{n}+1-\oH^{n}_{ \lfloor u (n-K_{n}) \rfloor}}  \\
   \cdot\tPr{\Scn_{(1-u) K_{n}+ J_{n}}= (1-u)n+\mathcal O(\sqrt{n})-\oB^{n}_{\lfloor u (n-K_{n}) \rfloor})},
   \label{eq:Tech4}
\end{multline}
where $\tilde{\mathbb{P}}$ denotes the conditional probability given $\oH^{n}_{ \lfloor u (n-K_{n} \rfloor}$ and $\oB^{n}_{\lfloor u (n-K_{n})\rfloor}$
(the randomness in Lemma~\ref{lem:magique} comes from $\Sbn$ and $\Scn$).

Conditionally given $\oH^{n}_{ \lfloor u (n-K_{n}) \rfloor }$ and $\oB^{n}_{ \lfloor u (n-K_{n}) \rfloor }$, 
Lemma~\ref{lem:ll1} gives us
\begin{multline*}
\tPr{\Sbn_{(1-u)n+\mathcal O(\sqrt{n})}=K_{n}-\oH^{n}_{ \lfloor u (n-K_{n}) \rfloor}}\\
  = \frac{1}{\sqrt{ (\sbn)^{2}  n} } \frac{1}{\sqrt{1-u}} p \left(  -   \frac{\oH^{n}_{ \lfloor u (n-K_{n}) \rfloor } - u K_{n}}{ \sqrt{1-u} \cdot \sqrt{ (\sbn)^{2} n} }  + \eta^{\bullet}_{n}\right) +  \frac{\epsilon^{\bullet}_{n}}{\sqrt{ (\sbn)^{2}  n}}.
\end{multline*}

Here, $\eta^{\bullet}_{n}$ and $\epsilon^{\bullet}_{n}$ are random sequences 
(they depend on $\oH^{n}_{ \lfloor u (n-K_{n}) \rfloor}$),
and tend in $L^\infty$ norm to $0$
on the event $ \big| \hH^{(n)}_{u} \big| < \tfrac{1}{\delta}$
(for $\epsilon^{\bullet}_{n}$, this is because of the uniformity in $k$ in Lemma~\ref{lem:ll1}).
Since $p$ is uniformly continuous
and since $\tfrac{1}{\sqrt{1-u}}p(\tfrac{-x}{\sqrt{1-u}})=p_{1-u}(-x)$,
we have
\begin{multline}
  \tPr{\Sbn_{(1-u)n+\mathcal O(\sqrt{n})}=K_{n}-\oH^{n}_{ \lfloor u (n-K_{n}) \rfloor}}\\
=\frac{1}{\sqrt{ (\sbn)^{2}  n} }p_{1-u}\left( -\frac{\oH^{n}_{ \lfloor u (n-K_{n}) \rfloor } 
- u K_{n}}{ \sqrt{ (\sbn)^{2} n} }\right) + \frac{\epsilon^{'\bullet}_{n}}{\sqrt{ (\sbn)^{2}  n}},
\label{eq:PSbn}
\end{multline}
for some other random sequence $\epsilon^{'\bullet}_{n}$ tending to $0$ in $L^\infty$ norm.

Similarly, using Lemma~\ref{lem:ll2} and the fact that  $q_{1-u}(x+c(1-u))=d_{1-u}(x)$, 
on the event $ \big| \hH^{(n)}_{u} \big| < \tfrac{1}{\delta}$,
we have
\begin{multline}
\tPr{\Scn_{(1-u) K_{n}+ J_{n}}= (1-u)n+\mathcal O(\sqrt{n})-\oB^{n}_{\lfloor u (n-K_{n}) \rfloor})}\\
   =  \frac{1}{ \sqrt{ (\scn)^{2} K_{n} } } d_{1-u} \left( \frac{\oB^{n}_{\lfloor u (n-K_{n}) \rfloor}}{\sqrt{ (\scn)^{2} K_{n} }}+ \eta^{\circ}_{n}\right)+  \frac{\epsilon^{\circ}_{n}}{\sqrt{ (\scn)^{2}  K_n}},
   \label{eq:PScn}
\end{multline}
conditionally given $\oH^{n}_{ \lfloor u (n-K_{n}) \rfloor }$ and $\oB^{n}_{ \lfloor u (n-K_{n}) \rfloor }$. 
Here $\eta^{\circ}_{n}$ is a deterministic sequence tending to $0$,
while $\epsilon^{\circ}_{n}$ is random and tends in $L^\infty$ norm to $0$
(because of the uniformity in $j$ in Lemma~\ref{lem:ll2}).
Since $d_{1-u}$ is uniformly continuous, the error $\eta^{\circ}_{n}$ in the argument
can be absorbed in the $\epsilon^{\circ}_{n}$ error.

In the proof of Lemma~\ref{lem:cvnoncond}, we already saw that
\begin{multline}
  \phi_{n-K_{n}} (K_{n}+1,-1) =\Pr{\oH^{n}_{n-K_{n}}=K_{n}+1, \oB^{n}_{n-K_{n}}=-1} \\
  \mathop{\sim}_{n \rightarrow \infty}   \frac{1}{ \sqrt{ (\sbn)^{2} n}  } \cdot   p_{1}  (0) \cdot  \frac{1}{\sqrt{ (\scn)^{2} K_{n}}} d_{1}(0).
  \label{eq:Cond_Proba}
\end{multline}
By combining Eqs.~\eqref{eq:Cond_Proba}, \eqref{eq:PSbn}, \eqref{eq:PScn}, \eqref{eq:Tech4} and \eqref{eq:def_psi}, we get
\begin{equation}
   \psi \left( \oH^{n}_{ \lfloor u (n-K_{n}) \rfloor },\oB^{n}_{ \lfloor u (n-K_{n}) \rfloor }\right)
=  \frac{p_{1-u}(\hH^{(n)}_{u})}{p_{1}(0)} \frac{d_{1-u}(\hB^{(n)}_{u})}{d_{1}(0)} +\, \text{error},
\label{eq:limit_psi}
\end{equation}
where the (random) error tends to $0$ in $L^\infty$ norm as $n \to \infty$ on the event $ \big| \hH^{(n)}_{u} \big| < \tfrac{1}{\delta}$.
Therefore, using \eqref{eq:Adn}, the dominated convergence theorem and Lemma~\ref{lem:cvnoncond}, we have:
\begin{equation}
\label{eq:cvu0}A^{\delta}_{n}  \quad \mathop{\longrightarrow}_{n \rightarrow \infty} \quad  \Es{F \left(   W_{s},X_{s} : 0 \leq s \leq u \right)  \mathbbm{1}_{|W_{u}|<1/\delta} \frac{p_{1-u}(-W_{u})}{p_{1}(0)} \frac{d_{1-u}(-X_{u})}{d_{1}(0)} }
\end{equation}
This last quantity is equal to $\Esb{F \big(  W^{\textrm{br}}_{s},X^{\textrm{br}}_{s} : 0 \leq s \leq u \big)  \mathbbm{1}_{|W^{\textrm{br}}_{u}|<1/\delta}}$ by \eqref{eq:lawbridge}.

In remains to take $\delta \rightarrow 0$. Specifically, 
taking for $F$ the constant functional always equal to $1$ in \eqref{eq:cvu0}, we get
\begin{equation}
\label{eq:cvE}\Pr{ |\hH^{(n)}_{u}| \geq 1/\delta \big|  \mathcal{C}_{n} }  \quad \mathop{\longrightarrow}_{n \rightarrow \infty} \quad  \Pr{|W^{br}_{u}| \geq 1/\delta}.
\end{equation}
Now fix $\epsilon>0$ and choose $\delta>0$ such that $  \Prb{|W^{br}_{u}| \geq 1/\delta} \leq \epsilon/ \| F \|_{\infty}$. Then, by \eqref{eq:cvE},  for every $n$ sufficiently large  $\Prb{  |\hH^{(n)}_{u}| \geq  1/\delta \big| \mathcal{C}_{n}} \leq 2\epsilon / \| F \|_{\infty}$, so that $\big| A_{n}^{\delta} - A_{n}^{0} \big| \leq 2 \epsilon$. Hence, for every $n$ sufficiently large,
\begin{align*}
  & \hspace{-3mm} \left| \Es{F \left(   W^{\textrm{br}}_{s},X^{\textrm{br}}_{s} : 0 \leq s \leq u \right) }- \Es{ \left. F \left(  \hH^{n}_{s},\hB^{n}_{s} : 0 \leq s \leq u \right) \right| \mathcal{C}_{n}}  \right|\\
  & \hspace{-2mm} \leq \| F \|_{\infty} \Pr{|W^{br}_{u}| \geq 1/\delta} + 
\left| \Es{F \left(   W^{\textrm{br}}_{s},X^{\textrm{br}}_{s} : 0 \leq s \leq u \right)\mathbbm{1}_{|W^{br}_{u}|<1/\delta}}  -A_{n}^{\delta}  \right| +  \left| A_{n}^{\delta} - A_{n}^{0} \right | \\
& \hspace{-1mm}\leq 4 \epsilon.
\end{align*}
Hence
\begin{equation}
\label{eq:cvu}\Es{ \left. F \left(  \hH^{(n)}_{s},  \hB^{(n)}_{s} : 0 \leq s \leq u \right) \right|  \mathcal{C}_{n} }   \mathop{\longrightarrow}_{n \rightarrow \infty} \quad \Es{F \left(  W^{\textrm{br}}_{s},X^{\textrm{br}}_{s} : 0 \leq s \leq u \right) }.
\end{equation}

A standard time-reversal argument, based on the fact that conditionally given $ \mathcal{C}_{n}$, the two paths  $(\oH^{n}_{i},\oB^{n}_{i})_{1 \leq i \leq n-K_{n}}$ and $(\oH^{n}_{n-K_{n}}-\oH^{n}_{n-K_{n}-i},\oB^{n}_{n-K_{n}}-\oB^{n}_{n-K_{n}-i})_{1 \leq i \leq n-K_{n}}$ have the same distribution, shows that the sequence of random variables $  (  \hH^{n}_{u}, \hB^{n}_{u} )_{0 \leq u \leq 1}$ is tight in $\D([0,1],\R^{2})$. The convergence \eqref{eq:cvu} then shows that there is only one possible distributional limit, and this completes the proof.
\end{proof}

\subsection{On black vertices with many descendants}
\label{ss:descendants}
We conclude Section~\ref{sec:limitheorems} by several results, concerning black vertices with many descendants, which build upon the machinery that has just been developed. The first main result (Lemma~\ref{lem:root}) concerns the modified alternating two-type BGW root which appears in Theorem~\ref{thm:cvXexc} (ii), while the two others (Lemmas~\ref{lem:AtMost2} and \ref{lem:two}) will be useful in the proof of Theorem~\ref{thm:cvlam} (i)$_{c>0}$ (convergence to $\mathbf{L}_{c}$).

Throughout Section~\ref{ss:descendants}, we assume that $ \tfrac{K_{n}}{\sqrt{n}} \rightarrow c>0$ as $n \rightarrow \infty$
and the offspring distributions $\mubn$ and $\mucn$ are the ones given by Lemma~\ref{lem:exist}.
We use the notation of Theorem~\ref{thm:cvXexc}, namely
\begin{itemize}
  \item $\Ts_{n}$ is  an alternating two-type BGW tree (with black root), with offspring distributions $\mub^{n}$ and $\muc^{n}$,
    conditioned on having $n-K_{n}$ black vertices and $K_{n}+1$ white vertices;
  \item  and  $\tilde{\Ts}_{n}$ is defined as $\Ts_{n}$,
     except that the root has an offspring distribution $\widetilde{\mu}^{n}_{\bullet}$
     given by $\widetilde{\mu}^{n}_{\bullet}(i)=\mubn(i-1)$ for $i \geq 1$
     (the offspring distribution of other vertices and the conditioning are unchanged).
\end{itemize} 

\paragraph*{Children of the root.}
We start by comparing the number of children of the root in these two models:
\begin{lemma}\label{lem:root}
The following properties are satisfied:
\begin{enumerate}
\item[(i)] with probability tending to $1$ as $n \rightarrow \infty$, the root of $\Ts_{n}$ has only one child;
\item[(ii)] with probability tending to $1$ as $n \rightarrow \infty$, the root of $\tilde{\Ts}_{n}$ has only one child;
\item[(iii)] we have $d_{\mathrm{TV}}(\Ts_{n},\tilde{\Ts}_{n}) \rightarrow 0$ as $n \rightarrow \infty$, where $d_{\mathrm{TV}}$ denotes the total variation distance.
\end{enumerate}
\end{lemma}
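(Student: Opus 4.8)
The plan is, for each of the two models, to compute the law of the number of children of the root by decomposing the tree at its root and feeding the vertex-count identities \eqref{eq:probaGivenNumberVertexRootBlack} and \eqref{eq:forestc} into the resulting ratio, and then to control that ratio with the uniform local limit estimate of Lemma~\ref{lem:ll1}; part (iii) will follow quickly from (i) and (ii). Throughout I write $N:=n-K_{n}-1$, $k_{\varnothing}$ for the number of children of the root, and recall from Lemma~\ref{lem:exist} that $\mbn=\tfrac{K_{n}+1}{n-K_{n}}\sim\tfrac c{\sqrt n}$, $\bbn\sim\tfrac c{\sqrt n}$ and $(\sbn)^{2}\sim\tfrac c{\sqrt n}$, so $D^{\bullet}_{n,N}:=\sqrt{(\sbn)^{2}N}\sim\sqrt c\,n^{1/4}\to\infty$ and $N\in[n/2,n]$ for $n$ large.

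\emph{Part (i).} If the root of $\Ts_{n}$ has $k$ children, these are white and the $k$ subtrees hanging from them form a forest $\mathscr{F}^{\circ}_{k}$ of $k$ i.i.d.\ copies of $\Tsc$ (offspring distributions $\mucn,\mubn$) which, the root being black, carries $n-K_{n}-1$ black and $K_{n}+1$ white vertices. Thus $\Pr{k_{\varnothing}(\Ts_{n})=k}$ equals $\mubn(k)\,f(k)$ divided by the normalization $\sum_{j\ge0}\mubn(j)f(j)$, where $f(k):=\Pr{\abs{\bullet_{\mathscr{F}^{\circ}_{k}}}=n-K_{n}-1,\ \abs{\circ_{\mathscr{F}^{\circ}_{k}}}=K_{n}+1}$; evaluating $f$ via \eqref{eq:forestc} and the normalization via \eqref{eq:probaGivenNumberVertexRootBlack}, the common factor $\Pr{\Scn_{K_{n}+1}=n-K_{n}-1}$ cancels and, using $\mbn=\tfrac{K_{n}+1}{n-K_{n}}$,
\[
  \Pr{k_{\varnothing}(\Ts_{n})=k}=\frac{k\,\mubn(k)}{\mbn}\cdot\frac{\Pr{\Sbn_{N}=K_{n}+1-k}}{\Pr{\Sbn_{n-K_{n}}=K_{n}+1}}.
\]
Since $\Pr{k_{\varnothing}(\Ts_{n})=0}=0$ (the conditioning forces $K_{n}+1\ge2$ white vertices), dividing by the $k=1$ instance gives, for $n$ large, $\Pr{k_{\varnothing}(\Ts_{n})\ge2}\le\tfrac1{\mubn(1)}\sum_{k\ge2}k\,\mubn(k)\,\tfrac{\Pr{\Sbn_{N}=K_{n}+1-k}}{\Pr{\Sbn_{N}=K_{n}}}$. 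By Lemma~\ref{lem:ll1} applied with this $N$ one has $\Pr{\Sbn_{N}=k}\le1/D^{\bullet}_{n,N}$ uniformly in $k$ for $n$ large, while $\Pr{\Sbn_{N}=K_{n}}\ge c'/D^{\bullet}_{n,N}$ for some $c'>0$ (the normalized deviation of $K_{n}$ from the mean $N\mbn$ being $(\mbn-1)/D^{\bullet}_{n,N}\to0$). Hence the bound is $\le\tfrac C{\mubn(1)}\sum_{k\ge2}k\,\mubn(k)=C\big(\tfrac{\mbn}{\mubn(1)}-1\big)=C\big(F'(\bbn)-1\big)$ with $C=1/c'$, using $\mbn=\bbn F'(\bbn)/F(\bbn)$ and $\mubn(1)=\abn\bbn=\bbn/F(\bbn)$ ($F$ as in \eqref{eq:F}); this tends to $0$ since $\bbn\to0$ and $F'(0)=1$, so $\Pr{k_{\varnothing}(\Ts_{n})=1}\to1$.

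\emph{Part (ii).} The same decomposition applies to $\tilde{\Ts}_{n}$, the only change being the root offspring law $\widetilde{\mu}^{n}_{\bullet}(k)=\mubn(k-1)$ for $k\ge1$; now $\Pr{k_{\varnothing}(\tilde{\Ts}_{n})=k}$ is proportional to $\widetilde{\mu}^{n}_{\bullet}(k)f(k)$, so for $n$ large $\Pr{k_{\varnothing}(\tilde{\Ts}_{n})\ge2}\le\tfrac1{\widetilde{\mu}^{n}_{\bullet}(1)}\sum_{k\ge2}\widetilde{\mu}^{n}_{\bullet}(k)\,\tfrac{f(k)}{f(1)}$. As $f(k)/f(1)=k\,\Pr{\Sbn_{N}=K_{n}+1-k}/\Pr{\Sbn_{N}=K_{n}}\le Ck$ by the same local limit bounds, and since $\widetilde{\mu}^{n}_{\bullet}(1)=\mubn(0)=1/F(\bbn)\to1$ and $\sum_{k\ge2}k\,\widetilde{\mu}^{n}_{\bullet}(k)=\sum_{j\ge1}(j+1)\mubn(j)=\mbn+1-\mubn(0)\to0$, the right-hand side tends to $0$. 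This proves (ii).

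\emph{Part (iii), and the main obstacle.} Let $\mathcal A$ be the set of bicolored trees with $n-K_{n}$ black and $K_{n}+1$ white vertices whose root has exactly one child. On $\mathcal A$ the weights assigned by ${\BGW}^{\mubn,\mucn}$ and by the modified measure ${\BGW}^{\widetilde{\mu}^{n}_{\bullet},\mubn,\mucn}$ of \eqref{eq:BGWtilde} differ only through the root factor ($\mubn(1)$ versus $\widetilde{\mu}^{n}_{\bullet}(1)=\mubn(0)$), hence are proportional on $\mathcal A$; so conditioning each of $\Ts_{n}$ and $\tilde{\Ts}_{n}$ on $\{k_{\varnothing}=1\}$ produces the same law, and therefore
\[
  d_{\mathrm{TV}}(\Ts_{n},\tilde{\Ts}_{n})\le\Pr{k_{\varnothing}(\Ts_{n})\ne1}+\Pr{k_{\varnothing}(\tilde{\Ts}_{n})\ne1}\;\xrightarrow[n\to\infty]{}\;0
\]
by (i) and (ii). The only genuinely delicate ingredient is the two-sided local limit control of $\Pr{\Sbn_{N}=\,\cdot\,}$ — a uniform upper bound together with a matching lower bound near the mode — which is precisely what Lemma~\ref{lem:ll1} provides; everything else is elementary bookkeeping with the asymptotics of $\mubn$ from Lemma~\ref{lem:exist}.
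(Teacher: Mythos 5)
Your proof is correct. For parts (i) and (ii) you take the same route as the paper: decompose the conditioned tree at its root into the root's offspring weight times a forest of white-rooted subtrees, convert those forest probabilities to random-walk probabilities via \eqref{eq:forestc} and \eqref{eq:probaGivenNumberVertexRootBlack}, observe that the $\Scn$-factor cancels in the ratio $\Pr{k_{\varnothing}=k}/\Pr{k_{\varnothing}=1}$, and then use the uniform two-sided local limit control of Lemma~\ref{lem:ll1} to bound the $\Sbn$-ratio by a constant, leaving an elementary computation with the $\mubn$-asymptotics. The paper carries out the same cancellations implicitly by working with the unnormalized quantities $P^n_k=k\,\mubn(k)A_{n,k}$ and $\tilde P^n_k = k\,\mubn(k-1)A_{n,k}$ and invoking both Lemma~\ref{lem:ll1} and Lemma~\ref{lem:ll2} to estimate $A_{n,k}$; your organization makes it explicit that Lemma~\ref{lem:ll2} is in fact not needed here, since the $\Scn$-probability drops out of every ratio you compare.

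Your part (iii) is genuinely a cleaner argument than the one in the paper. You note that the two BGW weights differ only through the root factor, so that on the event $\{k_{\varnothing}=1\}$ the two laws are proportional and hence the conditional laws given that event coincide; then $d_{\mathrm{TV}}(\Ts_{n},\tilde{\Ts}_{n})\le\Pr{k_{\varnothing}(\Ts_{n})\ne1}+\Pr{k_{\varnothing}(\tilde{\Ts}_{n})\ne1}$ (which is easily checked from the decomposition $P = P(A)\,P(\cdot\mid A)+P(A^{c})\,P(\cdot\mid A^{c})$ and likewise for $Q$), and (i) and (ii) finish. The paper instead restricts to sets $A\subset\mathbb A$ of trees whose root has degree one and shows directly that $\frac{1}{P^{n}}\mubn(1)-\frac{1}{\tilde P^{n}}\mubn(0)$ times $A_{n,1}$ tends to $0$, which requires the asymptotics $\mubn(1)A_{n,1}/P^{n}\to1$ and $\mubn(0)A_{n,1}/\tilde P^{n}\to1$ (and hence Lemma~\ref{lem:ll2}). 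Your formulation avoids any further local limit input and isolates the structural reason the two conditioned trees are close. One small remark: your inequality $\Pr{k_{\varnothing}(\Ts_{n})\ge2}\le\frac{1}{\mubn(1)}\sum_{k\ge2}k\,\mubn(k)\cdot\frac{\Pr{\Sbn_{N}=K_{n}+1-k}}{\Pr{\Sbn_{N}=K_{n}}}$ uses $\Pr{k_{\varnothing}\ge2}=1-p_{1}\le(1-p_{1})/p_{1}=\sum_{k\ge2}p_{k}/p_{1}$; it would be worth saying this in one line so the reader does not have to reconstruct it.
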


\begin{proof} 
Denote by $\Ts^{n}$ (resp.~$\tTs^{n}$) the unconditioned version of $\Ts_{n}$ (resp.~$\tTs_{n}$). We keep the superscript $n$ to emphasize that the offspring distributions depend on $n$. To simplify notation, for $k \geq 0$, set 
$$P^{n}_{k}= \Prb{\textrm{the root of }\Ts^{n} \textrm{ has } k \textrm{ children}, \abs{\bullet_{\Ts^{n}}}=n-K_{n}, \abs{\circ_{\Ts^{n}}}=K_{n}+1}$$
and  $P^{n}= \Prb{ \abs{\bullet_{\Ts^{n}}}=n-K_{n}, \abs{\circ_{\Ts^{n}}}=K_{n}+1}$.
Similarly, define $\tilde{P}^{n}_{k}$ and $\tilde{P}^{n}$ when  $\Ts^{n}$ is replaced with $\tTs^{n}$. Clearly, $P^{n}_{0}= \tilde{P}^{n}_{0}=0$.
Finally, set
\[A_{n,k}= \frac{1}{K_{n}+1} \Pr{\Sbn_{n-K_{n}-1}=K_{n}+1-k} \Pr{\Scn_{K_{n}+1}=n-K_{n}-1}.\]
By decomposing a tree with a black root having $k$ children into a forest of $k$ trees with white roots and using \eqref{eq:forestc}, we have
$P^{n}_{k}=  k\, \mubn(k) \cdot  A_{n,k}$ and $\tilde{P}^{n}_{k}= k\, \mubn(k-1) \cdot A_{n,k}$.

In the following, $C$ will denote a universal constant whose value might change from line to line.
By Lemmas~\ref{lem:exist}, \ref{lem:ll1} and \ref{lem:ll2}, we have the following two asymptotic estimates for $A_{n,k}$:
\begin{itemize}
  \item for fixed $k \geq 1$, $ A_{n,k} \sim { p(0) \, q_{1}(c)}\cdot \tfrac{1}{\sqrt{K_{n}} n^{3/2}}$ as $n \rightarrow \infty$;
  \item we have $A_{n,k} \leq  \frac{C}{\sqrt{K_{n}} n^{3/2}}$ uniformly for all $n \geq 1$ and $k \geq 1$.
\end{itemize} 
When $K_{n} \rightarrow \infty$ and $\tfrac{K_{n}}{n} \rightarrow 0$, we saw in the proof of Lemma~\ref{lem:exist} that $\abn  \rightarrow 1$ and that $\bbn \sim \frac{K_{n}}{n}$ as $n \rightarrow \infty$. Hence,
\[\mubn(0)  \quad \mathop{\longrightarrow}_{n \rightarrow \infty} \quad 1, \qquad  \mubn(1)  \quad \mathop{\sim}_{n \rightarrow \infty} \quad  \frac{K_{n}}{n},\]
and for $n$ large enough,
\[\mubn(k) \leq  \frac{K_{n}}{{n}} \frac{(k+1)^{k-1}}{k!} \, \textrm{ for } k \geq 1, \quad \mubn(k) \leq  \left(  \frac{K_{n}}{{n}}  \right)^{2} \frac{(k+1)^{k-1}}{k!} \, \textrm{ for } k \geq 2.\]
Consequently
\[ P^{n}_{1}  \quad \mathop{\sim}_{n \rightarrow \infty} \quad   p(0) \,q_{1}(c) \cdot \frac{\sqrt{K_{n}}}{n^{5/2}}, \qquad \sum_{k=2}^{\infty} P^{n}_{k} \leq  C \frac{K_{n}}{n} \sum_{k=2}^{\infty}  \frac{\sqrt{K_{n}}}{n^{5/2}}  \frac{(k+1)^{k-1}}{k!}= o\big( P^{n}_{1}\big)\]
since $\tfrac{K_{n}}{n} \rightarrow 0$.
Thus $P^{n} \sim P^{n}_{1}$ as $n \rightarrow \infty$, and (i) follows.

Similarly, for (ii), we now have
\[ \tilde{P}^{n}_{1}  \ \mathop{\sim}_{n \rightarrow \infty} \
{ p(0)\, q_{1}(c)}\ \frac{1}{\sqrt{K_{n}} n^{3/2}}, \qquad \sum_{k=2}^{\infty} \tilde{P}^{n}_{k} \leq C  \frac{K_{n}}{n}\sum_{k=1}^{\infty}\frac{1}{\sqrt{K_{n}} n^{3/2}}  \frac{(k+1)^{k-1}}{k!}= o\big( \tilde{P}^{n}_{1} \big).\]  
Thus $\tilde{P}^{n} \sim \tilde{P}^{n}_{1}$ as $n \rightarrow \infty$, and (ii) follows.

For (iii), recalling that $\mathbb{A}$ denotes the set of all finite plane rooted trees, we show that  
\begin{equation}
\label{eq:dtv}\sup_{A \subset \mathbb{A}} \left| \Pr{\Ts_{n} \in A} -  \Pr{\tilde{\Ts}_{n} \in A}\right|   \quad \mathop{\longrightarrow}_{n \rightarrow \infty} \quad  0.
\end{equation}
By (i) and (ii), it is enough to establish this convergence for subsets $A \subset \mathbb{A}$ such that the roots of all trees in $A$ have $1$ child.
If $\tau \in A$, denote by $[\tau]^{1}$ the subtree (with white root) grafted on the child of the root of $\tau$.
Denote by $\Ts^{\circ,n}$  an (unconditioned) alternating BGW tree with white root (and offspring distributions
$\mubn$ and $\mucn$). Then we have
\begin{align*}
  \Pr{ \Ts_{n}=\tau}&= \frac{1}{P^{n}} \Pr{ \Ts^{n}=\tau} = \frac{1}{P^{n}} \cdot \mubn(1) \cdot \Pr{ \Ts^{\circ,n}=[\tau]^{1}},\\
  \Pr{ \tTs_{n}=\tau}&= \frac{1}{\tilde{P}^{n}} \Pr{ \tTs^{n}=\tau} = \frac{1}{\tilde{P}^{n}} \cdot \mubn(0) \cdot \Pr{ \Ts^{\circ,n}=[\tau]^{1} }.
\end{align*}
Thus, setting $[A]^{1}= \{[\tau]^{1} : \tau \in A\}$, we get  
\begin{align*}
&\left| \Pr{\Ts_{n} \in A} -  \Pr{\tilde{\Ts}_{n} \in A}\right| \\
& \qquad  \qquad = \left| \frac{1}{P^{n}} \cdot \mubn(1)- \frac{1}{\tilde{P}^{n}} \cdot \mubn(0) \right| \Pr{ \Ts^{\circ,n}  \in [A]^{1}}\\
& \qquad \qquad \leq \left| \frac{1}{P^{n}} \cdot \mubn(1)- \frac{1}{\tilde{P}^{n}} \cdot \mubn(0) \right|  \Pr{\abs{\bullet_{\Ts^{\circ,n}}}=n-K_{n}-1, \abs{\circ_{\Ts^{\circ,n}}}=K_{n}+1} \\
& \qquad \qquad = \left| \frac{1}{P^{n}} \cdot \mubn(1)- \frac{1}{\tilde{P}^{n}} \cdot \mubn(0) \right|  A_{n,1}.
\end{align*}
Since $\tfrac{\mubn(1) A_{n,1}}{P^{n}} \rightarrow 1$ and $\tfrac{\mubn(0) A_{n,1}}{\tilde{P}^{n}} \rightarrow 1$ as $n \rightarrow \infty$, this completes the proof.
\end{proof}

\paragraph*{Descendants of black vertices.}
We now focus on the model $\Ts_n$ and discuss degrees of black vertices.

\begin{lemma}
  With probability tending to $1$ as $n \rightarrow \infty$, the tree $\Ts_{n}$ does not contain any black vertex with three or more children.
  \label{lem:AtMost2}
\end{lemma}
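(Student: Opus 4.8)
The plan is to express the event through the {\L}ukasiewicz-type encoding of Section~\ref{ss:coding} and run a first-moment argument. Since colours alternate along $\Ts_{n}$, a black vertex has $k$ children if and only if it has $k$ white children, so the number $N_{\ge 3}$ of black vertices of $\Ts_{n}$ with at least three children equals $\#\{1 \le i \le n-K_{n} :\ H_{i}(\Ts_{n}) \ge 3\}$, where $H_{i}(\Ts_{n})$ is the number of white children of the $i$-th black vertex in lexicographic order (notation of Section~\ref{ss:coding}). It therefore suffices to show $\Pr{N_{\ge 3} \ge 1} = O(n^{-1/2})$.

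First I would pass to a \emph{bridge} conditioning. By Lemma~\ref{lem:codeRW}(ii) combined with Lemma~\ref{lem:vervaat}(ii), the pair of paths $(H_{i}(\Ts_{n}),B_{i}(\Ts_{n}))_{1 \le i \le n-K_{n}}$ is a discrete Vervaat transform, i.e.\ a cyclic shift, of the i.i.d.\ sequence $(H_{i},B_{i})_{i \ge 1}$ conditioned on the bridge event $\mathcal B_{n} \coloneqq \{\oH_{n-K_{n}} = K_{n}+1,\ \oB_{n-K_{n}} = -1\}$. A cyclic shift leaves the multiset $\{H_{i}\}$ unchanged, so $\Pr{N_{\ge 3} \ge 1} = \Pr{\exists\, 1 \le i \le n-K_{n} :\ H_{i} \ge 3 \mid \mathcal B_{n}}$; since the conditional law of $(H_{i},B_{i})_{i}$ given $\mathcal B_{n}$ is exchangeable, a union bound gives
\[
  \Pr{N_{\ge 3} \ge 1} \ \le\  (n-K_{n})\, \Pr{H_{1} \ge 3 \mid \mathcal B_{n}}.
\]
It is essential to route the argument through the exchangeable bridge conditioning rather than the excursion one: bounding $\Pr{\,\cdot \mid \text{excursion event}}$ directly via Lemma~\ref{lem:vervaat}(i) would cost a spurious extra factor $n-K_{n}$ and break the estimate.

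Next I would compute $\Pr{H_{1} \ge 3 \mid \mathcal B_{n}}$ exactly. Writing $\mathcal B_{n} = \{\sum_{i \le n-K_{n}} H_{i} = K_{n}+1,\ \sum_{i \le n-K_{n}} B_{i} = n-K_{n}-1\}$, conditioning on $(H_{1},B_{1}) = (h,b)$ and applying Lemma~\ref{lem:magique} to the remaining $n-K_{n}-1$ steps, the summation over $b$ collapses $\Scn_{h}$ and an independent $\Scn_{K_{n}+1-h}$ into $\Scn_{K_{n}+1}$, and the resulting factor $\Pr{\Scn_{K_{n}+1} = n-K_{n}-1}$ cancels between numerator and denominator, leaving
\[
  \Pr{H_{1} \ge 3 \mid \mathcal B_{n}} \ =\ \frac{\sum_{h \ge 3} \mubn(h)\, \Pr{\Sbn_{n-K_{n}-1} = K_{n}+1-h}}{\Pr{\Sbn_{n-K_{n}} = K_{n}+1}}.
\]
The crucial point for the denominator is that, by the normalisation of Lemma~\ref{lem:exist}(i), $K_{n}+1$ is \emph{exactly} the mean of $\Sbn_{n-K_{n}}$, so the uniform local limit theorem Lemma~\ref{lem:ll1} (legitimately applied since $n-K_{n} \in [n/2,n]$ for $n$ large, as $K_{n} = o(n)$) yields $\Pr{\Sbn_{n-K_{n}} = K_{n}+1} \sim p(0)/\sqrt{(\sbn)^{2}(n-K_{n})} \asymp n^{-1/4}$, while the same lemma gives $\Pr{\Sbn_{n-K_{n}-1} = k} \le C/\sqrt{(\sbn)^{2} n} \asymp n^{-1/4}$ uniformly in $k \in \Z$. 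Hence $\Pr{H_{1} \ge 3 \mid \mathcal B_{n}} \le C' \sum_{h \ge 3} \mubn(h)$. Finally, since $\bbn \to 0$ (Lemma~\ref{lem:exist}(ii)), fixing $\rho$ with $\bbn < \rho < 1/e$ for $n$ large and factoring $(\bbn)^{3}$ out of $\sum_{h \ge 3} \mubn(h) = \abn \sum_{h \ge 3} (\bbn)^{h} \tfrac{(h+1)^{h-1}}{h!}$ bounds the residual series by $F(\rho) < \infty$ (the power series $F$ of \eqref{eq:F} has radius of convergence $1/e$), so $\sum_{h \ge 3} \mubn(h) = O((\bbn)^{3}) = O(n^{-3/2})$ because $\bbn \sim K_{n}/n \sim c/\sqrt{n}$. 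Combining the two displays, $\Pr{N_{\ge 3} \ge 1} \le (n-K_{n}) \cdot O(n^{-3/2}) = O(n^{-1/2}) \to 0$.

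The main obstacle is conceptual as much as computational: the event ``some black vertex has three or more children'' lives at scale $\Theta(1)$ and is invisible in the scaling limit $\Xexc$ of Theorem~\ref{thm:cvXexc}, so it cannot be read off from that convergence; one must genuinely produce the exact identity for $\Pr{H_{1} \ge 3 \mid \mathcal B_{n}}$ and feed it into the uniform-in-$k$ local limit estimates of Lemma~\ref{lem:ll1}. The two delicate points are (a) passing to the exchangeable bridge conditioning through the discrete Vervaat transform so that the union bound over the $n-K_{n}$ black vertices costs only a single factor of $n$, and (b) correctly controlling the residual series $\sum_{h \ge 3} (\bbn)^{h} \tfrac{(h+1)^{h-1}}{h!}$, which converges only because $\bbn$ remains below $1/e$; it is the resulting $(\bbn)^{3} \asymp n^{-3/2}$ saving that ultimately beats the $n-K_{n}$ factor.
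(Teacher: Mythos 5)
Your proof is correct and takes a genuinely different route from the paper's. Both reduce, via Lemma~\ref{lem:codeRW}(ii) and the cyclic-shift invariance of the multiset $\{H_i\}$ from Lemma~\ref{lem:vervaat}(ii), to working under the bridge conditioning $\mathcal{C}_n$. From there the paper proves the \emph{unconditioned} bound $\mathbb{P}(\max_i H^n_i \ge 3) \le n\,\mubn([3,\infty)) \to 0$ and transfers it to the conditioned law through the absolute-continuity functional $\psi$ of \eqref{eq:def_psi}--\eqref{eq:limit_psi} (after splitting the maximum into two halves by cyclic exchangeability). You instead exploit that the bridge-conditional law of $(H_i,B_i)_i$ is exchangeable, run a union bound over the $n-K_n$ indices, compute the one-step conditional probability $\mathbb{P}(H_1 \ge 3 \mid \mathcal{C}_n)$ exactly via the convolution collapse of Lemma~\ref{lem:magique}, and control the resulting ratio with the uniform local limit theorem Lemma~\ref{lem:ll1}, using crucially that $K_n+1$ is \emph{exactly} the mean of $\Sbn_{n-K_n}$ so the Gaussian density appears at $0$ in the denominator. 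Your warning that naively using Lemma~\ref{lem:vervaat}(i) would waste a factor of $n-K_n$ and destroy the estimate is exactly right. Your direct computation is cleaner and more self-contained for this union-bound type event; the paper's absolute-continuity device, while heavier here, has the advantage of transferring verbatim to events that are not simply unions over single time indices (and is indeed reused elsewhere in Section~\ref{ss:descendants}). A small plus for your write-up: requiring a fixed $\rho<1/e$ to dominate the residual series $\sum_{h\ge 3}(\bbn)^{h-3}\tfrac{(h+1)^{h-1}}{h!}$ is the careful way to bound $\mubn([3,\infty))$, since $F$ has radius of convergence exactly $1/e$.
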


\begin{proof}
Denote by $\Delta^{\bullet}(\Ts_{n})$ the maximum number of children of a black vertex in $\Ts_{n}$.
The idea is to reformulate the assertion in terms of random walks. 
We keep the notation  $(H^{n}_k,B^n_k)_{k \geq 1}$, $\overline{H}^{n}$ and $\overline{B}^{n}$ which has been introduced in Section~\ref{ss:coding}. 
To simplify notation, let us use $ \mathcal{C}_n$ and  $ \mathcal{C}^+_{n}$ for the events
\begin{align*}
  \mathcal{C}_{n}&= \{\oH^{n}_{n-K_{n}}=K_{n}+1,\oB^{n}_{n-K_{n}}=-1\};  \\
  \mathcal{C}^+_{n}&= \{\oH^{n}_{n-K_{n}}=K_{n}+1,\oB^{n}_{n-K_{n}}=-1, \oB^{n}_{i} \geq 0 \textrm{ for every } 1 \leq i <n-K_{n}\}.
\end{align*}

Since the maximum element of a vector is invariant under cyclic permutations, we may combine  Lemmas~\ref{lem:vervaat} and \ref{lem:codeRW} to get that 
\begin{align*}
  \Delta^{\bullet}(\Ts_{n}) \ & \mathop{=}^{(d)} \  \max(H^{n}_{1}, \ldots, H^{n}_{n-K_{n}}) \textrm{ under } \Pr{\ \cdot  \ \big|  \mathcal{C}^+_{n} } \\
  & \mathop{=}^{(d)} \  \max(H^{n}_{1}, \ldots, H^{n}_{n-K_{n}}) \textrm{ under } \Pr{\ \cdot  \ \big|  \mathcal{C}_{n} }.
\end{align*}
Therefore, the lemma asserts that
\[\lim_{n \to \infty} \Pr{\max(H^{n}_{1}, \ldots, H^{n}_{n-K_{n}}) \geq 3 \big|  \mathcal{C}_{n} } = 0.\]

The proof strategy consists in showing the same statement without conditioning 
and then using an absolute continuity argument as in the proof of Proposition~\ref{prop:cvbridge}.

The unconditioned statement relies on a simple estimate on the tail of the black vertex offspring distribution $\mubn$.
 The proof of Lemma~\ref{lem:exist} shows that we have $|a_\bullet^n| \le 2$ and $b_\bullet^n \le \tfrac{2 c}{\sqrt{n}} \le 1/2$ for $n$ large enough. Therefore
  \begin{equation}
    \mubn([3,\infty))\le 2 \left(\frac{2 c}{\sqrt{n}} \right)^3  \sum_{i \ge 3} \frac{(i+1)^{i-1}}{i!} \, \left(\frac{1}{2}\right)^{i-3}=\mathcal O(n^{-3/2}).
    \label{eq:tail_mub}
  \end{equation}
  As a consequence,
  \begin{equation}
    \Pr{\max(H^{n}_{1}, \ldots, H^{n}_{n-K_{n}}) \geq 3} \leq n  \mub\big([3,\infty)\big)  \quad \mathop{\longrightarrow}_{n \rightarrow \infty} \quad 0.
    \label{eq:noDeg3_Unconditioned}
  \end{equation}

  In order to use absolute continuity, 
  we first note that, by cyclic exchangeability,
  \begin{align*}
    \Pr{\max(H^{n}_{1}, \ldots, H^{n}_{n-K_{n}})  \geq 3 \big| \mathcal{C}_{n} } &\leq
\Prb{\max(H^{n}_{1}, \ldots, H^{n}_{\lfloor n/2 \rfloor})  \geq 3 \big| \mathcal{C}_{n} }\\
&\qquad + \Prb{\max(H^{n}_{\lfloor n/2 \rfloor+1}, \ldots, H^{n}_{n-K_{n}})  \geq 3 \big| \mathcal{C}_{n} }\\
&\le 2\Pr{\max(H^{n}_{1}, \ldots, H^{n}_{\lfloor n/2 \rfloor})  \geq 3 \big| \mathcal{C}_{n} }.
\end{align*}
To show that this last probability tends to $0$,
we use the same arguments as in the proof  of Proposition~\ref{prop:cvbridge}.
If $\delta >0$ and $G^{\delta}_{n}$  is the event $  \big\{\big| \tfrac{\oH^{n}_{ \lfloor  n /2 \rfloor } -  K_{n}/2 }{\sqrt{ (\sbn)^{2} n} } \big| < \tfrac{1}{\delta} \big\}$, then by \eqref{eq:cvE},
we have $\Prb{G^{\delta}_{n} | \mathcal{C}_{n}  } \rightarrow \Prb{|W^{\mathrm{br}}_{1/2}| <1/\delta}$.
For fixed $\epsilon>0$, we can therefore choose $\delta>0$ such that $\Prb{G^{\delta}_{n} \big| \mathcal{C}_{n} } \geq 1- \epsilon$ for $n$ sufficiently large. If $ \EEE_{n}$ is the event $\{\max(H^{n}_{1}, \ldots, H^{n}_{\lfloor n/2 \rfloor})  \geq 3\}$,  then
\[\Pr{ \EEE_{n} \big | \mathcal{C}_{n}  } \le \epsilon + \Pr{ \EEE_{n} \cap G^\delta_n \big| \mathcal{C}_{n} }
\leq \epsilon+ \Es{ \mathbbm{1}_{\EEE_n}\, \mathbbm{1}_{G^\delta_n} \,  \psi \left( \oH^{n}_{ \lfloor n/2 \rfloor },\oB^{n}_{ \lfloor    n/2 \rfloor } \right)},\]
with $\psi$ is defined in \eqref{eq:def_psi}.
But, from \eqref{eq:limit_psi}, there exists a constant $C>0$ (which may depend on $\epsilon$) such that
$\mathbbm{1}_{G^\delta_n} \psi \left( \oH^{n}_{ \lfloor n/2 \rfloor },\oB^{n}_{ \lfloor    n/2 \rfloor } \right) \leq C$ for $n$ sufficiently large. 
On the other hand, from \eqref{eq:noDeg3_Unconditioned}, 
$\mathbb{P}(\EEE_n) \le \eps/C$ for large $n$.
Hence, 
\[\Pr{\max(H_{1}, \ldots, H_{n-K_{n}})  \geq 3 \big| \mathcal{C}_{n} } \leq  2\epsilon + 2C \Pr{\max(H_{1}, \ldots, H_{\lfloor n/2 \rfloor}) \geq 3} \leq 3\epsilon.\]
This completes the proof.
\end{proof}

The tree $\Ts_{n}$ may, however, contain black vertices with $2$ children.
For such vertices, we will need a further control on their number of descendants.
Before stating and proving such a result, we give a concentration inequality for the lower tails sums of i.i.d. random variables distributed as $\mubn$.

\begin{lemma}
  Let $u, \delta$ be positive constants such that $0<u<1$ and $0<\delta<u c$. There exists a constant $A>0$ (depending only on $u$, $\delta$ and $c$)
  such that, for $n$ sufficiently large,
  \[\Pr{\Sbn_{\lfloor u\, n \rfloor} \le \delta \sqrt{n}}
  \le e^{-A\sqrt{n}}.\]
  \label{lem:Concentration}
\end{lemma}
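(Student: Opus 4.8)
The plan is to apply a standard exponential Markov (Cramér-type) inequality, exploiting the fact that the generating function of $\mubn$ is essentially the power series $F$ of \eqref{eq:F}. Write $m \coloneqq \lfloor u n \rfloor$. For every $\lambda \ge 0$, starting from $\mathbbm{1}_{\Sbn_m \le \delta\sqrt n} \le e^{\lambda(\delta\sqrt n - \Sbn_m)}$ and using that $\Sbn_m$ is a sum of $m$ i.i.d.\ variables of law $\mubn$, we get
\[\Pr{\Sbn_m \le \delta\sqrt n} \le e^{\lambda\delta\sqrt n}\left(\sum_{i\ge 0}\mubn(i)\, e^{-\lambda i}\right)^{m}.\]
By \eqref{eq:mubn_mucn} and $\abn = 1/F(\bbn)$ (see the proof of Lemma~\ref{lem:exist}), the inner sum equals $F(\bbn e^{-\lambda})/F(\bbn)$, so the whole argument reduces to estimating this ratio from above for a suitably chosen fixed $\lambda$.

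For this I would use that $F$ is analytic near $0$ with $F(0) = F'(0) = 1$, so there is a constant $C_0$ with $F(z) \le 1 + z + C_0 z^2$ for $0 \le z \le 1/(2e)$, together with the fact (Lemma~\ref{lem:exist}(ii), valid since $K_n/\sqrt n \to c$) that $\bbn \sim c/\sqrt n$; in particular $\bbn \to 0$. For $n$ large one then has $\bbn e^{-\lambda} \le \bbn \le 1/(2e)$, and, since all Taylor coefficients of $F$ are nonnegative, $F(\bbn) \ge 1 + \bbn$, whence
\[\frac{F(\bbn e^{-\lambda})}{F(\bbn)} \le \frac{1 + \bbn e^{-\lambda} + C_0\bbn^2}{1 + \bbn} \le 1 - \bbn\left(1 - e^{-\lambda}\right) + C_1 \bbn^2\]
for a suitable constant $C_1$. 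Taking logarithms ($\log(1+x) \le x$) and raising to the power $m$,
\[\left(\frac{F(\bbn e^{-\lambda})}{F(\bbn)}\right)^{m} \le \exp\left(-m\bbn\left(1 - e^{-\lambda}\right) + C_1\, m\bbn^2\right).\]
Now fix $c'$ with $\delta/u < c' < c$; for $n$ large one has $\bbn \ge c'/\sqrt n$, hence $m\bbn \ge u c' \sqrt n - 1$, while $m\bbn^2 = O(1)$ stays bounded. Substituting back yields
\[\Pr{\Sbn_m \le \delta\sqrt n} \le \exp\left(\left(\lambda\delta - u c'\left(1 - e^{-\lambda}\right)\right)\sqrt n + C_2\right)\]
for some constant $C_2$ independent of $n$.

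It remains to pick $\lambda$. Since $\lambda\delta - uc'(1 - e^{-\lambda}) = \lambda(\delta - uc') + O(\lambda^2)$ as $\lambda \to 0^+$ and $\delta < uc'$ by the choice of $c'$, there is a fixed $\lambda_0 > 0$ with $2A \coloneqq u c'(1 - e^{-\lambda_0}) - \lambda_0\delta > 0$. With this value, $\Pr{\Sbn_m \le \delta\sqrt n} \le e^{-2A\sqrt n + C_2} \le e^{-A\sqrt n}$ as soon as $n$ is large enough that $C_2 \le A\sqrt n$, which is the claim; here $A$ depends only on $u$, $\delta$, $c$ (through the choices of $c'$ and $\lambda_0$).

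The one point to handle with care is the bookkeeping of the error terms — in particular checking that $m\bbn^2$ stays bounded uniformly, which uses precisely that $\bbn$ is of order $n^{-1/2}$ as given by Lemma~\ref{lem:exist}(ii); the rest is routine once the Laplace transform is identified with $F(\bbn e^{-\lambda})/F(\bbn)$. Note also that the hypothesis $\delta < uc$ is used exactly once and is essential: it is what makes the linear-in-$\lambda$ coefficient $\delta - uc'$ negative, equivalently what makes the rate function strictly positive at the level $\delta\sqrt n$ (recall $\E[\Sbn_m] = m\,\mbn \sim uc\sqrt n$).
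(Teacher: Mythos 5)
Your proof is correct, but it takes a genuinely different route from the paper's. The paper applies Hoeffding's one-sided inequality (\cite[Theorem~3, Eq.~(2.12)]{Hoeffding}, a Bennett-type bound) to the bounded-above variables $X_i = \tfrac{K_n}{n} - Y_i$, and the work consists of tracking the asymptotics of the parameters $\tau_n$, $\lambda_n$, $h_n(\lambda_n)$ appearing in that inequality. You instead run a direct Chernoff/Cram\'er argument, exploiting the fact that the Laplace transform of $\mubn$ is available in closed form as $F(\bbn e^{-\lambda})/F(\bbn)$, and extracting the rate from a second-order expansion of $F$ near $0$. Both methods deliver the same conclusion. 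The paper's argument is somewhat more ``black-box'' in spirit (it only uses that the $Y_i$ are nonnegative, together with the mean and variance asymptotics), whereas yours leans harder on the explicit form \eqref{eq:mubn_mucn} of the offspring distribution, which pays off in a cleaner, more self-contained derivation that avoids invoking the Bennett/Hoeffding bound. Two small bookkeeping points in your write-up are fine as stated: the bound $F(z) \le 1 + z + C_0 z^2$ is uniform on $[0, 1/(2e)]$ because $F$ has nonnegative coefficients and radius of convergence $1/e$, and the resulting constant $A$ indeed depends only on $u$, $\delta$, $c$ since $C_0$ is absolute and $\lambda_0$, $c'$ are chosen from $u$, $\delta$, $c$ alone.
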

\begin{proof}
This is actually a straightforward application of the one-sided version of  Hoeffding's inequality.
Specifically, let $(Y_{i})_{i \geq 1}$ be i.i.d.~random variable with distribution $\mubn$. Observe that
\[\Pr{\Sbn_{\lfloor u\, n \rfloor} \le \delta \sqrt{n}}= \Pr{ \frac{1}{\lfloor u\, n \rfloor} \sum_{i=1}^{\lfloor u\, n \rfloor} \left(  \frac{K_{n}}{n}-Y_{i} \right)  \geq \tfrac{K_n}{n}-\tfrac{\delta \, \sqrt{n}}{\lfloor u\, n\rfloor}}.\]
We then apply \cite[Theorem 3]{Hoeffding}, and more precisely \cite[Eq.~(2.12)]{Hoeffding} (taking, in the notation of \cite{Hoeffding}, $\lfloor u\, n \rfloor$ instead of $n$, $X_{i}= \tfrac{K_{n}}{n}-Y_{i}$, $b= \tfrac{K_{n}}{n}$, $t=\tfrac{K_n}{n}-\tfrac{\delta \, \sqrt{n}}{\lfloor u\, n\rfloor}$), and obtain
\[\Pr{\Sbn_{\lfloor u\, n \rfloor} \le \delta \sqrt{n}}
  \leq e^{-\tau_{n} h_{n}(\lambda_{n})}\]
  with $h_{n}(\lambda)= \big( \tfrac{1}{\lambda} +1\big)  \ln(1+\lambda)-1$, $\tau_{n}= \tfrac{\lfloor u\, n \rfloor t}{b}$ and $\lambda_{n}= \tfrac{bt}{(\sbn)^{2}}$. The following asymptotic estimates are readily derived
  (recall from Lemma~\ref{lem:exist} that $(\sbn)^{2} \sim \tfrac{K_n}{n}$):
  \[t \sim \frac{u \, c-\delta}{u} n^{-1/2},\ \tau_{n} \sim  \frac{u \, c-\delta}{c} n,\  \lambda_{n} \sim \frac{uc-\delta}{u} n^{-1/2}\text{ and }h_{n}(\lambda_{n}) \sim \frac{\lambda_{n}}{2}.\]
  Thus, for $n$ sufficiently large,
\[\Pr{\Sbn_{\lfloor u\, n \rfloor} \le \delta \sqrt{n}} \leq  e^{- \frac{(uc-\delta)^{2}}{3uc} \sqrt{n}}.\]
This completes the proof.
\end{proof}

\begin{lemma}\label{lem:two}
Fix $\epsilon>0$. With probability tending to $1$ as $n \rightarrow \infty$, the tree $\Ts_{n}$ does not contain a black 
  vertex $v$ that has two children, both with at least $\eps \, n$ descendants (counting both white and black vertices).  \label{lem:PasDeuxGrandsSousArbres}
\end{lemma}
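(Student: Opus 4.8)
The plan is to adapt the strategy used for Lemma~\ref{lem:AtMost2} and Proposition~\ref{prop:cvbridge}: reformulate the statement in terms of the coding random walk, replace the conditioning $\mathcal C^{+}_{n}$ by the simpler one $\mathcal C_{n}$ via the Vervaat transform, and then carry out a direct estimate built on the local limit Lemmas~\ref{lem:ll1}--\ref{lem:ll2}, the factorization Lemma~\ref{lem:magique} and the concentration Lemma~\ref{lem:Concentration}.

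First I would reduce the statement. Since $\Ts_{n}$ has exactly $K_{n}+1=O(\sqrt n)$ white vertices, any vertex with at least $\epsilon n$ descendants has at least $\epsilon' n$ \emph{black} descendants for $n$ large, where $\epsilon':=\epsilon/2$; and by Lemma~\ref{lem:AtMost2} we may restrict to black vertices with at most (hence, in the bad event, exactly) two white children. So it is enough to show that, with probability tending to $1$, there is no black vertex $v$ of $\Ts_{n}$ with two white children $c_{1}\prec c_{2}$, both having at least $\epsilon' n$ black descendants. Transporting this to the reduced black subtree of $\Ts_{n}$ and to its {\L}ukasiewicz path $\big(\oB_{i}(\Ts_{n})\big)_{0\le i\le n-K_{n}}$: if $v=v^{\bullet}_{k}$ is such a vertex, its children in the reduced black subtree are its black grandchildren in $\Ts_{n}$, listed as the black children of $c_{1}$ followed by those of $c_{2}$; writing $D_{1},D_{2}$ for the total sizes of the subtrees hanging respectively from $c_{1}$'s and from $c_{2}$'s black children, one has $D_{1},D_{2}\ge\epsilon' n$, $\oB_{k+1+D_{1}+D_{2}}=\oB_{k}-1$, $\oB_{k+1+D_{1}}=\oB_{k+1}-p_{1}$ with $p_{1}\ge 1$ the number of black children of $c_{1}$, and $\oB_{j}\ge\oB_{k}$ for $k<j<k+1+D_{1}+D_{2}$. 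Hence the bad event is contained in the event that there exist indices $k<q<r$ with $\oB_{r}=\oB_{k}-1$, $\oB_{j}\ge\oB_{k}$ for $k<j<r$, $\oB_{q}\in\{\oB_{k},\dots,\oB_{k+1}-1\}$ with $\oB_{j}>\oB_{q}$ for $k+1\le j<q$, and $q-k\ge\epsilon' n$, $r-q\ge\epsilon' n$ — a ``double macroscopic descent'' of $\oB$. Since this event is invariant under cyclic reordering of the pairs, Lemma~\ref{lem:codeRW}(ii) together with the Vervaat identity of Lemma~\ref{lem:vervaat}(ii) reduces us to bounding the probability of the corresponding event for the i.i.d.\ sequence $(H^{n}_{i},B^{n}_{i})$ conditioned \emph{only} on $\mathcal C_{n}=\{\oH^{n}_{n-K_{n}}=K_{n}+1,\ \oB^{n}_{n-K_{n}}=-1\}$.

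This last estimate is the heart of the proof. One decomposes the walk along the four stretches $[0,k]$, $[k,q]$, $[q,r]$, $[r,n-K_{n}]$ and, applying the Markov property and Lemma~\ref{lem:magique} repeatedly, factorizes the probability of the given configuration together with $\mathcal C_{n}$ into a product of four factors of the form $\Prb{\oH^{n}_{a}=b,\ \oB^{n}_{a}=d}=\Prb{\Sbn_{a}=b}\Prb{\Scn_{b}=a+d}$, each controlled by the local limit Lemmas~\ref{lem:ll1} and \ref{lem:ll2}; dividing by $\P(\mathcal C_{n})$, which is estimated in the same way, and summing over $k$, over the two lengths $q-k,r-q\ge\epsilon' n$ and over the numbers of white vertices accumulated along the two middle stretches (whose orders of magnitude are forced to be $\asymp\epsilon' K_{n}$ each, the abnormally small values being ruled out by Lemma~\ref{lem:Concentration}), the powers of $n$ work out so that the resulting sum tends to $0$. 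The main obstacle is precisely this bookkeeping: one must simultaneously keep track of the heavy-tailed {\L}ukasiewicz path $\oB^{n}$, which governs the sizes of the two macroscopic pieces, and of the counting path $\oH^{n}$, whose total is pinned down by the conditioning $\mathcal C_{n}$; the concentration Lemma~\ref{lem:Concentration} is exactly what allows one to decouple the two and to show that the contribution of a double macroscopic descent is negligible against $\P(\mathcal C_{n})$.
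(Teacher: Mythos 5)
Your approach --- translating the bad event into a ``double macroscopic descent'' of the {\L}ukasiewicz path $\oB^{n}$, reducing via Vervaat from the excursion conditioning $\mathcal{C}^{+}_{n}$ to the bridge conditioning $\mathcal{C}_{n}$, and then decomposing the walk into four stretches estimated by Markov + Lemma~\ref{lem:magique} + the LLT Lemmas~\ref{lem:ll1}--\ref{lem:ll2} and Lemma~\ref{lem:Concentration} --- is genuinely different from the paper's. The paper stays at the level of trees: it performs a surgery cutting $\Ts_{n}$ at the offending black vertex $v$ into three \emph{independent} BGW trees $\tau_{1},\tau_{2},\tau_{3}$ (two with white roots, one with a black root and a marked leaf), applies Corollary~\ref{cor:probaGivenNumber} to each piece, re-indexes by the vertex counts $n^{\bullet}_{i},n^{\circ}_{i}$, and then splits the resulting sum into $\varSigma_{1}$ (both $n^{\circ}_{1},n^{\circ}_{2}\ge\delta\sqrt{n}$) and $\varSigma_{2}$ (at least one small white count), bounding $\varSigma_{1}$ by the LLT and $\varSigma_{2}$ by Lemma~\ref{lem:Concentration}. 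Both routes thus hinge on the same local-limit/concentration toolbox, and you have identified those tools correctly.

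There are, however, two real gaps. First, the claim that the double-descent event ``is invariant under cyclic reordering of the pairs'' is not true as stated: after a cyclic shift, a descent straddling the endpoint becomes a wrapped-around pattern which no longer satisfies your inequalities, so the literal walk event is not cyclically invariant. What is invariant is the \emph{cyclic} version of the pattern, and it is that cyclic version which corresponds to the tree event; you would need to formulate it precisely before invoking Lemma~\ref{lem:vervaat}(ii). (The paper's tree surgery avoids this issue entirely, since it never passes through the bridge.) Second --- and this is the central issue --- the actual estimate, which you compress into ``the powers of $n$ work out so that the resulting sum tends to $0$,'' is the entire content of the lemma. The four-stretch decomposition produces a sum over $k<q<r$ and over the $H$- and $B$-increments of each stretch, constrained by $\oH^{n}_{n-K_n}=K_n+1$ and $\oB^{n}_{n-K_n}=-1$; after dividing by $\P(\mathcal{C}_{n})\asymp n^{-5/4}$, several cancellations (the $\asymp n$ choices for each of $k,q,r$ minus the LLT gains on the macroscopic stretches; the role of Lemma~\ref{lem:Concentration} in killing the regimes where an $H$-increment over a length-$\asymp n$ stretch is $o(\sqrt n)$) need to be checked to give the needed $o(1)$. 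Without that bookkeeping the proposal is a plan rather than a proof. The paper's version is cleaner precisely because the independence of $\tau_{1},\tau_{2},\tau_{3}$ means the re-indexed sum factors, making the estimate $A_n=\mathcal{O}(n^{-5/2})$ against $\P(|\bullet_{\Ts^n}|=n-K_n,|\circ_{\Ts^n}|=K_n+1)\sim Cn^{-9/4}$ mechanical once the split $\varSigma_1,\varSigma_2$ is set up.
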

\begin{proof}
As before, we consider the alternating two-type $\BGW$ tree $\Ts^{n}$ (with black root) and offspring distribution
  $\mubn$ and $\mucn$, so that $\Ts_{n}$ is distributed according to $\Ts^{n}$ conditioned on 
  \hbox{$\{ \abs{\bullet_{\Ts^{n}}}=n-K_n, \abs{\circ_{\Ts^{n}}} =K_n+1\}$}.   
  If $\tau$ is a finite tree and $v \in \bullet_{\tau}$ is a black vertex, we denote by 
  $Q_\eps(\tau,v)$ the property that $v$ has two children both with at least $\eps \, |\tau|$ descendants. Then
  
  \[    \Pr{\exists v \in \bullet_{\Ts_{n}}: \, Q_\eps(\Ts_{n},v)} \le \frac{A_{n}}{\Pr{\abs{\bullet_{\Ts}}=n-K_n, \abs{\circ_{\Ts}} =K_n}}\]
  with \[A_{n} \coloneqq \sum_{\substack{\tau: \abs{\bullet_{\tau}}=n-K_n, \abs{\circ_{\tau}} =K_n+1  \\v \in \bullet_{\tau} \textrm{ with } Q_\eps(\tau,v)}} \Pr{\Ts^{n}=\tau}.\]
 
  A couple $(\tau,v)$ with $\abs{\bullet_{\tau}}=n-K_n, \abs{\circ_{\tau}} =K_n+1$ and $v \in \bullet_{\tau} $ with $Q_\eps(\tau,v)$
 can be decomposed into three trees $\tau_{1}$, $\tau_{2}$ and $\tau_{3}$ such that:  
  \begin{itemize}
    \item $\tau_{1}$ and $\tau_{2}$ are respectively the subtrees (with white root) grafted to the white children of $v$;
    \item $(\tau_{3},v)$ is the tree 
      obtained from $\tau$ by replacing the whole subtree grafted in $v$ with a marked leaf.
  \end{itemize}

 \begin{figure}[t]
 \begin{center}
 \includegraphics[scale=0.5]{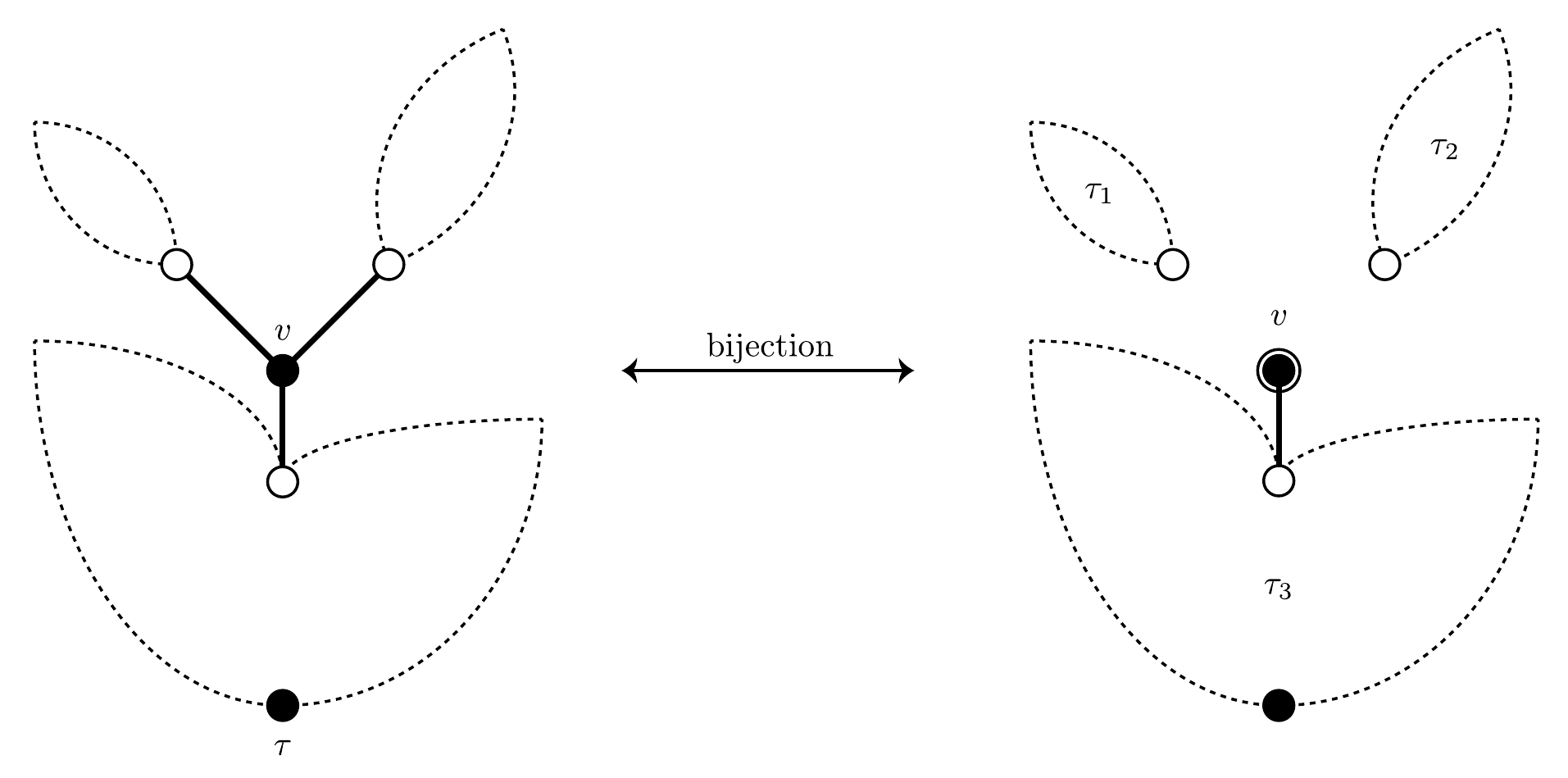}
 \caption{\label{fig:decomposition}Illustration of the decomposition $(\tau,v) \leftrightarrow  (\tau_{1},\tau_{2},(\tau_{3},v))$.}
 \end{center}
 \end{figure}
  This decomposition defines a bijection (see Figure~\ref{fig:decomposition} for an illustration):
  \begin{multline}
    \label{eq:BijTTi}
    \left\{ (\tau,v): Q_\eps(\tau),v), \abs{\bullet_{\tau}}=n-K_n, \abs{\circ_{\tau}} =K_n+1 \right\} \\
    \simeq \left\{ (\tau_{1},\tau_{2},(\tau_{3},v)):\begin{array}{l} \text{ for $i=1,2$, we have }
    \abs{\bullet_{\tau_{i}}}+ \abs{\circ_{\tau_{i}}} \ge \eps n,\\
    \text{ and }
    \abs{\bullet_{\tau_{1}}}+\abs{\bullet_{\tau_{2}}}+\abs{\bullet_{\tau_{3}}}=n-K_n,\\
    \qquad \ \abs{\circ_{\tau_{1}}}+\abs{\circ_{\tau_{2}}}+\abs{\circ_{\tau_{3}}}=K_n+1.
  \end{array}\right\}.
  \end{multline}
  In the right-hand side,
  $\tau_{1}$ and $\tau_{2}$ have white roots, while $(\tau_{3},v)$ has a black root and a marked black  leaf.
  Moreover, 
  if $(\tau,v)$ is mapped to $(\tau_{1},\tau_{2},(\tau_{3},v))$, we have
  \begin{equation}
  \Pr{\Ts^{n}=\tau}=\frac{\mubn(2)}{\mubn(0)} \, \Pr{\Ts^{\circ,n}_{1}=\tau_{1}} \, \Pr{\Ts^{\circ,n}_{2}=\tau_{2}} \, \Pr{\Ts^{\bullet,n}_{3}=\tau_{3}},
  \label{eq:ProbTProbTi}
\end{equation}
where as in Corollary~\ref{cor:probaGivenNumber}, $ \Ts^{\circ,n}_{1}$ and $ \Ts^{\circ,n}_{2}$ are two-type alternating random BGW trees  with offspring distributions $\mucn,\mubn$ with white roots, and $ \Ts^{\bullet,n}_{3}$ is the two-type alternating random BGW tree with black root (which was simply denoted by $\Ts$ before); we also take  $\Ts^{\circ,n}_{1}$, $\Ts^{\circ,n}_{2}$  and $\Ts^{\bullet,n}_{3}$ to be independent.
  The factor $\tfrac{\mubn(2)}{\mubn(0)}$ comes from the fact that when we make the decomposition
  of $\tau$ into three pieces $(\tau_{1},\tau_{2},(\tau_{3},v))$, the black vertex $v$  with two children
  is replaced with a vertex having no children.

  Combining \eqref{eq:BijTTi} and \eqref{eq:ProbTProbTi}, we get
 \[ A_{n}= \sum_{\substack{(\tau_{1},\tau_{2},(\tau_{3},v)):\\ (\ast) \textrm{ holds}} } \frac{\mubn(2)}{\mubn(0)} \, \Pr{\Ts^{\circ,n}_{1}=\tau_{1}} \, \Pr{\Ts^{\circ,n}_{2}=\tau_{2}} \, \Pr{\Ts^{\bullet,n}_{3}=\tau_{3}},\] 
 where $(\ast)$ are the conditions appearing in \eqref{eq:BijTTi}.

  We now re-index  the sum according to the numbers $n^\circ_1$, $n^\bullet_1$, $n^\circ_2$, $n^\bullet_2$
  of white and black vertices in $\tau_{1}$ and $\tau_{2}$, respectively.
  We also denote $n^\circ_3\coloneqq K_n+1-n^\circ_1-n^\circ_2$
  and $n^\bullet_3\coloneqq n-K_n-n^\bullet_1-n^\bullet_2$, which are the numbers of
  white and black vertices in $\tau_{3}$. By bounding from above the number of black leaves of $\tau_{3}$ by $\abs{\bullet_{\tau_{3}}}=n^\bullet_3$, we get that 
  \begin{multline*}
A_{n} \le \frac{\mubn(2)}{\mubn(0)} 
   \sum_{n^\circ_1, n^\bullet_1 \atop n^\circ_1 + n^\bullet_1 \geq \eps\, n} 
   \sum_{n^\circ_2, n^\bullet_2 \atop n^\circ_2 + n^\bullet_2 \geq \eps\, n} 
 \Bigg[ n^\bullet_3 \, \Pr{\abs{\bullet_{\Ts^{\circ,n}_{1}}}= n^\bullet_1, \abs{\circ_{\Ts^{\circ,n}_{1}}}= n^\circ_1} \\
  \qquad \cdot \Pr{\abs{\bullet_{\Ts^{\circ,n}_{2}}}= n^\bullet_2, \abs{\circ_{\Ts^{\circ,n}_{2}}}= n^\circ_2} \,
  \Pr{\abs{\bullet_{\Ts^{\bullet,n}_{3}}}= n^\bullet_3, \abs{\circ_{\Ts^{\bullet,n}_{3}}}= n^\circ_3} \Bigg].
  \end{multline*}
  We now use Eqs.~\eqref{eq:probaGivenNumberVertexRootBlack} and \eqref{eq:probaGivenNumberVertexRootWhite}
to express these probabilities using random walks 
(recall that $\Sbn$ and $\Scn$ are random walks with respective jump distributions given by $\mubn$ and $\mucn$):
  \begin{multline*}
   A_{n} \le \frac{\mubn(2)}{\mubn(0)} 
   \sum_{n^\circ_1, n^\bullet_1 \atop n^\circ_1 + n^\bullet_1 \geq \eps\, n} 
   \sum_{n^\circ_2, n^\bullet_2 \atop n^\circ_2 + n^\bullet_2 \geq \eps\, n} 
   \Bigg[ \frac{1}{n^\circ_1} \Pr{\Sbn_{n^\bullet_1}=n^\circ_1-1} \Pr{\Scn_{n^\circ_1}=n^\bullet_1}  \\
   \cdot \frac{1}{n^\circ_2} \Pr{\Sbn_{n^\bullet_2}=n^\circ_2-1} \Pr{\Scn_{n^\circ_2}=n^\bullet_2}  \,
   \Pr{\Sbn_{n^\bullet_3}=n^\circ_3} \Pr{\Scn_{n^\circ_3}=n^\bullet_3-1} \Bigg].
  \end{multline*}                                                                                                     
  Fix a positive constant $\delta \in (0,c \eps)$.
  We split the double sum in the right-hand side in two parts $\varSigma_1$ and $\varSigma_2$:
  $\varSigma_1$ is the sum of terms for which both $n^\circ_1 \ge \delta \sqrt{n}$ {\em and}
  $n^\circ_2 \ge \delta \sqrt{n}$,
  while $\varSigma_2$ contains all other terms.

\paragraph*{Bounding $\varSigma_1$.} Write
  \begin{multline*}   \varSigma_1 \le \frac{1}{\delta^2 n}
  \sum_{n^\circ_1, n^\bullet_1 \atop n^\circ_1 + n^\bullet_1 \geq \eps\, n} 
     \sum_{n^\circ_2, n^\bullet_2 \atop n^\circ_2 + n^\bullet_2 \geq \eps\, n}
     \Bigg[ \Pr{\Sbn_{n^\bullet_1}=n^\circ_1-1} \Pr{\Scn_{n^\circ_1}=n^\bullet_1} \\
     \cdot \Pr{\Sbn_{n^\bullet_2}=n^\circ_2-1} \Pr{\Scn_{n^\circ_2}=n^\bullet_2}
     \Pr{\Sbn_{n^\bullet_3}=n^\circ_3} \Pr{\Scn_{n^\circ_3}=n^\bullet_3-1} \Bigg].
   \end{multline*}
  Since $n^\circ_1 \le K_n+1 \sim c \sqrt{n}$, the condition $n^\circ_1 + n^\bullet_1 \geq \eps\, n$
  implies that we can find  $u \in (\tfrac{\delta}{c},\eps)$ such that  $n^\bullet_1 \geq u\, n$ for $n$ large enough.
  Similarly, we have $n^\bullet_2 \geq u\, n$ for $n$ large enough.
  Then Lemma~\ref{lem:ll1Uniforme} implies that
  $\Prb{\Sbn_{n^\bullet_1}=n^\circ_1-1} \le C_{1} n^{-1/4}$
  for $n$ large enough and for some constant $C_1>0$,
  uniformly for $n^\bullet_1 \in \{ \lceil un \rceil, \dots, n\}$ and for $n^\circ_1 \geq 1$.
  The same obviously holds when replacing indices $1$ with $2$.
By using the trivial bound $\Prb{\Sbn_{n^\bullet_3}=n^\circ_3} \le 1$, we get
  \[\varSigma_1 \le \frac{1}{\delta^2 n} \, C_{1}^2 n^{-1/2}
  \sum_{n^\circ_1, n^\circ_2}
  \left[ \sum_{n^\bullet_1, n^\bullet_2} \Pr{\Scn_{n^\circ_1}=n^\bullet_1} 
  \Pr{\Scn_{n^\circ_2}=n^\bullet_2}
  \Pr{\Scn_{n^\circ_3}=n^\bullet_3-1} \right].
  \]
  We claim that
  \[
   \sum_{n^\bullet_1, n^\bullet_2} \Pr{\Scn_{n^\circ_1}=n^\bullet_1} 
  \Pr{\Scn_{n^\circ_2}=n^\bullet_2}
  \Pr{\Scn_{n^\circ_3}=n^\bullet_3-1} =\Pr{\Scn_{K_n+1}=n-K_n-1}.\]
To see this, note that $n^\circ_1+n^\circ_2+n^\circ_3= K_n+1$ and $n^\bullet_1+n^\bullet_2+n^{\bullet}_{3}-1=n-K_{n}-1$,
and simply split the event $\{\Scn_{K_n+1}=n-K_n-1\}$,
  according to the intermediate values $\Scn_{n^\circ_1}$ and $\Scn_{n^\circ_1+n^\circ_2}$.

  By Lemma~\ref{lem:ll2}, there exists a constant $C_{2}>0$ such that  $\Prb{\Scn_{K_n+1}=n-K_n-1}   \leq C_{2}/n$ for $n$ sufficiently large. Also, the number of possible values for $n^\circ_1$ is at most $K_n+1$; and similarly for $n^\circ_2$.
  A straightforward analogue of \eqref{eq:tail_mub} gives $\mubn(2)=\mathcal O(n^{-1})$,
  while $\mubn(0)$ tends to a constant.
  Bringing everything together, there exists a constant $C_{3}>0$ such that
  \[ \frac{\mubn(2)}{\mubn(0)} \varSigma_1 \le  \frac{\mubn(2)}{\mubn(0)} \, \frac{1}{\delta^2 n} \, C_{1}^2 n^{-1/2}
  \sum_{n^\circ_1, n^\circ_2}
   \frac{C_{2}}{n}
 \le
 C_{3} \, \frac{1}{n} \, \frac{1}{\delta^2 n}  n^{-1/2}\,  n  \, \frac{1}{n}=\mathcal O(n^{-5/2}).\]

\paragraph*{Bounding  $\varSigma_2$.}
  As noticed above, we can find  $u \in (\tfrac{\delta}{c},\eps)$  such that for $n$ sufficiently large, in all summands, we have $n_1^\bullet \ge u\, n$ and $n_2^\bullet \ge u\, n$.
  Therefore, if $n_1^\circ < \delta\, \sqrt{n}$, Lemma~\ref{lem:Concentration} implies
  \[\Pr{\Sbn_{n^\bullet_1}=n^\circ_1-1} \le
  \Pr{\Sbn_{\lfloor u\, n \rfloor} \le \delta \sqrt{n}} \le \exp(-A \sqrt{n}).\]
  A similar bound holds if $n_2^\circ < \delta\, \sqrt{n}$.
  All the summands in $\varSigma_2$ are therefore bounded by $\exp(-A \sqrt{n})$.
  Since the number of terms is polynomial, we get that
  $\varSigma_2$ is exponentially small.

  To conclude, $A_{n}$ is $\mathcal O(n^{-5/2})$.
  Using \eqref{eq:probaGivenNumberVertexRootBlack} and the local limit theorems (Lemmas~\ref{lem:ll1} and \ref{lem:ll2}),
  we know that $\Prb{\abs{\bullet_{\Ts^{n}}}=n-K_n, \abs{\circ_{\Ts^{n}}} =K_n+1} \sim\ C n^{-9/4}$ as $n \rightarrow \infty$, for a certain constant $C>0$. 
  We have therefore proved that
  \[  \Pr{\exists v \in \bullet_{\Ts_{n}}: \, Q_\eps(\Ts_{n},v)}= \mathcal O(n^{-1/4}),\]
  which tends to $0$ as $n \rightarrow \infty$.  This completes the proof.
\end{proof}

\subsection{Local limit theorems}
\label{ss:llt}

We now establish two local limit estimates concerning the random walks $\Sbn$ and $\Scn$
(Lemmas~\ref{lem:ll1} and \ref{lem:ll2}).

\paragraph*{Local limit theorem for $\Sbn$.}

Let $\phibn(t)=\Esb{e^{it\Sbn_{1}}}= \tfrac{F( \bbn e^{it})}{F(\bbn)}$  be the characteristic function of $\Sbn_{1}$,
i.e.\ of the distribution $\mubn$.
(Recall that $F(z)=\sum_{k \ge 0} \tfrac{(k+1)^{k-1}}{k!} z^k$ was introduced in the proof of Lemma~\ref{lem:exist}.)

\begin{lemma}\label{lem:phib}
Assume that $\tfrac{K_{n}}{n} \rightarrow 0$ and $K_{n} \rightarrow \infty$ as $n \rightarrow \infty$. For every $n$ sufficiently large, for every $ 0 \leq |t| \leq \pi$  we have
\begin{equation}
\label{eq:phib}  \ln | \phibn (t)| \leq -  \frac{K_{n}}{ 8n} t^{2}.
\end{equation}
\end{lemma}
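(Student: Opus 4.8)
The plan is to isolate the two dominant atoms of $\mubn$, namely its masses at $0$ and $1$, and to show that this two-point part already forces the required Gaussian-type decay of the characteristic function. Writing $Y$ for a random variable with law $\mubn$, so that $\phibn(t)=\Es{e^{itY}}$, the first step is the crude triangle-inequality bound
\[ |\phibn(t)| \le \left|\mubn(0)+\mubn(1)\,e^{it}\right| + \Pr{Y \ge 2} = \left|\mubn(0)+\mubn(1)\,e^{it}\right| + 1-\mubn(0)-\mubn(1), \]
where I use that $\mubn(0)=\abn$ and $\mubn(1)=\abn\bbn$ (since $\tfrac{(k+1)^{k-1}}{k!}=1$ at $k=1$), together with $\abn F(\bbn)=1$.

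Next I would complete the square: with $a=\mubn(0)$ and $b=\mubn(1)$ one has $|a+b\,e^{it}|^2=(a+b)^2-2ab(1-\cos t)$, and the elementary inequality $\sqrt{A^2-B}\le A-\tfrac{B}{2A}$ — valid whenever $0\le B\le A^2$, which here always holds because $4ab\le(a+b)^2$ and $1-\cos t\le 2$ — applied with $A=a+b$ gives, after cancelling the linear-in-$(a,b)$ terms,
\[ |\phibn(t)| \le 1-\frac{\mubn(0)\,\mubn(1)}{\mubn(0)+\mubn(1)}\,(1-\cos t). \]
This holds for every $n$.

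It then remains to insert the asymptotics and an elementary trigonometric bound. From the proof of Lemma~\ref{lem:exist} we have $\abn\to 1$ and $\bbn\sim K_n/n$, hence $\mubn(0)\to 1$ and $\mubn(1)\sim K_n/n$, so the prefactor above is $\sim K_n/n$. Using $1-\cos t\ge\tfrac{2t^2}{\pi^2}$ for $|t|\le\pi$, we get for every $n$ large enough and all $|t|\le\pi$
\[ |\phibn(t)| \le 1-\frac{\mubn(0)\,\mubn(1)}{\mubn(0)+\mubn(1)}\cdot\frac{2t^2}{\pi^2} \le 1-\frac{K_n}{8n}\,t^2, \]
the last inequality being valid because $\tfrac{2}{\pi^2}>\tfrac18$ (equivalently $\pi^2<16$), which leaves enough slack to absorb, for finite $n$, the errors $\abn-1$ and $\bbn n/K_n-1$. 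Since the right-hand side is $\ge 1-\tfrac{\pi^2 K_n}{8n}$, which tends to $1$, it is positive; taking logarithms and using $\ln(1-x)\le -x$ yields $\ln|\phibn(t)|\le-\tfrac{K_n}{8n}t^2$, which is exactly \eqref{eq:phib}.

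There is no genuine obstacle here: the only thing to watch is the numerology of the constants — one must verify that $2/\pi^2$ (the coefficient produced by the bound $1-\cos t\ge 2t^2/\pi^2$) comfortably exceeds the target $1/8$, and propagate the convergences $\abn\to1$, $\bbn n/K_n\to1$ through the ``for $n$ sufficiently large'' clause. If one prefers to avoid even this, one can simply record the sharper $n$-free bound $\ln|\phibn(t)|\le-\tfrac{\mubn(0)\mubn(1)}{\mubn(0)+\mubn(1)}(1-\cos t)$ and invoke the asymptotics only at the very last step.
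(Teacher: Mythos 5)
Your proof is correct, and it takes a genuinely different route from the paper's. The paper splits into two regimes: for $|t|\le 1$ it performs a third-order Taylor expansion of the characteristic function in terms of the first three moments of $\mubn$ (reusing the moment framework that reappears in the proof of Lemma~\ref{lem:ll1}), while for $1\le|t|\le\pi$ it invokes the small-$z$ expansion $F(z)=1+z+o(z)$ and computes $|\phibn(t)|^2$ directly. Your argument instead isolates the two-point part $\mubn(0)+\mubn(1)e^{it}$ of the distribution, bounds the tail $\Pr{Y\ge 2}$ by the triangle inequality, and uses the elementary inequality $\sqrt{A^2-B}\le A-\tfrac{B}{2A}$ plus $1-\cos t\ge\tfrac{2t^2}{\pi^2}$ on $[-\pi,\pi]$ to obtain a single, uniform bound
\[ |\phibn(t)| \le 1-\frac{\mubn(0)\,\mubn(1)}{\mubn(0)+\mubn(1)}\,(1-\cos t) \]
valid for all $n$ and all $t$, only bringing in the asymptotics $\abn\to1$, $\bbn\sim K_n/n$ at the very end. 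What this buys: no case split, no control of third moments, and an $n$-free intermediate inequality that makes the dependence on the constants $(2/\pi^2$ versus $1/8)$ completely transparent. What the paper's approach buys: it is the generic moment-expansion template that one would use when the offspring law has no convenient explicit atoms (whereas yours leans on the specific two-parameter form $\mubn(i)=\abn\bbn^i\tfrac{(i+1)^{i-1}}{i!}$, which makes $\mubn(0)$ and $\mubn(1)$ especially tractable). Both are perfectly rigorous; yours is the shorter and, in my view, the cleaner proof for this particular lemma.
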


\begin{proof}
We take two different approaches depending on the distance of $|t|$ to $0$.

First assume that $|t| \leq 1$. We have (see e.g.~\cite[Lemma 3.3.7]{Dur10})
\[\phibn(t)=1+\textrm{i}\E[\Sbn_{1}]t- \E[(\Sbn_{1})^{2}] \frac{t^{2}}{2}+r^{n}_{1}(t),\]
with $|r^{n}_{1}(t)| \leq |t|^{3} \E[(\Sbn_{1})^{3}]/6$ for every $n \geq 1$ and $t \in \R$. By using the expansion of $F$ around $0$, we see that $ \E[\Sbn_{1}] \sim \E[(\Sbn_{1})^{2}] \sim \E[(\Sbn_{1})^{3}] \sim \frac{K_{n}}{n} \rightarrow 0$ as $n \rightarrow \infty$, so that
\begin{align*}
  \hspace{ -2mm} |\phibn(t)|^{2} &= \left( 1+\textrm{i}\E[\Sbn_{1}]t- \E[(\Sbn_{1})^{2}] \frac{t^{2}}{2}+r^{n}_{1}(t) \right) \left( 1-\textrm{i}\E[\Sbn_{1}]t- \E[(\Sbn_{1})^{2}] \frac{t^{2}}{2}+\overline{r^{n}_{1}(t)} \right)\\
&= 1 - (\sbn)^{2} t^{2}+r^{n}_{2}(t),
\end{align*}
where $|r^{n}_{2}(t)| \leq  |t|^{3} K_{n}/(2n)$ for every $n$ sufficiently large, uniformly for $t \in [-1,1]$.
Therefore
\[ 2 \ln |\phibn(t)| \leq - (\sbn)^{2} t^{2}+ |t|^{3} K_{n}/(2n)= - \frac{K_{n}}{n} t^{2} \left(  \frac{(\sbn)^{2} n}{K_{n}}-|t|/2 \right)
\]
for every $n$ sufficiently large, uniformly for $t \in [-1,1]$.
Since $(\sbn)^{2} \sim K_{n}/n$ as $n \rightarrow \infty$ by Lemma~\ref{lem:exist} (ii), it follows that for every $n$ sufficiently large and $0 \leq |t| \leq 1$ we have $ \ln | \phibn (t)| \leq -  \frac{K_{n}}{ 8n} t^{2}$.

Now assume that $1 \leq |t| \leq \pi$. Since $F(z)=1+z+o(z)$ as $z \rightarrow 0$ and, by definition, $\phibn(t)=\tfrac{F( \bbn e^{it})}{F(\bbn)}$,
we have
\[|\phibn(t)|^{2}=(1+\bbn(e^{it}-1)+o(\bbn))(1+\bbn(e^{-it}-1)+o(\bbn))=1+2\bbn(\cos(t)-1)+o(\bbn),\]
where the $o(\bbn)$ is uniform in $1\leq |t| \leq \pi$.
Therefore, for every $1 \leq |t| \leq \pi$, for every $n$ sufficiently large,
\[ 2 \ln |\phibn(t)| \leq - \bbn \frac{t^{2}}{3}+o(\bbn) \leq - \frac{K_{n}}{4n} t^{2},\]
where for the last inequality we have used the fact that $\bbn \sim \frac{K_{n}}{n}$ as $n \rightarrow \infty$ by Lemma~\ref{lem:exist} (ii) and that $t$ is not too close to $0$.
\end{proof}

\begin{proof}[Proof of Lemma~\ref{lem:ll1}]
We first check that the convergence
\begin{equation}
\label{eq:cv1b}\E\bigg[e^{i t \frac{\Sbn_{N}-N \frac{K_{n}+1}{n-K_{n}} }{ \DbnN }} \bigg]  \quad \mathop{\longrightarrow}_{n \rightarrow \infty} \quad e^{- \frac{t^{2}}{2}}
\end{equation}
holds uniformly for $un \leq N \leq n$ and $t$ in compact subsets of $\R$.  As in the proof of Lemma~\ref{lem:phib}, since $\tfrac{\E[(\Sbn_{1})^{3}]}{(\DbnN)^{3}}=\mathcal O(\tfrac{1}{n \sqrt{K_{n}}})= o(\frac{1}{n})$, we have
\[\phibn\left( \frac{t}{ \DbnN } \right)=1+\textrm{i}\E[\Sbn_{1}] \frac{t}{ \DbnN } - \E[(\Sbn_{1})^{2}] \frac{t^{2}}{2(\DbnN)^{2}}+ o \left( \frac{1}{n} \right),\]
where the $o$ is uniform when $un \leq N \leq n$ and $t$ belongs to a compact subset of $\R$, so that
\[\ln \phibn \left( \frac{t}{ \DbnN } \right)=it  \frac{K_{n}+1}{(n-K_{n})\DbnN} - \frac{t^{2}}{2} \cdot \frac{(\sbn)^{2}}{(\DbnN )^{2}}+ o \left( \frac{1}{n} \right).\]
Thus
\[\Es{e^{i t \frac{\Sbn_{N}-N \frac{K_{n}+1}{n-K_{n}} }{ \DbnN }}}= \exp \left(- \frac{t^{2}}{2} \cdot  \frac{ N (\sbn)^{2} }{(\DbnN)^{2}} + o \left( \frac{N}{n} \right)   \right).\]
Since $(\DbnN)^{2}= N (\sbn)^{2}$, this  implies \eqref{eq:cv1b}.

We then follow the steps of the analytic proof of the standard local limit theorem for a sequence of independent identically distributed random variables. The main difficulty is that the distribution of $\Sbn_{1}$ depends on $n$. To simplify notation, set $f(t)= e^{- \frac{t^{2}}{2}}$ for $t \in \R$. By Fourier inversion we have, for every $x \in \mathbb{R}$,

\begin{equation}
 \label{eq:fourierb}  p(x)= \frac{1}{2\pi} \int_{-\infty}^{\infty} e^{-itx} f(t) {\d}t.
 \end{equation}
Also, for $k \in \mathbb{Z}$,
\[ \Pr{\Sbn_{N}=k} = \frac{1}{2\pi} \int_{-\pi}^{\pi} e^{-itk} \phibn(t)^{ N} {\mathrm{d}}t.\]
In the following, we only consider values of $x \in \R$ such that $N \frac{K_{n}+1}{n-K_{n}}+x  \DbnN$ is an integer.
For such $x$,
\begin{multline*}
  \DbnN \cdot \Pr{\Sbn_{ N}= N \frac{K_{n}+1}{n-K_{n}}+x  \DbnN}\\
  =  \frac{1}{2\pi} \int^{\pi \DbnN}_{-\pi \DbnN} e^{- i t x} \left(  \phibn \left(  \frac{t}{\DbnN} \right)   e^{-it \frac{K_{n}+1}{(n-K_{n}) \DbnN}} \right)^{N} {\mathrm{d}}t.
\end{multline*}
Therefore for every fixed $A>0$, for $n$ sufficiently large,
\begin{multline*}
  \left| \DbnN \cdot \Pr{\Sbn_{N}= N \frac{K_{n}+1}{n-K_{n}}+x  \DbnN} -p( x)\right| \\
  \leq \frac{1}{2 \pi} \big(|I^{(1)}_{A}(x,N,n)| + |I^{(2)}_{A}(x,N,n)|+|I^{(3)}(x)|\big),
\end{multline*}
where
\begin{align*}
  I^{(1)}_{A}(x,N,n)&=\int_{-A}^{A} e^{-i t x} \left(   \left(  \phibn \left(  \frac{t}{\DbnN} \right)   e^{-it \frac{K_{n}+1}{(n-K_{n}) \DbnN}} \right)^{N}- f(t) \right) {\mathrm{d}}t,\\
 I^{(2)}_{A}(x,N,n)&= \int_{A<|t|< \pi \DbnN} e^{-itx - it \frac{(K_{n}+1) N}{(n-K_{n}) \DbnN}} \phibn \left(  \frac{t}{\DbnN} \right) ^{ N}{\mathrm{d}}t, \\
 I^{(3)}_{A}(x)&= \int_{|t|>A} e^{-itx} f(t) {\mathrm{d}}t.
 \end{align*}
We shall check that for every fixed $\epsilon>0$, there exists $A>0$ such that for every $n$ sufficiently large, for every $ un \leq N \leq  n$, for every $x \in \R$ (with the above integrality condition),  $|I^{(1)}_{A}(x,N,n)| \leq \epsilon$, $|I^{(2)}_{A}(x,N,n)| \leq \epsilon$ and $|I^{(3)}_{A}(x)| \leq\epsilon$.

 We choose $A >0$ such that
\begin{equation}
\label{eq:Ab} 2 \int_{A}^{\infty } e^{-  \frac{1}{9} t^{2}} {\d}t < \epsilon.
\end{equation}

\emph{Bounding $|I^{(1)}_{A}(x,N,n)|$.} Since the convergence \eqref{eq:cv1b} holds  uniformly on compact subsets of $\R$, for   $n$ sufficiently large, for every $ un \leq N \leq n$ and $x \in \R$, we have $|I^{(1)}_{A}(x,N,n)| \leq \epsilon$.

\emph{Bounding $|I^{(3)}_{A}(x)|$.} By the choice of $A$ in \eqref{eq:Ab}
since $|f(t)|=e^{-  \frac{1}{2} t^{2}} \le e^{-  \frac{1}{9} t^{2}}$,
for every $x \in \R$ we have $|I^{(3)}_{A}(x)| \leq \epsilon$.

 \emph{Bounding $|I^{(2)}_{{A} }(x,N,n)|$.}  By Lemma~\ref{lem:phib},  for every $n$ sufficiently large, for every $ un \leq N \leq n$, $A<|t| \leq \pi  \DbnN$ we have 
\[|I^{(2)}(x,N,n)| \leq 2 \int_{A}^{\pi \DbnN} e^{-  \frac{ K_{n} N }{ 8 n (\DbnN)^{2}} \cdot t^{2} } {\d}t \leq 2 \int_{A}^{\infty } e^{-  \frac{1}{9} t^{2}} {\d}t,\]
   which is less than $\epsilon$ by \eqref{eq:Ab}. This completes the proof.
\end{proof}

\paragraph*{Local limit theorem for $\Scn$.}

In the case of $\Scn$, the analysis is more subtle.
As above, we start with an estimate on the characteristic function 
of the step distribution of the random walk
\[\phicn(t)\coloneqq\Es{e^{it\Scn_{1}}}=\frac{F( \bcn e^{it})}{F(\bcn)}.\] 
To simplify notation, set  $y_{n}=1/e-\bcn$. 

\begin{lemma}\label{lem:technique}
Assume that $\tfrac{K_{n}}{\sqrt{n}} \rightarrow c>0$ as $n \rightarrow \infty$.  There exist constants $A_{0},\epsilon,\kappa>0$, which may only depend on $c$, such that the following holds.
For every $n$ sufficiently large and for every $|t| \in (\frac{A_{0}}{\Dcn},\epsilon)$, we have $|\phicn(t)| \leq e^{- \kappa |t|^{1/2}}$.
\end{lemma}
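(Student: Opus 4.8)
The plan is to estimate $|\phicn(t)| = |F(\bcn e^{it})|/F(\bcn)$ by pushing the singular expansion \eqref{eq:devF} of $F$ at its radius of convergence $1/e$. Write $\bcn = 1/e - y_n$ with $y_n \to 0$, and recall from \eqref{eq:est_bcirc} that $y_n \sim \tfrac{1}{2e}(K_n/n)^2 \sim \tfrac{c^2}{2e\,n}$, and that $\Dcn \sim n/c$. The key point is that $\bcn e^{it} = 1/e - z_n(t)$ where $z_n(t) = 1/e - \bcn e^{it} = y_n + \tfrac{1}{e}(1 - e^{it})$; for small $|t|$ this is a complex number close to $0$ whose real part is $y_n + \tfrac{1}{e}(1-\cos t) \ge 0$, so $z_n(t)$ stays in the slit plane $\CC \setminus \R_{<0}$ and the expansion \eqref{eq:devF} applies.

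\medskip

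First I would record, using \eqref{eq:devF}, that for a complex $z \to 0$ avoiding $\R_{<0}$,
\[
\frac{F(1/e - z)}{F(1/e)} = 1 - \sqrt{2e}\,\sqrt{z} + o(\sqrt z), \qquad F(1/e) = e.
\]
Hence
\[
\frac{\phicn(t) \cdot F(\bcn)}{e} = 1 - \sqrt{2e}\,\sqrt{z_n(t)} + o(\sqrt{z_n(t)}),
\]
while with $z_n(0) = y_n$ one gets $F(\bcn)/e = 1 - \sqrt{2e}\sqrt{y_n} + o(\sqrt{y_n})$. Dividing, for $|t|$ in the stated range,
\[
\phicn(t) = \frac{1 - \sqrt{2e}\sqrt{z_n(t)} + o(\sqrt{z_n(t)})}{1 - \sqrt{2e}\sqrt{y_n} + o(\sqrt{y_n})}
= 1 - \sqrt{2e}\bigl(\sqrt{z_n(t)} - \sqrt{y_n}\bigr) + o(\text{error terms}),
\]
so that $|\phicn(t)| \le 1 - \sqrt{2e}\,\Re\bigl(\sqrt{z_n(t)} - \sqrt{y_n}\bigr) + o(\cdots)$ provided the real part is controlled from below. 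The heart of the matter is the lower bound
\[
\Re\bigl(\sqrt{z_n(t)}\bigr) - \sqrt{y_n} \ge \kappa' |t|^{1/2}
\]
on $|t| \in (A_0/\Dcn,\epsilon)$ for suitable constants. To see this, write $z_n(t) = y_n + \tfrac1e(1-\cos t) + \tfrac{i}{e}\sin t$; its modulus is of order $\max(y_n, |t|^2, |t|) \asymp |t|$ (since $|t| \gg A_0/\Dcn \asymp 1/n \gg y_n$ when $A_0$ is large), and its argument stays bounded away from $\pm\pi$, so $\Re\sqrt{z_n(t)} \asymp |z_n(t)|^{1/2} \asymp |t|^{1/2}$. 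Meanwhile $\sqrt{y_n} \asymp 1/\sqrt{n}$, which is $\le \tfrac12 \kappa' |t|^{1/2}$ once $|t| \ge A_0/\Dcn$ with $A_0$ chosen large enough (depending only on $c$). This gives $\Re(\sqrt{z_n(t)} - \sqrt{y_n}) \ge \tfrac12\kappa'|t|^{1/2}$, and absorbing the $o(\cdot)$ terms (uniform on the compact-in-the-slit-plane range $|t|<\epsilon$) yields $|\phicn(t)| \le 1 - \tfrac{\kappa'}{2}\sqrt{2e}\,|t|^{1/2} + o(|t|^{1/2}) \le e^{-\kappa|t|^{1/2}}$ for a smaller constant $\kappa$ and $n$ large.

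\medskip

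The main obstacle I anticipate is making the singular expansion \eqref{eq:devF} genuinely \emph{uniform} over the whole range $|t| \in (A_0/\Dcn,\epsilon)$ rather than just pointwise as $z \to 0$: one needs that the $o(\sqrt z)$ error is controlled uniformly for $z$ in a region of the slit disk, and then that the resulting estimate on $\phicn(t)$ holds uniformly in $t$ across a range whose lower endpoint itself depends on $n$. This is handled by noting that $z_n(t)$ ranges over a set that, after rescaling, is contained in a fixed compact subset of $\CC\setminus\R_{\le 0}$ bounded away from $0$ (thanks to $|t| \ge A_0/\Dcn$) — so the expansion is applied on a region where the error is genuinely uniform, and the dangerous regime $z \to 0$ along the real axis (where $\sqrt{z_n(t)} - \sqrt{y_n}$ could be small) is excluded precisely by the lower bound on $|t|$. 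A secondary bookkeeping point is checking $z_n(t) \notin \R_{<0}$: since $\Re z_n(t) = y_n + \tfrac1e(1-\cos t) \ge 0$, equality to a negative real forces $1-\cos t<0$, impossible, so $z_n(t)$ lies in the closed right half-plane minus $\{0\}$, hence in the slit plane. Once these uniformity issues are settled the calculation above closes the proof; the constants $A_0,\epsilon,\kappa$ depend only on $c$ through the asymptotics $y_n \sim c^2/(2en)$ and $\Dcn \sim n/c$.
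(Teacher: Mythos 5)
Your proof is correct and follows essentially the same route as the paper's: both bound $|\phicn(t)|$ via the singular expansion \eqref{eq:devF} of $F$ at $1/e$ applied to $z_n(t) = 1/e - \bcn e^{it}$ (which lies in the closed right half-plane, hence in the slit plane), exploit $\Re\sqrt{z_n(t)} \ge |z_n(t)|^{1/2}/\sqrt{2}$ together with $|z_n(t)| \gtrsim |t|$, and use the lower constraint $|t| \ge A_0/\Dcn$ to make the competing term $\sqrt{y_n}$ small compared to $\sqrt{|t|}$ once $A_0$ is chosen large. The paper phrases this slightly more cleanly by working with $\ln|\phicn(t)| = \Re\ln F(\bcn e^{it}) - \ln F(\bcn)$ and a one-sided consequence of the expansion (inequality \eqref{eq:z}), which avoids dividing two asymptotic expansions; and one small phrasing slip in your uniformity paragraph — $z_n(t)$ is not "bounded away from $0$" after any natural rescaling — is harmless, since what actually matters is that $|z_n(t)|$ dominates $y_n$ and that the $o(\sqrt{z})$ error in \eqref{eq:devF} is uniform over the whole slit disk of small radius, both of which you use correctly.
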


\begin{proof}We start with some preliminary observations which fix the values of the different constants.
\begin{enumerate}
\item[--]  Since $\Dcn \sim n /c$ and $y_{n} \sim \frac{c^{2}}{2en}$ (as was seen in the proof of Lemma~\ref{lem:exist}),
  we can find a constant $\gamma>0$ depending only on $c$ such that
  the conditions $t \ge  {A_0}/{\Dcn}$ and $A_{0} \geq 1$ imply
  $t \ge {\gamma A_0}/{n}$ and $t \ge \gamma y_{n}$,
  for $n$ sufficiently large.
\item[--] We then fix $A_{0} \geq 1$ such that $\frac{2c}{\sqrt{\gamma A_{0}}} \leq  \frac{1}{4\sqrt{e}}$.
\item[--] From the expansion $ \ln F \big( \tfrac{1}{e}-z \big)= 1-\sqrt{2e} \sqrt{z}+o(\sqrt{z})$, there exists $\eta>0$ such that 
\begin{equation}
\label{eq:z} \Re(z) \ge 0, \, |z| \leq \eta  \qquad \implies  \qquad  \Re \ln F \left( \frac{1}{e}-z \right) \leq  1-  \Re \sqrt{z}.
\end{equation}
Then set $\epsilon= \sqrt{\eta^{2}/(e^{-2}+\gamma^{-2})}$.
\end{enumerate}

We now turn to the main part of the proof. For $t \in \R$, since
$ \ln |\phicn(t)| =\Re \ln \phicn(t)$, we have
\begin{equation}
\label{eq:lnphi} \ln |\phicn(t)| =\Re \ln F( \bcn e^{it})- \ln F( \bcn).
\end{equation}
We first study the term $\ln F( \bcn)$.
By \eqref{eq:devF} and \eqref{eq:est_bcirc}, we have $\ln F( \bcn)=1- \tfrac{c}{\sqrt{n}}+ o \big(  \frac{1}{\sqrt{n}} \big)$.  As a consequence, for every $n$ sufficiently large, for every $  {A_{0}}/{\Dcn} \leq |t| \leq 1$, we have
\begin{equation}
\label{eq:ln}\ln F( \bcn) \geq 1- \frac{2c}{\sqrt{n}} \geq 1- \frac{2c}{\sqrt{\gamma A_{0}}} \sqrt{|t|}.
\end{equation}

Consider now the term $\Re \ln F( \bcn e^{it})$.
To this end, we set $z=z_{n}(t)=\tfrac{1}{e}-\bcn e^{it}$.
Note that we always have $\Re(z_n(t)) \ge 0$.
Observing that $e^{-it} z_{n}(t)= \tfrac{1}{e}(e^{-it}-1)+y_n$, we have
\begin{equation}                                                         
  \label{eq:module} |z_{n}(t)|^{2}=  \tfrac{1}{e^2} \big|e^{-it}-1\big|^2 
  +\tfrac{2}{e} \big\langle y_n, e^{-it}-1 \big\rangle +|y_n|^2 
  = \frac{2(1-\cos(t))}{e^{2}} -\frac{2 y_{n}(1-\cos(t))}{e}+y_{n}^{2}.
\end{equation}
For $n$ sufficiently large and $|t| \leq \epsilon$, it follows that
\begin{equation}
\label{ineq:module} |z_{n}(t)|^{2} \leq  \left( \frac{1}{e^{2}} + \frac{1}{\gamma^{2}} \right)  t^{2} \leq \eta^{2}.
\end{equation}
Using \eqref{eq:z}, we have
\[\Re \ln F( \bcn e^{it}) \leq 1-   \Re \sqrt{z_{n}(t)}.\]
Since $\Re z_{n}(t) \ge 0$, we have
\[ \Re \sqrt{z_{n}(t)}=\sqrt{\frac{\Re(z_n(t))+|z_n(t)|}{2}} \ge \sqrt{\frac{|z_n(t)|}{2}}. \]
But, by \eqref{eq:module}, for $n$ sufficiently large and  for every $|t|$ in $(A_{0}/\Dcn,1)$, 
\[|z_{n}(t)|^{2} = \frac{2(1-\cos(t))}{e^{2}} (1-y_{n} e)+y_{n}^{2} \geq \frac{1-\cos(t)}{e^{2}} \geq  \frac{t^{2}}{4e^{2}}.\] 
As a consequence $\Re \sqrt{z_{n}(t)} \geq \frac{\sqrt{|t|}}{2\sqrt{e}}$, so that 
\begin{equation}
\label{eq:reln}\Re \ln F( \bcn e^{it}) \leq 1- \frac{\sqrt{|t|}}{2\sqrt{e}}.
\end{equation}
To conclude, by combining \eqref{eq:lnphi} with \eqref{eq:ln} and \eqref{eq:reln},
we get that for every $n$ sufficiently large, for every $|t|$ in $(A_{0}/\Dcn, \epsilon)$,
\[\ln|\phicn(t)| \leq \left(  \frac{2c}{\sqrt{\gamma A_{0}}}- \frac{1}{2\sqrt{e}} \right) \cdot \sqrt{|t|} \leq -\frac{1}{4\sqrt{e}} \sqrt{|t|}.\]
This completes the proof, with $\kappa= \frac{1}{4\sqrt{e}}$.
\end{proof}

We are now ready to tackle the proof of Lemma~\ref{lem:ll2}.
\begin{proof}[Proof of Lemma~\ref{lem:ll2}]
The first step is to show the convergence
\begin{equation}
\label{eq:cv1}\Es{e^{i t \frac{\Scn_{ u K_{n} +j}}{ \Dcn }}}  \quad \mathop{\longrightarrow}_{n \rightarrow \infty} \quad \exp \left( u c^{2} \left( 1 -\sqrt{1-  \frac{2it}{c}}  \right)  \right),
\end{equation}
uniformly for $t$ in compact subsets of $\R$ and $|j| \leq n^{3/8}$.

The characteristic function $\phicn(t)=\Es{e^{it\Scn_{1}}}$  of $\Scn_{1}$
is given as $\phicn(t)= \frac{F( \bcn e^{it})}{F(\bcn)}$,
where $\bcn$ has been chosen such that $ \frac{\bcn F'(\bcn)}{F(\bcn)}= \frac{n-K_{n}}{K_{n}+1}$.
Recall that $\bcn=1/e-y_{n}$ with 
\hbox{$y_{n} \sim \frac{1}{2e}  \big( \frac{K_{n}}{n} \big)^{2} \sim \frac{c^{2}}{2e} \cdot \frac{1}{n}$}
(from \eqref{eq:est_bcirc}).
Since $ \Dcn \sim  n /c$, we have, uniformly on compact subsets of $\R$ as $n \rightarrow \infty$,
\[\bcn e^{ \frac{it}{\Dcn}}=(1/e-y_{n})e^{ \frac{it}{\Dcn}}=1/e-  \left( \frac{c^{2}}{2e} - \frac{it c}{e } \right) \cdot \frac{1}{{n}}+o \left(  \frac{1}{n} \right).\]
By \eqref{eq:devF}, we have $ \ln F \left( \frac{1}{e}-z \right)= 1-\sqrt{2e} \sqrt{z}+o(\sqrt{z})$ as $z \rightarrow 0$.
As a consequence as $n \rightarrow \infty$, \[\ln \phicn \left( \frac{t}{ \Dcn } \right)=-\sqrt{2e} \sqrt{ \frac{c^{2}}{2e} - \frac{it c}{e}} \cdot \frac{1}{\sqrt{n}} + \sqrt{2e} \sqrt{ \frac{c^{2}}{2e}} \cdot \frac{1}{\sqrt{n}}+o \left(  \frac{1}{\sqrt{n}} \right),\]
 uniformly on compact subsets of $\R$. Therefore
\[( uK_{n}  +j) \ln \phicn \left( \frac{t}{\Dcn} \right)  \quad \mathop{\longrightarrow}_{n \rightarrow \infty} \quad u c^{2}-uc \sqrt{c^{2}- 2itc},\]
 uniformly when $t$ belongs to compact subsets of $\R$ and $|j| \leq n^{3/8}$, which implies \eqref{eq:cv1}. 

 As in the proof of Lemma~\ref{lem:ll1}, the second step consists in estimating $\Pr{\Scn_{ uK_{n}  +j}=k}$ by Fourier inversion.
 We first let, for $t \in \mathbb{R}$, 
\[f(t)=\exp \left( u c^{2} \left( 1 -\sqrt{1-  \frac{2it}{c}}  \right)  \right)\]
be the expression appearing in \eqref{eq:cv1}. 
By \cite[Sec.1.2.5]{Kyp06}),  $f$ is the characteristic function of a random variable with explicit density 
\[q_u(x)=\left( \frac{u^{2} c^{3}}{2 \pi x^{3}} \right)^{1/2} \exp \left(  - \frac{c (x-uc)^{2} }{2x} \right) \mathbbm{1}_{x>0},\]
so that by Fourier inversion we have, for every $x \in \mathbb{R}$,
\begin{equation}
 \label{eq:fourier} 
 q_u(x)=  \frac{1}{2\pi}\int_{-\infty}^{\infty}  {\mathrm{d}t} e^{-itx} \exp \left( u c^{2} \left( 1 -\sqrt{1-  \frac{2it}{c}}  \right)  \right).
 \end{equation}
On the other hand, 
for $k \in \mathbb{Z}$,
\[ \Pr{\Scn_{ uK_{n}  +j}=k} = \frac{1}{2\pi} \int_{-\pi}^{\pi} e^{-itk} \phicn(t)^{ uK_{n}  +j} {\mathrm{d}}t.\]
Therefore, assuming that $x \in \R$ is chosen so that $x \Dcn$ is an integer, we get that
\[ \Dcn \cdot \Pr{\Scn_{ uK_{n}  +j}=x  \Dcn}=  \frac{1}{2\pi} \int^{\pi \Dcn}_{-\pi \Dcn} e^{- i t x} \phicn \left(  \frac{t}{\Dcn} \right) ^{ uK_{n}  +j} {\mathrm{d}}t.\]
We deduce that for every fixed $A>0$ and $0< \epsilon \leq 1$, for $n$ sufficiently large, it holds 
\begin{multline*}
  \left| \Dcn \cdot \Pr{\Scn_{ uK_{n}  +j}=x  \Dcn} - q_{u}(x)\right| \\
  \leq \frac{1}{2 \pi} (|I^{(1)}_{A}(x,j,n)| + |I^{(2)}_{\epsilon,A}(x,j,n)|+|I^{(3)}_{\epsilon}(x,j,n)|+|I^{(4)}_{A}(x)|),
\end{multline*}
where
\[I^{(1)}_{A}(x,j,n)=\int_{-A}^{A} e^{-i t x} \left(  \phicn \left(  \frac{t}{\Dcn} \right) ^{ uK_{n}  +j}- f(t) \right) {\mathrm{d}}t\]
\[ I^{(2)}_{\epsilon,A}(x,j,n)= \int_{A<|t|< \epsilon \Dcn} e^{-itx} \phicn \left(  \frac{t}{\Dcn} \right) ^{ uK_{n}  +j}{\mathrm{d}}t,\]
\[ I^{(3)}_{\epsilon}(x,j,n)=\int_{\epsilon \Dcn<|t|<\pi \Dcn} e^{-itx} \phicn  \left(  \frac{t}{\Dcn} \right) ^{ uK_{n}  +j} {\mathrm{d}}t, \qquad I^{(4)}_{A}(x)= \int_{|t|>A} e^{-itx} f(t) {\mathrm{d}}t .\]
We shall check that for every fixed $\epsilon'>0$, there exists $A>0$ and $0 < \epsilon \leq 1$ such that for every $n$ sufficiently large, for every $|j| \leq n^{3/8}$, for every $x \in \R$,
 $|I^{(1)}_{A}(x,j,n)| \leq \epsilon', |I^{(2)}_{\epsilon,A}(x,j,n)| \leq \epsilon', |I^{(3)}_{\epsilon}(x,j,n)| \leq \epsilon', |I^{(4)}_{A}(x)| \leq\epsilon'$.

Fix $\epsilon'>0$. We first explain how to choose $A$ and $\epsilon$. 
As a preliminary observation, note that there exists a constant $C_{1}>0$
(which only depend on $u$ and $c$) such that $|f(t)| \leq C_{1} e^{-C_{1}^{-1} \sqrt{t}}$ for $t \geq 0$
(this is straightforward, using the explicit expression of $f$).
Let $A_{0}$ and $\kappa$ be the constants given by Lemma~\ref{lem:technique}.
We may choose $A \geq A_{0}$ such that
\begin{equation}
\label{eq:A}2\int_{A}^{\infty} |f(t)|  \mathrm{d}t <\epsilon' \qquad \textrm{and} \qquad  2 \int_{A}^{\infty} e^{- \frac{\kappa u c^{3/2}}{2} t^{1/2}} \mathrm{d}t < \epsilon'.
\end{equation}
We then let $\epsilon$ be also given by Lemma~\ref{lem:technique}.

\emph{Bounding $|I^{(1)}_{A}(x,j,n)|$.} Since the convergence \eqref{eq:cv1} holds  uniformly on compact subsets of $\R$ and $|j| \leq n^{3/8}$, for   $n$ sufficiently large, for every $|j| \leq n^{3/8}$ and $x \in \R$, we have $|I^{(1)}_{A}(x,j,n)| \leq \epsilon'$.

\emph{Bounding $|I^{(4)}_{A}(x)|$.} By the choice of $A$ in \eqref{eq:A}, for every $n$ sufficiently large, for every $x \in \R$ we have $|I^{(4)}_{A}(x,n)| \leq \epsilon'$.

\emph{Bounding $|I^{(3)}_{\epsilon}(x,j,n)|$.}  Set $\phi(t)=\frac{F(e^{it}/e)}{F(1/e)}$, which is the characteristic function of  the probability distribution $ \frac{(i+1)^{(i-1)}}{e^{i+1} i!}$ on $\Z_{+}$. Since the latter is non lattice, we have $|\phi(t)|<1$ for $t \in (0,2\pi)$ (see e.g.~\cite[Theorem 3.5.1]{Dur10}). 
Therefore, there exists $C_2>0$ such that $|\phi(t)|<e^{-2C_2}$ for $ \epsilon \leq |t| \leq \pi$.
Since $F$ is a power series with nonnegative coefficients and converges in $1/e$,
it converges uniformly on $\{z: |z| \le 1/e\}$ and is continuous on this compact set.
It is therefore uniformly continuous and
we have $\phicn(t) \rightarrow \phi(t)$, uniformly for $t \in \R$.
In particular, for $n$ sufficiently large and $t$ with $ \epsilon \leq |t| \leq \pi$, 
we have $|\phicn(t)|<e^{-C_2}$.
 Therefore, for every $n$ sufficiently large, for every $|j| \leq n^{3/8}$ and  $x \in \R$,
 \[ \left|I^{(3)}_{\epsilon}(x,j,n) \right|  \leq 2 \int_{\epsilon \Dcn}^{\pi \Dcn} e^{-C_2 (uK_{n}  +n^{3/8})} \mathrm{d}t
 \leq 2 \pi \Dcn \cdot (e^{- C_2 u c \sqrt{n}/2} ) \le \epsilon'.\]
 
 \emph{Bounding $|I^{(2)}_{\epsilon, {A} }(x,j,n)|$.}  By Lemma~\ref{lem:technique}, for every $A<|t| \leq \epsilon \Dcn$ we have \[  \left| \phicn \left(  \frac{t}{\Dcn} \right) \right|  \leq e^{-\kappa \left(  \frac{|t|}{\Dcn} \right) ^{1/2}}.\]
Hence, for $n$ sufficiently large, for every $|j| \leq n^{3/8}$ and $x \in \R$,
 \[ \left| I^{(2)}_{\epsilon,A}(x,j,n)  \right| \leq 2 \int_{A}^{\epsilon \Dcn}\left( e^{-\kappa  \left(  \frac{t}{\Dcn} \right) ^{1/2}} \right) ^{ uK_{n}  -n^{3/8}} \mathrm{d}t .\]
 Since $K_{n} \sim c \sqrt{n}$ and $ \Dcn\sim  n /c$, it follows that for $n$ sufficiently large, for every $|j| \leq n^{3/8}$ and $x \in \R$,
  \[ \left| I^{(2)}_{\epsilon,A}(x,j,n) \right| \leq 2 \int_{A}^{\epsilon \Dcn}  e^{- \frac{\kappa u  c^{3/2}}{2} t^{1/2}} \mathrm{d}t  \leq 2 \int_{A}^{\infty} e^{- \frac{\kappa u c^{3/2}}{2} t^{1/2}} \mathrm{d}t,\]
  which is less than $\epsilon'$ by \eqref{eq:A}.   This completes the proof. 
  \end{proof}

\section{Minimal factorizations and  laminations}
\label{sec:lam}

\begin{table}[htbp]\caption{Table of the main notation and symbols appearing in Section~\ref{sec:lam}.}
\centering
\begin{tabular}{c c p{0.73 \linewidth} }
\toprule
$\mathbf{L}_{\infty} $ & & The Brownian triangulation. \\
$\mathbf{L}_{c} $ && The lamination coded by an excursion of a Lévy process with characteristic exponent given by \eqref{eq:Levyc}.\\
$(t_{1}^{(n)}, \ldots, t_{n-1}^{(n)})$  && A minimal factorization of $\mathfrak{M}_{n}$.\\
$ \PPP_{n}$, $ \dot{\PPP}_{n}$  && The non-crossing partition $ \PPP(  t_{1}^{(n)}, \ldots, t_{K_{n}}^{(n)})$ and its associated compact set. \\
$ \FFF_{n}$, $\dot{ \FFF}_{n}$&&  The non-crossing forest $ \FFF(  t_{1}^{(n)}, \ldots, t_{K_{n}}^{(n)})$ and its associated compact set.\\
$T_{n}$   &&  The tree $ \mathcal{T}(  \PPP_{n})$   coding the non-crossing partition $ \PPP_{n}$.\\
\bottomrule
\end{tabular}
\label{tab:seclam}
\end{table}

A \emph{geodesic lamination} $L$ of $\overline{\D}$ is a closed subset of $\overline{\D}$ such that $L$  can be written as the union non-crossing chords, i.e.\ which do not intersect in $\D$.
Recall that by convention, $ \{x\}$ is a chord for every $x \in \S$. In particular, $\S$ is a geodesic lamination. In the sequel, by lamination we will always mean geodesic lamination of $\overline{\D}$.  It is well known (and simple to see) that the set of all geodesic laminations of $\overline{\D}$ equipped with Hausdorff topology is compact.

\subsection{Definition of the random lamination $\mathbf{L}_{c}$}
\label{sec:deflam}

We start with defining $\mathbf{L}_{c}$ in the three cases $c=0$, $c=\infty$ and $c \in \R_{+}$. First, for $c=0$, we simply let $\mathbf{L}_{0}$ be the unit circle $\mathbb{S}$. 

\paragraph*{Case $c=\infty$.}  Let $Z=(Z_{u})_{0 \leq u \leq 1} \in \mathbb{C}([0,1],\R)$ be a continuous function.
We assume \hbox{$Z_{0}=Z_{1}=0$}, as well as the following property:
\begin{enumerate}[label=\color{blue}(C\arabic*)]
\item\label{C1} Local minima of $Z$ have distinct values,
  meaning that for every $0 \le s < t \le 1$, there exists at most one value $r \in (s, t)$ such that $Z_{r} = \inf_{[s, t]} Z$.
\end{enumerate}
Following \cite{Ald94a,LGP08}, we construct a lamination $L(Z)$ from $Z$ as follows.
For
every $s,t \in [0,1]$, we set $s \, {\sim}^{Z}\,  t$ if we have $Z_{s}
=Z_{t} = \min_{[s\wedge t, s \vee t]} Z$. We  set
 \begin{equation}
 \label{eq:defbl}
L(Z) \quad \coloneqq \quad  \bigcup_{s \sim^{Z} t} \big[e^{-2\textrm{i} \pi s},e^{-2\textrm{i} \pi t}\big].
 \end{equation}
 Then $L(Z) $ is a geodesic lamination, which is a triangulation (in the sense that the
complement of $ L(Z) $ in $\overline{ \mathbb{D}}$ is a
disjoint union of open Euclidean triangles whose vertices belong to
the unit circle).  In particular,  $L(Z) $ is maximal with respect to inclusion for geodesic laminations (see \cite[Proposition 2.1]{LGP08}). Observe that $\S \subset  L(Z)$. 
Using a continuity argument and the fact that the set of local minima is countable, it is a simple matter to see that 
 \begin{equation}
 \label{eq:adherence}
 L(Z)= \overline{ \bigcup_{\substack{s \sim^{Z} t, s \neq t\\ s,t \textrm{ are not local minima}}} \big[e^{-2\textrm{i} \pi s},e^{-2\textrm{i} \pi t}\big]}.
 \end{equation}
 (Roughly speaking, this means that $ L(Z)$ is the closure of all non-trivial chords which do not belong to triangles.)
 In a similar way, since $Z$ is continuous, we have
  \begin{equation}
 \label{eq:isolated} s \sim^{Z} t \ \implies \  \forall \epsilon>0,\, \exists s',t' \textrm{ such that }  s' \sim^{Z} t'  \textrm{ with } 0<|s-s'| \leq \epsilon, 0<|t-t'| \leq \epsilon. 
  \end{equation}
 (Roughly speaking, this means that  chords are not isolated). 
  
 We now consider the {\em Brownian excursion} $\mathbbm{e}$,
 which is informally defined as a Brownian motion
 conditioned to return to the origin at time $1$ and to stay positive on the time interval $(0,1)$.
 It is well known that it satisfies \ref{C1}
 a.s.  We then define
 \[\mathbf{L}_{\infty} \coloneqq L(\mathbbm{e})\] 
 to be the Brownian triangulation, see Figure~\ref{fig:BrownianTriangulation} for a simulation.

 \begin{figure}[ht]
   \[\begin{array}{c}
     \includegraphics[scale=.45]{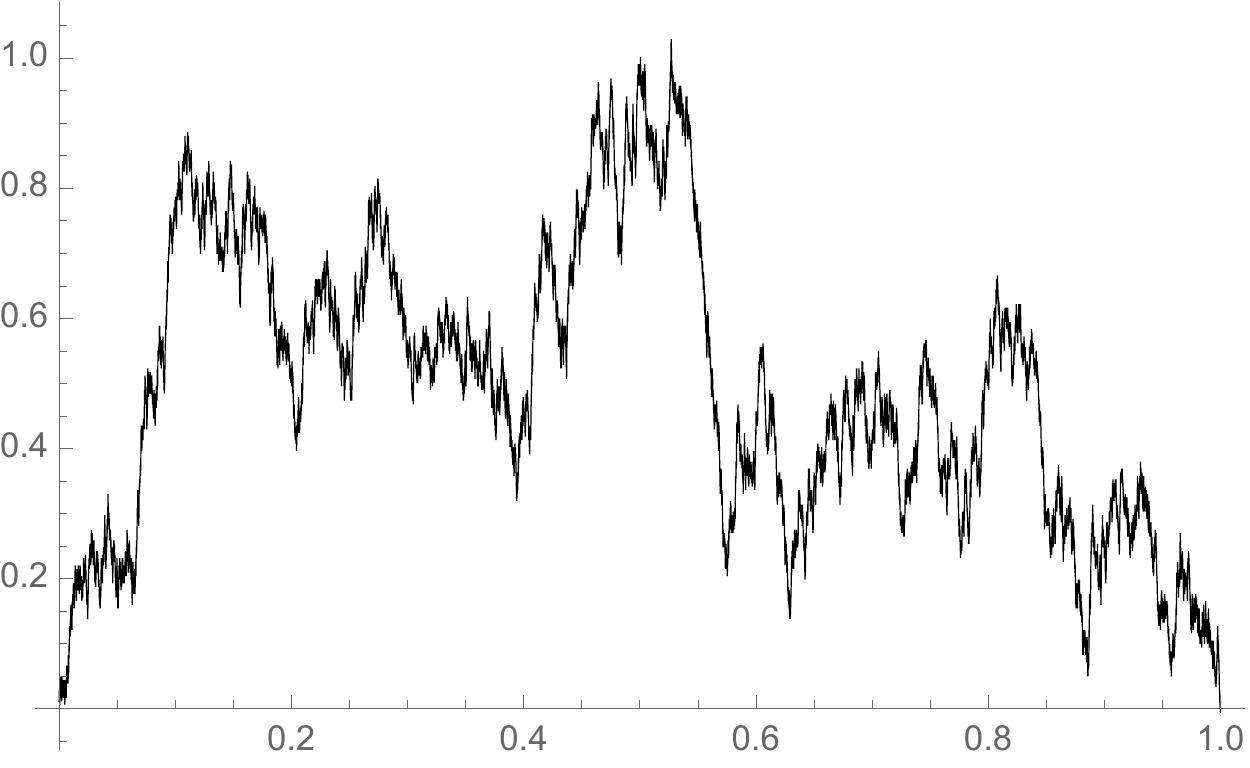} 
   \end{array} \qquad
   \begin{array}{c}
     \includegraphics[scale=.35]{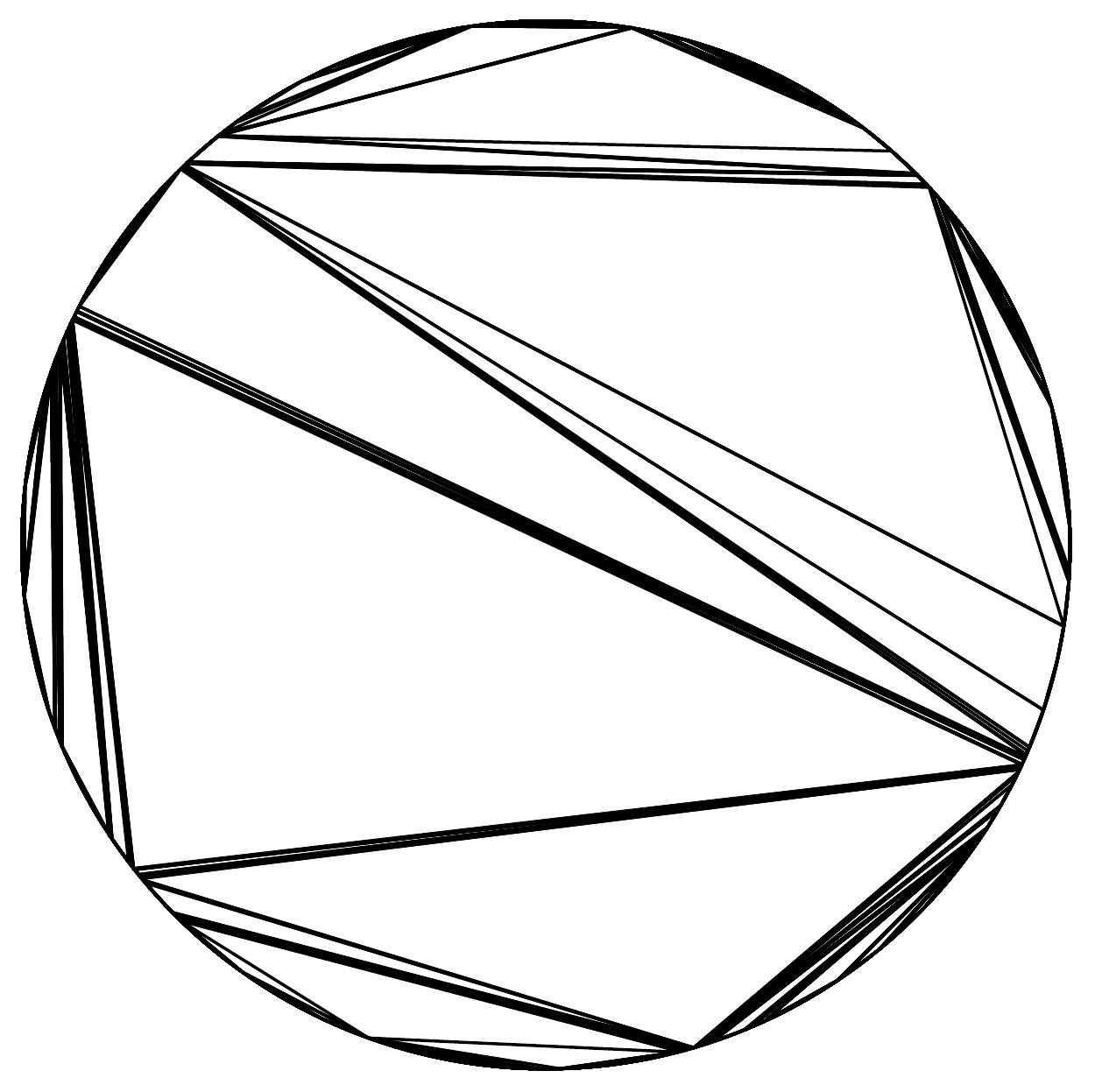}
   \end{array}\]
   \caption{A realization of the Brownian excursion $\mathbbm{e}$ and its associated lamination.
   The latter is therefore a realization of the Brownian triangulation $\mathbf{L}_{\infty}$.}
   \label{fig:BrownianTriangulation}
 \end{figure}

The Brownian triangulation has been introduced by Aldous \cite{Ald94a,Ald94b}, and it has been shown to be the universal limit of many different families of random non-crossing configurations \cite{Kor14,CK14,KM17}.

\paragraph*{Case $c >0$.}

 Let $Z \in {\D}([0, 1], \R)$ be a càdlàg function.
 For $t  \in [0,1]$, we denote $Z_{t-}$ the left-hand limit of $Z$ in $t$,
 and set $\Delta Z_{t}=Z_{t}-Z_{t-}$, with the convention $Z_{0-}=0$.
 We fix a function $Z \in {\D}([0, 1], \R)$ such that $Z_{1} = 0$, $Z_{t} > 0$ and $\Delta Z_{t} \ge 0$ for every $t \in (0,1)$,
 and we define the following four properties:
\begin{enumerate}[label=\color{blue}(H\arabic*)]
  \setcounter{enumi}{-1}
\item\label{H0} $ \{t \in (0,1) : \Delta Z_{t}>0\}$ is dense in $[0,1]$. 
\item\label{H1}
For every $0 \le s < t \le 1$, there exists at most one value $r \in (s, t)$ such that $Z_{r} = \inf_{[s, t]} Z$.
\item\label{H2}
For every $t \in [0,1)$ such that $\Delta Z_{t} > 0$, we have $\inf_{[t, t+\varepsilon]} Z < Z_{t}$ for every $0 < \varepsilon \le 1-t$;
\item\label{H3}
For every $t \in (0,1)$ such that $Z$ attains a local minimum at $t-$ (meaning that there exists $\epsilon>0$ such that $Z_{t-}=\inf_{[t-\epsilon,t+\epsilon]} Z$), if $s = \sup\{ u \in [0,t] : Z_{u} < Z_{t-}\}$, then $\Delta Z_{s} > 0$ and $Z_{s-} < Z_{t-} < Z_{s}$.
\end{enumerate}

Following \cite{Kor14}, 
we construct a lamination $L(Z)$ from a function $Z$ with the above 4 properties
(note that in contrast with \cite{Kor14}, we have one less property,
and the analogue of \ref{H2} differs since we accept here the case $Z_{0} > 0$). 
To this end, we define a relation (which is not an equivalence relation in general) on $[0, 1]$ as follows:
for every $0 \le s < t \le 1$, we set
\[
s \simeq^Z t \qquad\text{if}\qquad t = \inf \left\{u > s : Z_{u} \le Z_{s-} \right\},
\]
then for $0 \le t < s \le 1$, we set $s \simeq^Z t$ if $t \simeq^Z s$, and we agree that $s \simeq^Z s$ for every $s \in [0,1]$. We finally define a subset of $\overline{\D}$ by
\begin{equation}
L(Z) \quad \coloneqq \quad \bigcup_{s \simeq^Z t} \left[\e^{-2\pi \i s}, \e^{-2\pi\i t}\right].
\label{eq:defl_cadlag}
\end{equation}
Observe that $\S \subset L(Z)$ by definition. By using \ref{H0} and \ref{H2}, it is a simple matter to check that 
\begin{equation}\label{eq:lamination_stable2}
L(Z) \quad = \quad \overline{\bigcup_{\substack{s \simeq^Z t \\ s \neq t}} \left[\e^{-2\pi \i s}, \e^{-2\pi\i t}\right]}.
\end{equation}
Note that, if $s \simeq^Z t$ with $s<t$, we automatically have $Z_{s-}=Z_{t}$.

\paragraph*{Comment on notation.} Observe that the definition of the lamination $L(Z)$ for càdlàg functions  (Eq.~\eqref{eq:defl_cadlag}) with the above properties differs from the one 
for continuous functions (Eq.~\eqref{eq:defbl}). Since these two sets of functions are disjoint, this should not create any confusion.

\begin{lemma}
\label{lem:lamination}Assume that  $Z \in {\D}([0, 1], \R)$ satisfies $Z_{1} = 0$, $Z_{t} > 0$ and $\Delta Z_{t} \ge 0$ for every $t \in (0,1)$, \ref{H1} and \ref{H3}. Then the lamination $L(Z)$ is a geodesic lamination of $\overline{\D}$, called the lamination coded by $Z$.
\end{lemma}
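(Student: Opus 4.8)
The plan is to show that $L(Z)$ is closed and that no two chords appearing in the union cross in the open disk $\D$. Closedness is immediate from the representation \eqref{eq:lamination_stable2}, since $L(Z)$ is written there as a closure; so the real content is the non-crossing property. Concretely, I would fix two pairs $s \simeq^Z t$ and $s' \simeq^Z t'$ with $s < t$, $s' < t'$, and all four points distinct, and show that the chords $[\e^{-2\pi\i s},\e^{-2\pi\i t}]$ and $[\e^{-2\pi\i s'},\e^{-2\pi\i t'}]$ do not cross in $\D$. Crossing in $\D$ is equivalent to the combinatorial condition that exactly one of $s',t'$ lies strictly inside the arc $(s,t)$ (in cyclic order), i.e.\ $s < s' < t < t'$ or $s' < s < t' < t$. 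So the goal is to rule out this interlacing configuration.

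First I would record the elementary facts about the relation $\simeq^Z$ coming straight from its definition: if $s \simeq^Z t$ with $s<t$, then $Z_u > Z_{s-}$ for all $u \in (s,t)$ (by minimality of the infimum defining $t$), $Z_t = Z_{s-}$, and $Z_{s-} \le Z_s$ (since $\Delta Z_s \ge 0$); moreover by \ref{H1} the value $Z_{s-}$ is attained on $[s,t]$ only at $t$ among points to the right of $s$. I would also note that $t = \inf\{u > s: Z_u \le Z_{s-}\}$ makes $t$ the \emph{first} return of $Z$ to level $Z_{s-}$ after $s$. Then, assuming for contradiction the interlacing $s < s' < t < t'$, I would look at the position of $s'$: since $s' \in (s,t)$ we have $Z_{s'-} \ge Z_{s-}$, in fact $Z_{s'} > Z_{s-}$ and $Z_{s'-}\ge Z_{s-}$; but $t'$ is defined as the first time after $s'$ that $Z$ drops to or below $Z_{s'-}$. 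Since $Z_t = Z_{s-} \le Z_{s'-}$ and $t > s'$, the first such time is $\le t$, contradicting $t' > t$. The symmetric case $s' < s < t' < t$ is handled the same way by swapping the roles of the two pairs. This gives the non-crossing property for the ``core'' chords indexed by pairs with $s\neq t$, and then \eqref{eq:lamination_stable2} upgrades it to all of $L(Z)$: a limit of non-crossing chords is a chord that does not cross any chord in the union (two chords crossing in the open disk is an open condition, so it cannot appear in a Hausdorff limit of non-crossing families), hence $L(Z)$ is a union of pairwise non-crossing chords.

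There is one subtlety I would need to be careful about: the argument above used that $Z_{s'-} \geq Z_{s-}$, which is clear when $s'$ is not a jump time, but I should check it also at jump times and, more importantly, handle the boundary behaviour of the infimum (whether $t'$ could equal $t$ exactly, or whether endpoints could coincide in degenerate ways). Properties \ref{H1} and \ref{H3} are exactly what exclude the remaining degenerate configurations: \ref{H1} guarantees that infima defining the relation are attained at unique points, so the various ``first hitting times'' are well-defined and the chords are genuinely distinct, and \ref{H3} controls what happens at left-limits of local minima, ensuring that the relation behaves consistently when one endpoint is a point where $Z$ has a local minimum from the left (this is the case where a naive reading of the definition could produce a chord interlacing another). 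I expect the main obstacle to be precisely this bookkeeping at jump times and at left-local-minima — matching each potential crossing configuration to the hypothesis that forbids it — rather than any deep idea; the heart of the proof is the one-line ``first return time'' contradiction. I would organize the write-up as: (1) $L(Z)$ is closed via \eqref{eq:lamination_stable2}; (2) basic properties of $\simeq^Z$; (3) the interlacing contradiction for non-degenerate pairs; (4) reduction of degenerate cases to \ref{H1} and \ref{H3}; (5) conclude that $L(Z)$ is a geodesic lamination.
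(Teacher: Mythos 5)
Your interlacing argument for the non-crossing property is correct and is indeed the core of the matter: if $s<s'<t<t'$ with $s\simeq^Z t$ and $s'\simeq^Z t'$, then $Z_t=Z_{s-}$, while $Z_{s'-}\ge Z_{s-}$ because $Z_u>Z_{s-}$ on $(s,s')$; hence $Z_t\le Z_{s'-}$ and $t>s'$, forcing $t'\le t$ by the definition of $t'$ as an infimum, a contradiction. (The other interlacing case is symmetric.) This part of your write-up is clean, and the observation that $Z_t=Z_{s-}$ whenever $s\simeq^Z t$ with $s<t$ (using right-continuity of $Z$ and the hypothesis $\Delta Z\ge 0$) is the right preliminary remark.

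The closedness argument, however, has a genuine gap. You invoke \eqref{eq:lamination_stable2} and read off closedness "since $L(Z)$ is written there as a closure." But the paper derives \eqref{eq:lamination_stable2} from the assumptions \ref{H0} and \ref{H2}, and the very next sentence after Lemma~\ref{lem:lamination} states explicitly that \ref{H0} and \ref{H2} are \emph{not} assumed in this lemma. So you cannot use \eqref{eq:lamination_stable2} here; closedness must be proved directly from the definition \eqref{eq:defl_cadlag}. That is nontrivial: you have to show that if $s_n\simeq^Z t_n$ with $s_n<t_n$, $s_n\to s$, $t_n\to t$ and $s<t$, then the limiting chord still lies in $L(Z)$, and the several cases ($s_n\uparrow s$, $s_n\downarrow s$, $t_n\uparrow t$, $t_n\downarrow t$, jump at $s$ or $t$, etc.) behave differently and need to be handled one by one. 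This is precisely where \ref{H1} and \ref{H3} must come in — your non-crossing argument as written uses neither of them, so they must be doing their work in the closedness step, not in "excluding degenerate interlacing configurations" as you suggest. (\ref{H3} in particular guarantees that every left-local-minimum of $Z$ is the right endpoint of a genuine chord with a jump at the left endpoint, which is exactly the kind of statement needed to identify the limit of a sequence of chords.) In short: (1) and (3) of your outline are fine; (1) needs to be replaced by a direct closedness argument, and (4) needs to be made concrete — the present description is too vague to count as a proof and, as stated, does not correspond to a real case in your interlacing argument.
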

This can be proved by using the same arguments as \cite[Prop. 2.9]{Kor14}, we leave the details to the reader.
Note that the assumptions \ref{H0} and \ref{H2} are not required in Lemma~\ref{lem:lamination},
but will be later needed in Proposition~\ref{prop:cvLamDeterm}.

Recall from Section~\ref{ssec:Levy} that $\Xexc$ denotes the excursion process, 
associated with the L\'evy process with characteristic exponent given by \eqref{eq:Levyc}.
\begin{lemma}Almost surely $\Xexc$  satisfies \ref{H0}, \ref{H1}, \ref{H2} and \ref{H3}.
  \label{lem:hypoX}
\end{lemma}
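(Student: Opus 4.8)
\textbf{Proof plan for Lemma~\ref{lem:hypoX}.}
The plan is to verify the four properties \ref{H0}--\ref{H3} for $\Xexc$ by transferring them, via absolute continuity, from the Lévy process $X$ and its bridge $\Xbr$, and then using the construction $\Xexc = \mathcal V\, \Xbr$. The key structural facts we rely on have all been established in Section~\ref{ssec:Levy}: $(X_t)$ is spectrally positive with infinitely many jumps in every interval (since $\Pi(\R_+)=\infty$), has paths of bounded variation and drift $-c$, and $0$ is regular for $(-\infty,0)$ but irregular for $(0,\infty)$ (Eq.~\eqref{eq:regularity}); moreover by \cite[Theorem 3.1 (b)]{Mil77} the process jumps when it reaches a new infimum on any fixed interval. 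The bridge $\Xbr$ is absolutely continuous with respect to $X$ on every interval $[0,u]$ with $u<1$ via \eqref{eq:lawbridge}, so any property of $X$ that holds a.s.\ and depends only on the restriction to such intervals transfers to $\Xbr$; and any property of $\Xbr$ depending only on the pair (set of jump times, infimum location) transfers to $\Xexc$ since the Vervaat transform is a deterministic cyclic shift followed by a vertical translation.

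First I would treat \ref{H0}: the set of jump times of $X$ is a.s.\ dense because $\Pi(\R_+)=\infty$ forces a.s.\ infinitely many jumps in every nonempty subinterval; density transfers to $\Xbr$ by \eqref{eq:lawbridge} applied on an exhausting sequence of intervals $[0,1-1/k]$ together with a symmetric argument near $1$ (or by time-reversal of the bridge), and then to $\Xexc$ since $\mathcal V$ just relabels jump times cyclically. Next \ref{H1}: this is the statement that local minima of $\Xbr$ have distinct values. For $X$ itself this is a standard consequence of the strong Markov property combined with the fact that, started afresh from any stopping time, $X$ does not immediately revisit a fixed level (one compares countably many independent ``post-infimum'' pieces over rational time windows); it transfers to $\Xbr$ by absolute continuity on $[0,u]$, $u<1$, plus a reversal argument for the behaviour near $t=1$. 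In particular $\Xbr$ has a unique global minimum a.s., which is exactly what makes $\mathcal V$ well-defined and a.s.\ continuous at $\Xbr$ (as already used in the proof of Theorem~\ref{thm:cvXexc} (i)); property \ref{H1} for $\Xexc$ then follows since the Vervaat rearrangement is an a.s.\ bijection of $[0,1)$ preserving the order structure of excursion intervals.

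Then \ref{H2}: for $t$ with $\Delta \Xexc_t>0$ we must show $\inf_{[t,t+\eps]}\Xexc < \Xexc_t$ for all small $\eps$, i.e.\ immediately after an upward jump the process goes strictly below its current value. Since $0$ is regular for $(-\infty,0)$ (Eq.~\eqref{eq:regularity}), starting the spectrally positive process $X$ afresh at a jump time, it immediately enters $(-\infty, X_t)$; this holds simultaneously at all jump times by the strong Markov property applied along the countable family of jumps, transfers to $\Xbr$ on $[0,u]$ and, after noting that a jump of $\Xexc$ at $t$ near $1$ corresponds to a jump of $\Xbr$ away from $1$ because $\Xbr_{1-}=0$ while $\Xexc$ starts with a jump, to $\Xexc$. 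Finally \ref{H3}: if $\Xexc$ has a local minimum at $t-$ and $s=\sup\{u\le t:\Xexc_u<\Xexc_{t-}\}$, then $s$ is the (last) time the process descended to the level $\Xexc_{t-}$; because $X$ (hence $\Xbr$, hence $\Xexc$) jumps when it reaches a new infimum of the portion it is currently descending --- by \cite[Theorem 3.1 (b)]{Mil77} and the irregularity of $0$ for $(0,\infty)$ --- this descent to the level can only happen by an overshoot, forcing $\Delta\Xexc_s>0$ and $\Xexc_{s-}<\Xexc_{t-}<\Xexc_s$; this is the same mechanism invoked in Section~\ref{ssec:Levy} to show $\Xexc_0>0$, applied now at an arbitrary local-minimum level rather than at the global infimum, and again it transfers from $X$ to $\Xbr$ by \eqref{eq:lawbridge} and to $\Xexc$ through $\mathcal V$. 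The main obstacle is bookkeeping the absolute-continuity transfer carefully near the endpoint $t=1$, where \eqref{eq:lawbridge} only covers $[0,u]$ with $u<1$: one resolves this by a time-reversal argument for the bridge (the time-reversed bridge is again a bridge of a spectrally negative Lévy process, to which the same irregularity statements apply at the other endpoint), so that the four properties, each of which is a ``local'' statement, can be checked away from both $0$ and $1$ and then patched; I would flag that this patching, though routine, is the only genuinely delicate point and, as in Lemma~\ref{lem:lamination}, leave the remaining details to the reader while citing \cite{Mie01,Mil77,CUB11,UB14}.
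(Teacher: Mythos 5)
Your overall strategy matches the paper's: reduce everything to properties of the unconditioned Lévy process $X$ via absolute continuity \eqref{eq:lawbridge} and the Vervaat transform; deduce \ref{H0} from $\Pi(\R_+)=\infty$, and \ref{H1}, \ref{H2} from the strong Markov property of $X$ together with the regularity of $0$ for $(-\infty,0)$. So far, same route.

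The trouble is in your treatment of \ref{H3}, which is the only genuinely non-routine part. First, the description of what happens at $s$ is backwards: $s=\sup\{u\le t:\Xexc_u<\Xexc_{t-}\}$ is the last time $\Xexc$ sits \emph{strictly below} the level $\Xexc_{t-}$; immediately after $s$ the process stays $\ge \Xexc_{t-}$ up to $t$. So the event at $s$ is an upward crossing of the level, not a ``descent'', and $s$ is certainly not a time where a ``new infimum'' is reached. Consequently, \cite[Theorem 3.1 (b)]{Mil77} (the fact that $X$ jumps when reaching the infimum of a fixed interval, which the paper does use---but to show $\Xexc_0>0$, not for \ref{H3}) is not the right tool here. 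The relevant structural fact is instead that $X$ \emph{cannot creep upward}: at the first-passage time above a level $a>0$, the process jumps strictly across $a$. The paper cites the Kyprianou characterization (\cite[Theorem 7.11]{Kyp06}, using bounded variation, negative drift, no Gaussian part) for this; irregularity of $0$ for $(0,\infty)$ alone is not the statement you need.

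Second, you treat time reversal as a book-keeping device for the endpoint $t=1$, but it is actually the crux of the argument for \ref{H3}. The time $s$ is a ``last time before $t$'', hence \emph{not} a stopping time for the forward filtration, so the strong Markov property cannot be applied at $s$ directly. The paper time-reverses $(X_u)_{0\le u\le t}$: under reversal, $s$ becomes the first time the reversed process crosses above its past running maximum after a rational time $q$, which \emph{is} a stopping time; then the Markov property at $q$ plus no upward creeping give the jump across the level. Without this reversal step there is no clean way to invoke the Markov property, so the argument as you wrote it has a real gap. The fix is to promote the time-reversal remark from an afterthought about endpoints to the central mechanism of \ref{H3}, and to replace the Mil77 ``jumps at infimum'' invocation by the no-upward-creeping property of spectrally positive bounded-variation processes with negative drift and no Gaussian part.
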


\begin{proof}
We mimic the proof of \cite[Proposition 2.10]{Kor14}.  
By the construction of $\Xexc$ as the Vervaat transform of $\Xbr$, and by the absolute continuity relation \eqref{eq:lawbridge}, it is sufficient to prove that analogous properties hold
for the Lévy process $X$.  The property \ref{H0} follows from the fact that the Lévy measure of $X$ is infinite. The properties \ref{H1} and \ref{H2} are consequences
of the (strong) Markov property of $X$ and the fact that $0$ is regular for $(-\infty,0)$.

For \ref{H3}, we will use the time-reversal property of $X$, which
states that if $t > 0$ and $\widehat{X}^{(t)}$ is the process defined by $\widehat{X}^{(t)}_{s}=X_{t}-X_{(t-s)-}$ for $0 \leq s<t$ and $\widehat{X}^{(t)}_{t}=X_{t}$, then the two processes $(X_{s},0 \leq s \leq t)$ and $(\widehat{X}^{(t)}_{s},0 \leq s \leq t)$ have the same law. 
Set $S_{t}= \sup_{0 \leq s \leq t} X_{s}$. By the time-reversal property of $X$,
it is sufficient to prove that if $q > 0$ is rational and $T = \inf\{t  \geq  q : X_{t} > S_{q}\}$, then
$X_{T} > S_{q}  > X_{T-}$ almost surely. This follows from the Markov property at time $q$
and the fact that for any $a > 0$, $X$ jumps a.s. across $a$ at its first passage time
above $a$ (Lévy processes which can {\em creep upwards}
are characterized e.g. in~\cite[Theorem 7.11]{Kyp06};
since the Lévy process we are considering has paths of bounded variations,
a negative drift and no Gaussian component, it cannot creep upwards
and it jumps a.s. across any level $a$ at its first passage).
\end{proof}

\begin{remark}
Almost surely $\Xexc$ also satisfies the fact that for every $t \in (0,1)$ such that $\Delta \Xexc(t) > 0$, we have $\inf_{[t-\varepsilon, t]} \Xexc > \Xexc(t-)$ for every $0 < \varepsilon \le t$ (this follows from the time-reversal property of $X$ and the fact that $0$ is irregular for $(0,\infty)$), but we will not use this property.
\end{remark}

For $c>0$, 
we now define $\mathbf{L}_{c}$ as
\[\mathbf{L}_{c} \coloneqq L(\Xexc),\]
where $\Xexc$ is the Lévy excursion process introduced in Section~\ref{ssec:Levy};
see Figure~\ref{fig:Lamination_C5} for a simulation of $\mathbf{L}_{5}$.

 \begin{figure}[ht]
   \[\begin{array}{c}
     \includegraphics[scale=.45]{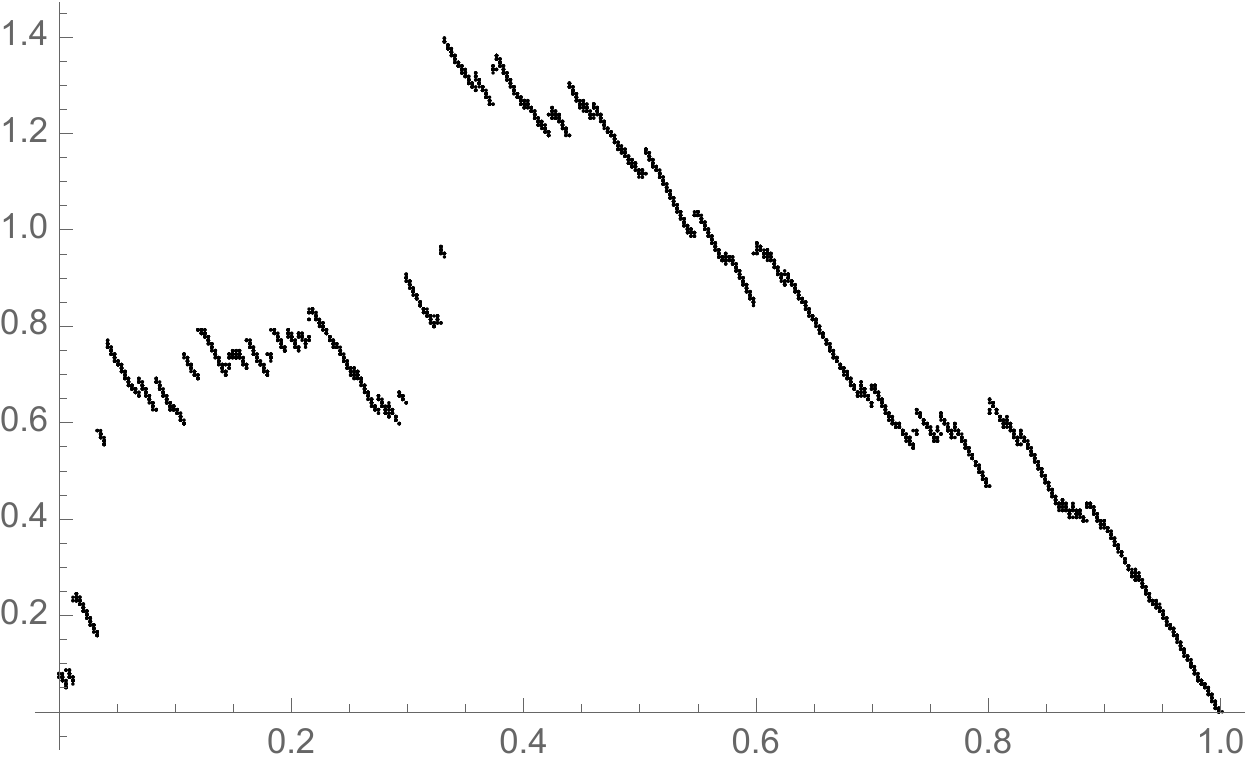} 
   \end{array} \qquad
   \begin{array}{c}
     \includegraphics[scale=.35]{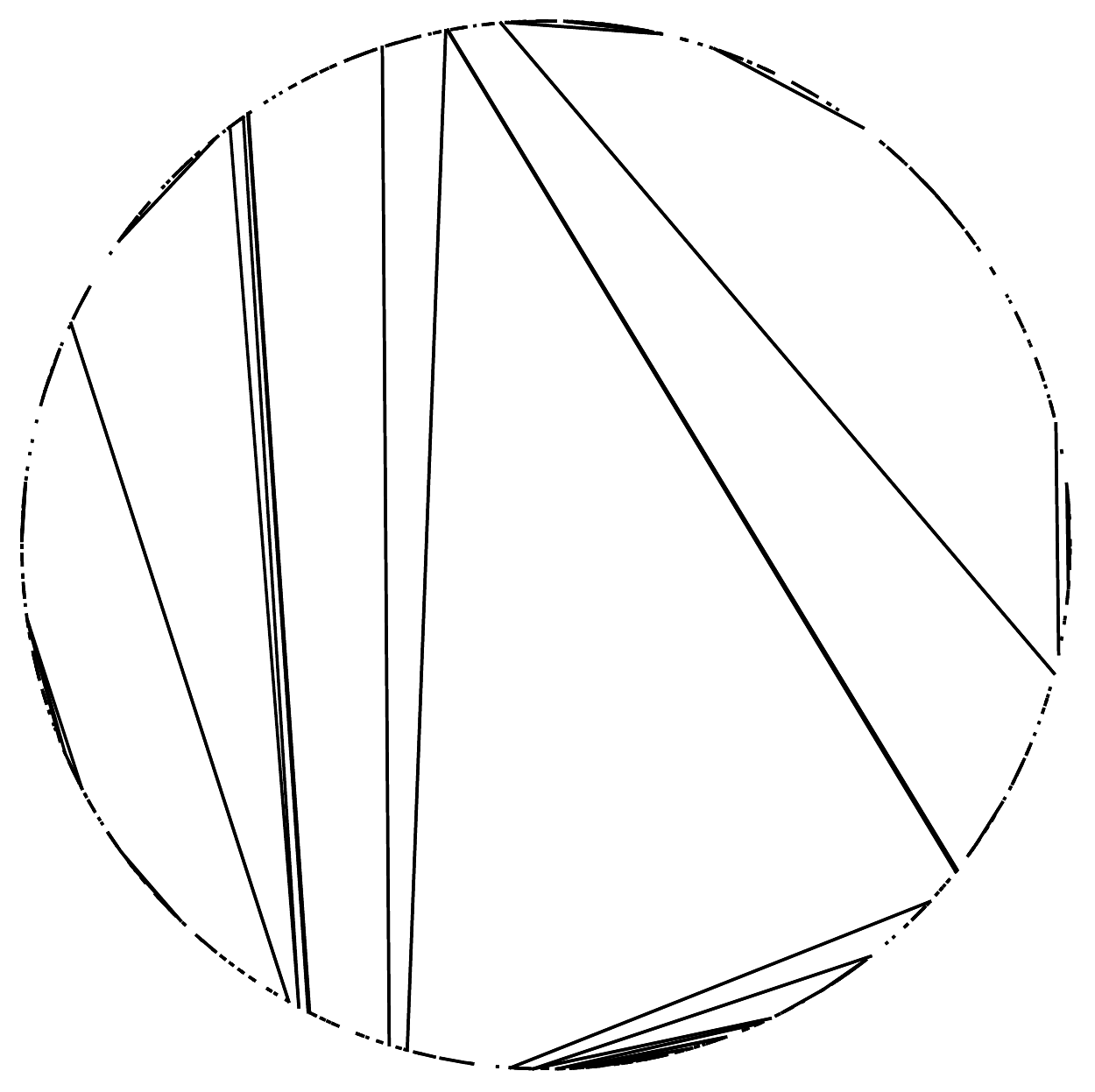}
   \end{array}\]
   \caption{A realization of the Lévy excursion process $\Xexc$ from Section~\ref{ssec:Levy} for $c=5$
   and its associated lamination.
   The latter is therefore a realization of the random lamination $\mathbf{L}_{5}$.}
   \label{fig:Lamination_C5}
 \end{figure}

In what follows, we first concentrate on the case $c\in \R_{+}$, and discuss the case $c=\infty$ in Section~\ref{sec:Linfty}.

\subsection{A symmetry result.} As a warm-up, we establish a result that will be useful twice: first, it will allow us to restrict ourselves to the case where $K_{n} \leq n/2$ in the proof of Theorem~\ref{thm:cvlam} (ii), and it will also allow us to immediately lift, for $ \dot{\mathcal{P}}^{(n)}_{K_{n}}$, (i) in  Theorem~\ref{thm:cvlam} to (iii).

Recall that if $P$ is a non-crossing partition, singleton blocks of $P$ do not appear in $\dot{P}$.  We denote by $\hat{P}$  the compact subset of $\overline{\D}$ obtained from $\dot{P}$ by adding the vertices $\{e^{-2 \i \pi k/n}, 1 \leq k \leq n\}$. This definition has its importance in view of the next result.

 \begin{lemma}
 \label{lem:reductionKn}
 Set $\hat{\PPP}^{(n)}_{i}=\hat{\PPP}(\tb_{1}^{(n)} \tb_{2}^{(n)} \cdots \tb_{i}^{(n)})$ for $1 \leq i \leq n-1$.
 Assume that there exists a random lamination $L$ such that the convergence $\hat{\PPP}^{(n)}_{n-K_{n}-1} \rightarrow  L$ holds in distribution.
Then 
\[  \hat{\PPP}^{(n)}_{K_{n}} \quad \mathop{\longrightarrow}^{(d)}_{n \rightarrow \infty} \quad L.\]
\end{lemma}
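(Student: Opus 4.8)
The plan is to combine the symmetry result of Lemma~\ref{lem:symmetry} with the observation that taking Kreweras complements does not change the ``trace on the circle'' of the associated compact set. Recall that Lemma~\ref{lem:symmetry} says that $\PPP\big(\tb_{1}^{(n)} \cdots \tb_{n-k-1}^{(n)}\big)$ and $\mathcal{K}\big( \PPP\big(\tb_{1}^{(n)} \cdots \tb_{k}^{(n)}\big)\big)$ have the same distribution. Applying this with $k = K_n$ (which is legitimate since $1 \leq K_n \leq n-1$, although one should note that when $K_n = n-1$ the index $n-k-1 = 0$, and one reads $\PPP$ of the empty product as the partition into singletons, whose $\hat{\PPP}$ is just the set of roots of unity; this degenerate case must be handled separately or excluded, but it is compatible with $L = \mathbb{S}$), we get that $\PPP^{(n)}_{n-K_n-1}$ has the same law as $\mathcal{K}(\PPP^{(n)}_{K_n})$.

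The key geometric fact I would establish is that for any non-crossing partition $P$ of $[n]$, the compact sets $\hat P$ and $\widehat{\mathcal{K}(P)}$ are \emph{equal}. Indeed, recall the visualization of the Kreweras complement described in Section~\ref{sec:ncp}: one inverts the colors in the disk picture and rotates the $n$-gon by an angle $-\pi/n$; the blocks of $\mathcal{K}(P)$ are the groups of (rotated) vertices lying in the same monochromatic region. Geometrically, the chords of $\dot P$ together with the arcs of $\S$ they cut off bound the black regions, and the white regions are bounded by the same chords; hence the union of all chords arising from $P$ and the union of all chords arising from $\mathcal{K}(P)$, once we add back \emph{all} $n$ roots of unity (which is exactly what the hat operation does), coincide. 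More carefully: a chord $[e^{-2\i\pi \ell/n}, e^{-2\i\pi \ell'/n}]$ appears in $\dot P$ iff $\ell,\ell'$ are cyclically consecutive in a block of $P$; one checks that the set of chords appearing in $\dot{\mathcal K(P)}$ is precisely the complementary set of ``boundary chords'' between adjacent black and white regions, so that $\dot P \cup \{\text{roots of unity}\} = \dot{\mathcal K(P)} \cup \{\text{roots of unity}\}$, i.e. $\hat P = \widehat{\mathcal{K}(P)}$. (This is essentially the statement, mentioned in the introduction, that $\dot{\PPP}$ is obtained from $\dot{\FFF}$ by taking convex hulls of components, combined with the fact that the dual tree $\mathcal T(P)$ is the same object whether read from $P$ or, up to rerooting, from $\mathcal K(P)$.)

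Granting this, the proof is immediate: with $\overset{(d)}{=}$ denoting equality in distribution,
\[
\hat{\PPP}^{(n)}_{K_n} = \widehat{\mathcal{K}\big(\PPP^{(n)}_{K_n}\big)} \overset{(d)}{=} \hat{\PPP}^{(n)}_{n-K_n-1},
\]
where the first equality is the deterministic geometric identity $\hat P = \widehat{\mathcal K(P)}$ applied to the random partition $\PPP^{(n)}_{K_n}$, and the distributional identity is Lemma~\ref{lem:symmetry}. Since by hypothesis $\hat{\PPP}^{(n)}_{n-K_n-1} \to L$ in distribution, the same holds for $\hat{\PPP}^{(n)}_{K_n}$.

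The main (and really only) obstacle is the careful verification of the deterministic identity $\hat P = \widehat{\mathcal K(P)}$; everything else is a one-line consequence of Lemma~\ref{lem:symmetry}. I would prove this identity either by the contour-walk description of $\mathcal T(P)$ given in Section~\ref{sec:coding} — tracking which chords of the $n$-gon correspond to the pairs of cyclically consecutive block elements of $P$ versus of $\mathcal K(P)$ and observing they jointly exhaust, with multiplicity, exactly the ``corner-to-corner'' chords — or, more conceptually, by noting that $\dot P$, $\dot{\mathcal K(P)}$ and the $n$ marked points all sit inside the planar graph dual to $\mathcal T(P)$ drawn in the disk, and that adding the marked points makes the black-region boundary and the white-region boundary coincide. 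I would also pause to confirm that the hypothesis ``$\hat{\PPP}^{(n)}_{n-K_n-1} \to L$'' is genuinely about the hatted sets (with all roots of unity added back), since the unhatted $\dot{\PPP}$ loses singleton blocks and the identity above genuinely requires the hat on both sides.
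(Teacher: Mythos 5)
Your overall skeleton (Lemma~\ref{lem:symmetry} plus a geometric comparison between $\hat P$ and $\widehat{\mathcal K(P)}$) is the right one, but the ``key geometric fact'' you propose to establish, namely the exact identity $\hat P = \widehat{\mathcal{K}(P)}$, is false, and this is a genuine gap. The chords of $\dot P$ join roots of unity that are cyclically consecutive in a block of $P$, while the chords of $\dot{\mathcal K(P)}$ join roots of unity cyclically consecutive in a block of $\mathcal K(P)$; these are generally different subsets of segments, and adding the $n$ vertices via the hat does not reconcile them. A concrete counterexample: take $n=4$ and $P = \{\{1,2\},\{3\},\{4\}\}$, so that $\mathcal K(P) = \{\{1\},\{2,3,4\}\}$. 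With the convention $k \mapsto \e^{-2\i\pi k/4}$ one finds $\dot P = [-\i,-1]$ (a single side of the square), while $\dot{\mathcal K(P)} = [-1,\i] \cup [\i,1] \cup [1,-1]$, a triangle containing the \emph{diameter} $[1,-1]$. Since $\hat P$ contains no point near the center of the disk, $d_H\big(\hat P, \widehat{\mathcal K(P)}\big)$ is bounded away from $0$, so the two sets are certainly not equal. Your heuristic (``the black and white regions have the same boundary'') breaks down because the white blocks live, in the middle picture of Figure~\ref{fig:complement_Kreweras}, on the \emph{rotated} $n$-gon; snapping them back to the standard $n$-gon is what produces a different set of chords.

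The correct statement, and the one the paper uses, is only asymptotic: $\max_{P \in \mathfrak{P}_n} d_H\big(\hat P, \widehat{\mathcal K(P)}\big) \to 0$ as $n \to \infty$. This does follow from the picture you describe: the boundary between black and white regions is traced both by chords of $\dot P$ (on the standard vertices) and by chords of $\dot{\mathcal K(P)}$ (on the rotated vertices), and undoing the rotation of angle $\pi/n$ moves each endpoint by $O(1/n)$, so every chord of one set is within Hausdorff distance $O(1/n)$ of either a chord of the other or of a short arc of $\S$, and the hat (adding all roots of unity) absorbs the short-arc contributions. With the exact identity replaced by this uniform $o(1)$ bound, your argument then goes through: $\hat{\PPP}^{(n)}_{K_n}$ is within a deterministic $o(1)$ Hausdorff distance of $\widehat{\mathcal K\big(\PPP^{(n)}_{K_n}\big)}$, which by Lemma~\ref{lem:symmetry} has the same law as $\hat{\PPP}^{(n)}_{n-K_n-1}$, which converges to $L$ by hypothesis; a Slutsky-type argument concludes. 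That is exactly the paper's proof, up to which of the two symmetric instances of Lemma~\ref{lem:symmetry} one applies.
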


\begin{proof}
Recall that $ \mathfrak{P}_{n}$ denotes the set of all non-crossing partition of $[n]$ and that $\mathcal{K}(P)$ is the Kreweras complement of a non-crossing partition $P$.
Using the visual description of Kreweras complement (Figure~\ref{fig:complement_Kreweras}),
it is a simple matter to see that $ \max_{P \in \mathfrak{P}_{n}} d_{H} \big(\hat{P}, \widehat{\mathcal{K}(P)} \big) \rightarrow 0$ as $n \rightarrow \infty$.  As a consequence,  
    \[ \widehat{\mathcal{K} \big(\PPP^{(n)}_{n-K_{n}-1}\big)}    \quad \mathop{\longrightarrow}^{(d)}_{n \rightarrow \infty} \quad  L.\]
But by Lemma~\ref{lem:symmetry}, $\PPP^{(n)}_{K_{n}}$ and $\mathcal{K}( \PPP^{(n)}_{n-K_{n}-1})$ have the same distribution. This entails the desired result.  
\end{proof}

\subsection{Convergence to $\mathbf{L}_{c}$ for $c >0$ and proof of Theorem~\ref{thm:cvlam} (i)$_{c>0}$.}
\label{ssec:cv_pos}
The goal of this subsection is to prove Theorem~\ref{thm:cvlam} (i)$_{c>0}$, 
which deals with the case $\tfrac{K_{n}}{\sqrt{n}} \rightarrow c>0$.
Recall that if $\tau$ is a tree, $(\oB^{n}_{i}(\tau))_{1 \leq i \leq |\tau|}$ is the {\L}ukasiewicz path of the reduced black subtree of $\tau$, as introduced in Section~\ref{ss:coding}. We also set $\oB^{n}_{i}(\tau)=0$ for $i \leq 0$ or $i>|\tau|$.

For every $n \geq 1$, we let $(t_{1}^{(n)}, \ldots, t_{n-1}^{(n)}) \in \mathfrak{M}_{n}$  be a minimal factorization. If $1 \leq K_{n} \leq n-1$, to simplify notation we set
\[  \FFF_{n}= \FFF \big( t_{1}^{(n)}, \ldots, t_{K_{n}}^{(n)}\big) \qquad \text{and} \qquad \PPP_{n}= \PPP\big(  t_{1}^{(n)}  t_{2}^{(n)}  \cdots t_{K_{n}}^{(n)}\big),\]
and let $\dFn$ and respectively  $\dPn$ be their associated compact subsets of $\overline{\D}$.
Finally, we denote by $T_{n}= \mathcal{T}( \PPP_{n})$   the tree coding the non-crossing partition  $\PPP_{n}$ as in Section~\ref{sec:coding}.

Roughly speaking,  Theorem~\ref{thm:cvlam} (i)$_{c>0}$ follows from the following deterministic convergence result
(the proof of this implication is given at the end of Section~\ref{ssec:cv_pos}).

\begin{proposition}
  \label{prop:cvLamDeterm} Let $Z \in {\D}([0, 1], \R)$ be a càdlàg function satisfying \ref{H0}, \ref{H1}, \ref{H2} and \ref{H3}
  and let $\FFF_{n}$, $\PPP_{n}$, $\dFn$, $\dPn$ and $T_{n}$ be as above.
  Assume that the following four properties hold.
  \begin{enumerate}[label=\color{blue}(B\arabic*)]
  \setcounter{enumi}{-1}
\item\label{B0}  We have $\tfrac{K_n}{\sqrt{n}} \rightarrow c>0$ as $n \rightarrow \infty$.
\item\label{B1} The convergence 
\begin{equation}
 \left(  c \frac{\oB^{n}_{\lfloor u (n-K_{n}) \rfloor}(T_n)}{ n} \right)_{-1 \leq u \leq 1}
\quad \mathop{\longrightarrow}_{n \rightarrow \infty} \quad (Z_{u})_{- 1\leq u \leq 1}
\label{eq:cv_HB_Skorokhod}
\end{equation}
holds for the Skorokhod $J_{1}$ topology, where we set $Z_{u}=0$ for $u<0$.
\item\label{B2} For $n$ sufficiently large, $T_n$ has no black vertex  with at least three children.
\item\label{B3} For every $\eps>0$, there exists $n_0(\eps)>0$ such that  for every $n \ge n_0(\eps)$, the tree $T_n$ does not contain a black 
  vertex $v$ that has two children, both with at least $\eps \, n$ descendants (counting both white and black vertices). 
\end{enumerate}
Then we have the  convergence
\begin{equation}
\left(  \dFn, \dPn   \right)   \quad \mathop{\longrightarrow}_{n \rightarrow \infty} \quad ( L(Z), L(Z))
\label{eq:cv_FP_Skorokhod}
\end{equation}
in the Hausdorff topology.
\end{proposition}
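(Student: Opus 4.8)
The plan is to prove Proposition~\ref{prop:cvLamDeterm} in several steps, first establishing the convergence $\dPn \to L(Z)$, then deducing the convergence $\dFn \to L(Z)$ from it, and finally upgrading to joint convergence.

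\textbf{Step 1: recovering chords of $\dPn$ from the path $\oB^n(T_n)$.} First I would translate the combinatorial structure into the language of the {\L}ukasiewicz path of the reduced black subtree. Recall from Section~\ref{ss:coding} that $(\oB^n_i(T_n))_i$ is precisely the {\L}ukasiewicz path of the reduced black subtree $\bullet_{T_n}$, and from Section~\ref{sec:coding} together with Lemma~\ref{lem:OnBlockLabels} that the black vertices, listed in lexicographic order $v^\bullet_0,\dots,v^\bullet_{n-K_n-1}$, carry the labels of the non-crossing partition $\PPP_n$ via their corners. The key point is that a chord of $\dPn$ joins two cyclically consecutive labels of a common block of $\PPP_n$, and blocks of $\PPP_n$ correspond to black vertices of $T_n$. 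Because of \ref{B2}, for $n$ large every black vertex has one or two children; and because of \ref{B3}, if a black vertex $v^\bullet_i$ has two children, at most one of the two subtrees hanging from it is ``macroscopic'' (has $\ge \eps n$ descendants). Using Lemma~\ref{lem:OnBlockLabels}~(i) and (iii), the first and last corners $x^\bullet_i,y^\bullet_i$ of $v^\bullet_i$ satisfy $y^\bullet_i = x^\bullet_i + n_i$ with $x^\bullet_i = i + O(|\circ_{T_n}|) = i + O(\sqrt n)$ and $n_i - n^\bullet_i = O(\sqrt n)$, where $n^\bullet_i$ is the number of black descendants of $v^\bullet_i$. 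Since $n^\bullet_i = \oB^n_{i}(T_n) - \oB^n_{i-1}(T_n) + (\text{number of black descendants beyond the reduced subtree jump})$, one reads off that the two extreme labels of the block of $v^\bullet_i$ are, up to an $O(\sqrt n)=o(n)$ additive error, at positions $i$ and $i + n^\bullet_i$ on the circle, rescaled by $n-K_n \sim n/c$. Thus, modulo negligible errors, a macroscopic chord of $\dPn$ connects $e^{-2\pi\i u}$ and $e^{-2\pi\i v}$ where $u=\lim i/(n-K_n)$ and $v=\lim (i+n^\bullet_i)/(n-K_n)$; and by the {\L}ukasiewicz path interpretation, $\oB^n_i(T_n)$ records the ``height'' and $n^\bullet_i$ the size of the subtree just discarded, i.e.\ $v$ is the first time after $u$ at which the rescaled path $c\,\oB^n_{\lfloor \cdot (n-K_n)\rfloor}(T_n)/n$ returns to (or below) its value just before time $u$. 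This is exactly the relation $u \simeq^Z v$ in the definition of $L(Z)$ once we pass to the limit using \ref{B1}.

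\textbf{Step 2: passing to the limit.} Given Step 1, I would prove the two inclusions separately. For $L(Z) \subset \liminf \dPn$: take $s \simeq^Z t$ with $s<t$, $s,t$ not local minima of $Z$ (which by \eqref{eq:lamination_stable2} and properties \ref{H0}, \ref{H2} suffices to generate $L(Z)$). Using the Skorokhod convergence \ref{B1} and property \ref{H2} (which guarantees that the infimum after a jump is strictly smaller, so the level is genuinely crossed), one finds black vertices $v^\bullet_{i_n}$ in $T_n$ with $i_n/(n-K_n)\to s$ and subtree-size $n^\bullet_{i_n}/(n-K_n)\to t-s$, hence a chord of $\dPn$ converging to $[e^{-2\pi\i s}, e^{-2\pi\i t}]$. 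For $\dPn \subset \limsup^{-1} L(Z)$: any macroscopic chord of $\dPn$ corresponds, via Step 1, to a pair $(i_n, i_n+n^\bullet_{i_n})$; extracting a subsequence so that the rescaled endpoints converge to $(s,t)$, the Skorokhod convergence and property \ref{H1} (uniqueness of the location of minima on subintervals, so that the limiting "first return" is well-defined) force $s \simeq^Z t$, so the chord limit lies in $L(Z)$. Microscopic chords (length $o(1)$) converge to points of $\S \subset L(Z)$, and we also need $\S \subset \liminf\dPn$: this follows from the fact that the labels $1,\dots,n$ are spread around the circle and, by a Riemann-sum / counting argument (or by invoking case (i)$_{c=0}$-type estimates, but more simply from \ref{B1} since the path does not stay at $0$), every arc contains an endpoint of some chord. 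Combining, $\dPn \to L(Z)$ in the Hausdorff topology. Property \ref{H3} enters to ensure that the limiting set $L(Z)$ has no ``spurious'' isolated chords attached to local minima that would not be approximated — matching \eqref{eq:isolated}-type stability.

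\textbf{Step 3: from $\dPn$ to $\dFn$, and joint convergence.} Recall from Lemma~\ref{lem:observation} and the discussion in Section~\ref{sec:introcompacts} that $\dPn$ is obtained from $\dFn$ by replacing each connected component of $\FFF_n$ by the convex hull of its vertices; in particular $\dFn \subset \dPn$ always, and $\S \cap \dFn = \S \cap \dPn$. It therefore suffices to show $d_H(\dFn,\dPn)\to 0$. A chord $[e^{-2\pi\i u},e^{-2\pi\i v}]$ of $\dPn$ bounds a convex hull of one block $B$; the vertices of $B$ are dense (on scale $1/n$) inside the corresponding arc only if $B$ is ``spread out'', but in general the non-crossing forest $\FFF_n$ restricted to $B$ is a tree whose chords have endpoints among the labels of $B$. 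The point is that these labels, being the corners of one black vertex and its descendants, fill the arc $[u,v]$ densely at macroscopic scale: between any two consecutive chords of the convex hull there is at most $o(n)$ gap because, were there a macroscopic gap, \ref{B3} would be violated (a black vertex with two macroscopic subtrees). Hence every point of $\dPn$ is within $o(1)$ of $\dFn$, giving $d_H(\dFn,\dPn)\to 0$; combined with Step 2 this yields $\dFn \to L(Z)$, and the joint convergence $(\dFn,\dPn)\to(L(Z),L(Z))$ is immediate since both coordinates converge to the same deterministic-given-$Z$ limit built from the same data.

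\textbf{Main obstacle.} The hardest part, I expect, is Step 1 together with the precise bookkeeping in Step 2: controlling the $O(\sqrt n)$ discrepancies between the positions of the block labels on the circle (given only by Lemma~\ref{lem:OnBlockLabels}) and the values read off the {\L}ukasiewicz path, and — crucially — handling the boundary between ``macroscopic'' and ``microscopic'' chords uniformly, so that properties \ref{H1}, \ref{H2}, \ref{H3} of $Z$ can be invoked to identify the limit exactly as $L(Z)$ rather than a strict sub- or super-lamination. Establishing the reverse inclusion $\limsup \dPn \subset L(Z)$ is the delicate direction, because it requires ruling out chords of $\dPn$ whose rescaled endpoints converge to a pair $(s,t)$ with $s \not\simeq^Z t$; this is exactly where \ref{B3} (no two large subtrees at a branch point, controlling how $\oB^n$ can jump) and \ref{H1} (no ties among local minima) combine, and where the choice to work in $\D([-1,1],\R)$ — so that the a.s.\ initial jump of $\Xexc$ is seen — is needed to avoid a degenerate limit near $u=0$.
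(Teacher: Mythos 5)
Your overall strategy—first show $\dPn \to L(Z)$ via the two Hausdorff inclusions using the {\L}ukasiewicz path of the reduced black subtree, then control $d_H(\dFn,\dPn)$ via \ref{B2} and \ref{B3}, and deduce joint convergence—is essentially the route taken in the paper. The paper reverses the order (establishing $d_H(\dFn,\dPn)\to 0$ first via a standalone lemma and only then identifying the common limit with $L(Z)$), and it isolates the discrete-to-continuous approximation in a separate lemma equating ``$s\simeq^Z t$'' with the existence of indices $i_n,j_n$ with the first-return property of $\oB^n$, but these are organizational differences, not different ideas.

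There is, however, a genuine gap in your Step~3. You claim that the labels of a block $B$ are ``the corners of one black vertex and its descendants,'' that they ``fill the arc $[u,v]$ densely at macroscopic scale,'' and that ``between any two consecutive chords of the convex hull there is at most $o(n)$ gap.'' None of these is true. A block of $\PPP_n$ consists of the corner labels of a \emph{single} black vertex; under \ref{B2} it therefore has size $\le 3$, and the convex hull is a point, a segment, or a triangle. For a triangle with vertices at positions $u_1<u_2<u_3$, the two arcs $(u_1,u_2)$ and $(u_2,u_3)$ correspond to the two subtrees below the black vertex, and under \ref{B3} at most one of them can be macroscopic—so one of those two gaps can certainly be of order $n$, contradicting your ``at most $o(n)$ gap'' claim. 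The correct statement is only that \emph{one} of the two subtree sides is short; this already suffices, because removing any single edge from a triangle with one short side leaves a path Hausdorff-close to the triangle. You should also note that $\dFn\subset\dPn$ is \emph{not} true in general (for a block of size $\ge 4$ the spanning tree may contain diagonals not present in the boundary polygon); it is \ref{B2} that makes this inclusion valid. Finally, your Steps~1 and~2 correctly identify where \ref{H1}, \ref{H2}, \ref{H3} enter, but the key approximation lemma (equivalence of $s\simeq^Z t$ with the discrete first-return description of $i_n,j_n$) is only gestured at and would need to be proved—this is where the case analysis involving \ref{H1}/\ref{H3} (to rule out ties at minima and jumps at the endpoint $t$) actually takes place. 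Incidentally, $\oB^n$ is the {\L}ukasiewicz path of the black reduced tree, not its height process; this does not affect the argument but the terminology is off.
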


Before tackling the proof of Proposition~\ref{prop:cvLamDeterm}, we state and prove two lemmas. The first one is a simple geometric lemma and the second one concerns approximation properties of the Skorokhod $J_{1}$ metric.

\begin{lemma}
\label{lem:proche}
Fix $\epsilon>0$ and assume that \ref{B2}, \ref{B3} hold. Then, for every $n$ sufficiently large,
\[d_{H}\left( \dFn  , \dPn  \right)  \leq 7\epsilon,\]
where $d_{H}$ denotes the Hausdorff distance.
\end{lemma}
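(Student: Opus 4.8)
The idea is to show that, under hypotheses \ref{B2} and \ref{B3}, the two compact sets $\dFn$ and $\dPn$ differ only through a uniformly bounded number of "small" pieces, so that the Hausdorff distance between them is controlled by $\epsilon$. Recall that, by Lemma~\ref{lem:observation}, $\dot\PPP_n$ is obtained from $\dot\FFF_n$ by replacing each connected component of $\FFF_n$ by the convex hull of its vertices; in particular $\dPn \subseteq \mathrm{conv}(\dFn)$ componentwise, and the two sets share the same intersection with $\S$. So the only way in which $d_H(\dFn,\dPn)$ can be large is if some connected component of $\FFF_n$ spans a large arc of $\S$ but is itself "thin" (a long path, say), so that the chords of $\dFn$ inside it stay far from the chord of $\dPn$ joining its extreme points. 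The combinatorial translation is exactly in terms of the tree $T_n=\mathcal T(\PPP_n)$: a connected component of $\FFF_n$ corresponds to a black vertex of $T_n$ together with its descendant subtree, and the arc of $\S$ it subtends has length proportional to the number of descendants of that black vertex (this is the content of Lemma~\ref{lem:OnBlockLabels}, parts (i) and (iii), which relate the first/last black corner visited — hence the positions on the circle — to the number of descendants).

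\textbf{Key steps.} First I would fix $\epsilon>0$ and, using \ref{B2} and \ref{B3}, argue that for $n$ large the tree $T_n$ has the following structure: every black vertex has at most two children, and no black vertex has two children each carrying $\geq \epsilon n$ descendants. Consequently, the set of black vertices $v$ whose descendant subtree has $\geq \epsilon n$ descendants forms a \emph{path} in the (reduced) black tree — there is no branching among "big" subtrees — so there are at most $1/\epsilon$ of them, say $v_1,\dots,v_m$ with $m\le \lfloor 1/\epsilon\rfloor$, each giving a "large block" $B_1,\dots,B_m$ of $\PPP_n$. Second, I would observe that for any block $B$ of $\PPP_n$ of size $<\epsilon n$, the corresponding connected component of $\FFF_n$ has all its vertices on an arc of $\S$ of length $O(\epsilon)$ (again by Lemma~\ref{lem:OnBlockLabels}(iii), controlling the spread of the labels); hence the chords of $\dFn$ coming from this component and the chords of $\dPn$ coming from it are all within Hausdorff distance $O(\epsilon)$ of that small arc, and in particular within $O(\epsilon)$ of each other. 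This handles all but the $\leq 1/\epsilon$ large blocks uniformly. Third — the crux — for each large block $B_j$ I would compare the "thin tube" structure: the component of $\FFF_n$ on $B_j$ consists of a big piece that is, up to $O(\epsilon n)$ vertices, a chain (since the big subtrees form a path), plus at most $O(\epsilon n)$ other vertices hanging off in small sub-blocks already controlled by the previous step. For the chain part, the chords of $\dFn$ connect consecutive elements of $B_j$ along $\S$, while the chord of $\dPn$ connects extreme points of (sub-)arcs; by planarity and the fact that consecutive elements of a non-crossing block are "close" once we remove the small sub-blocks, the chords of $\dot\FFF_n$ restricted to the chain lie within $O(\epsilon)$ of the corresponding chords of $\dot\PPP_n$. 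Summing the $O(\epsilon)$ contributions over the $\leq 1/\epsilon$ large blocks and the small-block estimate, and being careful with the constants (this is where the "$7\epsilon$" comes from — one collects a handful of $\epsilon$'s from: the small-block arcs, the residual small sub-blocks inside large blocks, the discretization $K_n=O(\sqrt n)=o(n)$, and the two directions of Hausdorff inclusion), one gets $d_H(\dFn,\dPn)\le 7\epsilon$ for $n$ large.

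\textbf{Main obstacle.} The delicate point is the third step: quantifying, geometrically and uniformly in $n$, why a "thin" (nearly path-like) connected component of $\FFF_n$ produces chords that stay close to the single chord joining its extremities in $\dPn$. The naive worry is a long path winding around the disk; but \ref{B3} forbids precisely the branching that would be needed to "fill" a large region, and \ref{B2} bounds the branching degree, so a large block is forced to be a thin chain decorated with small blobs. Making this rigorous requires translating "thin chain of a non-crossing block" into a statement that consecutive block elements are separated by an arc of length $O(\epsilon)$ on $\S$ once the $O(\epsilon n)$ decoration vertices are discounted — this uses Lemma~\ref{lem:OnBlockLabels}(iii) applied to each decoration sub-block to bound how far apart two consecutive chain elements can be. I would present this as a short geometric lemma: if a connected subset of chords has all its vertices on $\S$ and at no point "branches into two big pieces", then it is contained in an $O(\epsilon)$-neighborhood of the convex hull of its extreme vertices, and conversely the convex hull is contained in an $O(\epsilon)$-neighborhood of the chord set. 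The bookkeeping of constants to land exactly on $7\epsilon$ is routine once this geometric picture is in place, and I would leave those details to a careful but unenlightening computation.
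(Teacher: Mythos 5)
Your plan misreads what \ref{B2} actually gives you. Recall the dictionary from Proposition~\ref{prop:bitype}: a black vertex of $T_n$ with $i$ children corresponds to a block of $\PPP_n$ of size $i+1$ (size $i$ for the root). So \ref{B2} — no black vertex has three or more children — forces \emph{every} block of $\PPP_n$ to have at most $3$ elements. This is the entire point: $\dPn$ is a disjoint union of isolated points, single chords, and triangles, and by Lemma~\ref{lem:observation}, $\dFn$ is obtained from $\dPn$ by deleting one edge from each triangle (a spanning tree of $3$ points is a path, i.e.\ two of the three edges). There are no ``large blocks,'' no chains of length $\Theta(n)$, no need to count how many big components there are. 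The only remaining work is: for each triangle $\Delta$ of $\dPn$, the three arcs it cuts out of $\S$ correspond (via Lemma~\ref{lem:OnBlockLabels}, up to an $O(K_n)=o(n)$ error) to the descendant counts of the two white children of the associated black vertex and to the complement; \ref{B3} says one of the two children has fewer than $\eps n$ descendants, so one of those arcs has length $O(\eps)$, and removing any one edge of such a triangle changes it by at most $7\eps$ in Hausdorff distance. That is the whole proof.

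Your argument instead tries to handle blocks of size up to $\eps n$, which cannot occur here, and in doing so it relies on a claim that does not follow from the hypotheses: you assert that the black vertices with $\ge\eps n$ descendants ``form a path, there is no branching among big subtrees.'' But \ref{B3} only constrains the two \emph{white} children of a black vertex; it says nothing about the black grandchildren, and white vertices have unconstrained degree. A white child $c$ with $\ge\eps n$ descendants can perfectly well have several black children each carrying $\ge\eps n$ descendants, so in the reduced black tree the ``big'' vertices need not form a path. This makes the ``thin chain decorated with small blobs'' picture — the crux of your Step 3 — unjustified. Fortunately it is also unneeded: once you see that block sizes are at most $3$, the lemma collapses to a one-line geometric fact about triangles with one short side.
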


\begin{proof}
Assume that $n$ is chosen sufficiently large so that \ref{B2} and \ref{B3} for our fixed value of $\epsilon$ are in force.
By \ref{B2}, $ \dPn$ is a disjoint union of line segments and of triangles. Also, by Lemma~\ref{lem:observation}, $ \dFn \subset \dPn$ and  $\dFn$ is obtained from $\dPn$ by removing an edge from every triangle. Now, if $ \Delta$ is a triangle of $\dPn$, it splits the unit circle into three arcs, and by \ref{B3}, one of these arcs has length at most $7 \epsilon$. If $ {\Delta}'$ is obtained from $\Delta$ by removing one of the three edges of $\Delta$, this implies that $d_{H}(\Delta,{\Delta}') \leq 7 \epsilon$. The desired result readily follows.
\end{proof}

\begin{lemma}
\label{lem:approxJ1}
Suppose that the assumptions of Proposition~\ref{prop:cvLamDeterm} are in force. Fix $0<s<t < 1$. 
Then the following assertions are equivalent:
\begin{enumerate}
\item[(i)] We have $s \simeq^Z t$;
\item[(ii)] There exists integers $i_{n}< j_{n}$ such that $ \tfrac{i_{n}}{n-K_{n}} \rightarrow s$, $ \tfrac{j_{n}}{n-K_{n}} \rightarrow t$ and
\[ j_{n}= \inf \left\{k>i_{n}: \,  \oB^{n}_k(T_n)=\oB^{n}_{i_{n}}(T_n)-1 \right\}.\]
\end{enumerate}
\end{lemma}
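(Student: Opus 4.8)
The statement is an equivalence between a continuous-level relation $s\simeq^Z t$ and a discrete-level relation on the \L ukasiewicz path $\oB^n(T_n)$. The natural strategy is to transfer the convergence \eqref{eq:cv_HB_Skorokhod} of \ref{B1}, stated for the Skorokhod $J_1$ topology, into pointwise statements about hitting times and values of the discrete paths, using the special structure that the limit $Z$ satisfies \ref{H1}--\ref{H3} and that the discrete trees satisfy \ref{B2} (no black vertex with $\ge 3$ children, so all jumps of $\oB^n(T_n)$ are $+1$ or $\ge 0$ of a restricted form — in fact increments lie in $\{-1,0,1\}$). I would first record the elementary observation that, because of \ref{B2}, the path $k\mapsto \oB^n_k(T_n)$ has increments in $\{-1,0,1\}$ (a black vertex with $0$, $1$ or $2$ children gives \L ukasiewicz increment $-1$, $0$, $1$ respectively), so that ``$\oB^n$ goes down by exactly $1$'' is the same as ``$\oB^n$ reaches the level $\oB^n_{i_n}-1$'', and the infimum defining $j_n$ in (ii) is the first return of the path strictly below its value at $i_n$.

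\emph{(i) $\Rightarrow$ (ii).} Assume $s\simeq^Z t$ with $s<t$; by definition $t=\inf\{u>s: Z_u\le Z_{s-}\}$ and (as noted after \eqref{eq:lamination_stable2}) $Z_{s-}=Z_t$. Using \ref{H3} applied at the local minimum structure, and \ref{H2}, one knows that $Z$ has a positive jump ``at $s$'' in the appropriate sense and that $Z$ strictly decreases below $Z_t$ immediately after $t$; this is what makes the hitting time $t$ a \emph{continuity point} of the first-passage functional, so that the Skorokhod convergence \eqref{eq:cv_HB_Skorokhod} can be applied. Concretely, set $i_n$ to be (roughly) $\lfloor s(n-K_n)\rfloor$ adjusted to a time where $\oB^n(T_n)$ achieves a suitable local near-minimum, and let $j_n=\inf\{k>i_n:\oB^n_k(T_n)=\oB^n_{i_n}(T_n)-1\}$. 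One then shows $i_n/(n-K_n)\to s$ and $j_n/(n-K_n)\to t$ by a standard argument: first-passage times and running infima are continuous functionals at paths for which the relevant infimum is attained at a unique time and is approached ``from a jump'' (cf.\ the discussion around the Vervaat transform in Section~\ref{ss:coding} and \cite{Kor14}), and \ref{H1}, \ref{H2}, \ref{H3} guarantee exactly this at the pair $(s,t)$. The rescaling constant $c/n$ disappears since we only compare the path to a level shifted by $-1 = o(n)$; here one uses that $\oB^n$ cannot make large negative jumps (increments $\ge -1$), so passing below level $\oB^n_{i_n}$ forces hitting $\oB^n_{i_n}-1$.

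\emph{(ii) $\Rightarrow$ (i).} Conversely, given such $i_n<j_n$ with $i_n/(n-K_n)\to s$, $j_n/(n-K_n)\to t$, pass to the limit in \eqref{eq:cv_HB_Skorokhod}: the relation $j_n=\inf\{k>i_n:\oB^n_k(T_n)=\oB^n_{i_n}(T_n)-1\}$ together with the increment bound shows that on $(i_n,j_n)$ the path stays $\ge \oB^n_{i_n}(T_n)$, hence in the limit $Z_u\ge Z_{s-}$ for $u\in(s,t)$ and $Z_t\le Z_{s-}$, while $Z_{s-}$ is recovered as the limit of $(c/n)\oB^n_{i_n}(T_n)$ — here I would be careful that $i_n$ lands just after the jump of $Z$ at $s$, so that the limit of $(c/n)\oB^n_{i_n}(T_n)$ is $Z_{s-}$ (equivalently $Z_t$) and not $Z_s$; property \ref{H2}/\ref{H3} again pins this down. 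This gives $t=\inf\{u>s:Z_u\le Z_{s-}\}$, i.e.\ $s\simeq^Z t$. The main obstacle, as in all such discrete-to-continuous transfers, is the careful bookkeeping of which side of the jump the discrete indices $i_n$ converge to, and verifying that $(s,t)$ is a continuity locus for the first-passage functional so that $J_1$ convergence can legitimately be evaluated there; properties \ref{H1}, \ref{H2}, \ref{H3} are tailored precisely for this, and \ref{B2} is what keeps the discrete path from overshooting the target level. The remaining estimates are routine and I would leave them to the reader, in the style of the surrounding proofs.
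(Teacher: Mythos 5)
There is a genuine gap in your argument for (ii) $\Rightarrow$ (i). Passing to the limit in \eqref{eq:ineg} only gives the weak inequality $Z_r \ge Z_{s-}$ for $r \in (s,t)$, together with $Z_{t-}$ or $Z_t$ equal to $Z_{s-}$. But the definition of $s \simeq^Z t$ requires $t=\inf\{u>s: Z_u \le Z_{s-}\}$, so one must rule out the possibility that $Z_r = Z_{s-}$ for some $r$ strictly between $s$ and $t$, and also verify $Z_{t}=Z_{t-}$ so that the discrete endpoint actually converges to $Z_t$. Neither of these is automatic, and neither is addressed in your proposal. The paper establishes them by a case split on whether $Z_{s}=Z_{s-}$ or $Z_s>Z_{s-}$, and then, in each case, by a short contradiction argument showing that $Z_{t-}<Z_t$ or $Z_r=Z_t$ for some interior $r$ would violate \ref{H1} or \ref{H3}. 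Your proposal correctly flags that the identification of $\lim \hB^{(n)}(i_n/(n-K_n))$ with $Z_{s-}$ (rather than $Z_s$) must be justified, but the justification is also a short contradiction using \ref{H2}: if the limit were $Z_s>Z_{s-}$ then $Z_r\ge Z_s$ on $(s,t)$, contradicting the strict descent guaranteed by \ref{H2}. These are the actual content of the paper's proof and cannot be left as ``routine''.

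A secondary point: your opening claim that \ref{B2} forces the increments of $\oB^n(T_n)$ to lie in $\{-1,0,1\}$ is incorrect. The increment of $\oB^n$ at step $i$ is $B_i-1$, where $B_i$ counts black \emph{grandchildren} of the $i$-th black vertex; \ref{B2} only bounds the number of (white) children, and a single white child may have arbitrarily many black children. The inequality you actually use, namely that increments are $\ge -1$ (so that the path cannot pass below $\oB^n_{i_n}$ without hitting $\oB^n_{i_n}-1$), is a general property of \L ukasiewicz paths and has nothing to do with \ref{B2}. Indeed, \ref{B2} plays no role at all in the proof of this lemma — it is used elsewhere, in the proof of Proposition~\ref{prop:cvLamDeterm} itself. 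Your treatment of (i) $\Rightarrow$ (ii) is at the same level of detail as the paper, which also refers to standard $J_1$ approximation properties (citing \cite{KM17}), so that direction is fine.
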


\begin{proof}
The implication $(i) \implies (ii)$ follows from standard approximation properties of the Skorokhod $J_{1}$ topology (see e.g. the proof of Lemma 4.6. in \cite{KM17}).

Now assume (ii). To simplify notation, set $ \hB^{(n)}(u)=c \tfrac{\oB^{n}_{\lfloor u (n-K_{n}) \rfloor}(T_n)}{ n}$, so that
\begin{equation}
\label{eq:ineg} \hB^{(n)}(r) \geq \hB^{(n)} \left(  \frac{i_{n}}{n-K_{n}} \right) =\hB^{(n)} \left(  \frac{j_{n}}{n-K_{n}} \right)+ \frac{c}{n}  \quad \textrm{for every } r \in \left[  \frac{i_{n}}{n-K_{n}}, \frac{j_{n}}{n-K_{n}}  \right).
\end{equation}

First assume that  $Z_{s}=Z_{s-}$.
Then $\hB^{(n)} \big(  \tfrac{i_{n}}{n-K_{n}} \big) \rightarrow Z_{s}$ 
and it follows from \eqref{eq:ineg} and from \ref{B1} that  $Z_r \ge Z_{t-}=Z_{s}$ for every $r \in (s,t)$.
In addition, we claim that $Z_{t-}=Z_{t}$.
Indeed, if $Z_{t-}<Z_{t}$ and if $Z_{u}\geq Z_{s}$ for $u$ belonging to a small neighborhood of the left of $s$, 
then \ref{H1} is not fulfilled.
On the other hand, if $Z_{t-}<Z_{t}$ and if $s=\sup\{ u \in [0,t] : Z_{u} < Z_{t-}\}$, then \ref{H3}
is not fulfilled.
Therefore, we must have $Z_{t-}=Z_{t}$.
Similarly, we necessarily have $Z_{r}>Z_{t}$ for every $r \in (s,t)$.
Indeed, assuming $Z_r=Z_t$ yields a contradiction with either \ref{H1} or \ref{H3},
using the same case distinction as above.
We conclude that $s \simeq^Z t$.

Next assume that $Z_{s}>Z_{s-}$. Then, by \ref{H2}, we can find $r \in (s,t)$ such that $Z_{r}<Z_{s}$. 
As a consequence,  $\hB^{(n)} \big(  \tfrac{i_{n}}{n-K_{n}} \big) \rightarrow Z_{s-}$
(it cannot converge to $Z_s$ because of \eqref{eq:ineg}), and $Z_{t-}=Z_{s-}$. 
Similarly as above, we can prove that $Z_{t}=Z_{t-}$.
We thus have $Z_{s-}=Z_{t} \leq Z_{r}$ for every $r \in (s,t)$, 
and this inequality is strict (otherwise, as above, this would contradict either \ref{H1} or \ref{H3}).
Therefore $s \simeq^Z t$, and this completes the proof.
\end{proof}

We are now ready to establish Proposition~\ref{prop:cvLamDeterm}.

\begin{proof}[Proof of Proposition~\ref{prop:cvLamDeterm}]
Since the set of all geodesic laminations of $\overline{\D}$ equipped with Hausdorff topology is compact, up to extraction,
we may assume that $(\dFn, \dPn)$  converges to a pair of geodesic laminations $(K,K')$ of $\overline{\D}$ and we aim at proving that $(K,K')=( L(Z), L(Z))$. By Lemma~\ref{lem:proche}, $K=K'$ and it is therefore enough to show that $K=L(Z)$. Denote by $(v^{\bullet,n}_{i})_{0 \leq i < n-K_{n}}$ the black vertices of $T_{n}$ listed in lexicographical order.

First note that there can not exist $s \in (0,1)$ such that $0 \simeq^{Z} s$, 
and since $Z_{t}>0$ for $t \in (0,1)$, there does not exist $s \in (0,1)$ such that $ s \simeq^{Z} 1$ either.

\paragraph*{Step $1$: $L(Z) \subset K$.}  Fix $0 < s \le t < 1$ such that  $s \simeq^Z t$.
We need to prove that $[e^{-2\pi\, \textrm{i}\, s}, e^{-2\pi\, \textrm{i}\, t}] \subset K$.
By \eqref{eq:lamination_stable2}, it is enough to prove it for $s<t$.
Since $K$ is a limit point of the sequence $(\dPn)_{n \ge 0}$
it is enough to check that for fixed $\epsilon>0$, for every $n$ sufficiently large,
there exists a chord of $\dPn$  within distance $12\epsilon$ of the chord $[e^{-2\pi\, \textrm{i}\, s}, e^{-2\pi\, \textrm{i}\, t}]$.

 By Lemma \ref{lem:approxJ1}, for $n$ sufficiently large, there exist integers $i_{n},j_{n}$ such that
\begin{equation}
\label{eq:injn}
    \left|\frac{i_{n}}{n-K_n} -s \right|  \le \eps, \quad \left|\frac{j_{n}}{n-K_n} -t \right|  \le \eps, \quad
    j_{n}= \inf \left\{k>i_{n}: \,  \oB^{n}_k(T_n)=\oB^{n}_{i_{n}}(T_n)-1 \right\}.
\end{equation}
Note that the last equality means that $v^{\bullet,n}_{j_{n}}$ is the first vertex greater (in the lexicographical order) than $v^{\bullet,n}_{i_{n}}$ and than all the descendants of $v^{\bullet,n}_{i_{n}}$.

We now find a chord of $ \dPn $ which is within distance $12\epsilon$ of the chord $[e^{-2\pi\, \textrm{i}\, s}, e^{-2\pi\, \textrm{i}\, t}]$ as follows. Denote by  $x^{\bullet,n}_{i_{n}}$ (resp.~$y ^{\bullet,n}_{i_{n}}$)  the label of the first (resp.~the last) corner of $v^{\bullet,n}_{i_{n}}$ visited by the contour sequence.
By construction (see in Section~\ref{sec:coding} how the partition $\PPP_{n} $ is reconstructed from $T_{n}$),
we have $[e^{-2\pi\, \textrm{i} {x^{\bullet,n}_{i_{n}}}/{n}}, e^{-2\pi\, \textrm{i} {y ^{\bullet,n}_{i_{n}}}/{n}}] \in \dPn$.
Lemma~\ref{lem:OnBlockLabels}  (iii) then implies that
\begin{equation}
  i_{n} \leq x^{\bullet,n}_{i_{n}} \leq i_{n}+ K_{n}+1, \qquad  i_{n}+n^{\bullet}_{i_{n}} \leq y^{\bullet,n}_{i_{n}} \leq   i_{n}+n^{\bullet}_{i_{n}} + 2 K_{n}+2,
  \label{eq:Control_xy}
\end{equation}
where $n^{\bullet}_{i_{n}} $ is the number of black descendants of $v^{\bullet,n}_{i_{n}}$.
But, because of \eqref{eq:injn},  the black vertices visited between $v^{\bullet,n}_{i_{n}}$ and $v^{\bullet,n}_{j_{n}}$
in the contour walk of $T_{n}$ are exactly the black descendants of $v^{\bullet,n}_{i_{n}}$,
so that $n^{\bullet}_{i_{n}}= j_{n}-i_{n}-1$. Thus
\[i_{n} \leq x^{\bullet,n}_{i_{n}} \leq i_{n}+ K_{n}+1, \qquad  j_{n}-1 \leq y^{\bullet,n}_{i_{n}} \leq    j_{n}-1 + 2 K_{n}+2.\]
Since $K_{n} \sim c \sqrt{n}$, this implies that for every $n$ sufficiently large,
\[\left| \frac{x^{\bullet,n}_{i_{n}}}{n}-s \right| \leq 2 \epsilon, \qquad   \left| \frac{y^{\bullet,n}_{i_{n}}}{n}-t \right|  \leq 2 \epsilon.\]
As a consequence, for $n$ sufficiently large, $[e^{-2\pi\, \textrm{i} {x^{\bullet,n}_{i_{n}}}/{n}}, e^{-2\pi\, \textrm{i} {y ^{\bullet,n}_{i_{n}}}/{n}}] \in\dPn$  is within distance $12\epsilon$ of the chord $[e^{-2\pi\, \textrm{i}\, s}, e^{-2\pi\, \textrm{i}\, t}]$. This shows that $ L(Z)   \subset K$.

\paragraph*{Step $2$: $ K \subset L(Z)$.} 
Since $K$ is a geodesic lamination (i.e. a union of non crossing chords) and since $L(Z)$ contains the circle $\S$,
it is enough to show that nontrivial chords of $K$ are included in $L(Z)$.
Namely, we need to prove that if $s<t$ are such that $[e^{-2\pi\, \textrm{i}\, s}, e^{-2\pi\, \textrm{i}\, t}] \subset K$,
then $s \simeq^Z t$.
By definition, $K$ is a limit point of $(\dPn)_{n \ge 0}$
so, up to extraction, we may assume that there exist sequences $(s_{n})$ and $(t_{n})$
such that $s_{n} \rightarrow s$, $t_{n} \rightarrow t$, $n s_{n}$ and $n t_{n}$ are integers, and $[e^{-2\pi\, \textrm{i}\, s_{n}}, e^{-2\pi\, \textrm{i}\, t_{n}}] \subset \dPn$.
By construction,
the condition $[e^{-2\pi\, \textrm{i}\, s_{n}}, e^{-2\pi\, \textrm{i}\, t_{n}}] \subset \dPn$
means that
$ns_n$ and $nt_n$ are the labels of two consecutive corners of the same black vertex of $T_n$;
call $v^{\bullet,n}_{i_{n}}$ this black vertex.
Combining this with the hypothesis \ref{B2}, there are two cases:
$v^{\bullet,n}_{i_{n}}$ may have 1 or 2 children, but not more.

Let $n s'_{n}$ (resp.~$n t'_{n}$) is the index of the first (resp.~last) black corner of $v^{\bullet,n}_{i_{n}}$ visited by the contour sequence (if $v^{\bullet,n}_{i_{n}}$ has one child, we simply have $(s_{n},t_{n})=(s'_{n},t'_{n})$).
By \ref{B3}, for $n \ge n_0(\eps)$, only one of the quantities $t'_n-t_n$, $t_n-s_n$ and $s_n-s'_n$
can be bigger than $\eps$. 
Since $t>s$, for $\eps$ small enough and $n$ sufficiently large, we have $t_n-s_n>\eps$.
Therefore $t'_n-t_n$ and $s_n-s'_n$ tend to $0$ and 
we have $(s'_{n},t'_{n}) \rightarrow (s,t)$.
Note that we \emph{crucially} need \ref{B3} to rule out the possibility that  $v^{\bullet,n}_{i_{n}}$ has two white children both having a macroscopic descendance, in which case the last convergence may fail.
Then, by Lemma~\ref{lem:OnBlockLabels}   (iii),
  \[ i_{n} \leq n s'_{n} \leq i_{n}+  K_{n}+1, \qquad i_{n}+n^{\bullet}_{i_{n}} \leq n t'_{n} \leq   i_{n}+n^{\bullet}_{i_{n}} +2 K_{n}+2,\]
where $n^{\bullet}_{i_{n}}$ is the number of black descendants of $ v^{\bullet,n}_{i_{n}}$. 
Hence, since $K_n/n$ tends to $0$, we have $i_{n}/(n-K_{n}) \rightarrow s$ and $  (i_{n}+n^{\bullet}_{i_{n}})/(n-K_{n}) \rightarrow t$.
Besides, by construction of $\overline{B}$, it holds that
\[i_{n}+n^{\bullet}_{i_{n}}+1= \min \{k>i_{n}: \oB^{n}_{k}(T_{n}) =  \oB^{n}_{i_{n}}(T_{n})-1\}.\]
Lemma~\ref{lem:approxJ1} implies that $s \simeq^Z t$, and this completes the proof.
\end{proof}

We now conclude the proof of Theorem~\ref{thm:cvlam} (i) in the case $c>0$.
\begin{proof}[Proof of Theorem~\ref{thm:cvlam} (i) in the case $c>0$]
  Since the space of laminations of the disk is compact,
  it is enough to show that the sequence $\big( \dFKn  ,  \dPKn \big) $ has a unique  limit point in distribution. Up to extraction, we may therefore assume that $\big( \dFKn  ,  \dPKn \big)$  converges in distribution. We shall show that the limiting distribution is the law of $(\mathbf{L}_c,\mathbf{L}_c)$.

  For each $n \ge 1$, we denote by $\tTs_n= \mathcal{T}( \PPP( \tb_{1}^{(n)} \tb_{2}^{(n)} \cdots \tb_{K_{n}}^{(n)}))$ 
  the tree associated with $\PKn$.
  From Proposition~\ref{prop:bitype}, it has the distribution of an alternating BGW tree with our favorite offspring distributions
  and a modified offspring distribution at the root (as in Theorem~\ref{thm:cvXexc} (ii)).
  We consider the following events:
  \begin{itemize}
    \item for $\epsilon >0$, we let $A_{n}(\epsilon)$ be the event ``the tree $\tTs_{n}$ does not contain a black 
    vertex $v$ that has two children, both with at least $\eps \, n$ descendants'';
        \item $B_{n}$ is the event ``$\tTs_{n}$ does not contain any black vertex with three or more children''.
  \end{itemize}
  From Lemmas~\ref{lem:AtMost2} and \ref{lem:PasDeuxGrandsSousArbres} combined with Lemma~\ref{lem:root}, we know that
   as $n \rightarrow \infty$, $\Pr{B_{n}} \rightarrow 0$ and, for each $\eps >0$, $\Pr{A_{n}(\epsilon)} \rightarrow 0$ as well.
  Therefore, we can find a subsequence $(\phi(n))_{n \geq 1}$ such that, for every $n \geq 1$,
  \[ \Pr{{B_{\phi (n)}}} \leq \tfrac{1}{2^{n}}, \qquad \Pr{{A_{ \phi(n)} (\tfrac1n)}} \leq  \tfrac{1}{2^{n}}.\]
  The Borel-Cantelli lemma ensures the almost sure convergence of 
  $\One_{B_{\phi (n)}}$ and $\One_{A_{ \phi(n)} (1/n)}$ to $0$.
  On the other hand, from Theorem~\ref{thm:cvXexc} (ii), we know that the following convergence in distribution holds
  \[  \left(  c \frac{\oB_{\lfloor u (n-K_{n}) \rfloor}(\tilde{\Ts}_{n})}{ n} : -1 \leq u \leq 1\right)  \quad \mathop{\longrightarrow}^{(d)}_{n \rightarrow \infty} \quad (\Xexc_{u}: -1 \leq u \leq 1).\]
  The three previous convergences hold jointly in distribution, so that by Skorokhod representation theorem,
  we can find a coupling of  $(\tTs_{\phi(n)})_{n \ge 1}$ such that these convergences hold almost surely.
  The sequence of associated random factorizations almost surely satisfies the assumptions
  \ref{B1}, \ref{B2} and \ref{B3} of Proposition~\ref{prop:cvLamDeterm}.
  Besides, from Lemma~\ref{lem:hypoX}, the random process $\Xexc$ almost surely 
  satisfies conditions \ref{H0}, \ref{H1}, \ref{H2} and \ref{H3}.
  From Proposition~\ref{prop:cvLamDeterm},
   we conclude that $\big( \dFKphin  ,  \dPKphin \big)$ converges in distribution
   to $(\mathbf{L}_c,\mathbf{L}_c)$, since $\mathbf{L}_c=L(\Xexc)$ by definition. 
  Therefore the limiting distribution of the whole sequence $\big( \dFKn, \dPKn\big)$
  must be $(\mathbf{L}_c,\mathbf{L}_c)$ and this completes the proof of Theorem~\ref{thm:cvlam} (i) in the case $c>0$.
\end{proof}

\subsection{Convergence to $\mathbf{L}_{\infty}$ and proof of Theorem~\ref{thm:cvlam} (ii)}
\label{sec:Linfty}

Our goal is now to establish Theorem~\ref{thm:cvlam} (ii), when  both $\frac{K_{n}}{\sqrt{n}} \rightarrow \infty$ and  $ \frac{n-K_{n}}{\sqrt{n}} \rightarrow \infty$ as $n \rightarrow \infty$.
The main difference  with the case $ \frac{K_{n}}{\sqrt{n}}  \rightarrow c>0$ occurs when we assume $ \tfrac{K_{n}}{n} \rightarrow \gamma \in (0,1)$. 
In the latter case,  both the  number of white and black vertices then have a positive proportion in the dual tree coding $ \PPP(\tb_{1}^{(n)}\tb_{2}^{(n)} \cdots \tb_{K_{n}}^{(n)}) $.

As before, $\mubn$ and $\mucn$  will denote the probability distributions satisfying the conditions of Lemma~\ref{lem:exist}.
We also set $\widetilde{\mu}^{n}_{\bullet}(i)=\mubn(i-1)$ for $i \geq 1$.
We let $\tilde{\Ts}^{n}$ be a random tree with distribution ${\BGW}^{\widetilde{\mu}_{\bullet}^{n},\mubn,\mucn}$, as defined in \eqref{eq:BGWtilde}.
In words, $\tilde{\Ts}^{n}$ is 
 an alternating two-type BGW tree (with a black root), with offspring distributions $\mubn$ and $\mucn$ except the root which has offspring distribution $\widetilde{\mu}^{n}_{\bullet}$.
We then define $\tTs_n$, as $\tTs^{n}$ conditioned on having $n-K_{n}$ black vertices and $K_{n}+1$ white vertices. 
Recall from the beginning of Section~\ref{ss:coding} that we associate
two paths $(\oB^{n}_{i}(T))_{0 \leq i \leq n-K_{n}}$ and  $(\oH^{n}_{i}(T))_{0 \leq i \leq n-K_{n}}$ with each alternating tree $T$
with $n-K_n$ black vertices.

We first give the analogue of  Theorem~\ref{thm:cvXexc} in the setting $c=\infty$.
In the following statement, as in Section~\ref{sec:deflam},
$\mathbbm{e}$ is the Brownian excursion.
We also use the notation of Lemma~\ref{lem:OnBlockLabels} and Section~\ref{ss:coding}.
 In particular,
 we denote by  $(v^{\bullet,n}_{i})_{0 \leq i < n-K_{n}}$ are the black vertices $\tTs_n$, listed in lexicographical order,
 and, for $0 \leq i < n-K_{n}$,
 we let $\ell^{\bullet,n}_{i}$ be  the number of black corners 
 branching on the right of  $\llbracket \emptyset, v^{\bullet,n}_{i} \llbracket$ for $0 \leq i < n-K_{n}$.
\begin{proposition}
\label{prop:cvBH}
Assume that $\tfrac{K_{n}}{\sqrt{n}} \rightarrow \infty$ and $ \tfrac{K_{n}}{n}  \rightarrow \gamma \in [0,1)$ as $n \rightarrow \infty$. Set $\Dcn=\sqrt{ (\scn)^{2} K_{n} }$. Then
\[  \left(  \frac{\oB^{n}_{\lfloor u (n-K_{n}) \rfloor}(\tTs_{n})}{ \Dcn} : 0\leq u \leq 1\right)  \quad \mathop{\longrightarrow}^{(d)}_{n \rightarrow \infty} \quad\mathbbm{e}.\]
In addition, if $\gamma>0$, then
\[  \sup_{0 \leq u \leq 1} \left|   \frac{\oH^{n}_{ \lfloor u (n-K_{n}) \rfloor }(\tTs_{n})}{K_{n}}-u\right|  \quad \mathop{\longrightarrow}^{(\P)}_{n \rightarrow \infty} \quad 0 \qquad \textrm{and} \qquad \frac{1}{n} \max_{1 \leq i \leq n} \ell^{\bullet,n}_{i}  \quad \mathop{\longrightarrow}^{(\P)}_{n \rightarrow \infty} \quad 0.\]
\end{proposition}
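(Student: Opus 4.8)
The plan is to reduce Proposition~\ref{prop:cvBH} to a functional limit theorem for the conditioned bivariate walk $(\oH^n,\oB^n)$ of Section~\ref{ss:coding}, exactly as in the case $\tfrac{K_n}{\sqrt n}\to c$, and then to pass to the scaling limit. First I would note that, by Lemma~\ref{lem:root}-type arguments (the root modification is negligible), it suffices to work with $\Ts_n$, the unconditioned alternating BGW tree with offspring distributions $\mubn,\mucn$ conditioned on $\{\abs{\bullet}=n-K_n,\abs{\circ}=K_n+1\}$; by Lemma~\ref{lem:codeRW} the path $(\oB^n_i(\Ts_n))_i$ has the law of $(\oB^n_i)_i$ conditioned on $\mathcal C_n=\{\oH^n_{n-K_n}=K_n+1,\oB^n_{n-K_n}=-1,\ \oB^n_i\ge 0\text{ for }i<n-K_n\}$, and by Lemma~\ref{lem:vervaat} we may replace the excursion-type conditioning by the bridge-type conditioning $\{\oH^n_{n-K_n}=K_n+1,\oB^n_{n-K_n}=-1\}$ up to a Vervaat transform. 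So the core task is an invariance principle for $(\oH^n,\oB^n)$ under this bridge conditioning, which then gets sent through $\Vc$ to produce $\mathbbm{e}$.

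The unconditioned piece is where the regime $c=\infty$ genuinely differs. Here the relevant scaling is $\Dcn=\sqrt{(\scn)^2 K_n}$, and one expects $\tfrac{1}{\Dcn}\oB^n_{\lfloor u(n-K_n)\rfloor}$ to converge to a \emph{Brownian motion} rather than to the Lévy process $X$: the point is that when $\tfrac{K_n}{\sqrt n}\to\infty$, the number $K_n$ of white offspring summed to build one increment $B^n_i$ grows, so a CLT kicks in and the jumps of the limiting process wash out. Concretely I would redo Lemma~\ref{lem:cvnoncond} in this regime: using Lemma~\ref{lem:magique}, $\Prb{\oH^n_{n-K_n}=n_2,\oB^n_{n-K_n}=n_3}=\Prb{\Sbn_{n-K_n}=n_2}\Prb{\Scn_{n_2}=n-K_n+n_3}$, and one needs local limit estimates for $\Sbn$ (already covered, in spirit, by Lemma~\ref{lem:ll1Uniforme}, which only requires $\tfrac{K_n}{n}\to 0$ and $K_n\to\infty$) together with a \emph{new} local limit theorem for $\Scn_{ \Theta(K_n)}$ valid when $\tfrac{K_n}{\sqrt n}\to\infty$. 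In the current $c$ finite regime Lemma~\ref{lem:ll2} gives the inverse-Gaussian density $q_u$; but as $c\to\infty$ the rescaled $q_u$ degenerates to a Gaussian, so the right statement here is: $\Dcn\,\Prb{\Scn_{uK_n+j}=k}$ converges uniformly to a centered Gaussian density in the variable $k/\Dcn$. This is proved by the same Fourier-inversion scheme as Lemma~\ref{lem:ll2}, with the characteristic-function bound of Lemma~\ref{lem:technique} replaced by a cleaner Gaussian-type bound near $0$ (now $(\scn)^2 K_n$ is the natural variance and $K_n$ summands suffice for a quadratic expansion) plus the non-lattice bound away from $0$, which is unchanged. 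Granting this, one gets, without conditioning, joint convergence of $\big(\tfrac{\oH^n_{\lfloor u(n-K_n)\rfloor}-uK_n}{\sqrt{(\sbn)^2 n}},\ \tfrac{\oB^n_{\lfloor u(n-K_n)\rfloor}}{\Dcn}\big)$ to $(W,W')$, two independent Brownian motions — here using \cite[Thm.~16.14]{Kal02} to upgrade one-dimensional to functional convergence, the one-dimensional statement coming from the local limit estimates as in the proof of Lemma~\ref{lem:cvnoncond}.

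Next I would run the absolute-continuity argument of the proof of Proposition~\ref{prop:cvbridge} verbatim in this setting: apply the Markov property at time $\lfloor u(n-K_n)\rfloor$, write the Radon–Nikodym factor $\psi$ as a ratio of transition probabilities, and use the two local limit theorems plus the estimate $\phi_{n-K_n}(K_n+1,-1)\sim \tfrac{1}{\sqrt{(\sbn)^2 n}}p_1(0)\cdot\tfrac{1}{\Dcn}\,\tilde q_1(0)$ to identify the limiting density as the Brownian-bridge density $p^{br}_{1-u}(-X_u)/p_1(0)\cdot p^{br}_{1-u}(-X'_u)/p_1(0)$; combined with tightness (time reversal under $\mathcal C_n$, as before) this gives that, conditionally on the bridge event, $\tfrac{1}{\Dcn}\oB^n_{\lfloor u(n-K_n)\rfloor}\to W^{br}$, a standard Brownian bridge (and simultaneously $\tfrac{\oH^n_{\lfloor u(n-K_n)\rfloor}-uK_n}{\sqrt{(\sbn)^2 n}}\to \widetilde W^{br}$, independent). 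Applying $\Vc$ — which is a.s.\ continuous at $W^{br}$ since Brownian bridge attains its minimum uniquely — yields the Vervaat transform of a Brownian bridge, i.e.\ $\mathbbm e$, proving the first display of Proposition~\ref{prop:cvBH}. For the two statements when $\gamma>0$: the first, $\sup_u|\tfrac{\oH^n_{\lfloor u(n-K_n)\rfloor}}{K_n}-u|\to 0$ in probability, follows because $\oH^n_{\lfloor u(n-K_n)\rfloor}-uK_n$ is of order $\sqrt{(\sbn)^2 n}=o(K_n)$ when $K_n\gg\sqrt n$ (from $(\sbn)^2\sim K_n/n$ we get $\sqrt{(\sbn)^2 n}\sim\sqrt{K_n}$), so the normalized white-count is essentially the identity; this transfers to the conditioned tree by the same absolute-continuity bound (control the sup via a maximal inequality on the discrete bridge, dominated by the unconditioned one up to the bounded factor $\psi$). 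The second, $\tfrac1n\max_i\ell^{\bullet,n}_i\to 0$, I would deduce from Lemma~\ref{lem:OnBlockLabels}(iii), which gives $\ell^{\bullet,n}_i=\sum_{j<i}k_j-(x^{\bullet,n}_i-i)$ with $0\le x^{\bullet,n}_i-i\le\abs{\circ}$; more usefully, $\ell^{\bullet,n}_i$ counts black corners on the right of the ancestral line of $v^{\bullet,n}_i$, which is bounded by the total progeny still to be visited, i.e.\ by $n-K_n-i$ plus the number of white children hanging to the right — since Lemma~\ref{lem:AtMost2}/\ref{lem:PasDeuxGrandsSousArbres}-style controls on vertex degrees do \emph{not} apply here (degrees can be large when $\gamma>0$), one instead bounds $\ell^{\bullet,n}_i$ directly in terms of the Łukasiewicz excursion: $\ell^{\bullet,n}_i$ is dominated by a constant times $\oB^n_i(\tTs_n)+(\text{current overshoot})$, and since $\oB^n_{\lfloor u(n-K_n)\rfloor}(\tTs_n)$ is of order $\Dcn=\sqrt{(\scn)^2 K_n}$, which one checks (using Lemma~\ref{lem:exist}, $(\scn)^2\sim(n/K_n)^3$, hence $\Dcn\sim n^{3/2}/K_n^{1/2}\cdot\sqrt{K_n}/n^{?}$ — more carefully $\Dcn^2=(\scn)^2K_n\sim n^3/K_n^2$, so $\Dcn\sim n^{3/2}/K_n$) is $o(n)$ precisely because $K_n\gg\sqrt n$, we get $\max_i\ell^{\bullet,n}_i=o(n)$ with probability tending to $1$.

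The main obstacle is the new uniform local limit theorem for $\Scn$ in the regime $\tfrac{K_n}{\sqrt n}\to\infty$ with $\tfrac{K_n}{n}\to\gamma\in[0,1)$: when $\gamma>0$ the base point $\bcn$ of the offspring distribution is bounded away from $0$ \emph{and} from the singularity $1/e$, so the analysis near the radius of convergence (which drove Lemma~\ref{lem:technique}) is irrelevant and one needs instead a genuinely different, more classical, local CLT for $\Scn_{\Theta(K_n)}$ — the subtlety being uniformity of the error over the range $uK_n+j$ with $|j|$ allowed to grow, and over $\gamma$ possibly $\to 0$ or $\to 1$, which forces one to keep track of how $(\scn)^2$ and $\bcn$ degenerate. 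A secondary difficulty is carefully justifying the $\ell^{\bullet,n}_i$ bound, since unlike in the $c$ finite case we cannot appeal to bounded degrees and must argue purely through the Łukasiewicz path; but granting the invariance principle for $\oB^n$ this is a soft argument. Everything else is a direct transcription of the proofs of Lemmas~\ref{lem:cvnoncond}, \ref{lem:ll1}, \ref{lem:ll2}, Proposition~\ref{prop:cvbridge} and the proof of Theorem~\ref{thm:cvXexc}, with $X$ replaced by Brownian motion and $\Xexc$ by $\mathbbm e$.
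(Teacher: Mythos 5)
Your high-level outline matches the paper's strategy for the first statement (local limit theorems adapted to the new regime, absolute continuity to pass to the bridge, Vervaat transform to obtain $\mathbbm{e}$), but there are two substantive gaps, both concentrated in the case $\gamma>0$, which is precisely the genuinely new regime.

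First, the passage from $\tTs_n$ to $\Ts_n$. You write that ``Lemma~\ref{lem:root}-type arguments'' show the modified root is negligible, but Lemma~\ref{lem:root}(i)--(ii) rests on the fact that, when $\tfrac{K_n}{n}\to 0$, the root of either tree has a single child with probability tending to one. When $\gamma>0$, the root has a nondegenerate number of children and Lemma~\ref{lem:root} fails outright; in particular, the total variation estimate (iii) does not carry over. The paper instead proves a separate result (Lemma~\ref{lem:tilde}): among all subtrees grafted on the black grandchildren of the root of $\tTs_n$, exactly one is macroscopic while the others have bounded size in distribution, so that the $\oB^n$ path of $\tTs_n$ agrees with that of a conditioned $\Ts_n$ except for a vanishing boundary layer. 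This is a condensation-type argument, not a root-degree argument, and it is a nontrivial piece of the proof that your plan omits.

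Second, and more seriously, the bound $\tfrac1n\max_i\ell^{\bullet,n}_i\to 0$. You propose to dominate $\ell^{\bullet,n}_i$ by ``a constant times $\oB^n_i(\tTs_n)$ plus the current overshoot'' and conclude $o(n)$ from $\Dcn=o(n)$. This is incorrect: $\ell^{\bullet,n}_i$ counts black \emph{corners} branching right along $\llbracket\varnothing,v^{\bullet,n}_i\llbracket$, which is governed by the number of \emph{white children} hanging to the right of the ancestral line, not by the number of black grandchildren to the right. Since a positive fraction of those white children can be leaves, the black {\L}ukasiewicz path $\oB^n$ does not control $\ell^{\bullet,n}_i$ from above. (Your appeal to Lemma~\ref{lem:OnBlockLabels}(ii), $\ell^{\bullet,n}_i=i+\oH^n_i-x^{\bullet,n}_i$, only gives $\ell^{\bullet,n}_i\le\oH^n_i$, which is of order $n$ and thus useless.) The paper's actual argument is substantially harder: re-root at the first white corner, pass to the reduced white subtree, bound $\ell^{\bullet,n}_i$ by the white reduced tree's {\L}ukasiewicz value \emph{plus its height}, and control the height by a large-deviation estimate $\Prb{\mathsf{Height}(\hat{\Ts^\circ_n})\ge n^{1/2+\eps}}\to 0$. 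The height bound cannot be inherited from known one-type results because $\hat{\Ts^\circ_n}$ is not a monotype BGW tree conditioned on its size (the underlying tree carries the double conditioning $\{\abs{\bullet}=n-K_n,\abs{\circ}=K_n+1\}$, and the offspring law depends on $n$); it needs an absolute-continuity reduction plus Hoeffding/exponential-moment estimates uniform in $n$ (Lemma~\ref{lem:moments}). This is not a ``soft argument'' that follows from the invariance principle for $\oB^n$.

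A smaller remark on the unconditioned piece: for $\gamma>0$, $\oB^n$ and $\oH^n$ are correlated at leading order (each $B^n_i$ is $S^\circ_{H^n_i}$, and the variance contribution $(m_\circ^n)^2(\sbn)^2$ is comparable to $(\scn)^2 m_\bullet^n$), so the pair $\bigl(\hH^{(n)},\tfrac{\oB^n}{\Dcn}\bigr)$ does \emph{not} converge to two independent Brownian motions without first subtracting from $\oB^n$ its $\oH^n$-driven drift $m_\circ^n(\oH^n-u(K_n+1))$; the paper's normalization $\hB^{(n)}_u$ is designed precisely to perform this decoupling before the local limit theorem is applied. This affects the identification of the limit in the absolute-continuity step, though since the lamination functional is scale-invariant it does not threaten the final conclusion.
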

The condition $ \tfrac{K_{n}}{n}  \rightarrow \gamma \in (0,1)$ means the both the number of white and black vertices have a positive proportion in $\tilde{\Ts}_{n}$. The proof of Proposition~\ref{prop:cvBH}, very similar in spirit to that of Theorem~\ref{thm:cvXexc}, is postponed  to the end of Section~\ref{ssec:technical}.

In the next Section~\ref{sec:cvBT}, we develop deterministic tools that allow us to show that Proposition~\ref{prop:cvBH} implies Theorem~\ref{thm:cvlam} (ii).

\subsubsection{Convergence to the Brownian triangulation}
\label{sec:cvBT}
As in Section~\ref{ssec:cv_pos}, for every $n \geq 1$, we let $(t_{1}^{(n)}, \ldots, t_{n-1}^{(n)}) \in \mathfrak{M}_{n}$  be a minimal factorization. If $1 \leq K_{n} \leq n-1$, we set
\[  \FFF_{n}= \FFF \big( t_{1}^{(n)}, \ldots, t_{K_{n}}^{(n)}\big) \qquad \text{and} \qquad \PPP_{n}= \PPP\big(  t_{1}^{(n)}  t_{2}^{(n)}  \cdots t_{K_{n}}^{(n)}\big)\]
and we denote by $T_{n}= \mathcal{T}( \PPP_{n})$   the tree coding the non-crossing partition  $\PPP_{n}$ as in Section~\ref{sec:coding}.

Our goal is to prove some convergence result for the pair $\big(\dFn , \dPn  \big)$,
but it turns out to be enough to consider 
 $ \dPn$, as ensured by the following lemma.
  
 \begin{lemma}
 \label{lem:reduction_FtoK}
  Let $Z \in {\mathbb{C}}([0, 1], \R)$ be a continuous function with $Z_{0}=Z_{1}=0$ satisfying \ref{C1}.
  We assume that $\dPn \rightarrow   L(Z)$ as $n$ tends to infinity.
Then we also have  $\dFn \rightarrow   L(Z)$.
 \end{lemma}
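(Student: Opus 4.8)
The idea is that $\dPn$ and $\dFn$ are always close to each other for the Hausdorff distance, so that convergence of the former forces convergence of the latter to the same limit. Recall from Lemma~\ref{lem:observation} that the blocks of $\PPP_{n}$ are exactly the connected components of $\FFF_{n}$, hence $\dFn \subset \dPn$ and $\dPn$ is obtained from $\dFn$ by replacing each connected component by the convex hull of its vertices. Consequently, every chord of $\dPn$ lies ``above'' some connected component of $\dFn$ spanning the same endpoints on $\S$; and $\S \cap \dFn = \S \cap \dPn$. The key point is therefore to control, uniformly, the ``width'' of the blocks of $\PPP_{n}$ that are not already triangulated by $\dFn$ --- more precisely, to show that any chord of $\dPn$ is within small Hausdorff distance of $\dFn$.

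First I would translate the geometric closeness into a statement about the tree $T_{n}$. A chord $[e^{-2\pi\i a/n},e^{-2\pi\i b/n}]$ of $\dPn$ comes from two consecutive elements $a,b$ of the same block $B$ of $\PPP_{n}$, i.e.\ from two consecutive black corners of a black vertex $v$ of $T_{n}$; by Lemma~\ref{lem:OnBlockLabels}, the arc of $\S$ strictly between $e^{-2\pi\i a/n}$ and $e^{-2\pi\i b/n}$ (on the side not containing other elements of $B$) has normalized length comparable to the proportion of descendants of the corresponding white child of $v$, up to an $O(|\circ_{T_{n}}|/n)=O(K_{n}/n)$ error. Since by hypothesis $\dPn \to L(Z)$ with $Z$ continuous, and since $L(Z)$ is a lamination coded by a continuous function, its chords have no isolated behaviour: by the analogue of \eqref{eq:isolated} and \eqref{eq:adherence}, and by the structure of $L(Z)$, the complement of $L(Z)$ consists of faces whose boundary arcs can be arbitrarily small. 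The crucial input is that $\dFn$ already contains enough of the ``boundary'' chords: I would argue that for any chord of $\dPn$ approximating a chord of $L(Z)$, one of the two arcs it cuts off has length tending to $0$, because otherwise $Z$ would have a flat piece or a jump violating continuity / property \ref{C1} at the corresponding point. Hence replacing that chord of $\dPn$ by the matching path in $\dFn$ (which shares the two endpoints) moves points by at most the diameter of the thin region, which is small.

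Concretely, I would proceed as follows. Fix $\epsilon>0$. Since $\dPn \to L(Z)$ and $L(Z)=L(\mathbbm{e})$-type laminations are triangulation-like and in particular have the property that the complement is a disjoint union of faces each of which, for all but finitely many of them, has all boundary arcs of length $<\epsilon$: choose finitely many ``big'' faces $F_{1},\dots,F_{N}$ of $L(Z)$ whose three (or more) boundary arcs are $\geq \epsilon$; wait, since $Z$ is continuous the faces are triangles, so choose the finitely many triangles of $L(Z)$ with all sides of length $\geq\epsilon/10$. For $n$ large, $\dPn$ is $\epsilon$-close to $L(Z)$, so its chords near the ``small'' faces are already $O(\epsilon)$-thin and removing an edge from the corresponding near-triangle in $\dFn$ costs $O(\epsilon)$; and near the finitely many big triangles, $\dFn$ must contain chords close to at least two of the three sides (because on those two sides the two endpoints are connected within the same component of $\FFF_{n}$), and a short geometric argument bounds $d_{H}$ between a triangle and its two-sided sub-path by the length of the omitted side. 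Summing, $d_{H}(\dPn,\dFn)\leq C\epsilon$ for $n$ large. Combined with $\dPn\to L(Z)$ and the triangle inequality for $d_{H}$, this gives $\dFn\to L(Z)$.

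\textbf{Main obstacle.} The delicate point is making rigorous the claim that chords of $\dPn$ sitting near a chord of $L(Z)$ cut off an arc of vanishing length on at least one side, and that on the complementary sides $\dFn$ genuinely has nearby chords; this is where continuity of $Z$ (equivalently, the fact that $c=\infty$, so the coding function is the Brownian excursion rather than a jump process) is essential. In the finite-$c$ case one really needs assumption \ref{B3} to rule out a black vertex with two macroscopic white children; here continuity of $Z$ plays that role automatically, but one must still argue that a single black vertex of $T_{n}$ cannot create a ``wide'' chord of $\dPn$ that is far from $\dFn$ --- precisely, that the near-triangle situation is the worst case and is handled by the elementary bound $d_{H}(\Delta,\Delta')\leq(\text{shortest side of }\Delta)$ used already in Lemma~\ref{lem:proche}. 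I expect the cleanest route is to invoke, essentially verbatim, the geometric Lemma~\ref{lem:proche} together with the present hypothesis $\dPn\to L(Z)$: the convergence of $\dPn$ to a continuous-function lamination forces, for each $\epsilon$, that all but finitely many chords of $\dPn$ are $\epsilon$-thin on one side, and the finitely many remaining ones are controlled by the triangle/sub-path estimate, yielding $d_{H}(\dFn,\dPn)\to 0$ and hence the lemma.
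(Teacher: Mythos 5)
Your proposal takes a genuinely different route from the paper's, and unfortunately the route does not close. You aim to prove $d_H(\dFn,\dPn)\to 0$ directly by per-block surgery, essentially transporting the argument of Lemma~\ref{lem:proche}. But Lemma~\ref{lem:proche} rests on the hypotheses \ref{B2} and \ref{B3}, which are proved (Lemmas~\ref{lem:AtMost2} and \ref{lem:PasDeuxGrandsSousArbres}) only in the regime $K_n\sim c\sqrt n$ with $c<\infty$. In the regime covered by the present lemma — in particular when $K_n/n\to\gamma\in(0,1)$ — there is no analogue of \ref{B3}: the tree $T_n$ can perfectly well contain a black vertex with two white children each carrying a macroscopic subtree, and the paper explicitly warns of this just after the statement of the lemma. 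Concretely, $\dPn$ may then contain a roughly equilateral triangle whose three boundary arcs are all macroscopic, and it converges to a macroscopic face of $L(Z)$; your key claim that ``any chord of $\dPn$ approximating a chord of $L(Z)$ cuts off an arc of vanishing length on at least one side'' is false for the sides of such a triangle, and continuity of $Z$ or property~\ref{C1} does nothing to prevent it. For such a block, the non-crossing spanning tree sitting inside $\dFn$ is forced upon you by $\FFF_n$ (you cannot choose which side to omit), and the Hausdorff distance from the two remaining sides to the full triangle boundary is of order one. So the estimate $d_H(\dFn,\dPn)\le C\eps$ that you want simply fails at the level of a single block.

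The missing idea is that the omitted side is recovered not from the same block but from a \emph{neighbouring} one. This is exactly what the paper exploits: it passes to a subsequential limit $K$ of $\dFn$ and shows $L(Z)\subset K$ (then concludes by maximality of the triangulation $L(Z)$, using~\eqref{eq:adherence}). To show a chord $[e^{-2\i\pi s},e^{-2\i\pi t}]$ of $L(Z)$ lies in $K$, one uses~\eqref{eq:isolated} twice to produce a nearby chord $[e^{-2\i\pi s'},e^{-2\i\pi t'}]$ and then a third chord $[e^{-2\i\pi u},e^{-2\i\pi v}]$ strictly between them; the block $\dot B_n$ of $\dPn$ approximating $[e^{-2\i\pi u},e^{-2\i\pi v}]$ has all its vertices confined near the two short boundary arcs of the thin region $R$ delimited by the first two chords, and hence \emph{any} spanning tree of $\dot B_n$ inside $\dFn$ must contain a chord joining the two short arcs, i.e.\ a chord close to $[e^{-2\i\pi s},e^{-2\i\pi t}]$. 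Your proof never invokes~\eqref{eq:isolated} and never considers the block on the other side of the ``missing'' chord; without that, the fat-triangle counterexample above blocks the argument.
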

\begin{proof}
Since the set of all geodesic laminations of $\overline{\D}$ equipped with Hausdorff topology is compact, we may assume that the convergence $\big( \dFn, \dPn  \big)   \rightarrow   (K,L(Z))$ holds, where $K$ is a lamination, and we aim at showing that $K=L(Z)$. By maximality of $L(Z)$, it is enough to show that $L(Z) \subset K$. 
Consider a chord  $\big[e^{-2\textrm{i} \pi s},e^{-2\textrm{i} \pi t}\big]$ of $L(Z)$ (with $s \neq t$),
we want to show that $\big[e^{-2\textrm{i} \pi s},e^{-2\textrm{i} \pi t}\big] \subset K$.

By \eqref{eq:isolated}, for every $\epsilon>0$, we can find another chord  $\big[e^{-2\textrm{i} \pi s'},e^{-2\textrm{i} \pi t'}\big] \subset L(Z)$ such that  $0<|s-s'| \leq \epsilon$ and $0<|t-t'| \leq \epsilon$. 
By symmetry, we can assume that  $s<s'<t'<t$.
The following argument having some geometric flavor,
looking at Figure~\ref{fig:region} while reading the proof might help.

These two chords form a region of the disk denoted by $R$. 
Again by \eqref{eq:isolated},  there exists a chord $\big[e^{-2\textrm{i} \pi u},e^{-2\textrm{i} \pi v}\big] \subset L(Z)$ such that $s<u<s'$ and $t'<v<t$.
This implies that for each $\eta>0$ small enough and $n$ larger to some threshold $n_0(\eta)$,
we can find a block $\dot{B}_{n}$ of $\dPn$ such that:
\begin{itemize}
  \item $\dot{B}_{n}$ has a chord with endpoints at distance at most $\eta$ of
    $e^{-2\textrm{i} \pi u}$ and $e^{-2\textrm{i} \pi v}$;
  \item moreover, all vertices of $\dot{B}_{n}$ are in the $\eta$ enlargement
    of one of the arcs $(e^{-2\textrm{i} \pi s},e^{-2\textrm{i} \pi s'})$ or 
    $(e^{-2\textrm{i} \pi t},e^{-2\textrm{i} \pi t'})$ (we denote arcs with parentheses to
    distinguish them from chords).
\end{itemize}
We recall that $\dFn$ is obtained from $\dPn$ by replacing each polygon 
by a spanning tree of its vertices. It is clear, that for the block $B_n$
with the above property, such a spanning tree must contain a chord that 
has one extremity on each side, i.e. one in the $\eta$ enlargement
    of $(e^{-2\textrm{i} \pi s},e^{-2\textrm{i} \pi s'})$
    and one in the $\eta$ enlargement
    of $(e^{-2\textrm{i} \pi t},e^{-2\textrm{i} \pi t'})$.
Therefore $\dFn$ contains a chord at distance $\eps+\eta$
of our original chord $\big[e^{-2\textrm{i} \pi s},e^{-2\textrm{i} \pi t}\big]$.
Since this is true for all $\eps$ and $\eta$ sufficiently small,
 this implies that  $\big[e^{-2\textrm{i} \pi s},e^{-2\textrm{i} \pi t}\big] \subset K$ and shows that $K=L(Z)$.   
   \end{proof}

\begin{figure}[ht]
  \begin{center}
  \includegraphics[scale=0.8]{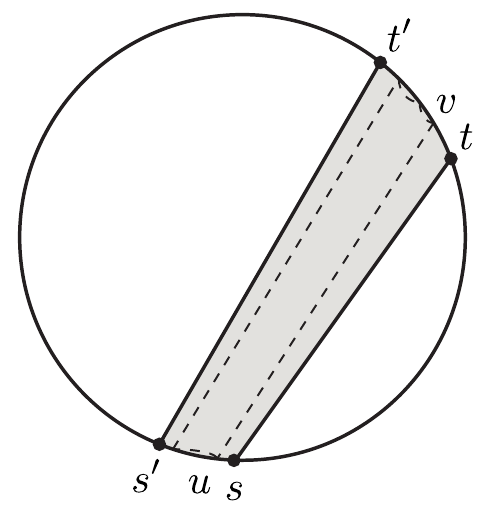}
  \caption{\label{fig:region} An illustration of the second part of the proof. The region $R$ is in gray, 
  the block $\dot{B}_{n}$ is in dashed lines.
  This block is entirely contained in a small enlargement of $R$
  and has at least one chord joining a small enlargement of the $(e^{-2\textrm{i} \pi s},e^{-2\textrm{i} \pi s'})$ arc
  with that of the $(e^{-2\textrm{i} \pi t},e^{-2\textrm{i} \pi t'})$ arc.
  No matter which spanning tree of the vertices of $\dot{B}_{n}$ is present in $\dFn$
  it will contain a chord close to $\big[e^{-2\textrm{i} \pi s},e^{-2\textrm{i} \pi t}\big]$.}
  \end{center}
  \end{figure}
  Let us point out that, although the above lemma is similar in spirit to Lemma~\ref{lem:proche}
  (both show that $\dPn$ and $\dFn$ are in some sense close),
  the arguments are quite different. 
  One the one hand, Lemma~\ref{lem:PasDeuxGrandsSousArbres}, which is the base of Lemma~\ref{lem:proche},
  has certainly no analogue when there is a constant proportion of vertices of both colors in the tree.
  On the other hand, here, we need to assume the convergence of $\dPn$ to some lamination
  encoded by a {\em continuous function} satisfying \ref{C1} to prove that it is close to $\dFn$.
  \medskip

 The gist of the proof of Theorem~\ref{thm:cvlam} (ii) is contained in the next result.
 The notation is the same as in Proposition~\ref{prop:cvBH}.
 
\begin{proposition}
  \label{prop:cvLamDetermBrownien} Let $Z \in {\mathbb{C}}([0, 1], \R)$ be a continuous function such that $Z_{0}=Z_{1}=0$ satisfying \ref{C1}. For every $n \geq 1$, let $(t_{1}^{(n)}, \ldots, t_{n-1}^{(n)}) \in \mathfrak{M}_{n}$  be a minimal factorization, and denote by $T_{n}= \mathcal{T}(  \PPP_{n})$ the tree coding the non-crossing partition $  \PPP_{n}=\PPP(  t_{1}^{(n)}, \ldots, t_{K_{n}}^{(n)})$. Assume that the following two properties hold.
  \begin{enumerate}[label=\color{blue}(D\arabic*)]
    \setcounter{enumi}{-1}
\item\label{D0} We have $\frac{K_{n}}{\sqrt{n}} \rightarrow \infty$ and $ \tfrac{K_{n}}{n} \rightarrow \gamma \in [0,1)$ as $n \rightarrow \infty$.
\item\label{D1} There exists a sequence $\Dcn \rightarrow \infty$ such that
\[  \left(  \frac{\oB^{n}_{\lfloor u (n-K_{n}) \rfloor} (T_{n})}{ \Dcn} : 0\leq u \leq 1\right)  \quad \mathop{\longrightarrow}_{n \rightarrow \infty} \quad Z.\]
\item\label{D2} If $\gamma>0$, we require that
\[ \sup_{0 \leq u \leq 1} \left|   \frac{\oH^{n}_{ \lfloor u (n-K_{n}) \rfloor  }(T_{n})}{K_{n}}-u\right|  \quad \mathop{\longrightarrow}_{n \rightarrow \infty} \quad 0 \qquad \textrm{and} \qquad \frac{1}{n} \max_{0 \leq i < n-K_{n}} \ell^{\bullet,n}_{i}    \quad \mathop{\longrightarrow}_{n \rightarrow \infty} \quad 0.\]
\end{enumerate}
Then we have the  convergence
\begin{equation}
   \dot{\PPP}_{n}    \quad \mathop{\longrightarrow}_{n \rightarrow \infty} \quad L(Z)
\label{eq:cv_FP_SkorokhodBrownien}
\end{equation}
in the Hausdorff topology.
\end{proposition}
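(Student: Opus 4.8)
The strategy parallels that of Proposition~\ref{prop:cvLamDeterm}, replacing the $7\eps$-type geometric bounds (which rested on the existence of macroscopic triangles) by the softer argument of Lemma~\ref{lem:reduction_FtoK} adapted to the present setting; in fact since we only need to identify the limit of $\dot\PPP_n$, we can even dispense with $\dot\FFF_n$ here. By compactness of the space of laminations of $\overline\D$ for the Hausdorff topology, it suffices to show that every subsequential limit $K$ of $(\dot\PPP_n)$ equals $L(Z)$; up to extraction we assume $\dot\PPP_n \to K$. Since $L(Z)$ is a triangulation, hence maximal for inclusion (recall $\S\subset L(Z)$ and the discussion around \eqref{eq:defbl}), it is enough to prove the single inclusion $L(Z)\subset K$: indeed $K\subset L(Z)$ then follows by maximality. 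The key translation device is Lemma~\ref{lem:approxJ1}: even though it was stated for càdlàg $Z$ satisfying \ref{H0}--\ref{H3}, the implication $(i)\Rightarrow(ii)$ used here is the one coming from standard $J_1$-approximation properties and remains valid for a continuous $Z$ satisfying \ref{C1}, where now $s\simeq^Z t$ should be read as $s\sim^Z t$ in the sense of \eqref{eq:defbl}.

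So fix $s<t$ with $s\sim^Z t$, i.e.\ $Z_s=Z_t=\min_{[s,t]}Z$, and fix $\eps>0$. By \ref{D1} and the $J_1$-approximation argument (as in \cite[Lemma 4.6]{KM17}), for $n$ large there exist integers $i_n<j_n$ with $\tfrac{i_n}{n-K_n}\to s$, $\tfrac{j_n}{n-K_n}\to t$ and $j_n=\inf\{k>i_n:\ \oB^n_k(T_n)=\oB^n_{i_n}(T_n)-1\}$; the last condition means $v^{\bullet,n}_{j_n}$ is the first black vertex, in lexicographical order, strictly after $v^{\bullet,n}_{i_n}$ and all its descendants, so the number $n^\bullet_{i_n}$ of black descendants of $v^{\bullet,n}_{i_n}$ equals $j_n-i_n-1$. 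Let $x^{\bullet,n}_{i_n}$ (resp.\ $y^{\bullet,n}_{i_n}$) be the label of the first (resp.\ last) black corner of $v^{\bullet,n}_{i_n}$ visited by the contour sequence; by the reconstruction of $\PPP_n$ from $T_n$ in Section~\ref{sec:coding}, the chord $[e^{-2\pi\i x^{\bullet,n}_{i_n}/n},e^{-2\pi\i y^{\bullet,n}_{i_n}/n}]$ belongs to $\dot\PPP_n$. Lemma~\ref{lem:OnBlockLabels}~(iii) gives
\[
i_n\le x^{\bullet,n}_{i_n}\le i_n+|\circ_{T_n}|=i_n+K_n+1,\qquad
i_n+n^\bullet_{i_n}\le y^{\bullet,n}_{i_n}\le i_n+n^\bullet_{i_n}+2(K_n+1),
\]
hence $i_n\le x^{\bullet,n}_{i_n}\le i_n+K_n+1$ and $j_n-1\le y^{\bullet,n}_{i_n}\le j_n-1+2(K_n+1)$.

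The point where the two regimes genuinely diverge is the control of the error terms above. In Proposition~\ref{prop:cvLamDeterm} one had $K_n=O(\sqrt n)=o(n)$, so $x^{\bullet,n}_{i_n}/n\to s$ and $y^{\bullet,n}_{i_n}/n\to t$ automatically. Here, when $\gamma>0$ we have $K_n\sim\gamma n$, so these corrections are \emph{not} negligible and must be handled by \ref{D2}. The clean way is to replace the crude bound $x^{\bullet,n}_{i_n}\le i_n+K_n+1$ by the exact formula of Lemma~\ref{lem:OnBlockLabels}~(ii), $x^{\bullet,n}_{i_n}=i_n+\sum_{j=0}^{i_n-1}k_j-\ell^{\bullet,n}_{i_n}=i_n+\oH^n_{i_n}(T_n)-\ell^{\bullet,n}_{i_n}$, together with $y^{\bullet,n}_{i_n}=x^{\bullet,n}_{i_n}+n_{i_n}$ from Lemma~\ref{lem:OnBlockLabels}~(i), where $n_{i_n}$ is the total number of descendants of $v^{\bullet,n}_{i_n}$, and $n_{i_n}=n^\bullet_{i_n}+(\text{white descendants})$, the white descendants being counted by $\oH^n_{j_n}(T_n)-\oH^n_{i_n}(T_n)-(\text{something }O(\text{local quantities}))$. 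Then the first part of \ref{D2}, $\sup_u|\oH^n_{\lfloor u(n-K_n)\rfloor}/K_n-u|\to0$, forces $\oH^n_{i_n}(T_n)=i_n\tfrac{K_n}{n-K_n}+o(n)$ and $\oH^n_{j_n}(T_n)=j_n\tfrac{K_n}{n-K_n}+o(n)$, while the second part, $\tfrac1n\max_i\ell^{\bullet,n}_i\to0$, kills the $\ell^{\bullet,n}_{i_n}$ term; combining, $x^{\bullet,n}_{i_n}/n\to s\cdot\tfrac{1}{1-\gamma}\cdot(1-\gamma)=s$ after accounting that $(n-K_n)/n\to1-\gamma$ and the identity $i_n+\oH^n_{i_n}\approx i_n(1+\tfrac{K_n}{n-K_n})=i_n\tfrac{n}{n-K_n}$, so that $x^{\bullet,n}_{i_n}/n\to s$, and likewise $y^{\bullet,n}_{i_n}/n\to t$. (When $\gamma=0$ the crude bounds already suffice, exactly as before.) I expect this bookkeeping — correctly expressing the corner labels $x^{\bullet,n}_{i_n},y^{\bullet,n}_{i_n}$ in terms of $\oH^n$ and $\ell^{\bullet,n}$ and checking the cancellations — to be the main obstacle; everything else is routine.

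Granting these convergences, the chord $[e^{-2\pi\i x^{\bullet,n}_{i_n}/n},e^{-2\pi\i y^{\bullet,n}_{i_n}/n}]\in\dot\PPP_n$ lies within distance, say, $3\eps$ of $[e^{-2\pi\i s},e^{-2\pi\i t}]$ for all large $n$; since $K$ is the Hausdorff limit of $(\dot\PPP_n)$, it follows that $[e^{-2\pi\i s},e^{-2\pi\i t}]\subset K$. By \eqref{eq:adherence} (the lamination $L(Z)$ is the closure of the chords $[e^{-2\pi\i s},e^{-2\pi\i t}]$ with $s\sim^Z t$, $s,t$ not local minima, which are dense among all chords of $L(Z)$), this gives $L(Z)\subset K$. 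Maximality of $L(Z)$ then yields $K=L(Z)$, and since every subsequential limit coincides with $L(Z)$, the convergence $\dot\PPP_n\to L(Z)$ holds, completing the proof.
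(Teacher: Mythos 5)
Your proof is correct and follows essentially the same route as the paper: compactness plus maximality of the triangulation $L(Z)$ reduces to showing $L(Z)\subset K$, and you identify the corner labels $x^{\bullet,n}_{i_n},y^{\bullet,n}_{i_n}$ via Lemma~\ref{lem:OnBlockLabels} (crude bounds (iii) when $\gamma=0$, exact formula (ii) plus \ref{D2} when $\gamma>0$), exactly as the paper does. One small inaccuracy worth noting: the number of white descendants of $v^{\bullet,n}_{i_n}$ is \emph{exactly} $\oH^n_{j_n}(T_n)-\oH^n_{i_n}(T_n)$, with no $O(\text{local quantities})$ correction needed, since $v^{\bullet,n}_{i_n},\dots,v^{\bullet,n}_{j_n-1}$ are precisely $v^{\bullet,n}_{i_n}$ and its black descendants.
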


Before proving Proposition~\ref{prop:cvLamDetermBrownien}, let us explain how Theorem~\ref{thm:cvlam} (ii)  follows from a combination of Propositions~\ref{prop:cvBH} and \ref{prop:cvLamDetermBrownien} and Lemmas~\ref{lem:reductionKn} and \ref{lem:reduction_FtoK}.

\begin{proof}[Proof of  Theorem~\ref{thm:cvlam} (ii)]
First assume that $\frac{K_{n}}{\sqrt{n}} \rightarrow \infty$ as $n \rightarrow \infty$, and that $K_{n} \leq \frac{n}{2}$ for every $n \geq 1$. Up to extraction, we can suppose that $\tfrac{K_{n}}{n} \rightarrow \gamma \in [0,\tfrac{1}{2}]$. By Skorokhod's representation theorem, we may also suppose that the convergence in  Proposition~\ref{prop:cvBH} holds almost surely. Then, by Proposition~\ref{prop:cvLamDetermBrownien}, the convergence 
\[ \dPKn    \quad \mathop{\longrightarrow}_{n \rightarrow \infty} \quad \mathbf{L}_{\infty},\]
holds almost surely. In turn, by Lemma~\ref{lem:reduction_FtoK}, this implies that the joint convergence 
 \[ \left( \dFKn , \dPKn \right)   \quad \mathop{\longrightarrow}_{n \rightarrow \infty} \quad ( \mathbf{L}_{\infty},\mathbf{L}_{\infty})\]
holds almost surely, and the desired conclusion follows.

Now assume that $\frac{n-K_{n}}{\sqrt{n}} \rightarrow \infty$ and $K_{n} > \frac{n}{2}$ as $n \rightarrow \infty$. 
The previous paragraph tells us that
\[   \dot{\mathcal{P}}^{(n)}_{n-K_{n}-1}  \quad \mathop{\longrightarrow}^{(d)}_{n \rightarrow \infty} \quad \mathbf{L}_{\infty}.\]
Lemma~\ref{lem:reductionKn} entails that the convergence $\hat{\mathcal{P}}^{(n)}_{K_{n}} \rightarrow  \mathbf{L}_{\infty}$ holds in distribution. As a consequence,  since $\hat{\mathcal{P}}^{(n)}_{K_{n}}$ is obtained from  $\dot{\mathcal{P}}^{(n)}_{K_{n}}$ by adding points of $\S$, any sub-sequential distributional limit of $\dot{\mathcal{P}}^{(n)}_{K_{n}}$ must contain all the non-trivial chords of $\mathbf{L}_{\infty}$, and by \eqref{eq:adherence} must be equal to $\mathbf{L}_{\infty}$. In other words, the convergence $\dot{\mathcal{P}}^{(n)}_{K_{n}} \rightarrow  \mathbf{L}_{\infty}$ holds in distribution, and  another application of Lemma~\ref{lem:reduction_FtoK} implies that  the joint convergence 
 \[ \left( \dFKn,   \dPKn \right)   \quad \mathop{\longrightarrow}^{(d)}_{n \rightarrow \infty} \quad ( \mathbf{L}_{\infty},\mathbf{L}_{\infty})\]
holds in distribution. This completes the proof.
\end{proof}

\begin{proof}[Proof of Proposition~\ref{prop:cvLamDetermBrownien}]
Since the set of all geodesic laminations of $\overline{\D}$ equipped with Hausdorff topology is compact, up to extraction, we may therefore assume that  $ \dot{\PPP}_{n}$  converges to a geodesic lamination $K$ of $\overline{\D}$ and we aim at proving that $K=L(Z)$. 
By maximality, it is enough to check that $ L(Z) \subseteq K$.
By \eqref{eq:adherence},  it suffices to show that if $s \sim^{Z} t$  and if $s$ nor $t$ are local minima of $Z$, with $0<s < t<$, then $\big[e^{-2\textrm{i} \pi s},e^{-2\textrm{i} \pi t}\big] \subset K$. 
Using the convergence \ref{D1}, similarly as in Eq.~\eqref{eq:injn}, we can find black vertices
$v^{\bullet,n}_{i_{n}}$ and $v^{\bullet,n}_{j_{n}}$ in $T_{n}$ 
such that:
\begin{itemize}
\item $i_{n}/(n-K_{n}) \rightarrow s$  and $j_{n}/(n-K_{n}) \rightarrow t$;
\item all the black vertices between $v^{\bullet,n}_{i_{n}}$ and $ v^{\bullet,n}_{j_{n}}$
  in the lexicographical order are descendants of  $v^{\bullet,n}_{i_{n}}$.
\end{itemize}
  Denote by  $x^{\bullet,n}_{i_{n}}$ (resp.~$y ^{\bullet,n}_{i_{n}}$)  
  the label of the first (resp.~the last) corner of $v^{\bullet,n}_{i_{n}}$ visited by the contour sequence. We claim that 
  \begin{equation}
\label{eq:cvij} \frac{x^{\bullet,n}_{i_{n}}}{n}  \quad \mathop{\longrightarrow}_{n \rightarrow \infty} \quad s, \qquad \frac{y ^{\bullet,n}_{i_{n}}}{n}  \quad \mathop{\longrightarrow}_{n \rightarrow \infty} \quad t.
\end{equation}
Since $[e^{-2\pi\, \textrm{i} {x^{\bullet,n}_{i_{n}}}/{n}}, e^{-2\pi\, \textrm{i} {y ^{\bullet,n}_{i_{n}}}/{n}}] \subset \dot{\PPP}_{n}$, this will then imply $\big[e^{-2\textrm{i} \pi s},e^{-2\textrm{i} \pi t}\big] \subset K$ and the proof will be complete.

\paragraph*{First case: $\gamma=0$.} By  Lemma~\ref{lem:OnBlockLabels}  (iii), 
we have $i_{n} \leq x^{\bullet,n}_{i_{n}} \leq i_{n}+ K_{n}+1$ and
\[x^{\bullet,n}_{i_{n}}+ (j_n-i_n-1) \leq y^{\bullet,n}_{i_{n}} \leq x^{\bullet,n}_{i_{n}}+(i_n-j_n-1) +2 K_{n}+2,\]
since the number of black descendants of $v^{\bullet,n}_{i_{n}}$ is $j_{n}-i_{n}-1$.
But we have ${K_{n}}/{n} \rightarrow 0$, $i_{n}/(n-K_{n}) \rightarrow s$  and $j_{n}/(n-K_{n}) \rightarrow t$, 
which implies \eqref{eq:cvij}.

\paragraph*{Second case: $\gamma>0$.} In this case, the number of white vertices is no longer negligible, and we use the precise identity of   Lemma~\ref{lem:OnBlockLabels} (ii). Recalling that $k_j$ is the number of children of $v^{\bullet,n}_j$
and that $\ell_{i_{n}}^{\bullet,n}$ denotes the number of black corners  
branching on the right of  $\llbracket \emptyset, v^{\bullet,n}_{i_{n}} \llbracket$,
we have
 \begin{equation}
 \label{eq:est1}x^{\bullet,n}_{i_{n}}= i_{n}+\sum_{j=0}^{i_{n}-1} k_{j} - \ell_{i_{n}}^{\bullet,n}=i_{n}+\oH^{n}_{i_{n}}(T_{n})- \ell_{i_{n}}^{\bullet,n}.
 \end{equation}
Since $i_{n}/(n-K_{n}) \rightarrow s$ and $K_{n} \sim \gamma n$,  using \ref{D2}, we get
\begin{equation}
\label{eq:est2}i_{n}+ \oH^{n}_{i_{n}}(T_{n})+ \ell_{i_{n}}^{\bullet,n}=  s(1-\gamma) n + s \gamma n + o(n)=sn+o(n).
\end{equation} 
This proves that $\tfrac{x^{\bullet,n}_{i_{n}}}{n} \rightarrow s$.
 
 In order to show that $\tfrac{y ^{\bullet,n}_{i_{n}}}{n} \rightarrow t$, note that the number of black descendants of $v^{\bullet,n}_{i_{n}}$ is again $j_{n}-i_{n}-1$ while its number of white descendants is $\oH^{n}_{j_{n}}(T_{n})-\oH^{n}_{i_{n}}(T_{n})$. Hence, by Lemma~\ref{lem:OnBlockLabels} (i) and using \ref{D2}, we get
 \[y ^{\bullet,n}_{i_{n}}=x ^{n}_{i_{n}}+\big(j_{n}-i_{n}-1\big)+\big(\oH^{n}_{j_{n}}(T_{n})-\oH^{n}_{i_{n}}(T_{n})\big)=j_{n}+\oH^{n}_{j_{n}}(T_{n})- \ell_{i_{n}}^{\bullet,n}-1= tn+o(n),\]
where the last equality is obtained exactly as before. This completes the proof.
\end{proof}

\subsubsection{Proof of the technical results}
\label{ssec:technical}

We now turn to the proof of Proposition~\ref{prop:cvBH}. 
The strategy is similar to the case $c>0$: we first establish convergence of unconditioned processes using local limit theorems, then lift them to the bridge version by absolute continuity and finally conclude by using a Vervaat transform.

 As before, we let 
$\mucn$ and $\mubn$ be the offspring distributions defined in Lemma~\ref{lem:exist} (i), and we let $\Scn_{k}$, $\Sbn_{k}$ denote respectively the sum of $k$ i.i.d.~random variables distributed according to $\mucn,\mubn$.

\paragraph*{Local limit theorems.} We first state the analogue of Lemmas~\ref{lem:ll1} and \ref{lem:ll2} in our new regime.
First set
\[\DbnN \coloneqq \sqrt{ (\sbn)^{2}  N}, \qquad \Dcn \coloneqq\sqrt{ (\scn)^{2} K_{n} },\]
and $\Dbn \coloneqq D^{\bullet}_{n,n}= \sqrt{ (\sbn)^{2}  n}$.

\begin{lemma}
\label{lem:locallimitbis}
Assume that $\tfrac{K_{n}}{\sqrt{n}} \rightarrow  \infty$ and $\tfrac{K_{n}}{n}   \rightarrow  \gamma \in [0,1)$.
For every $u \in (0,1]$,
\begin{equation}
\label{eq:ext1}\sup_{un \leq N \leq n} \sup_{k \in \Z  } \left|  \DbnN  \cdot \Pr{\Sbn_N=k} 
  -    \frac{1}{\sqrt{2 \pi}}\exp \left(-   \frac{1}{2}\left( \frac{k-  N \cdot  \frac{K_{n}+1}{n-K_{n}}}
  {  \DbnN  } \right)^{2}\right)\right|  \quad \mathop{\longrightarrow}_{n \rightarrow \infty} \quad 0
\end{equation} 
and
\begin{equation}
  \label{eq:ext2} \hspace{-3.5mm} \sup_{|j| \leq K_{n}^{3/4}} \sup_{k \in \Z} \left|  \Dcn \cdot \Pr{\Scn_{ uK_{n}  + j }=k} -  \frac{1}{\sqrt{2 \pi u}} \exp \left(  - \frac{1}{2u} \left( \frac{k- (uK_{n}+j) \frac{n-K_{n}}{K_{n}+1}}{\Dcn} \right)^{2} \right)\right|  \ \mathop{\longrightarrow}_{n \rightarrow \infty} \ 0.
\end{equation}
\end{lemma}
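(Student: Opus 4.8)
The plan is to prove Lemma~\ref{lem:locallimitbis} by adapting the arguments of Lemmas~\ref{lem:ll1} and \ref{lem:ll2} to the regime $\tfrac{K_n}{\sqrt n}\to\infty$, $\tfrac{K_n}{n}\to\gamma\in[0,1)$. The key point is that in this regime the step distributions $\mubn$ and $\mucn$ no longer approach a degenerate limit in the same way as before: for $\mubn$, the mean $\mbn=\tfrac{K_n+1}{n-K_n}$ now converges to $\tfrac{\gamma}{1-\gamma}$ (a nonzero constant if $\gamma>0$) and one checks using $G(\bbn)=\mbn$ and the analyticity of $F$ on $[0,1/e)$ that $\bbn$ converges to some $b_\bullet^\infty\in[0,1/e)$, so $\mubn$ converges to a fixed non-lattice probability distribution with finite exponential moments and positive variance; for $\mucn$, the mean $\mcn=\tfrac{n-K_n}{K_n+1}\to\infty$, and since $\mcn\sim n/K_n=o(\sqrt n)$ (because $K_n/\sqrt n\to\infty$), one gets from the singular expansion \eqref{eq:devF} and $G(1/e-y)\sim\tfrac{1}{\sqrt{2e}}y^{-1/2}$ that $y_n=1/e-\bcn\sim\tfrac{1}{2e}(\mcn)^{-2}\to 0$, hence $(\scn)^2\sim(\mcn)^3$ and $\Dcn=\sqrt{(\scn)^2K_n}$ is of order $n^{3/2}K_n^{-1/2}$, which still tends to infinity.

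For \eqref{eq:ext1}, the step distribution $\mubn$ now has bounded characteristic function bounded away from $1$ away from the origin, uniformly in $n$ for large $n$ (by a compactness/continuity argument using $\mubn\to\mu_\bullet^\infty$ non-lattice), and near $0$ one has the quadratic estimate $\ln|\phibn(t)|\le -c_0 t^2$ for a constant $c_0>0$ depending only on $\liminf(\sbn)^2>0$. First I would establish, exactly as in the proof of Lemma~\ref{lem:ll1}, that $\E[e^{it(\Sbn_N-N\mbn)/\DbnN}]\to e^{-t^2/2}$ uniformly for $un\le N\le n$ and $t$ in compacts; here it is even simpler since $(\sbn)^2$ and the third absolute moment are bounded. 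Then I would run the same Fourier-inversion argument with the three integrals $I^{(1)}_A$, $I^{(2)}_A$, $I^{(3)}_A$: $I^{(1)}_A$ is handled by uniform convergence on compacts, $I^{(3)}_A$ by the Gaussian tail, and $I^{(2)}_A$ (the integral over $A<|t|<\pi\DbnN$) is handled by combining the near-origin quadratic bound for $|t/\DbnN|$ small with the uniform subexponential bound $|\phibn(s)|\le e^{-c_1}<1$ for $s$ bounded away from $0$ mod $2\pi$.

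For \eqref{eq:ext2}, the situation parallels Lemma~\ref{lem:ll2}: the relevant scaling is $\mucn$ with mean $\mcn\to\infty$, and the limit law of the rescaled sum is Gaussian rather than inverse Gaussian because now $uK_n$ summands each of order $\mcn$ give, after centering by $(uK_n+j)\mcn$ and scaling by $\Dcn$, a central-limit behavior. I would first prove the characteristic-function convergence $\E[e^{it(\Scn_{uK_n+j}-(uK_n+j)\mcn)/\Dcn}]\to e^{-ut^2/2}$ uniformly for $t$ in compacts and $|j|\le K_n^{3/4}$, using the expansion of $\ln\phicn(t/\Dcn)$ via \eqref{eq:devF}: the key is that $uK_n+j\sim uK_n$ and $(uK_n)\ln\phicn(t/\Dcn)\to -ut^2/2$; note $K_n^{3/4}=o(K_n)$ makes the $j$-dependence uniformly negligible (any exponent $<1$ would do). Then the Fourier-inversion argument needs the tail bound on $\phicn$ near its singularity: here I would reprove the analogue of Lemma~\ref{lem:technique}, namely $|\phicn(t)|\le e^{-\kappa|t|^{1/2}}$ for $|t|\in(A_0/\Dcn,\epsilon)$, which goes through verbatim since it only uses the singular expansion of $F$ at $1/e$, $y_n\to 0$, and $\Dcn\to\infty$ with $\Dcn y_n\to$ a finite constant (one must recheck: $\Dcn y_n\sim (n^{3/2}K_n^{-1/2})\cdot\tfrac{1}{2e}(n/K_n)^{-2}=\tfrac{1}{2e}n^{-1/2}K_n^{3/2}$, which now tends to infinity, so the condition ``$t\ge\gamma y_n$'' is automatic for $t\ge A_0/\Dcn$ — this actually simplifies the argument). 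Away from $0$, $\phicn\to\phi$ uniformly with $\phi$ non-lattice, giving $|\phicn(t)|\le e^{-C_2}<1$ for $\epsilon\le|t|\le\pi$. Combining these, the four integrals $I^{(1)},I^{(2)},I^{(3)},I^{(4)}$ are controlled exactly as in the proof of Lemma~\ref{lem:ll2}.

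The main obstacle is bookkeeping the asymptotics of $\bbn,\bcn,(\sbn)^2,(\scn)^2,\Dbn,\Dcn$ and the various products like $\Dcn y_n$ in the new regime, and in particular separating the cases $\gamma=0$ and $\gamma>0$ (when $\gamma=0$ the behavior of $\mubn$ degenerates as in Lemma~\ref{lem:exist}(ii) and one must instead track $\bbn\sim K_n/n\to 0$, so the near-origin quadratic constant $c_0$ in $\ln|\phibn(t)|\le -c_0 t^2$ must be allowed to vanish and the estimate rescaled accordingly, exactly as in Lemma~\ref{lem:phib}). Other than that, the two local limit theorems follow the same analytic template already used twice in Section~\ref{ss:llt}, so I would mostly indicate the modifications and leave the routine recomputations to the reader, as the paper does elsewhere.
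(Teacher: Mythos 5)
Your overall template (characteristic-function convergence plus Fourier inversion, separated into $\gamma=0$ and $\gamma>0$) matches the paper's, and you correctly identify the Gaussian, not inverse-Gaussian, limit for the $\Scn$-walk in this regime. For $\gamma>0$ the paper does exactly what you propose: $\bbn,\bcn$ converge to interior points of $(0,1/e)$ so there is no singularity to worry about and the classical local limit theorem applies. For \eqref{eq:ext1} when $\gamma=0$, the paper simply observes that the hypotheses of Lemma~\ref{lem:ll1} (namely $K_n\to\infty$ and $K_n/n\to 0$) are already met, so \eqref{eq:ext1} is literally contained there; this is a small shortcut you could have taken rather than rederiving the quadratic bound on $\phibn$.

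The genuine gap is in your treatment of \eqref{eq:ext2} when $\gamma=0$. You claim the square-root bound of Lemma~\ref{lem:technique}, $|\phicn(t)|\le e^{-\kappa|t|^{1/2}}$ for $|t|\in(A_0/\Dcn,\epsilon)$, ``goes through verbatim,'' and you read the fact that $\Dcn y_n\to\infty$ as making the argument easier. Both claims are wrong. First, a small arithmetic slip: $\Dcn=\sqrt{(\scn)^2 K_n}\sim\sqrt{(n/K_n)^3 K_n}=n^{3/2}/K_n$, not $n^{3/2}/\sqrt{K_n}$; with the correct value, $\Dcn y_n\sim\tfrac{1}{2e}K_n/\sqrt n$, which still tends to infinity. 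But $\Dcn y_n\to\infty$ has the \emph{opposite} consequence from the one you draw: it means $A_0/\Dcn \ll y_n$ for large $n$, so the lower end of the range $(A_0/\Dcn,\epsilon)$ falls \emph{below} the scale $y_n$ (not above), and the inequality ``$t\ge\gamma y_n$'' used in the proof of Lemma~\ref{lem:technique} fails precisely where it is most needed. Concretely, in that proof the bound $\ln F(\bcn)\ge 1-\alpha\sqrt{|t|}$ requires $\sqrt{2e y_n}\lesssim\sqrt{|t|}$, i.e. $|t|\gtrsim (K_n/n)^2$; but $A_0/\Dcn\sim A_0K_n/n^{3/2}=o\bigl((K_n/n)^2\bigr)$, so there is a growing sub-interval of the range where the square-root estimate breaks. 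More fundamentally, the square-root bound is \emph{incompatible} with the Gaussian convergence you yourself prove: if $|\phicn(t/\Dcn)|\le e^{-\kappa|t/\Dcn|^{1/2}}$ held for all $|t|>A_0$, then $|\phicn(t/\Dcn)|^{uK_n}\le e^{-\kappa u K_n\Dcn^{-1/2}|t|^{1/2}}$, and since $K_n\Dcn^{-1/2}\to\infty$ this would force the rescaled characteristic function to tend to $0$ for every fixed $t$, contradicting the limit $e^{-ut^2/2}$. The paper's proof avoids this by replacing the singular analysis of $F$ at $1/e$ with a Taylor expansion of $\phicn$ at $0$, controlled by the third moment $\E[(\Scn_1)^3]\sim 3(n/K_n)^5$, yielding the error bound $|r^n(t/\Dcn)|\le|t|^3\sqrt n/K_n^2$ and the crucial fact $\sqrt n/K_n^2=o(1/K_n)$ (this is where $K_n/\sqrt n\to\infty$ enters). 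This gives a \emph{quadratic} bound $|\phicn(t/\Dcn)|\le e^{-\kappa t^2/K_n}$ on a suitable range, which is the correct replacement for Lemma~\ref{lem:technique} here; you should rework this step accordingly.
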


\begin{proof}
  We only give the main steps, since the structure of the proof is very similar to those of Lemmas~\ref{lem:ll1} and \ref{lem:ll2}. One separately treats two cases. The first one is $\gamma>0$. In this case, $\bbn$ and $\bcn$ converge to values belonging to $(0,1/e)$. In particular, there are no singularities, and one may follow the standard proof of the local limit theorem (see e.g.~\cite[Theorem 3.5.2]{Dur10}). The second case is $\gamma=0$, in which case  $ \Dbn \sim \sqrt{K_{n}}$ and $\Dcn \sim n^{3/2} /K_{n}$. The first estimate \eqref{eq:ext1} in contained in Lemma~\ref{lem:ll1}. For the second estimate, the main idea is to write, as in the proof of Lemma~\ref{lem:phib},
\[\phicn(t)=1+\textrm{i}\E[\Scn_{1}]t- \E[(\Scn_{1})^{2}] \frac{t^{2}}{2}+r^{n}(t),\]
where $|r^{n}(t)| \leq |t|^{3}  \E[(\Scn_{1})^{3}]/6$ for every $n \geq 1$ and $t \in \R$. In particular, by using the expansion of $F(1/e-z)$ around $z=0$, one sees that $ \E[(\Scn_{1})^{3}] \sim 3 (n/K_{n})^{5}$, so that for every $n$ sufficiently large and $t \in \R$, $|r^{n}(t/\Dcn)| \leq |t|^{3} \tfrac{\sqrt{n}}{K_{n}^{2}}$, and the key point is that $ \tfrac{\sqrt{n}}{K_{n}^{2}}= o(\tfrac{1}{K_{n}})$ since $K_{n}/\sqrt{n} \rightarrow \infty$. From this follows an analogue of Lemma~\ref{lem:technique}, namely that there exist constants $A_{0},\epsilon,\kappa>0$ such that  for every $n$ sufficiently large and for every $|t|$ in $(A_{0}, \epsilon \Dcn)$, we have $|\phicn( \tfrac{t}{\Dcn})| \leq e^{- \kappa  \frac{t^{2}}{K_{n}}}$. This then allows us to follow the same steps as the proof of Lemma~\ref{lem:ll2}. We leave the details to the reader.
\end{proof}

\paragraph*{Convergence of the unconditioned processes.}  Recall that $(H^{n}_{k},B^{n}_{k})_{k \geq 1}$ denotes a sequence of i.i.d.~random variables with distribution given by: for $i, j \geq 0$,
\[ \Prb{H^{n}=i,B^{n}=j}=\mubn(i) \,\Prb{\Scn_{i}=j}.\]
Also, for $i \geq 0$, we set  $\oH^{n}_{i}=H^{n}_{1}+H^{n}_{2}+ \cdots+H^{n}_{i}$ and  $\oB^{n}_{i}=B^{n}_{1}+B^{n}_{2}+ \cdots+B^{n}_{i}-i$. 
To simplify notation, set \[\hH^{(n)}_{u}=\frac{\oH^{n}_{ \lfloor u (n-K_{n}) \rfloor } - u (K_{n}+1) }{\Dbn}, \qquad \hB^{(n)}_{u}= \frac{\oB^{n}_{ \lfloor u (n-K_{n}) \rfloor }  - \frac{n-K_{n}}{K_{n}+1} \hH^{(n)}_{u}  }{\Dcn}.\]

   \begin{lemma} Assume that $\frac{K_{n}}{\sqrt{n}} \rightarrow   \infty$ and   $\tfrac{K_{n}}{n}  \rightarrow \gamma \in [0,1)$ as $n \rightarrow \infty$.
  The following convergence holds in distribution in $\mathbb{D}([0,1],\R^{2})$:
  \[ \left( \hH^{(n)}_{u} ,  \hB^{(n)}_{u} \right)_{0 \leq u \leq 1}  \quad \mathop{\longrightarrow}_{n \rightarrow \infty} \quad  (W_{u},X_{u})_{0 \leq u \leq 1},\]
 where  $W,X$ are two independent standard Brownian motions.
  \end{lemma}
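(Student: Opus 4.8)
The plan is to reduce this to a bivariate local limit theorem for the random walk $(\oH^{n}_{i},\oB^{n}_{i})_{i}$, exactly in the spirit of the proof of Lemma~\ref{lem:cvnoncond} in the regime $c\in\R_+$. By \cite[Theorem 16.14]{Kal02} it suffices to establish the one-dimensional convergence at $u=1$, i.e.\ that $(\hH^{(n)}_{1},\hB^{(n)}_{1})\to(W_1,X_1)$ in distribution, where now both coordinates are standard Gaussians (note that when $c=\infty$ the Lévy process $X$ degenerates to a Brownian motion, since the variance $(\scn)^2 K_n$ dominates the contribution of the jumps — this is the key structural difference with the case $\tfrac{K_n}{\sqrt n}\to c$). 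Tightness in $\D([0,1],\R^2)$ then follows from a time-reversal argument identical to the one at the end of the proof of Proposition~\ref{prop:cvbridge}: conditionally on nothing, the increments $(H^n_i,B^n_i)$ are i.i.d., so the process and its time-reversal have comparable structure and a maximal inequality gives tightness. Since we are working with unconditioned walks here, tightness is in fact immediate from the functional CLT applied componentwise once the joint one-dimensional convergence is known, plus the fact that $(\oH^n_i)$ and the ``decoupled'' part of $(\oB^n_i)$ are built from independent increments.

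First I would fix $x,y\in\R$ and write, using Lemma~\ref{lem:magique}, that
\[
\Prb{\oH^{n}_{n-K_n}=\lfloor x\Dbn+(K_n+1)\rfloor,\ \oB^{n}_{n-K_n}= \lfloor y\Dcn\rfloor + \tfrac{n-K_n}{K_n+1}\lfloor x\Dbn\rfloor}
\]
factors as $\Prb{\Sbn_{n-K_n}=\cdots}\cdot\Prb{\Scn_{\lfloor x\Dbn+(K_n+1)\rfloor}=\cdots}$. Then I would apply the two local limit estimates of Lemma~\ref{lem:locallimitbis}: the first estimate \eqref{eq:ext1} (with $N=n-K_n$, which is of order $un$ for $u$ close to $1$ when $\gamma<1$) handles the $\Sbn$ factor and produces the Gaussian density $p(x)/\Dbn$; the second estimate \eqref{eq:ext2} handles the $\Scn$ factor, and here the crucial point is that the argument $\lfloor x\Dbn+(K_n+1)\rfloor$ is of the form $uK_n+j$ with $u\to 1$ (recall $K_n\sim\gamma n$ when $\gamma>0$, or $\Dbn\sim\sqrt{K_n}$ when $\gamma=0$) and $|j|=\mathcal O(\Dbn)=\mathcal O(K_n^{3/4})$ — this is exactly why the exponent $3/4$ was chosen in Lemma~\ref{lem:locallimitbis}, just as $3/8$ was chosen in Lemma~\ref{lem:ll2}. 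This yields the Gaussian density $p(y)/\Dcn$ after centering by $\tfrac{n-K_n}{K_n+1}\hH^{(n)}_1$. Multiplying the two asymptotics and invoking the standard passage from local to weak convergence (\cite[Theorem 7.8]{Bil68}, as in Lemma~\ref{lem:cvnoncond}) gives $(\hH^{(n)}_1,\hB^{(n)}_1)\to(W_1,X_1)$ with $W_1,X_1$ independent standard normal.

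The main obstacle — though it is largely already dispatched by Lemma~\ref{lem:locallimitbis} — is the interplay between the two regimes $\gamma>0$ and $\gamma=0$: when $\gamma=0$ the walk $\Scn$ still has a step distribution concentrating near the singularity $1/e$ of $F$, so one must be careful that the centering $(uK_n+j)\tfrac{n-K_n}{K_n+1}$ in \eqref{eq:ext2} and the normalization $\Dcn=\sqrt{(\scn)^2 K_n}\sim n^{3/2}/K_n$ combine correctly; the error terms $o(\sqrt n)$ and $o(n)$ in the deterministic shifts $n-K_n-\lfloor u(n-K_n)\rfloor$ must be checked to be negligible against $\Dcn$ and $\Dbn$, which they are since $\Dcn\to\infty$ faster than $\sqrt n$ precisely when $K_n/\sqrt n\to\infty$. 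Once the one-dimensional convergence is in hand, I would upgrade to the functional statement on $[0,1]$ by the usual argument: the finite-dimensional marginals converge by the same local-limit computation applied to increments over disjoint subintervals (using independence of the $(H^n_i,B^n_i)$), and tightness follows from the componentwise invariance principles together with the uniform local limit bounds that control the oscillations. This completes the proof.
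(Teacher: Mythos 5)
Your proposal is correct and matches the paper's argument step for step: reduce to the one-dimensional marginal at $u=1$ via \cite[Theorem 16.14]{Kal02}, factor the joint probability using Lemma~\ref{lem:magique}, apply the uniform local limit estimates \eqref{eq:ext1} and \eqref{eq:ext2} of Lemma~\ref{lem:locallimitbis}, and pass from local to weak convergence by \cite[Theorem 7.8]{Bil68}. The only minor wrinkle is the discussion of tightness at the end: once Kallenberg's theorem has been invoked, the functional convergence (including tightness) comes for free from the i.i.d.\ structure of the increments, so the time-reversal digression and the separate argument about finite-dimensional marginals and oscillations are superfluous — but you correctly note this yourself, and the rest of your checks (that $|j|=\mathcal O(\Dbn)=o(K_n^{3/4})$ falls within the range of \eqref{eq:ext2}, and that the deterministic shifts in the centering are negligible against $\Dbn$ and $\Dcn$) are exactly the verifications the paper leaves implicit.
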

  
As the proof will show, the appearance of the normalized quantity $\hH^{(n)}_{u}$ is important in the definition of $\hB^{(n)}_{u}$.

\begin{proof}
The proof is similar to that of Lemma~\ref{lem:cvnoncond}. In virtue of \cite[Theorem 16.14]{Kal02}, it is enough to check that the one-dimensional convergence holds for $u=1$.  For fixed $x,y \in \R$, by Lemma~\ref{lem:magique},
\begin{multline*}
\Pr{\oH^{n}_{  n-K_{n} }= \lfloor x \Dbn+K_{n} \rfloor ,\oB^{n}_{  n-K_{n}}= \lfloor y  \Dcn + \frac{n-K_{n}}{K_{n}+1} \lfloor x \Dbn \rfloor \rfloor} \\
=\Pr{\Sbn_{ n-K_{n} }=  \lfloor x \Dbn +K_{n} \rfloor} \Pr{\Scn_{ \lfloor x \Dbn +K_{n} \rfloor }= n-K_{n}   + \left\lfloor  y  \Dcn + \frac{n-K_{n}}{K_{n}+1} \lfloor x \Dbn \rfloor \right\rfloor}.  
\end{multline*}
By \eqref{eq:ext1} and \eqref{eq:ext2}, as $n \rightarrow \infty$, this quantity is asymptotic to $ \tfrac{1}{ \Dbn  }    p (x)  \cdot  \tfrac{1}{\Dcn} p (y)$,
where $p(x)= \tfrac{1}{\sqrt{2\pi}} e^{- {x^{2}}/{2}}$ denotes the Gaussian density.
It is then standard (see e.g.~\cite[Theorem 7.8]{Bil68}) that this implies that the convergence
  \[ \left( \hH^{(n)}_{1} ,  \frac{\oB^{n}_{ n-K_{n} }  - \frac{n-K_{n}}{K_{n}+1} \hH^{(n)}_{1}  }{\Dcn} \right)  \quad \mathop{\longrightarrow}_{n \rightarrow \infty} \quad  (W_{1},X_{1}),\]
  holds in distribution. This completes the proof.
  \end{proof}  

\paragraph*{Convergence of the bridge version.} The proof of the following lemma is similar to that of Proposition~\ref{prop:cvbridge} and is left to the reader.

\begin{lemma}Assume that $\frac{K_{n}}{\sqrt{n}} \rightarrow   \infty$ and   $\tfrac{K_{n}}{n}  \rightarrow \gamma \in [0,1)$ as $n \rightarrow \infty$. Conditionally given the event
$ \{\oH^{n}_{n-K_{n}}=K_{n}+1, \oB^{n}_{n-K_{n}}=-1 \}$, the following  convergence in distribution holds jointly
\[  \left(  \hH^{(n)}_{u},  \hB^{(n)}_{u} \right)_{0 \leq u \leq 1}  \quad \mathop{\longrightarrow}^{(d)}_{n \rightarrow \infty} \quad (W^{\mathrm{br}},\Xbr),\]
where $W^{\mathrm{br}}$ and $\Xbr$ are two independent Brownian bridges.
\end{lemma}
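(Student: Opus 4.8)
The plan is to mimic closely the proof of Proposition~\ref{prop:cvbridge}, replacing the Lévy inputs by their Brownian counterparts throughout. First I would set up the notation exactly as before: writing $\mathcal{C}_{n}=\{\oH^{n}_{n-K_{n}}=K_{n}+1,\oB^{n}_{n-K_{n}}=-1\}$, the goal is to show that, conditionally on $\mathcal C_n$, the pair $(\hH^{(n)}_{u},\hB^{(n)}_{u})_{0\le u\le 1}$ converges to $(W^{\mathrm{br}},\Xbr)$ with $W^{\mathrm{br}},\Xbr$ independent Brownian bridges. The unconditioned convergence of $(\hH^{(n)},\hB^{(n)})$ to two independent standard Brownian motions has just been established in the preceding lemma, and the local limit estimates \eqref{eq:ext1}--\eqref{eq:ext2} are available; these play here the role that Lemmas~\ref{lem:ll1} and \ref{lem:ll2} played for $c>0$.

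The core computation is the discrete absolute continuity relation. For fixed $0<u<1$ and a bounded continuous functional $F$ on $\D([0,u],\R^2)$, I would apply the Markov property at time $\lfloor u(n-K_n)\rfloor$ to write $\Es{F(\hH^{(n)}_s,\hB^{(n)}_s:0\le s\le u)\mid \mathcal C_n}$ as an unconditioned expectation of $F$ times a Radon--Nikodym weight $\psi$, exactly as in \eqref{eq:def_psi}--\eqref{eq:Adn}. Using Lemma~\ref{lem:magique} one factors the relevant probability into a product of a $\Sbn$-probability and a $\Scn$-probability; then \eqref{eq:ext1} gives, on the event $|\hH^{(n)}_u|<1/\delta$, the asymptotics $\tPr{\Sbn_{(1-u)n+\mathcal O(\sqrt n)}=\cdots}\sim \frac{1}{\Dbn}p_{1-u}(-\hH^{(n)}_u)$, while \eqref{eq:ext2} gives $\tPr{\Scn_{(1-u)K_n+J_n}=\cdots}\sim \frac{1}{\Dcn}p_{1-u}(-\hB^{(n)}_u)$ (here the Gaussian density replaces $d_{1-u}$, since the limiting object is Brownian). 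Dividing by the estimate for $\phi_{n-K_n}(K_n+1,-1)$, which is $\sim \frac{1}{\Dbn}p_1(0)\cdot\frac{1}{\Dcn}p_1(0)$, yields $\psi = \frac{p_{1-u}(\hH^{(n)}_u)}{p_1(0)}\frac{p_{1-u}(\hB^{(n)}_u)}{p_1(0)} + \text{error}$, the error tending to $0$ in $L^\infty$ on the truncation event. The truncation parameter $\delta$ is then sent to $0$ using the self-consistency identity obtained by taking $F\equiv 1$, as in \eqref{eq:cvE}--\eqref{eq:cvu}; this shows the finite-dimensional (in fact up to time $u$) convergence to $(W^{\mathrm{br}},\Xbr)$ with the Brownian bridge decomposition of the Brownian motion conditioned to return to $0$.

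Finally I would close the argument with the same time-reversal/tightness step: conditionally on $\mathcal C_n$, the paths $(\oH^{n}_i,\oB^{n}_i)_{1\le i\le n-K_n}$ and their reversals $(\oH^{n}_{n-K_n}-\oH^{n}_{n-K_n-i},\oB^{n}_{n-K_n}-\oB^{n}_{n-K_n-i})$ are equidistributed, so controlling the process on $[0,u]$ for $u$ close to $1$ plus the reversal gives tightness on $\D([0,1],\R^2)$, and the convergence on $[0,u]$ for all $u<1$ pins down the unique limit. One delicate point specific to the regime $K_n/n\to\gamma$: the correct centering/scaling for $\hB^{(n)}$ involves subtracting $\frac{n-K_n}{K_n+1}\hH^{(n)}_u$, so one must be careful that this substitution is compatible with the Markov-property splitting and the local limit estimates (the $J_n=\mathcal O(K_n^{3/4})$ fluctuation of the second index in \eqref{eq:ext2} is exactly what makes this go through). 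I expect this bookkeeping --- keeping the two normalizations straight and verifying that the error terms in the two-variable local limit estimates remain uniform on the truncation event --- to be the main technical obstacle; everything else is a transcription of the $c>0$ proof with Brownian inputs, which is why the paper states it can be left to the reader.
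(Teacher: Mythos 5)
Your proposal is correct and follows exactly the route the paper intends: the paper states that this lemma's proof is "similar to that of Proposition~\ref{prop:cvbridge} and is left to the reader," and your transcription (Markov property at time $\lfloor u(n-K_n)\rfloor$, Lemma~\ref{lem:magique}, the uniform local limit estimates \eqref{eq:ext1}--\eqref{eq:ext2} with the Gaussian density $p_{1-u}$ in place of $d_{1-u}$, truncation in $\delta$, then time-reversal for tightness) is precisely that adaptation. You also correctly flag the one genuinely new bookkeeping point, namely that the coupling term $\frac{n-K_n}{K_n+1}\hH^{(n)}_u$ in the definition of $\hB^{(n)}_u$ must be carried through the Markov splitting so that the $\hH$-contribution cancels in the argument of the $\Scn$-local limit estimate, and that the admissible range $|j|\le K_n^{3/4}$ in \eqref{eq:ext2} accommodates the $O(\Dbn)$ fluctuation of the conditioning index.
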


\begin{corollary}\label{cor:cvjointe}
Assume that, as $n\rightarrow \infty$, we have $\frac{K_{n}}{\sqrt{n}} \rightarrow   \infty$ and   $\tfrac{K_{n}}{n}  \rightarrow \gamma$
with $\gamma$ in $[0,1)$. Conditionally given the event
$ \{\oH^{n}_{n-K_{n}}=K_{n}+1, \oB^{n}_{n-K_{n}}=-1 \}$, the following  convergence in distribution holds jointly
\[\left(  \frac{\oB^{n}_{ \lfloor u (n-K_{n}) \rfloor }}{\Dcn} \right)_{0 \leq u \leq 1}  \ \mathop{\longrightarrow}^{(d)}_{n \rightarrow \infty} \  \Xbr,  \qquad  \sup_{0 \leq u \leq 1} \left|   \frac{\oH^{n}_{ \lfloor u (n-K_{n}) \rfloor }}{K_{n}}-u\right|  \ \mathop{\longrightarrow}_{n \rightarrow \infty} \ 0.\]
\end{corollary}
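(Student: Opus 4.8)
The plan is to deduce Corollary~\ref{cor:cvjointe} directly from the previous lemma by absorbing the Vervaat-type normalization $\frac{n-K_n}{K_n+1}\hH^{(n)}_u$ into a negligible error. Recall that the previous lemma asserts the joint convergence, conditionally on $\mathcal C_n=\{\oH^n_{n-K_n}=K_n+1,\ \oB^n_{n-K_n}=-1\}$,
\[
\left(\hH^{(n)}_u,\ \hB^{(n)}_u\right)_{0\le u\le 1}\ \mathop{\longrightarrow}^{(d)}_{n\to\infty}\ (W^{\mathrm{br}},\Xbr),
\]
where by definition $\hB^{(n)}_u=\bigl(\oB^n_{\lfloor u(n-K_n)\rfloor}-\frac{n-K_n}{K_n+1}\hH^{(n)}_u\bigr)/\Dcn$ and $\hH^{(n)}_u=\bigl(\oH^n_{\lfloor u(n-K_n)\rfloor}-u(K_n+1)\bigr)/\Dbn$. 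By the continuous mapping theorem, the projection onto the first coordinate gives $\bigl(\hH^{(n)}_u\bigr)_{0\le u\le 1}\to W^{\mathrm{br}}$, and since $\hH^{(n)}_u=\frac{K_n}{\Dbn}\bigl(\frac{\oH^n_{\lfloor u(n-K_n)\rfloor}}{K_n}-u\bigr)+o(1)$ uniformly, with $\Dbn=\sqrt{(\sbn)^2 n}\to\infty$, the supremum bound $\sup_{0\le u\le 1}\bigl|\frac{\oH^n_{\lfloor u(n-K_n)\rfloor}}{K_n}-u\bigr|\to 0$ in probability follows from tightness of $\hH^{(n)}$ in $\D([0,1],\R)$ together with the fact that $\Dbn/K_n\to 0$ (indeed $\Dbn/K_n=\sqrt{(\sbn)^2n}/K_n$, which one checks goes to $0$ in both regimes $\gamma>0$ and $\gamma=0$ using Lemma~\ref{lem:exist}).

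For the first convergence, the point is that $\frac{\oB^n_{\lfloor u(n-K_n)\rfloor}}{\Dcn}=\hB^{(n)}_u+\frac{n-K_n}{(K_n+1)\Dcn}\,\hH^{(n)}_u\cdot\frac{\Dbn}{\Dbn}$, i.e.
\[
\frac{\oB^n_{\lfloor u(n-K_n)\rfloor}}{\Dcn}=\hB^{(n)}_u+\frac{(n-K_n)\Dbn}{(K_n+1)\Dcn}\,\hH^{(n)}_u.
\]
It therefore suffices to show that the deterministic prefactor $\rho_n\coloneqq\frac{(n-K_n)\Dbn}{(K_n+1)\Dcn}$ tends to $0$; then, since $(\hH^{(n)}_u)$ is tight (hence $\sup_u|\rho_n\hH^{(n)}_u|\to 0$ in probability), the uniform difference between $\oB^n_{\lfloor u(n-K_n)\rfloor}/\Dcn$ and $\hB^{(n)}_u$ vanishes, and the convergence $\oB^n_{\lfloor\cdot\rfloor}/\Dcn\to\Xbr$ follows from that of $\hB^{(n)}$. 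Computing $\rho_n$ with $\Dbn=\sqrt{(\sbn)^2n}$, $\Dcn=\sqrt{(\scn)^2K_n}$ and the estimates of Lemma~\ref{lem:exist}: when $\gamma=0$ one has $(\sbn)^2\sim K_n/n$, $(\scn)^2\sim(n/K_n)^3$, so $\Dbn\sim\sqrt{K_n}$, $\Dcn\sim n^{3/2}/K_n$, giving $\rho_n\sim n\cdot\sqrt{K_n}/(K_n\cdot n^{3/2}/K_n)=\sqrt{K_n}/\sqrt{n}\to 0$ is false — one must redo this carefully; in fact $\rho_n\sim \frac{n\sqrt{K_n}}{K_n}\cdot\frac{K_n}{n^{3/2}}=\frac{\sqrt{K_n}}{\sqrt n}$, which does \emph{not} vanish, so the correct reading is that the normalization $\Dcn$ in the statement has been chosen precisely so that $\rho_n\hH^{(n)}_u$ is compensated: one should instead observe $\rho_n=\Theta(\Dbn\cdot(n-K_n)/((K_n+1)\Dcn))$ and that $\Dbn=o(\Dcn\cdot K_n/(n-K_n))$, i.e. the relevant comparison is $\Dbn\ll K_n\Dcn/n$, which does hold. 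When $\gamma>0$, all of $\bbn,\bcn$ and the variances converge to constants, $\Dbn\asymp\Dcn\asymp\sqrt n$, and $\rho_n\asymp\sqrt n\to\infty$ naively — so again the correct statement requires the compensation and one must keep $\hH^{(n)}_u$ in the bound rather than bounding it by its sup; in this regime we use instead the already-established fact $\sup_u|\oH^n_{\lfloor u(n-K_n)\rfloor}/K_n-u|\to0$ to write $\oH^n_{\lfloor u(n-K_n)\rfloor}=u(K_n+1)+o(K_n)$, whence $\frac{n-K_n}{K_n+1}\oH^n_{\lfloor u(n-K_n)\rfloor}-u(n-K_n)=o(n)=o(\Dcn)$, and therefore $\oB^n_{\lfloor u(n-K_n)\rfloor}/\Dcn=\hB^{(n)}_u+o(1)$ uniformly.

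So the clean argument runs: (1) project the joint convergence of the previous lemma to get $\hH^{(n)}\to W^{\mathrm{br}}$ and deduce the uniform law-of-large-numbers statement for $\oH^n$ from $\Dbn/K_n\to0$; (2) use that uniform statement to show $\frac{n-K_n}{K_n+1}\hH^{(n)}_u\Dbn=\frac{n-K_n}{K_n+1}\oH^n_{\lfloor u(n-K_n)\rfloor}-u(n-K_n)+O(\sqrt n)=o(\Dcn)$ uniformly in $u$; (3) conclude $\oB^n_{\lfloor u(n-K_n)\rfloor}/\Dcn=\hB^{(n)}_u+o(1)\to\Xbr$, jointly with (1). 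The main obstacle — and the step requiring genuine care rather than routine bookkeeping — is step~(2), namely verifying that the error incurred by replacing the centering $u(K_n+1)$ by the actual partial sum $\oH^n_{\lfloor u(n-K_n)\rfloor}$ is of smaller order than $\Dcn$ uniformly over $u\in[0,1]$; this is where the precise asymptotics of $\Dbn$ and $\Dcn$ from Lemma~\ref{lem:exist}, in both regimes $\gamma=0$ and $\gamma\in(0,1)$, are used, and one should double-check that the $J_1$-tightness of $\hH^{(n)}$ (established as in the time-reversal argument at the end of the proof of Proposition~\ref{prop:cvbridge}) does give uniform control of its supremum. Since the proof of the underlying lemma is itself stated to be left to the reader, it is natural here to likewise only indicate these steps.
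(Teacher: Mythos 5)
The proposal goes astray at the key algebraic step. Under the paper's definition of $\hB^{(n)}_u$, the correction term in the identity
\[
\frac{\oB^{n}_{ \lfloor u (n-K_{n}) \rfloor }}{\Dcn} \;=\; \hB^{(n)}_u \;+\; \frac{n-K_n}{(K_n+1)\Dcn}\,\hH^{(n)}_u
\]
carries the coefficient $\tfrac{n-K_n}{(K_n+1)\Dcn}$, which is $O(n^{-1/2})$ in \emph{both} regimes: when $\gamma>0$ because $\tfrac{n-K_n}{K_n+1}$ is bounded while $\Dcn\asymp\sqrt n$, and when $\gamma=0$ because $\tfrac{n-K_n}{K_n+1}\asymp n/K_n$ while $\Dcn\asymp n^{3/2}/K_n$. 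The paper's proof then concludes in one line from this and the tightness of $\sup_u|\hH^{(n)}_u|$. Your manipulation ``multiply by $\Dbn/\Dbn$'' silently drops a $\Dbn$ from the denominator and produces instead the coefficient
$\rho_n=\tfrac{(n-K_n)\Dbn}{(K_n+1)\Dcn}$. For $\gamma>0$ this is $\asymp 1$ (a nonzero constant; not $\asymp\sqrt n$ as you write, but the essential point — that it does not vanish — is the same), and this is what derails the rest of the argument.

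The attempted repair in your ``clean argument'' does not close the gap: step~(2), the claim that $\tfrac{n-K_n}{K_n+1}\hH^{(n)}_u\Dbn=o(\Dcn)$ uniformly in $u$, is false for $\gamma>0$, because $\tfrac{n-K_n}{K_n+1}\Dbn\asymp\Dcn$ and $\sup_u|\hH^{(n)}_u|$ is only tight, not $o_P(1)$. The uniform law of large numbers for $\oH^n$ (which you establish correctly in step~(1), and which matches the paper's argument via $\Dbn/K_n\to 0$) gives only $\oH^n_{\lfloor u(n-K_n)\rfloor}-u(K_n+1)=o_P(K_n)$, which is far too coarse: multiplied by $\tfrac{n-K_n}{K_n+1}$ it is $o_P(n)$, and $o_P(n)$ is not $o_P(\Dcn)$ when $\Dcn\asymp\sqrt n$. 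In short, once the spurious $\Dbn$ is removed the argument collapses to the paper's proof; with it kept, neither of your two routes for $\gamma>0$ is sound.
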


\begin{proof}
  Since $  \hH^{(n)}_{u}$
  converges in distribution and since $ \frac{1}{\Dcn} \cdot \frac{n-K_{n}}{K_{n}+1} \rightarrow 0$, we have that
\[  \frac{n-K_{n}}{\Dcn(K_{n}+1)} \cdot \sup_{0 \leq u \leq 1}| \hH^{(n)}_{u}| \rightarrow 0.\]
Therefore, $\frac{\oB^{n}_{ \lfloor u (n-K_{n}) \rfloor }}{\Dcn}$
has the same limit as $\hB^{(n)}_{u}$, namely $\Xbr$.

For the second convergence, we simply write
\[\frac{\oH^{n}_{ \lfloor u (n-K_{n}) \rfloor }}{K_{n}}-u= \frac{\Dbn}{K_{n}} \cdot \hH^{(n)}_{u}\]
and the result follows from the fact that $ \frac{\Dbn}{K_{n}} \rightarrow 0$.
\end{proof}

\paragraph*{Convergence of the excursion version.} 

Using the (bivariate) Vervaat transform and Lemma~\ref{lem:vervaat},
we can now state an invariance principle for $\oB^{n}$ and $\oH^{n}$
conditionally given the event
\[\mathcal C^+=\{\oH^{n}_{n-K_{n}}=K_{n}+1,\, \oB^{n}_{n-K_{n}}=-1,\, \oB^{n}_i \ge 0\text{ for all }i <n-K_n \}.\]
From Lemma~\ref{lem:codeRW},
under this conditioning $(\oB^{n}, \oH^{n})$ has the same distribution
as $(\oB(\Ts_n),\oH(\Ts_n))$,
where $\Ts_n$ is an alternating BGW tree with offspring distributions $\mubn$ and $\mucn$
conditioned on having $n-K_n$ black vertices and $K_n+1$ white vertices.
We will thus state our invariance principle in terms of $(\oB(\Ts_n),\oH(\Ts_n))$.

\begin{proposition} 
\label{prop:final}
Assume that $\frac{K_{n}}{\sqrt{n}} \rightarrow   \infty$ and   $\tfrac{K_{n}}{n}  \rightarrow \gamma \in [0,1)$ as $n \rightarrow \infty$. The following convergences hold jointly in distribution \[  \left(  \frac{\oB_{\lfloor u (n-K_{n}) \rfloor}(\Ts_{n})}{ \Dcn} : 0\leq u \leq 1\right)  \ \mathop{\longrightarrow}^{(d)}_{n \rightarrow \infty} \ \mathbbm{e}, \qquad  \sup_{0 \leq u \leq 1} \left|   \frac{\oH_{ \lfloor u (n-K_{n}) \rfloor  }(\Ts_{n})}{K_{n}}-u\right|  \ \mathop{\longrightarrow}_{n \rightarrow \infty} \ 0.\]
\end{proposition}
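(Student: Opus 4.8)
The plan is to obtain the excursion‐type convergence from the bridge‐type convergence of Corollary~\ref{cor:cvjointe} by means of the bivariate Vervaat transform, exactly mimicking the proof of Theorem~\ref{thm:cvXexc}~(i) from Proposition~\ref{prop:cvbridge}. First I would recall from Lemma~\ref{lem:codeRW} that, conditionally on the event $\mathcal{C}^{+}$ introduced just above the statement, the pair of paths $(\oB^{n}_{i},\oH^{n}_{i})_{0 \leq i \leq n-K_{n}}$ has the same law as $(\oB_{i}(\Ts_{n}),\oH_{i}(\Ts_{n}))_{0 \leq i \leq n-K_{n}}$. Then, by Lemma~\ref{lem:vervaat}~(ii) (applied with the bivariate cyclic shift acting on the pairs $(H^{n}_{i},B^{n}_{i})$, with the minimum taken with respect to the second coordinate), the Vervaat transform of $(H^{n}_{i},B^{n}_{i})_{1 \leq i \leq n-K_{n}}$ conditioned on $\{\oH^{n}_{n-K_{n}}=K_{n}+1,\ \oB^{n}_{n-K_{n}}=-1\}$ has the same law as the same sequence conditioned on $\mathcal{C}^{+}$. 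Passing to the cumulative sums, this yields the distributional identity
\[
\Vc\!\left(\left( \frac{\oB^{n}_{\lfloor u (n-K_{n}) \rfloor}}{\Dcn},\ \frac{\oH^{n}_{\lfloor u (n-K_{n}) \rfloor}}{K_{n}}-u \right)_{0 \leq u \leq 1}\right)
\ \mathop{=}^{(d)}\
\left( \frac{\oB_{\lfloor u (n-K_{n}) \rfloor}(\Ts_{n})}{\Dcn},\ \frac{\oH_{\lfloor u (n-K_{n}) \rfloor}(\Ts_{n})}{K_{n}}-u \right)_{0 \leq u \leq 1},
\]
where $\Vc$ denotes the Vervaat transform acting on the first coordinate only (shifting both coordinates by the location of the right-most infimum of the first one).

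Next I would invoke Corollary~\ref{cor:cvjointe}: conditionally on $\{\oH^{n}_{n-K_{n}}=K_{n}+1,\ \oB^{n}_{n-K_{n}}=-1\}$, the first coordinate above converges in distribution to the Brownian bridge $\Xbr$, while the second coordinate converges uniformly to $0$. To transfer the convergence through $\Vc$, the key point is that $\Vc$ is almost surely continuous at the limiting pair $(\Xbr,0)$. This holds because the Brownian bridge attains its infimum at a unique time almost surely — this is true for Brownian motion on every fixed interval and transfers to $\Xbr$ by the absolute continuity relation for bridges — so the shift location is a continuous functional at $\Xbr$, and moreover $\Xbr$ is continuous so no extension to $\D([-1,1])$ is needed here (unlike in Theorem~\ref{thm:cvXexc}). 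Consequently the continuous mapping theorem gives that the right-hand side above converges jointly in distribution to $(\Vc\,\Xbr,\ 0)=(\mathbbm{e},\ 0)$, where the last identity is the very definition of the Brownian excursion as the Vervaat transform of the Brownian bridge. This is precisely the claimed joint convergence.

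Since the structure is entirely parallel to the already proven Theorem~\ref{thm:cvXexc}~(i), I expect no genuinely new obstacle; the only mildly delicate point is the joint (rather than marginal) handling of the two coordinates under $\Vc$. Here it suffices to observe that $\Vc$ acts on the pair by a single (random) cyclic shift determined by the first coordinate, so once continuity at $\Xbr$ of the shift-location functional is established, both coordinates pass to the limit simultaneously; the uniform convergence of the second coordinate to the constant $0$ is trivially preserved under any cyclic shift. One writes out these details, noting that the uniqueness of the argmin for $\Xbr$ is the load-bearing fact, and concludes. $\qed$
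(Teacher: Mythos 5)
Your proposal is correct and matches the paper's intended argument exactly: the paper itself states that Proposition~\ref{prop:final} ``follows from Corollary~\ref{cor:cvjointe}, in the same way that Theorem~\ref{thm:cvXexc}~(i) was established using Proposition~\ref{prop:cvbridge},'' leaving the details to the reader, and your write-up supplies precisely those details. You also correctly identify the one genuine simplification over the $c<\infty$ case, namely that the Brownian bridge is continuous (no jump at its minimum), so the Vervaat transform is a.s.\ continuous as a map $\mathbb{C}([0,1])\to\mathbb{C}([0,1])$ and no extension to $\D([-1,1])$ is needed.
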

This proposition follows from Corollary~\ref{cor:cvjointe},
in the same way that Theorem~\ref{thm:cvXexc} (i) was established using Proposition~\ref{prop:cvbridge}.
Again, we leave details to the reader.

\medskip

\paragraph*{End of the proof of Proposition~\ref{prop:cvBH}}

We consider separately the cases $\gamma=0$ and $\gamma>0$.
The first one is effortless.

\begin{proof}[Proof of Proposition~\ref{prop:cvBH} when  $\tfrac{K_{n}}{\sqrt{n}} \rightarrow \infty$ and $ \tfrac{K_{n}}{n}  \rightarrow 0$ as $n \rightarrow \infty$] In this case, the conclusions of Lemma~\ref{lem:root} are true with the same proof, where the occurrences of the quantity $p(0) q_{1}(c)$ should be replaced by $p(0)^{2}$, and where instead of using the local limit Lemmas~\ref{lem:ll1} and \ref{lem:ll2}, one uses \eqref{eq:ext1} and \eqref{eq:ext2}.  This shows that as before,   $\Ts_{n}$ and $\tTs _{n}$ can be coupled so that $\Ts_{n}=\tTs_{n}$ with probability tending to $1$.  Proposition~\ref{prop:cvBH} then readily follows from Proposition~\ref{prop:final}.
\end{proof}

We now concentrate on the case  $\tfrac{K_{n}}{\sqrt{n}} \rightarrow \infty$ and $ \tfrac{K_{n}}{n}  \rightarrow \gamma \in (0,1)$ as $n \rightarrow \infty$, which requires new ideas. 
In this setting, we set $m_\bullet:=\frac{\gamma}{1-\gamma}$ and $m_\circ:=\frac{1-\gamma}{\gamma}$,
which are the limits of the means $m_\bullet^n$ and $m_\circ^n$, of $\mubn$ and $\mucn$ respectively.
By an argument similar to that used in Lemma~\ref{lem:exist}, there exist unique parameters $(\ab,\bb,\ac,\bc)$ with $\bb,\bc \in (0,1/e)$
such that the following equations define probability distributions with means $m_\bullet$ and $m_\circ$:
 \begin{equation}
 \label{eq:mubc}\mub(i)= \ab \cdot  (\bb)^{i} \cdot \frac{(i+1)^{i-1}}{i!}  \quad (i \geq 0), \qquad \muc(i)= \ac \cdot  (\bc)^{i} \cdot \frac{(i+1)^{i-1}}{i!}  \quad (i \geq 0).
 \end{equation}
 It is then immediate that $(\abn,\bbn,\acn,\bcn)$ converges as $n \rightarrow \infty$ to the vector $(\ab,\bb,\ac,\bc)$.
 As a consequence, one easily check that the distributions $\mubn$ and $\mucn$
 have {\em uniform exponential moments}, in the sense that
 \begin{equation}
   \label{eq:expmoment} \exists \delta>0,\,  \qquad  \sup_{n \ge 1} \, \sum_{i=0}^{\infty} e^{\delta i} \mubn(i) < \infty 
   \quad \textrm{and} \quad \sup_{n \ge 1}\, \sum_{i=0}^{\infty} e^{\delta i} \mucn(i) < \infty.
 \end{equation}
 This result will be useful later. 
 
We first establish a result which will allow us to replace $\tTs_{n}$ with $\Ts_{n}$,
that is to forget about the specific offspring distribution of the root.
As before, we denote $\tilde{\Ts} ^{n}$ the unconditioned version of $\tTs_n$  
with distribution ${\BGW}^{\widetilde{\mu}_{\bullet}^{n},\mubn,\mucn}$, as defined in \eqref{eq:BGWtilde}. 
Recall that the superscript $n$ indicates that the offspring distributions depend on $n$, and not a conditioning.

\begin{lemma}\label{lem:tilde}The following assertions are satisfied.
\begin{enumerate}
\item[(i)] We have
\[\Pr{\abs{\circ_{\tTs^{n}}}=K_{n}+1, \abs{\bullet_{\tTs^{n}}}=n-K_{n}}   \ \mathop{\sim}_{n \rightarrow \infty} \ \left( \sum_{a=1}^{\infty} a \mub(a-1) \right)\cdot \frac{1}{ 2\pi \sb \sc  \gamma \sqrt{\gamma(1-\gamma)} } \cdot \frac{1}{n^{2}},\]
where $(\sb)^{2}$, $(\sc)^{2}$ denote respectively the variance of $\mub$, $\muc$.
\item[(ii)] The couple $\big(\oH^{n}_{1}(\tTs_{n}),\oB^{n}_{1}(\tTs_{n})\big)$ converges in distribution as $n \rightarrow \infty$.
\item[(iii)] Let  $ \mathscr{F}^{\bullet,n}_{k} =(\Ts^{\bullet,n}_{1}, \ldots, \Ts^{\bullet,n}_{k})$ denote a collection of $k$ i.i.d. BGW trees distributed as  $ \Tsbn$ (with a black root). Then, for fixed $i_{0}$, conditionally given $ \{ \abs{\circ_{\mathscr{F}^{\bullet,n}_{k}}}=K_{n}+1-i_{0},\abs{\bullet_{\mathscr{F}^{\bullet,n}_{k}}}=n-K_{n}-1 \} $, the random variable $ \min_{1 \leq i \leq k} (\abs{\circ_{\mathscr{F}^{\bullet,n}_{k}}}- \abs{\circ_{\Ts^{\bullet,n}_{i}}},\abs{\bullet_{\mathscr{F}^{\bullet,n}_{k}}}- \abs{\bullet_{\Ts^{\bullet,n}_{i}}})$ converges in distribution, where the minimum refers to the lexicographical order on $\Z_{+}^{2}$.
\end{enumerate}
\end{lemma}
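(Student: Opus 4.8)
All three parts are local estimates concerning the alternating BGW trees $\tTs^n$ and $\Tsbn$ when $K_n/\sqrt n \to \infty$ and $K_n/n \to \gamma \in (0,1)$, and the common engine is the local limit theorem of Lemma~\ref{lem:locallimitbis} together with the random walk codings of Corollary~\ref{cor:probaGivenNumber} and its forest extensions \eqref{eq:forestc}--\eqref{eq:forestb}. Since $\gamma \in (0,1)$, both $\bbn$ and $\bcn$ stay inside $(0,1/e)$ and converge to $\bb,\bc$, so the step distributions $\mubn$, $\mucn$ are genuinely non-degenerate, have means converging to $m_\bullet = \gamma/(1-\gamma)$ and $m_\circ = (1-\gamma)/\gamma$, positive limiting variances $(\sb)^2$, $(\sc)^2$, and uniform exponential moments \eqref{eq:expmoment}. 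In this regime the local limit estimates \eqref{eq:ext1}--\eqref{eq:ext2} are the classical (non-singular) ones, so that $\Pr{\Sbn_N = k}$ and $\Pr{\Scn_N = k}$ behave like Gaussian densities of the right order, uniformly in $k$, for $N$ of order $n$.

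\textbf{Part (i).} The plan is to decompose a tree contributing to the event $\{\abs{\circ_{\tTs^n}}=K_n+1,\ \abs{\bullet_{\tTs^n}}=n-K_n\}$ according to the number $a\ge 1$ of children of the root (which under $\tmub^n$ has distribution $\mubn(\cdot-1)$): removing the root leaves a forest of $a$ i.i.d.\ trees distributed as $\Tsc$ (white root), so by \eqref{eq:forestc},
\[
\Pr{\abs{\circ_{\tTs^n}}=K_n+1,\ \abs{\bullet_{\tTs^n}}=n-K_n}
= \sum_{a\ge 1}\widetilde{\mu}^n_{\bullet}(a)\cdot\frac{a}{n-K_n}\,\Pr{\Sbn_{n-K_n}=K_n+1-a}\,\Pr{\Scn_{K_n+1}=n-K_n}.
\]
By the local limit theorem \eqref{eq:ext1}--\eqref{eq:ext2}, for each fixed $a$,
$\Pr{\Sbn_{n-K_n}=K_n+1-a}\sim \frac{1}{\sqrt{2\pi}\,\sb\sqrt{n-K_n}}$ (the mean matches since $(K_n+1)/(n-K_n)=m_\bullet^n$), and $\Pr{\Scn_{K_n+1}=n-K_n}\sim \frac{1}{\sqrt{2\pi}\,\sc\sqrt{K_n+1}}$. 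Using $n-K_n\sim (1-\gamma)n$ and $K_n\sim \gamma n$ and the uniform exponential moment \eqref{eq:expmoment} to dominate the tail of the sum over $a$ (so dominated convergence applies term by term), one gets
\[
\Pr{\abs{\circ_{\tTs^n}}=K_n+1,\ \abs{\bullet_{\tTs^n}}=n-K_n}\sim
\Big(\sum_{a\ge 1}a\,\mub(a-1)\Big)\cdot\frac{1}{(1-\gamma)n}\cdot\frac{1}{\sqrt{2\pi}\,\sb\sqrt{(1-\gamma)n}}\cdot\frac{1}{\sqrt{2\pi}\,\sc\sqrt{\gamma n}},
\]
which after simplification is $\big(\sum_a a\mub(a-1)\big)\cdot\frac{1}{2\pi\sb\sc\,\gamma\sqrt{\gamma(1-\gamma)}}\cdot n^{-2}$, as claimed. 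The main care needed here is uniformity: one must ensure the local limit estimates hold uniformly enough in $a$ over the relevant range and that the contribution of large $a$ is negligible, which is exactly what \eqref{eq:expmoment} provides.

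\textbf{Parts (ii) and (iii).} For (ii), $(\oH^n_1(\tTs_n),\oB^n_1(\tTs_n))$ records the number of white children $H$ of the root and the number of black grandchildren minus one, $B-1$, so conditioning on the global size event amounts (by the same root-removal decomposition and \eqref{eq:forestc}) to a ratio of the form
\[
\Pr{(\oH^n_1,\oB^n_1)=(h,b-1)\mid \cdot}=\frac{\widetilde{\mu}^n_{\bullet}(h)\,\Pr{\Scn_h=b}\cdot\frac{h}{n-K_n}\,\Pr{\Sbn_{n-K_n-1}=K_n+1-h}\,\Pr{\Scn_{K_n+1-h}=n-K_n-b}}{\Pr{\abs{\circ_{\tTs^n}}=K_n+1,\abs{\bullet_{\tTs^n}}=n-K_n}};
\]
for fixed $(h,b)$ the local limit theorem makes the numerator's ``bulk'' factors $\Pr{\Sbn_{n-K_n-1}=K_n+1-h}$ and $\Pr{\Scn_{K_n+1-h}=n-K_n-b}$ asymptotically equal to the corresponding factors in the denominator (the shifts $h,b$ being $o(\sqrt n)$), so the ratio converges to $\frac{\widetilde{\mu}^n_{\bullet}(h)\,\Pr{\Scn_h=b}\,h}{\sum_a a\mub(a-1)}\to$ a proper probability distribution on $(h,b)$; tightness/total-mass-one is checked via the exponential moments. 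For (iii), with $\mathscr F^{\bullet,n}_k$ a forest of $k$ i.i.d.\ black-rooted trees conditioned on total white size $K_n+1-i_0$ and total black size $n-K_n-1$, one uses \eqref{eq:forestb} (or rather its $k$-component version) applied both to the whole forest and to the forest with one tree singled out: the probability that the singled-out tree has $(\abs\bullet,\abs\circ)=(n^\bullet,n^\circ)$ and the rest has the complementary sizes is again a ratio of random-walk probabilities, and the local limit theorem shows that subtracting bounded $(n^\bullet,n^\circ)$ does not change the leading asymptotics of the ``large'' random-walk factors. One concludes that $\min_i(\text{complementary sizes})$ has, in the limit, the distribution of a size-biased pick among $k$ i.i.d.\ (finite) trees $\Tsb$, which is a bona fide distribution on $\Z_+^2$; again the subtlety is the uniformity of the local limit estimates, controlled by \eqref{eq:expmoment}. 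In all three parts the calculations are routine once the uniform local limit theorem is in hand, so I would present (i) in detail and indicate (ii) and (iii) as variations, leaving the bookkeeping to the reader as the paper does elsewhere.
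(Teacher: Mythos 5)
Your overall strategy is exactly the paper's: decompose at the root to reduce to forest size probabilities, rewrite those via Corollary~\ref{cor:probaGivenNumber} and its forest extensions as products of random-walk local probabilities, apply the local limit theorem \eqref{eq:ext1}--\eqref{eq:ext2}, and dominate the sum over $a$ with the uniform exponential moments \eqref{eq:expmoment}. However, your application of \eqref{eq:forestc} in part (i) is wrong in a way that changes the constant. After removing the root (which has $a$ white children) the remaining forest $\mathscr F^{\circ,n}_a$ has $n_\circ=K_n+1$ white vertices and $n_\bullet=n-K_n-1$ black vertices, so \eqref{eq:forestc} gives
\[
\Pr{\abs{\circ_{\mathscr F^{\circ,n}_a}}=K_n+1,\ \abs{\bullet_{\mathscr F^{\circ,n}_a}}=n-K_n-1}
=\frac{a}{K_n+1}\,\Pr{\Sbn_{n-K_n-1}=K_n+1-a}\,\Pr{\Scn_{K_n+1}=n-K_n-1},
\]
whereas you wrote $\tfrac{a}{n-K_n}\,\Pr{\Sbn_{n-K_n}=\cdot}\,\Pr{\Scn_{K_n+1}=n-K_n}$. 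The off-by-one shifts in the walk indices are harmless, but $\tfrac{a}{n-K_n}$ versus $\tfrac{a}{K_n+1}$ is not: your own intermediate display then yields $\frac{1}{(1-\gamma)n}\cdot\frac{1}{2\pi\sb\sc\sqrt{(1-\gamma)\gamma}\,n}$, i.e.\ $\frac{1}{2\pi\sb\sc\,(1-\gamma)\sqrt{\gamma(1-\gamma)}}\,n^{-2}$, which does \emph{not} equal the stated $\frac{1}{2\pi\sb\sc\,\gamma\sqrt{\gamma(1-\gamma)}}\,n^{-2}$ unless $\gamma=\tfrac12$; you assert the correct final constant without noticing that your formula contradicts it.

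The same slip propagates into part (ii): after the root-removal decomposition, the relevant forest is the forest of $b$ black-rooted trees grafted on the $a$ white children, and \eqref{eq:forestb} gives a prefactor $\tfrac{b}{n-K_n-1}$, not the $\tfrac{h}{n-K_n}$ appearing in your displayed ratio. Finally, for (iii) the idea is right (exchangeability to single out one tree, then local limit estimates make the big-tree factor cancel with the denominator), but the limiting distribution is that of the pair $\big(\abs{\circ_{\mathscr F^\bullet_{k-1}}},\abs{\bullet_{\mathscr F^\bullet_{k-1}}}\big)$ for a forest of $k-1$ i.i.d.\ limit trees with offspring distributions $\mub,\muc$, not a ``size-biased pick among $k$ trees'': once one tree is singled out as the macroscopic one (this uses the factor $k$ from exchangeability), the min over $i$ of the complementary sizes is simply the sum of the sizes of the other $k-1$ trees. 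The strategy is sound; the execution needs to be repaired.
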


\begin{proof} We follow the strategy of \cite[Lemma 5.7]{KM16}.
For $j \geq 0$, let $ \mathscr{F}^{\circ,n}_{j}$ denote a collection of $j$ i.i.d. BGW trees distributed as  $ \Tscn$ (with a white root). By considering the trees (with white roots) grafted on the root of $\tTs^{n}$  and using \eqref{eq:forestc}, write 
\begin{align*}
&\Pr{\abs{\circ_{\tTs^{n}}}=K_{n}+1, \abs{\bullet_{\tTs^{n}}}=n-K_{n}} \\
& \qquad\qquad = \sum_{a=1}^{\infty} \mubn(a-1) \Pr{\abs{ \circ_{ \mathscr{F}^{\circ,n}_{a}}}=K_{n}+1, \abs{\bullet_{ \mathscr{F}^{\circ,n}_{a}}}=n-K_n-1} \\
& \qquad \qquad =\sum_{a=1}^{\infty} a \mubn(a-1) \frac{1}{K_{n}+1} \Pr{\Sbn_{n-K_{n}-1}=K_{n}+1-a} \Pr{\Scn_{K_{n}+1}=n-K_{n}-1}.
\end{align*}
The desired result then follows by combining \eqref{eq:ext1} and \eqref{eq:ext2} with the dominated convergence theorem.

For (ii), fix $a,b \geq 1$. Since $(\oH^{n}_{1}(\tTs_{n}),\oB^{n}_{1}(\tTs_{n}))=(a,b-1)$ if and only if the root of $\tTs_{n}$ has $a$ (white) children, which altogether have $b$ (black) children, using \eqref{eq:forestb},  the quantity $\Prb{\oH^{n}_{1}(\tTs_{n}),\oB^{n}_{1}(\tTs_{n})=(a,b-1)}$ is equal to 
\[
 \frac{\mubn(a-1) \Pr{\Scn_{a}=b}  \Pr{\abs{ \circ_{ \mathscr{F}^{\bullet,n}_{b}}}=K_{n}+1-a, \abs{\bullet_{ \mathscr{F}^{\bullet,n}_{b}}}=n-K_n-1}}{\Pr{\abs{\circ_{\tTs^{n}}}=K_{n}, \abs{\bullet_{\tTs^{n}}}=n-K_{n}}},
\]
where, for $j \geq 1$,  $ \mathscr{F}^{\bullet,n}_{j}$ denotes a collection of $j$ i.i.d. BGW trees distributed as  $ \Tsbn$ (with a black root).
By \eqref{eq:forestb}, the quantity $\Prb{\abs{ \circ_{ \mathscr{F}^{\bullet,n}_{b}}}=K_{n}+1-a, \abs{\bullet_{ \mathscr{F}^{\bullet,n}_{b}}}=n-K_n-1}$ is equal to
\[\ \frac{b}{n-K_{n}-1} \Pr{\Sbn_{n-K_{n}-1}=K_{n}+1-a} \Pr{\Scn_{K_{n}+1-a} =n-K_{n}-1-b}.\]
 By  combining \eqref{eq:ext1} and \eqref{eq:ext2} with (i), we finally get that
 \[\Prb{\oH^{n}_{1}(\tTs_{n}),\oB^{n}_{1}(\tTs_{n})=(a,b-1)}  \quad \mathop{\longrightarrow}_{n \rightarrow \infty} \quad  \frac{\gamma}{1-\gamma} \cdot \frac{\mub(a-1)  b\Pr{S^{\circ}_{a}=b}}{\sum_{i=1}^{\infty} i \mub(i-1)},\]
 where $(S^{\circ}_{i})_{i \geq 1}$ is a random walk with jump distribution $\muc$.
Since $\Es{S^{\circ}_{1}}= \frac{1-\gamma}{\gamma}$, the limiting quantity indeed defines a probability distribution.

For (iii), fix integers $u,v \geq 0$. Observe that for $n$ sufficiently large, the following union is disjoint
\begin{align*}
  & \scalebox{.8}{$\left\{  \min_{1 \leq i \leq k} (\abs{\circ_{\mathscr{F}^{\bullet,n}_{k}}}- \abs{\circ_{\Ts^{\bullet,n}_{i}}},\abs{\bullet_{\mathscr{F}^{\bullet,n}_{k}}}- \abs{\bullet_{\Ts^{\bullet,n}_{i}}})=(u,v), \ \abs{\circ_{\mathscr{F}^{\bullet,n}_{k}}}=K_{n}+1-i_{0},\ \abs{\bullet_{\mathscr{F}^{\bullet,n}_{k}}}=n-K_{n}-1  \right\}$} \\
  & \scalebox{.8}{$ \quad =\bigcup_{i=1}^{k} \left\{ \abs{\circ_{\Ts^{\bullet,n}_{i}}}=\abs{\circ_{\mathscr{F}^{\bullet,n}_{k}}}-u, \abs{\bullet_{\Ts^{\bullet,n}_{i}}}=\abs{\bullet_{\mathscr{F}^{\bullet,n}_{k}}}-v,  \abs{\circ_{\mathscr{F}^{\bullet,n}_{k}}}=K_{n}+1-i_{0},\abs{\bullet_{\mathscr{F}^{\bullet,n}_{k}}}=n-K_{n}-1\right\}. $}
\end{align*}
Hence, by exchangeability, the probability of the latter event is
\[\!\!
   k \Pr{\abs{\circ_{\Ts^{\bullet,n}_{1}}}=K_{n}+1-i_{0}-u, \abs{\bullet_{\Ts^{\bullet,n}_{1}}}=n-K_{n}-1-v}\Pr{\abs{\circ_{\mathscr{F}^{\bullet,n}_{k-1}}}=u,\abs{\bullet_{\mathscr{F}^{\bullet,n}_{k-1}}}=v}\]
Now, using  \eqref{eq:forestb},
\begin{multline*}
\frac{k \Pr{\abs{\circ_{\Ts^{\bullet,n}_{1}}}=K_{n}+1-i_{0}-u, \abs{\bullet_{\Ts^{\bullet,n}_{1}}}=n-K_{n}-1-v}}{\Pr{\abs{\circ_{\mathscr{F}^{\bullet,n}_{k}}}=K_{n}+1-i_{0},\abs{\bullet_{\mathscr{F}^{\bullet,n}_{k}}}=n-K_{n}-1}}\\
 = \frac{ \frac{k}{n-K_{n}-1-v} \Pr{\Sbn_{n-K_{n}-1-v}=K_{n}+1-i_{0}-u} \Pr{\Scn_{K_{n}+1-i_{0}-u} =n-K_{n}-2-v} }{ \frac{k}{n-K_{n}-1} \Pr{\Sbn_{n-K_{n}-1}=K_{n}+1-i_{0} } \Pr{\Scn_{K_{n}+1-i_{0}}=n-K_{n}-2} }.
\end{multline*}
By  \eqref{eq:ext1} and \eqref{eq:ext2}, this quantity tends to $1$ as $n \rightarrow \infty$. By combining Eqs.~\eqref{eq:forestb}, \eqref{eq:ext1} and \eqref{eq:ext2} we also get that
\[\Pr{\abs{\circ_{\mathscr{F}^{\bullet,n}_{k-1}}}=u,\abs{\bullet_{\mathscr{F}^{\bullet,n}_{k-1}}}=v}  \quad \mathop{\longrightarrow}_{n \rightarrow \infty} \quad \Pr{\abs{\circ_{\mathscr{F}^{\bullet}_{k-1}}}=u,\abs{\bullet_{\mathscr{F}^{\bullet}_{k-1}}}=v},\]
 where $ \mathscr{F}^{\bullet}_{k-1}$ denotes a collection of $k-1$ i.i.d.\ alternating BGW trees with offspring distribution $\mub$ and $\muc$ and black roots. 
 
 We conclude that 
 \[\scalebox{.8}{$\Pr{\min_{1 \leq i \leq k} (\abs{\circ_{\mathscr{F}^{\bullet,n}_{k}}}- \abs{\circ_{\Ts^{\bullet,n}_{i}}},\abs{\bullet_{\mathscr{F}^{\bullet,n}_{k}}}- \abs{\bullet_{\Ts^{\bullet,n}_{i}}}) =(u,v) \, \Big| \, \abs{\circ_{\mathscr{F}^{\bullet,n}_{k}}}=K_{n}+1-i_{0},\abs{\bullet_{\mathscr{F}^{\bullet,n}_{k}}}=n-K_{n}-1 }$} \]
converges to $\Pr{\abs{\circ_{\mathscr{F}^{\bullet}_{k-1}}}=u,\abs{\bullet_{\mathscr{F}^{\bullet}_{k-1}}}=v}$ as $n \rightarrow \infty$.
This completes the proof.
\end{proof}

We are now ready to establish Proposition~\ref{prop:cvBH} in the final case.

\begin{proof}[Proof of  Proposition~\ref{prop:cvBH}  when $\tfrac{K_{n}}{\sqrt{n}} \rightarrow \infty$ and $ \tfrac{K_{n}}{n}  \rightarrow \gamma \in (0,1)$.] 
Lemma~\ref{lem:tilde} (ii) and (iii) shows that with probability tending to $1$ as $n \rightarrow \infty$, among all the trees grafted on a (black) grandchild of the root of $\tTs_{n}$, only one has $n-K_{n}+o(n)$ black vertices and $K_{n}+o(n)$ white vertices, while all the others have $o(n)$ vertices. It is therefore sufficient to establish the desired result with $\tTs_{n}$ replaced with $\Ts_{n}$.  The first two convergences in  Proposition~\ref{prop:cvBH}  are then obtained from Proposition~\ref{prop:final}. 

It remains to check that 
\[ \frac{1}{n} \min_{0 \leq i < n-K_{n}} \ell^{\bullet,n}_{i}  \quad \mathop{\longrightarrow}^{(\P)}_{n \rightarrow \infty} \quad 0,\]
where $\ell^{\bullet,n}_{i}$ denotes  the number of black corners  branching on the right of  $\llbracket \emptyset, v^{\bullet,n}_{i} \llbracket$ for $0 \leq i < n-K_{n}$ (recall that $v^{\bullet,n}_{i} $ is the $i$-th black vertex of $\Ts_{n}$ in lexicographical order). 
The proof is divided in several steps: (i) Reduction to a one-type tree (ii) Reduction to a non-conditioned statement (iii) Large deviation estimates.

\paragraph*{Step 1: reduction to a one-type tree.}

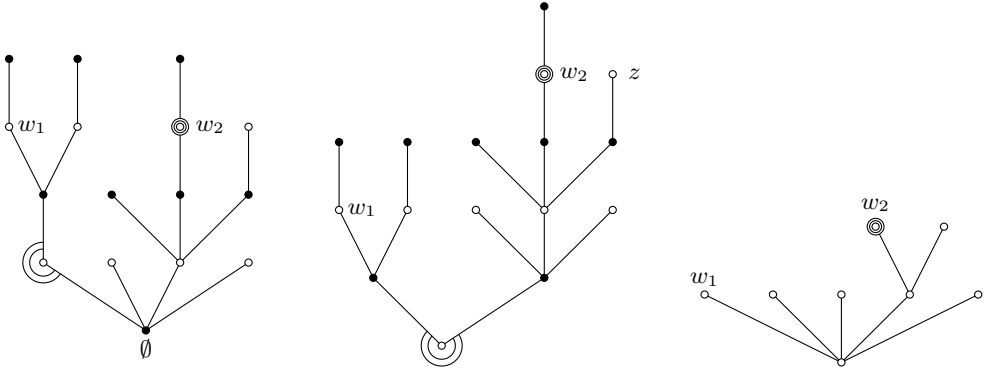
\begin{figure}[t] \centering
\begin{scriptsize}
\begin{tikzpicture}[scale=.9]
\coordinate (0) at (0,0);
	\coordinate (1) at (-1.5,1);
		\coordinate (11) at (-1.5,2);
			\coordinate (111) at (-2,3);
				\coordinate (1111) at (-2,4);
			\coordinate (112) at (-1,3);
				\coordinate (1121) at (-1,4);
	\coordinate (2) at (-.5,1);
	\coordinate (3) at (.5,1);
		\coordinate (31) at (-0.5,2);
		\coordinate (32) at (0.5,2);
			\coordinate (321) at (0.5,3);
				\coordinate (3211) at (0.5,4);
		\coordinate (33) at (1.5,2);
			\coordinate (331) at (1.5,3);
	\coordinate (4) at (1.5,1);
\draw
	(0) -- (1)	(0) -- (2)	(0) -- (3)	(0) -- (4)
	(1) -- (11)
	(11) -- (111) -- (1111)	(11) -- (112) -- (1121)
	(3) -- (31)	(3) -- (32) -- (321) --(3211)	(3) -- (33) -- (331)
;
\draw[fill=black]
	(0) circle (1.5pt)
	(11) circle (1.5pt)
	(31) circle (1.5pt)
	(32) circle (1.5pt)
	(33) circle (1.5pt)
	(1111) circle (1.5pt)
	(1121) circle (1.5pt)
	(3211) circle (1.5pt)
;
\draw[fill=white]
	(1) circle (1.5pt)
	(2) circle (1.5pt)
	(3) circle (1.5pt)
	(4) circle (1.5pt)
	(111) circle (1.5pt)
	(112) circle (1.5pt)
	(321) circle (1.5pt)
	(321) circle (2.5pt)
	(321) circle (3.5pt)
	(331) circle (1.5pt)
	
;

\draw
	(321)++(.1,0) node[right] {$w_{2}$}
	(111)  node[right] {$w_{1}$}
	(0) node[below] {$\emptyset$}
;
   \draw [domain=-33.69:-270] plot ({-1.5+0.2*cos(\x)}, {1+0.2*sin(\x)});
   \draw [domain=-33.69:-270] plot ({-1.5+0.3*cos(\x)}, {1+0.3*sin(\x)});

\end{tikzpicture}
\qquad\quad
\begin{tikzpicture}[scale=.9]
\coordinate (0) at (0,0);
	\coordinate (1) at (-1,1);
		\coordinate (11) at (-1.5,2);
			\coordinate (111) at (-1.5,3);
		\coordinate (12) at (-0.5,2);
			\coordinate (121) at (-0.5,3);
	\coordinate (2) at (1.5,1);
		\coordinate (21) at (0.5,2);
		\coordinate (22) at (1.5,2);
			\coordinate (221) at (0.5,3);
			\coordinate (222) at (1.5,3);
				\coordinate (2221) at (1.5,4);
				\coordinate (22211) at (1.5,5);
			\coordinate (223) at (2.5,3);
				\coordinate (2231) at (2.5,4);
		\coordinate (23) at (2.5,2);
\draw
	(0) -- (1)	(0) -- (2)
	(1) -- (11)	(1) -- (12)
	(11) -- (111)
	(12) -- (121)
	(2) -- (21)	(2) -- (22)	(2) -- (23)
	(22) -- (221)	(22) -- (222) -- (2221) --(22211)	(22) -- (223) -- (2231)
;
%
   \draw [domain=33.69:-225] plot ({0.2*cos(\x)}, {0.2*sin(\x)});
   \draw [domain=33.69:-225] plot ({0.3*cos(\x)}, {0.3*sin(\x)});
\draw
	(11) node[right] {$w_{1}$}
	(2221)++(.1,0) node[right] {$w_{2}$}
	(2231)++(.1,0) node[right] {$z$}

;
\draw[fill=black]
	(1) circle (1.5pt)
	(2) circle (1.5pt)
	(111) circle (1.5pt)
	(121) circle (1.5pt)
	(221) circle (1.5pt)
	(222) circle (1.5pt)
	(223) circle (1.5pt)
	(22211) circle (1.5pt)
;
\draw[fill=white]
	(0) circle (1.5pt)
	(11) circle (1.5pt)
	(12) circle (1.5pt)
	(21) circle (1.5pt)
	(22) circle (1.5pt)
	(23) circle (1.5pt)
	(2221) circle (1.5pt)
	(2221) circle (2.5pt)
	(2221) circle (3.5pt)
	(2231) circle (1.5pt)
;
\end{tikzpicture}
\quad
\begin{tikzpicture}[scale=.9]
\coordinate (0) at (0,0);
	\coordinate (1) at (-2,1);
	\coordinate (2) at (-1,1);
	\coordinate (3) at (0,1);
	\coordinate (4) at (1,1);
		\coordinate (41) at (0.5,2);
		\coordinate (42) at (1.5,2);
	\coordinate (5) at (2,1);
\draw
	(0) -- (1)	(0) -- (2)	(0) -- (3)	(0) -- (4)	(0) -- (5)
	(4) -- (41)	(4)--(42);

\draw
	(1) node[above] {$w_{1}$}
	(41)++(0,0.1) node[above] {$w_{2}$}
;

\draw[fill=white]
	(0) circle (1.5pt)
	(1) circle (1.5pt)
	(2) circle (1.5pt)
	(3) circle (1.5pt)
	(4) circle (1.5pt)
	(5) circle (1.5pt)
	(41) circle (1.5pt)
	(41) circle (2.5pt)
	(41) circle (3.5pt)
	(42) circle (1.5pt)
;
\end{tikzpicture}
\end{scriptsize}
\caption{From left to right: a two-type tree $T$ with black root, the two-type tree $T^{\circ}$ with white root obtained be re-rooting $T$ at its first white corner (which is highlighted), and finally its associated reduced white subtree $\hat{T}^{\circ}$. The  number of black corners branching on the right of $\llbracket \emptyset, w_{2} \llbracket$ in $T$ is $3$, which is   the same as the number of black corners branching on the right of $\llbracket \emptyset, w_{2} \llbracket$ in $T^{\circ}$, and which is less or equal than the number of corners branching to the right of $\llbracket \emptyset, w_{2} \llbracket$ in $\hat{T^{\circ}}$ (which is $4$).  The  number of black corners branching on the right of $\llbracket \emptyset, w_{1} \llbracket$ in $T$ is $6$, which is indeed  the same as the number of black corners branching on the right of $\llbracket \emptyset, w_{1} \llbracket$ in $T^{\circ}$ plus the number of children of $\emptyset$ in $T$, and which is less or equal than the number of corners branching to the right of $\llbracket \emptyset, w_{1} \llbracket$ in $\hat{T^{\circ}}$ (which is $5$) plus one.
}
\label{fig:whiteblack}
\end{figure}

The first idea is to reduce the statement to a one-type tree, a reduced white subtree.  Specifically, we proceed in several elementary steps:
\begin{itemize}
  \item For $0 \le j\le K_n$, we denote $v^{\circ,n}_{j} $ the $j$-th white vertex of $\Ts_{n}$ in lexicographical order
    and $\ell^{\circ,n}_{j}$ the number of black corners  branching on the right of  $\llbracket \emptyset, v^{\circ,n}_{j} \llbracket$.
    If $v^{\circ,n}_{j}$ is the parent of $v^{\bullet,n}_{i}$, we have $\ell^{\circ,n}_{j}=\ell^{\bullet,n}_{i}$.
    We conclude that 
    \[ \max_{0 \leq i < n-K_{n}} \ell^{\bullet,n}_{i}= \max_{0 \leq j \leq  K_{n}} \ell^{\circ,n}_{j}.\]
\item  Denote by $\Ts_{n}^{\circ}$ the tree obtained by re-rooting $\Ts_{n}$ at its first white corner (see Figure~\ref{fig:whiteblack}). Then for every white vertex $w$, the number of black corners branching on the right of $\llbracket \emptyset, w \llbracket$ is the same in $ \Ts_{n}$ and in  $\Ts_{n}^{\circ}$ (except for white vertices which are descendants of the first child of $\emptyset$ in $\Ts_{n}$, for which one has to add the number of children of $\emptyset$ in $\Ts_{n}$). See Figure~\ref{fig:whiteblack} for an example.
\item Denote by $\hat{\Ts_{n}^{\circ}}$ the reduced white-subtree of $\Ts_{n}^{\circ}$. Then for every white vertex $w$, the number of black corners branching on the right of $\llbracket \emptyset, w \llbracket$ in $\Ts_{n}^{\circ}$ is less than or equal to   the number  of  corners branching on the right of $\llbracket \emptyset, w \llbracket$  in  $\hat{\Ts_{n}^{\circ}}$ (except for white vertices which are the descendants of the first child of $\emptyset$ in $\Ts_{n}$, for which one has to add $1$).  See Figure~\ref{fig:whiteblack} for an example. The reason why there may not be equality is that the reduction operation forgets the genealogy of black vertices: for instance, if in Figure~\ref{fig:whiteblack} the vertex $z$ were a younger sibling of $w_{2}$, the reduced white subtree would not have changed, but  the number of black corners branching on the right of $\llbracket \emptyset, w \llbracket$ in $\Ts_{n}^{\circ}$ would have increased.
\end{itemize}

Thanks to the previous observations, it therefore remains to check that
\[ \frac{1}{n}  \max_{0 \leq i  \leq K_{n}} \hat{\ell}^{\circ,n}_{i}  \quad \mathop{\longrightarrow}^{(\P)}_{n \rightarrow \infty} \quad 0,\]
where $\hat{\ell}^{\circ,n}_{i}$ is the number of  corners  branching on the right of  $\llbracket \emptyset, v^{\circ,n}_{i} \llbracket$, where $v^{\circ,n}_{i} $ is the $i$-th  vertex in lexicographical order of the white reduced tree $\hat{\Ts^{\circ}_{n}}$.

In turn, if $\hat{R}^{\circ,n}_{i}$ denotes the number of white \emph{vertices}  branching on the right of  $\llbracket \emptyset, v^{\circ,n}_{i} \llbracket$ in $\hat{\Ts^{\circ}_{n}}$, we have
\[\max_{0 \leq i  \leq K_{n}} \hat{\ell}^{\circ,n}_{i} \leq \mathsf{Height}(\hat{\Ts^{\circ}_{n}})+ \max_{0 \leq i  \leq K_{n}} \hat{R}^{\circ,n}_{i}.\]

The advantage of considering  $\hat{R}^{\circ,n}_{i}$ is that this quantity is simply the value at time $i$ of the {\L}ukasiewicz path of $\hat{\Ts^{\circ}_{n}}$ (this  is a well-known fact of the depth-first search of trees). But Proposition~\ref{prop:cvBH}, applied with colors exchanged, shows that  the {\L}ukasiewicz path of $\hat{\Ts^{\circ}_{n}}$, scaled in time by a factor $n$ and in space by a factor $\sqrt{n}$ possesses a functional scaling limit. As a consequence, $\tfrac{1}{\sqrt{n}} \max_{0 \leq i \leq  K_{n}} \hat{R}^{\circ,n}_{i}$ converges in distribution. Therefore, it remains to check that 
\[\frac{\mathsf{Height}(\hat{\Ts^{\circ}_{n}})}{n}  \quad \mathop{\longrightarrow}^{(\P)}_{n \rightarrow \infty} \quad 0.\]
To this end, we will establish that for every fixed $\epsilon>0$,
\begin{equation}
\label{eq:height}\Pr{\mathsf{Height}(\hat{\Ts^{\circ}_{n}}) \geq n^{ \tfrac{1}{2} +\epsilon}}   \quad \mathop{\longrightarrow}_{n \rightarrow \infty} \quad 0,
\end{equation}
which is a stronger result. This is classical (see e.g.~\cite{BK00,MM03}) for one-type (critical, finite variance) BGW trees conditioned on having a fixed size. The main issue here is that even though $\hat{\Ts^{\circ}_{n}}$ may be seen as a ``one-type tree'', $\hat{\Ts^{\circ}_{n}}$ is \emph{not} a BGW tree conditioned on having a fixed size (this would have been the case if $\Ts_{n}$ was obtained by  conditioning $\Ts^{n}$ on having a fixed number of white vertices only). Another (smaller) issue is that  offspring distributions depend on $n$.

\paragraph*{Step 2: Reduction to a non-conditioned statement.}
In order to get rid of the conditioning, we start by adapting Section~\ref{ss:coding} in the setting where the root of the two-type tree is white. We fix a tree $\tau$, and we now assume that white vertices are at even generation and black vertices are at odd generation (this is the case of $\Ts^{\circ}_{n}$).  Denote by $(v^{\circ}_{i}(\tau))_{0 \leq i < \abs{\circ_{\tau}}} $ the white vertices of $\tau$ listed in lexicographical order.  For $1 \leq i \leq \abs{\circ_{\tau}}$, let $H_{i}(\tau)$ be the number of black children of $v^{\circ}_{i-1}(\tau)$ and let $W_{i}(\tau)$ be the number of white grandchildren of $v^{\circ}_{i-1}(\tau)$. Finally, set $\oH_{0}(\tau)=\oH_{0}(\tau)=0$ and  for $1 \leq i \leq \abs{\circ_{\tau}}$ set 
\[\oH_{i}(\tau)=H_{1}(\tau)+ \cdots+H_{i}(\tau), \qquad  \overline{W}_{i}(\tau)=W_{1}(\tau)+ \cdots+W_{i}(\tau)-i.\]
Now note that the tree whose {\L}ukasiewicz path is $\overline{W}(\tau)$ is precisely the \emph{reduced white subtree} of $\tau$.
Let $(H^{n},W^{n})$ be a random variable such that 
 \begin{equation}
    \Pr{H^{n}=i,W^{n}=j}=\mucn(i) \Pr{\Sbn_{i}=j}, \qquad  i, j \geq 0.
    \label{eq:HW}
  \end{equation}
Let $(H^{n}_{i},W^{n}_{i})_{i \geq 1}$ be a sequence of i.i.d.~random variables distributed as $(H^{n},W^{n})$. Set $\overline{H}^{n}_{i}=H^{n}_{1}+H^{n}_{2}+ \cdots+H^{n}_{i}$ and  $\overline{W}^{n}_{i}=W_{1}^{n}+W^{n}_{2}+ \cdots+W^{n}_{i}-i$. Then Lemma~\ref{lem:codeRW} (ii) (applied with colors switched) tells us that $(H_{i}(\Ts_{n}^{\circ}) ,W_{i}(\Ts_{n}^{\circ}))_{1 \leq i \leq K_{n}+1}$ has the same distribution as $(H^{n}_{i},W^{n}_{i})_{1 \leq i \leq K_{n}+1}$ conditioned on the event 
\[ \mathcal{C}^{+}_{n} \coloneqq \{\oH^{n}_{K_{n}+1}=n-K_{n},\overline{W}^{n}_{K_{n}+1}=-1 \textrm{ and } \overline{W}^{n}_{i} \geq 0 \textrm{ for every } 1 \leq i < K_{n}+1\}.\]
We now observe that $u^{\circ}_{i}(\Ts^{\circ}_{n})$ is an ancestor of $u^{\circ}_{k}(\Ts^{\circ}_{n})$ in $\Ts^{\circ}_{n}$
(or equivalently in the white reduced tree $\hat{\Ts^{\circ}_{n}}$)
if and only if $\overline{W}_{i}(\hat{\Ts^{\circ}_{n}})=\min_{i \leq j \leq k}  \overline{W}_{j}(\hat{\Ts^{\circ}_{n}})$.
Therefore the height of $u^{\circ}_{k}(\Ts^{\circ}_{n})$ in the white reduced tree $\hat{\Ts^{\circ}_{n}}$
 is equal to $ \# \{0 \leq i \leq k-1: \overline{W}_{i}(\hat{\Ts^{\circ}_{n}})=\min_{i \leq j \leq k}  \overline{W}_{j}(\hat{\Ts^{\circ}_{n}}) \}$. As a consequence, we can write
\[\mathsf{Height}(\hat{\Ts^{\circ}_{n}})= \Psi\big(  \overline{W}_{i}(\hat{\Ts^{\circ}_{n}}) :0 \leq i \leq K_{n}+1 \big),\]
where for any discrete trajectory $\omega=(\omega_{0},\omega_{1}, \ldots, \omega_{K_{n}+1})$ we have set
\[\Psi(\omega)= \max_{0 \leq k \leq K_{n}+1} \# \left\{0 \leq i \leq k-1: \omega_{i}=\min_{i \leq j \leq k}  \omega_{j} \right \}.\]

As a consequence, for fixed $\epsilon>0$,
\begin{align*}
\Pr{\mathsf{Height}(\hat{\Ts^{\circ}_{n}}) \geq n^{ \tfrac{1}{2} +\epsilon}} &= \frac{\Pr{\Psi( \overline{W}_{i} :0 \leq i \leq K_{n}+1) \geq n^{{1}/{2}+\epsilon},{ \mathcal{C}^{+}_{n} }}}{\Pr{ \mathcal{C}^{+}_{n} }} \\
& \leq  \frac{\Pr{\Psi( \overline{W}^{n}_{i} :0 \leq i \leq K_{n}+1) \geq n^{{1}/{2}+\epsilon}}}{\Pr{ \mathcal{C}^{+}_{n} }}
\end{align*} 
 We now claim that for every $\eps>0$, there exists $\epsilon'>0$ such that for every $n$ sufficiently large
\begin{equation}
\label{eq:decroissance} \P\big(\Psi( \overline{W}^{n}_{i} :0 \leq i \leq K_{n}+1) \geq n^{{1}/{2}+\epsilon}
\big) \leq e^{-n^{\epsilon'}}.
\end{equation}
The same reasoning that led to Lemma~\ref{lem:tilde} shows that there exists a constant $C>0$ such that $\Pr{ \mathcal{C}^{+}_{n} } \sim \tfrac{C}{n^{2}}$ as $n \rightarrow \infty$, so that the estimate \eqref{eq:decroissance} indeed implies \eqref{eq:height}.

\paragraph*{Step 3: large deviation estimates.}

Our goal is now to prove the large deviation estimate \eqref{eq:decroissance}.
For a discrete trajectory 
\hbox{$\omega=(\omega_0,\dots,\omega_{k})$},
in addition to $\Psi(\omega)$, we also let 
\[\Phi(\omega)=\# \left\{0 < i \leq k: \omega_{i}=\max_{0 \leq j \leq i} \, \omega_{j} \right \}\]
be the number of weak records of $\omega$.
By a standard time-reversal argument, we have, for every fixed $k \geq 0$,
\[
\# \left\{0 \le i <k:  \overline{W}^{n}_{i}=\min_{i \leq j \leq k}  \overline{W}^{n}_{j} \right\}
\quad \stackrel{(d)}{=} \quad 
\Phi(\overline{W}^{n}_0,\dots,\overline{W}^{n}_k).
\] 
By definition, $\Psi( \overline{W}^{n}_{i} :0 \leq i \leq K_{n}+1)$ is the maximum
of the left-hand side of the above display for $k$ in $\{0,\dots,K_n\}$.
Thus we have
\begin{multline*}
\P\Big(\Psi( \overline{W}^{n}_{i} :0 \leq i \leq K_{n}+1) \geq n^{1/2+\epsilon}
\Big)  \leq  \sum_{0 \leq k \leq K_{n+1}} \P\Big(\Phi( \overline{W}^{n}_{i} :0 \leq i \leq k) \geq n^{1/2+\epsilon} \Big)\\
 \leq (K_n+2) \cdot \P\Big(\Phi( \overline{W}^{n}_{i} :0 \leq i \leq K_{n}+1) \geq n^{1/2+\epsilon} \Big),
\end{multline*}
where the second inequality follows form the monotonicity of $\Phi$.
As a consequence, \eqref{eq:decroissance} will follow if we establish that 
for every $\eps>0$, there exists $\epsilon'>0$ such that for every $n$ sufficiently large
\begin{equation}                                                                
  \label{eq:decroissancePhi} \P\big(\Phi( \overline{W}_{i} :0 \leq i \leq K_{n}+1) \geq n^{1/2+\epsilon}
  \big) \leq e^{-n^{\epsilon'}}.                                                  
\end{equation}

 To this end, we closely follow \cite{MM03} (see also \cite[Sec.~1.3]{LG05}). Set
\[\nu^{n}(k)=\sum_{ i=0}^{\infty}\mucn(i) \Pr{\Sbn_{i}=k},\qquad k \geq 0,\]
so that  $(\overline{W}^{n}_{i})_{i \geq 0}$ is a random walk  with jump distribution 
$\Pr{\overline{W}^{n}_{1}=k}=\nu^{n}(k+1)$.
Observe that $\nu^{n}$ has expectation $1$,
so that $\overline{W}^{n}$ is centered. 
We denote by $\sigma_{n}^{2}$  the variance of $\overline{W}^{n}_1$,
which converges to a positive constant when $n$ tends to infinity
(since the parameters $a_\bullet^n,b_\bullet^n,a_\circ^n,b_\circ^n$
of the laws $\mubn$ and $\mucn$ converge to those of $\mub$ and $\muc$).
Finally we let
\[
  T^{n}= \inf \Big\{k \geq 1 : \overline{W}^{n}_{k}= \max_{0 \leq i \leq k} \overline{W}^{n}_{i} \Big\}, \quad M^n = \max_{0 \le i \le {K_n+1}} \overline{W}^{n}_i.
\]
be respectively the first ladder time and the maximum of $\overline{W}^{n}$ up to time $K_{n}+1$.

Observe that the quantity $ \P\big(\Phi( \overline{W}_{i} :0 \leq i \leq K_{n}+1) \geq n^{1/2+\epsilon}
  \big) $ is bounded from above by
\begin{equation}
\label{eq:somme} \P\bigg( \big|M^n-\frac{\sigma_n^2}{2}\Phi(\overline{W}^{n}_0,\dots,\overline{W}^{n}_{K_n+1})\big|
  \geq n^{1/4+\eps} \bigg) + \P\bigg( M^n  \geq n^{1/2+\eps}- \frac{2}{\sigma_{n}^{2}} n^{1/4+\epsilon} \bigg)
\end{equation}  
In order to bound from above the first term of \eqref{eq:somme}, we use the fact that there exists $\epsilon_{1}>0$ such that for every $n$ sufficiently large 
\begin{equation}
  \P\bigg( \big|M^n-\frac{\sigma_n^2}{2}\Phi(\overline{W}^{n}_0,\dots,\overline{W}^{n}_{K_n+1})\big|
  >n^{1/4+\eps} \bigg) < e^{-n^{\eps_{1}}},
\label{eq:Phi_Close_M}
\end{equation}
as in \cite[Lemma 1.11]{LG05} (we warn the reader that in loc. cit.,
$\Phi(\overline{W}^{n}_0,\dots,\overline{W}^{n}_{K_n+1})$ is denoted by $K_n$).
We cannot directly apply \cite[Lemma 1.11]{LG05} because our offspring distributions depend on $n$;
we note however that the same proof can be applied,
provided that we have {\em uniform} deviation estimates for sums of i.i.d. variables 
distributed as $\overline{W}^{n}_{1}$ or $\overline{W}^{n}_{T^n}-\tfrac12 \sigma_n^2$;
here, {\em uniform} means that the quantities $\eps$ and $\eps_{1}$ appearing
in the exponents should be independent of $n$.
Such estimates directly follow from the estimate for its Laplace transform,
given in Lemma~\ref{lem:moments} (ii) and (iii) below (see, e.g., the proof of \cite[Lemma 1.12]{LG05}).
Therefore \eqref{eq:Phi_Close_M} is proved.

In order to bound from above the second term of \eqref{eq:somme}, we use the fact that there exists $\epsilon_{2} \in (0,\epsilon/4)$ such that for every $n$ sufficiently large
\begin{equation}
\label{eq:Mpasgrand}
\P\bigg( M^n  >n^{1/2+\eps}- \frac{2}{\sigma_{n}^{2}} n^{1/4+\epsilon} \bigg) \leq  n \max_{1 \leq k \leq n} \P\bigg( \overline{W}^{n}_{k}>n^{1/2+\eps}- \frac{2}{\sigma_{n}^{2}} n^{1/4+\epsilon} \bigg) \leq n e^{-n^{\epsilon_{2}}}.
\end{equation}This similarly follows the proof of \cite[Lemma 1.12]{LG05} combined with Lemma~\ref{lem:moments} (ii) below.

By combing Eqs.~\eqref{eq:Mpasgrand} and \eqref{eq:Phi_Close_M} with \eqref{eq:somme}, we get \eqref{eq:decroissancePhi} and this completes the proof.
\end{proof}

 \begin{lemma}
 \label{lem:moments}
 The following assertions hold.
 \begin{enumerate}
 \item[(i)] Let $(Y_{n})_{n \geq 1}$ be a sequence of centered random variables such that there exists $a>0$ and $C_{1}>0$ such that $\Es{e^{a|Y_{n}|}} \leq C_{1}$ for every $n \geq 1$. Then there exists $C_{2}>0$ such that for $\lambda>0$ sufficiently small, for every $n \geq 1$, $\E\big[e^{\lambda Y_{n}}\big] \leq e^{C_{2} \lambda ^{2}}$.
 \item[(ii)]  There exists a constant $C>0$ such that for $\lambda>0$ sufficiently small, for every $n  \geq 1$, $\E\big[e^{\lambda \overline{W}^{n}_{1}}\big] \leq e^{C \lambda ^{2}}$.
 \item[(iii)] There exists a constant $C>0$ such that for $\lambda>0$ sufficiently small, for every $n \geq 1$, $\E\big[{e^{\lambda(\overline{W}^{n}_{T^{n}} - {\sigma_{n}^{2}}/{2} ) }}\big] \leq e^{C \lambda ^{2}}$.
 \end{enumerate}
 \end{lemma}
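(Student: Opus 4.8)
\textbf{Proof plan for Lemma~\ref{lem:moments}.}

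The plan is to prove the three items in order, with (i) providing the abstract tool and (ii), (iii) its applications. For (i), I would start from the elementary bound $e^x \le 1+x+\tfrac{x^2}{2}e^{|x|}$, valid for all $x \in \R$. Applying it with $x = \lambda Y_n$, taking expectations, and using $\Es{Y_n}=0$ gives $\Es{e^{\lambda Y_n}} \le 1 + \tfrac{\lambda^2}{2}\Es{Y_n^2 e^{\lambda |Y_n|}}$. For $0 < \lambda \le a/2$ one has $Y_n^2 e^{\lambda|Y_n|} \le Y_n^2 e^{(a/2)|Y_n|} \le C' e^{a|Y_n|}$ for a universal constant $C'$ (since $t^2 e^{(a/2)t} \le C' e^{at}$ for $t \ge 0$), hence $\Es{Y_n^2 e^{\lambda|Y_n|}} \le C' C_1$ uniformly in $n$. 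Then $\Es{e^{\lambda Y_n}} \le 1 + \tfrac{C' C_1}{2}\lambda^2 \le e^{C_2 \lambda^2}$ with $C_2 = C'C_1/2$, for $\lambda$ small enough.

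For (ii), the point is to check that $\overline{W}^n_1$ has a uniformly bounded exponential moment, so that (i) applies. Recall that $\overline W^n_1 = W^n_1 - 1$, where $\Prb{W^n = j} = \sum_i \mucn(i)\Prb{\Sbn_i = j}$; equivalently $W^n = \Sbn_{H^n}$ with $H^n$ distributed as $\mucn$ and independent of the walk $\Sbn$. Conditioning on $H^n$, $\Es{e^{\delta' W^n} } = \Es{\big(\Es{e^{\delta' \Sbn_1}}\big)^{H^n}}$. By \eqref{eq:expmoment}, both $\mubn$ and $\mucn$ have uniform exponential moments for some $\delta > 0$; since $\abn,\bbn,\acn,\bcn$ converge, one can fix $\delta'>0$ small enough that $\Es{e^{\delta' \Sbn_1}} = \abn F(\bbn e^{\delta'})/(\abn F(\bbn))$ stays bounded by some $\rho < \infty$ uniformly in $n$, and moreover $\log \rho$ can be made small enough that $\Es{\rho^{H^n}} = \acn F(\bcn \rho)/(\acn F(\bcn))$ is bounded uniformly in $n$ (using again the uniform exponential moment of $\mucn$ and $\bcn < 1/e$). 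This gives $\sup_n \Es{e^{\delta' |\overline W^n_1|}} < \infty$ (the negative tail is trivial since $\overline W^n_1 \ge -1$), and since $\overline W^n$ is centered—$\nu^n$ has mean $1$ as noted in the proof of Proposition~\ref{prop:cvBH}—item (i) yields (ii).

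For (iii), I would recall the classical identity for ladder variables of a centered random walk with finite variance: if $T^n$ is the first strict ascending ladder time of $\overline W^n$, then $\Es{\overline W^n_{T^n}} = \tfrac12 \sigma_n^2$ (this is the Wald/ladder-height identity, e.g. via $\Es{\overline W^n_{T^n}}\Prb{\text{no return to }(-\infty,0]}^{-1}$ relations, or simply from $\Es{(\overline W^n_{m\wedge T^n})^2} \to \Es{\overline W^n_{T^n}}^2 \cdot(\ldots)$; in any case it is standard and uniform in $n$ given (ii)). Thus $\overline W^n_{T^n} - \sigma_n^2/2$ is centered, and it remains to check a uniform exponential moment for $\overline W^n_{T^n}$. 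This follows from (ii) together with the standard fact (Wiener–Hopf / Spitzer) that the ladder height is dominated, in the exponential-moment sense, by the positive part of the step: $\Es{e^{\lambda \overline W^n_{T^n}}} \le 1 + \Es{e^{\lambda (\overline W^n_1)^+}}$ up to constants, or more concretely by bounding $\Prb{\overline W^n_{T^n} > x}$ via $\sum_{k\ge 1} \Prb{T^n = k, \overline W^n_k > x} \le \sum_k \Prb{\overline W^n_k > x, \overline W^n_k > \overline W^n_{k-1}}$ and using the uniform subgaussian bound from (ii) to sum the tail. The constants are uniform in $n$ because $\sigma_n^2 \to \sigma^2 > 0$ and the exponential-moment bound in (ii) is uniform. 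Applying (i) to $Y_n = \overline W^n_{T^n} - \sigma_n^2/2$ finishes the proof.

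The main obstacle is the uniform control of the ladder-height exponential moment in (iii): one must be careful that the classical ladder-variable estimates (finiteness of $\Es{e^{\lambda \overline W^n_{T^n}}}$ and the value $\Es{\overline W^n_{T^n}} = \sigma_n^2/2$) hold with constants that do not degenerate as $n \to \infty$. This is where the convergence of the offspring-distribution parameters $(\abn,\bbn,\acn,\bcn)$ to the limiting $(\ab,\bb,\ac,\bc)$ with $\bb,\bc \in (0,1/e)$, hence \eqref{eq:expmoment}, is essential; it guarantees that all the relevant moment bounds and the lower bound on $\sigma_n^2$ are uniform.
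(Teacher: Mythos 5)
Parts (i) and (ii) are sound. For (i) you use the pointwise bound $e^x \le 1 + x + \tfrac{x^2}{2}e^{|x|}$ and the sub-multiplicative trick $t^2 e^{(a/2)t}\le C' e^{at}$, whereas the paper first derives $\P(|Y_n|\ge t)\le C_1 e^{-at}$ by Markov, bounds all moments, and resums the exponential series; the two routes are equivalent in content, and yours is if anything shorter. Part (ii) follows the paper almost verbatim: condition on the number of (black) children, reduce to $\sum_i \mucn(i)\,\Es{e^{a\Sbn_1}}^i$, use (i) to force $\Es{e^{a\Sbn_1}}\le e^{\delta}$ for small $a$, then invoke \eqref{eq:expmoment}.

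Part (iii) has a genuine gap. The two facts you need are both true, but neither of your justifications would survive scrutiny. First, the ``concrete'' tail bound
$\Prb{\overline W^n_{T^n} > x} \le \sum_{k\ge 1} \Prb{\overline W^n_k > x,\ \overline W^n_k > \overline W^n_{k-1}}$
is vacuous: since the walk $\overline W^n$ is centered and recurrent, $\Prb{\overline W^n_k > x,\ \overline W^n_k > \overline W^n_{k-1}}$ does \emph{not} tend to $0$ as $k \to \infty$ for fixed $x$ (by the central limit theorem the first event has probability tending to $1/2$, and the second is an event on the $k$-th step with strictly positive probability, asymptotically independent of the first), so the right-hand side diverges. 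Replacing ``subgaussian bound from (ii)'' by Hoeffding gives $\Prb{\overline W^n_k>x}\lesssim e^{-x^2/(ck)}$, which also fails to be summable. The variant $\Prb{T^n=k,\overline W^n_k>x}\le\Prb{T^n\ge k}\Prb{\overline W^n_1>x}$ fares no better since $\Es{T^n}=\infty$. Second, $\Es{\overline W^n_{T^n}}=\sigma_n^2/2$ is \emph{not} a consequence of a generic ``Wald/ladder-height identity'' (for a general centered walk the ascending ladder height has no closed-form mean; the Wiener--Hopf relation only yields the product $\Es{S_{T^+}}\cdot\Es{|S_{T^-}|}=\sigma^2/2$). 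It holds here precisely because the step $\overline W^n_1$ is skip-free to the left (bounded below by $-1$), so the strict descending ladder height is $-1$ almost surely; your sketch does not use this and the parenthetical ``$\cdots \cdot(\ldots)$'' is a placeholder for exactly the argument that is missing.

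The paper's route, which you should adopt, exploits the skip-free structure directly: by \cite[Lemma 1.9]{LG05} the ladder height $\overline W^n_{T^n}$ has an \emph{explicit} distribution given by a tail of $\nu^n$. From that formula the identity $\Es{\overline W^n_{T^n}}=\sigma_n^2/2$ is an immediate computation, and the uniform exponential tail of $\overline W^n_{T^n}$ follows at once from the uniform exponential tail of $\nu^n$ established in the proof of (ii). With both facts in hand you can then apply (i) to $Y_n=\overline W^n_{T^n}-\sigma_n^2/2$ (using that $\sigma_n^2$ converges, so the shift is uniformly bounded) and conclude. The reliance on the explicit left-continuous ladder-height law is essential; a black-box appeal to Wiener--Hopf would still require you to extract uniform-in-$n$ constants, which is the whole point of the lemma.
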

 
 \begin{proof}
 The first assertion is a standard result concerning sub-exponential distributions. However, since we need estimates uniform in $n$, we give a proof for completeness. First,  using Markov's inequality, write, for $t \geq 0$ and $n \geq 1$,
 \[\P(|Y_{n}| \geq t)= \P(e^{a|Y_{n}|} \geq e^{at}) \leq C_{1} e^{-at}.\]
 Hence, for $n,k \geq 1$,
 \[\Es{|Y_{n}|^{k}}=\int_{0}^{\infty} k t^{k-1} \Pr{|Y_{n}| \geq t} {\d}t \leq  C_{1} k \int_{0}^{\infty} t^{k-1} e^{-a t} {\d}t.\]
 In particular, $\Es{|Y_{n}|^{2}} \leq 2C_{1}/a^{2}$.
 Now, for $\lambda>0$ small enough, since $\E[Y_{n}]=0$,
 \[ \Es{e^{\lambda Y_{n}}}=1+ \sum_{k=2}^{\infty} \Es{Y_{n}^{k}} \frac{\lambda^{k}}{k!}  \leq 1+ \frac{C_{1}}{a^{2}} \lambda^{2}+  C_{1}  \int_{0}^{\infty}  \left( \sum_{k=3}^{\infty}  kt^{k-1} \frac{\lambda^{k}}{k!} \right) e^{-a t} {\d}t.\]
 By calculating explicitly the last sum and then the integral, we finally get that for $\lambda \in (0,a/2)$, for every $n \geq 1$, $\E\big[e^{\lambda Y_{n}} \big] \leq  1+ \tfrac{C_{1}}{a^{2}} \lambda^{2}+  \tfrac{2C_{1}}{a^{3}} \lambda^{3} \leq 1+ \big( \tfrac{C_{1}}{a^{2}}+ \tfrac{2C_{1}}{a^{3}} \big) \lambda^{2}$, which gives the desired result with $C_{2}= \tfrac{C_{1}}{a^{2}}+ \tfrac{2C_{1}}{a^{3}}$. 
 \medskip

 For the second assertion, by (i), it is enough to show that there exists $a>0$ and $C_{1}>0$ such that $\E\big[e^{a |\overline{W}^{n}_{1}|}\big] \leq C_{1}$ for every $n \geq 1$.
 Since $\overline{W}^{n}_{1}$ takes value in $\{-1,0,1,\dots\}$, it is equivalent to prove
 that
 $\E\big[e^{a (\overline{W}^{n}_{1}+1)}\big] \leq C'_{1}$ for every $n \geq 1$
 (where $C'_1$ is another constant).
 To this end, write
 \begin{equation}
   \E\big[e^{a (\overline{W}^{n}_{1}+1)}\big]  \leq  \sum_{ i=0}^{\infty}\mucn(i)  \E \big[ e^{a \Sbn_{i}} \big] =   \sum_{ i=0}^{\infty}\mucn(i)  \E \big[e^{a \Sbn_{1}} \big]^{i}.
   \label{eq:Tech5}
 \end{equation}
 Let $\delta>0$ be such that  \eqref{eq:expmoment}  holds.
 Hence $\sup_{n \ge 1} \E[e^{\delta \Sbn_{1}}]<\infty$
 and from the first assertion,
 for any $A>1$, we have $\sup_{n \ge 1} \E[e^{a \Sbn_{1}}] <A$ for $a$ sufficiently small.
 Choosing $A=e^\delta$, and going back to \eqref{eq:Tech5}, we get
 \[\E\big[e^{a (\overline{W}^{n}_{1}+1)}\big]  \leq \sum_{ i=0}^{\infty}\mucn(i) e^{\delta i}.\]
 By \eqref{eq:expmoment}, the right-hand side is finite and bounded, as a function of $n$,
 which concludes the proof of (ii).
 \medskip

For the last assertion, we first observe that by \cite[Lemma 1.9]{LG05}, the law of $\overline{W}^{n}_{T^{n}}$ is given by
\[\Pr{\overline{W}^{n}_{T^{n}}=k}=\nu^{n}([k,\infty)), \qquad k \geq 0.\]
Since $\sigma_{n}^{2}$ converges as $n \rightarrow \infty$, 
it is enough to show that there exists $a>0$ and $C>0$
such that $\E\big[e^{a |\overline{W}^{n}_{T^{n}}|}\big] \leq C$ for every $n \geq 1$.
Note that the absolute values are here superfluous since $\overline{W}^{n}_{T^{n}}$
is nonnegative.

The proof of (ii) shows that there exists two constants $C',b>0$ such that 
\[\Pr{\overline{W}^{n}_{T^{n}}=k}=\nu^{n}([k,\infty)) = \P(\overline{W}^{n}_{1} \geq k-1)  \leq e^{-b(k-1)} \E\big[ e^{b\, \overline{W}^{n}_1} \big]\le  C' e^{-bk}\] 
for every $n \geq 1$ and $ k \geq 0$. 
This implies the existence of $a>0$ and $C>0$ such that $\E\big[e^{a \overline{W}^{n}_{T^{n}}}\big] \leq C$ for every $n \geq 1$. This concludes the proof.
 \end{proof}

\subsection{Completing the proof of Theorem \ref{thm:cvlam}.} So far, we have established Theorem \ref{thm:cvlam} in the following cases:
\begin{itemize}
\item (i)$_{c=0}$ only for $ \dFKn$ in Section~\ref{ssec:partialproof};
\item (i)$_{c>0}$ in Section~\ref{ssec:cv_pos};
\item (ii) in Section~\ref{sec:Linfty}.
\end{itemize}

In this Section, we treat the missing cases. More precisely, in Section~\ref{sssec:iiiF}, we show that the convergence $\dFKn \rightarrow \mathbf{L}_{\infty}$  in case (iii)  follows from (ii) by a maximality argument. Then, in Section~\ref{sssec:P0}, we establish, in case (i)$_{c=0}$, the  convergence $\dPKn \rightarrow \mathbf{L}_{0}$ by using, surprisingly, the convergence $ \dFKn \rightarrow \mathbf{L}_{\infty}$ just established in Section~\ref{sssec:iiiF}. We conclude in Section~\ref{sssec:iiiP} that   the last missing convergence $\dPKn \rightarrow \mathbf{L}_{c}$ in case (iii) holds by using a short symmetry argument,  as well as again the convergence $ \dFKn \rightarrow \mathbf{L}_{\infty}$  established in Section~\ref{sssec:iiiF}.

\subsubsection{Proof of  Theorem~\ref{thm:cvlam} (iii) for $ \dFKn$.}
\label{sssec:iiiF}
Suppose that $ \frac{n-K_{n}}{\sqrt{n}} \rightarrow c$ as $n \rightarrow \infty$, with $c \geq 0$. We choose $K'_{n}$ such that $ \frac{K'_{n}}{\sqrt{n}} \rightarrow \infty$ and $ \frac{n-K'_{n}}{\sqrt{n}} \rightarrow \infty$ as $n \rightarrow \infty$, and $K'_{n} \leq K_{n}$ for $n$ sufficiently large. Then by  Theorem~\ref{thm:cvlam} (ii), which has been established in Section~\ref{sec:Linfty}, $ \dot{\FFF}^{(n)}_{K'_{n}}$ converges in distribution to $\mathbf{L}_{\infty}$, which is maximal for inclusion. Since  $ \dot{\FFF}^{(n)}_{K'_{n}}\subset\dFKn$, it readily follows by a compactness argument that $ \dFKn$ also converges in distribution to $\mathbf{L}_{\infty}$ (and in fact we have the joint convergence $ (\dot{\FFF}^{(n)}_{K'_{n}},\dFKn) \to (Z,Z)$, where $Z$ has the distribution of $\mathbf{L}_{\infty}$). This completes the proof. \qed

\begin{remark}
  The same argument can be used to deduce the convergence of $\dFKn$ to $\mathbf L_\infty$ in the case
  $c=\infty,\, \gamma>0$ from the case $c=\infty,\, \gamma=0$.
  Since in Section~\ref{sec:Linfty}, the most difficult part is the case $\gamma>0$,
  we could have shortened significantly the proof if we were only interested in $\dFKn$
  and not in $\dPKn$. However, it is not true in general that  if $\dFKn \rightarrow \mathbf{L}_{\infty}$, then    $(\dFKn,\dPKn) \rightarrow ( \mathbf{L}_{\infty}, \mathbf{L}_{\infty})$ (but it is true that if $\dPKn \rightarrow \mathbf{L}_{\infty}$ then $(\dFKn,\dPKn) \rightarrow ( \mathbf{L}_{\infty}, \mathbf{L}_{\infty})$, see Lemma~\ref{lem:reduction_FtoK}).
\end{remark}

\subsubsection{Proof of Theorem~\ref{thm:cvlam} (i)$_{c=0}$ for $ \dPKn$.}
\label{sssec:P0}

Assume that $K_{n} \rightarrow \infty$ and that $\frac{K_n}{\sqrt{n}} \to 0$.
We want to prove that $\dPKn$ tends to $\mathbb{L}_0=\S$.
Recall from Lemma~\ref{lem:observation} that 
the blocks of $\PKn$ are the connected components of $\FKn$; in particular, $ \S \cap \dPKn= \S \cap \dFKn$).
Since $\S \cap \dFKn$ converges to $\S$ in distribution 
(this was established in the second part of the convergence in distribution 
$\dFKn \rightarrow \mathbf{L}_0$ in Section~\ref{ssec:partialproof}), 
any limit point of $\dPKn$ contain $\S$ and
it is enough to check that 
$\Lambda(\dPKn) \rightarrow 0$  in probability as $n \rightarrow \infty$,
where $\Lambda(L)$ is the Euclidean length of the longest chord of the lamination $L$.

In the first part of the proof of the convergence in distribution $\dFKn \rightarrow \mathbf{L}_0$ in Section~\ref{ssec:partialproof}, it was shown that  $\Lambda(\dFKn) \rightarrow 0$ in probability as $n \rightarrow \infty$.
The problem is that $\Lambda(\dFKn) \rightarrow 0$ does not automatically imply that $\Lambda(\dPKn) \rightarrow 0$, 
since $\dFKn$ may have a connected component made of many small chords
that creates a long chord in $\dPKn$ (for instance, if $\dFKn= \cup_{k=1}^{\lfloor n/2 \rfloor} [e^{-2 \i \pi k/n},e^{-2 \i \pi (k+1)/n}]$, then $\dPKn$ has a chord of length approximately $2$ while $\dFKn$ only has chords of length order $1/n$). 

To prove $\Lambda(\dPKn) \rightarrow 0$, we shall additionally use the convergence
of $\dFKn \rightarrow \mathbf{L}_0$ (proved in Section~\ref{ssec:partialproof} since we are in the case $c=0$)
and of $\dot{\mathcal{F}}^{(n)}_{n-1}$ to the Brownian triangulation $\mathbf{L}_{\infty}$ (proved in Section~\ref{sssec:iiiF}).
From Skorokhod's representation theorem, we can assume that the convergence 
$\big(\dFKn, \Lambda(\dFKn), \dot{\mathcal{F}}^{(n)}_{n-1}\big) \rightarrow (\S,0,\mathbf{L}_{\infty})$ holds almost surely.
Note also that, almost surely, for every $\epsilon>0$, for every arc ${A} \subset \S$ of length at least $\epsilon$, 
$\mathbf{L}_{\infty}$ contains a chord of positive Euclidean length with endpoints belonging to $ {A}$ (indeed, this easily follows from the fact that local minima times are almost surely dense for the Brownian excursion).

We argue by contradiction, and assume the existence of $\epsilon>0$ such that, 
up to extraction,  $\Lambda(\dPKn) \geq 2\epsilon$ for every $n \geq 1$.
By compactness, up to extraction, we may assume that 
$\dPKn$ contains a chord $ [e^{-2 \i \pi s^{n}},e^{-2 \i \pi t^{n}}]$
such that $s^{n} \rightarrow s$ and $t^{n} \rightarrow t$,
with $\min(t-s,1-t+s) \geq \epsilon$.
Since $\Lambda(\dFKn) \rightarrow 0$, for $n$ sufficiently large 
the chord $[e^{-2 \i \pi s^{n}},e^{-2 \i \pi t^{n}}]$ is not in $\FKn$. But since the blocks of $\PKn$ are the connected components of $\FKn$, $n s^{n}$ and $n t^{n}$ must belong to the same connected component $ \mathcal{C}_{n}$  of $\FKn$. By considering the path that joins them in $\dFKn$, we get the existence of $s_{1}^{n}, \ldots, s_{d_{n}}^{n}$ with $s_{1}^{n}=s^{n}$, $s_{d_{n}}=t^{n}$, and $n s_{i}^{n} \in \mathcal{C}_{n}$ for every $1 \leq i \leq d_{n}$, with either $ \{s_{1}^{n}, \ldots, s_{d_{n}}^{n}\}  \subset[s^{n},t^{n}]$, or $ \{s_{1}^{n}, \ldots, s_{d_{n}}^{n}\}  \subset [0,1] \backslash [s^{n},t^{n}]$. To simplify, let us treat the first case (the argument in the second case is exactly the same). Then $s_{1}^{n} \leq s_{2}^{n} \leq \cdots \leq s_{d_{n}}^{n}$, and since $\Lambda(\dFKn) \rightarrow 0$, we have $\max_{1 \leq i \leq d_{n}-1} (s^{n}_{i+1}-s_{i}^{n}) \rightarrow 0$ as $n \rightarrow \infty$. 
Since $\dFKn$ is a non-crossing tree,  this implies that for every fixed $\eta>0$, for $n$ sufficiently large, in $\dFKn$ there is no chord of Euclidean length at least $\eta$ with endpoints belonging to $[s^{n},t^{n}]$. Since $  \dot{\mathcal{F}}^{(n)}_{n-1}$ is still a non-crossing tree obtained from $\dFKn$ by adding chords, so the latter property also holds for $ \dot{\mathcal{F}}^{(n)}_{n-1}$ (see Figure~\ref{fig:zoneinterdite} for an illustration).
  But $ \dot{\mathcal{F}}^{(n)}_{n-1} \rightarrow \mathbf{L}_{\infty}$. Therefore, in $\mathbf{L}_{\infty}$,  there is no chord of positive Euclidean length with endpoints belonging to the arc $ \{ e^{-2 \i \pi u} : u \in (s,t) \}$. This is a contradiction, and this completes the proof.
\qed

\begin{figure}[t]
  \begin{center}
  \includegraphics[scale=1]{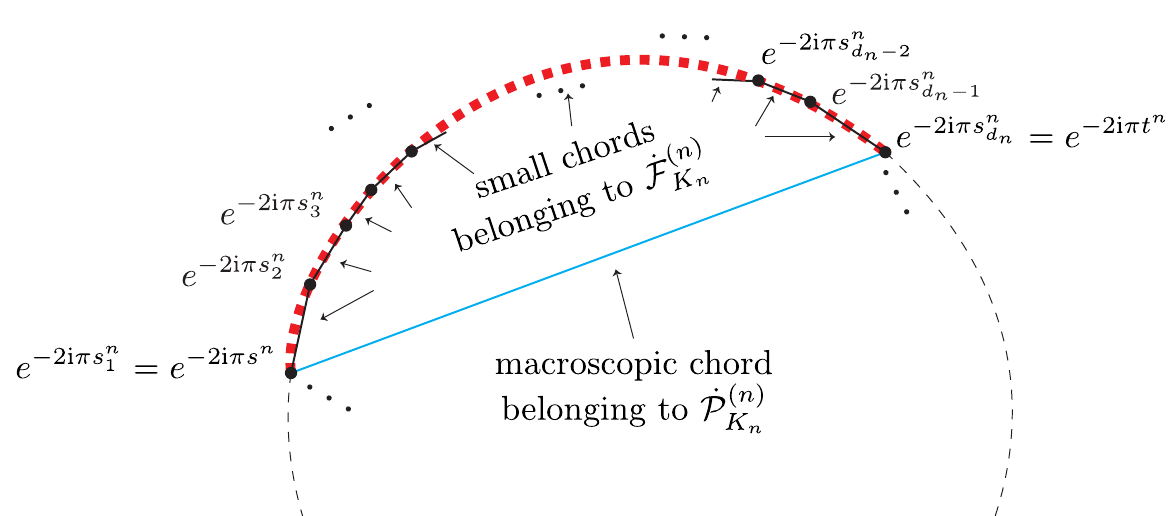}
  \caption{\label{fig:zoneinterdite} Illustration of the proof of Theorem~\ref{thm:cvlam} (i)$_{c=0}$ for $ \dPKn$: the blue ``macroscopic'' chord belongs to $\dPKn$. Since  the blocks of $\PKn$ are the connected components of $\FKn$ and since all the chords of $\dFKn$ are ``small'', there exists a path of such chords (in black) joining the two endpoints of the ``macroscopic'' chord. It is then not possible to add a new  ``macroscopic'' chord with endpoints  belonging to the bold dashed red arc.}
  \end{center}
  \end{figure}

\subsubsection{Proof of  Theorem~\ref{thm:cvlam} (iii) for $ \dPKn$}
\label{sssec:iiiP}
Assume that $ \frac{n-K_{n}}{\sqrt{n}} \rightarrow c$ as $n \rightarrow \infty$, with $c \geq  0$.
We first establish that
\begin{equation}
\label{eq:cvhat} \hat{\PPP}^{(n)}_{K_{n}}  \quad \mathop{\longrightarrow}^{(d)}_{n \rightarrow \infty} \quad \mathbf{L}_{c}.
\end{equation}
By Lemma~\ref{lem:reductionKn}, it suffices to show that $ \hat{\PPP}^{(n)}_{n-K_{n}} \rightarrow \mathbf{L}_{c}$ as $n \rightarrow \infty$. If $n-K_{n} \rightarrow \infty$,  this follows from Theorem~\ref{thm:cvlam} (i) for $\dot{\PPP}^{(n)}_{n-K_{n}}$ (which was established in Section~\ref{ssec:cv_pos} for $c>0$ and in Section~\ref{sssec:P0} for $c=0$). If $ \limsup_{n \rightarrow \infty} (n-K_{n})<\infty$, then   $ \hat{\FFF}^{(n)}_{n-K_{n}}$ has a bounded number of chords, whose Euclidean length, divided by $n$, tends to $0$ with probability tending to one as $n \rightarrow \infty$ (this was established in Section~\ref{ssec:partialproof}), and this readily implies the convergence $ \hat{\PPP}^{(n)}_{n-K_{n}} \rightarrow \mathbf{L}_{0}$. This establishes \eqref{eq:cvhat}.

Now assume that $c>0$. Since $\hat{\mathcal{P}}^{(n)}_{K_{n}}$ is obtained from  $\dPKn$ by adding points of $\S$ and since $\hat{\mathcal{P}}^{(n)}_{K_{n}} \rightarrow \mathbf{L}_{c}$ in distribution, any sub-sequential distributional limit of $\dPKn$ must contain all the chords of $\mathbf{L}_{c}$, and by \eqref{eq:lamination_stable2} must be equal to $\mathbf{L}_{c}$. In other words, the convergence $\dPKn \rightarrow  \mathbf{L}_{c}$ holds in distribution.

Finally assume that $c=0$.  Since $\hat{\mathcal{P}}^{(n)}_{K_{n}}$ is obtained from  $\dPKn$ by adding points of $\S$, \eqref{eq:cvhat} shows that any subsequential distribution limit of $\dPKn$ is included in $\mathbf{L}_{0}=\S$. Therefore, it is enough to show that the convergence $\S \cap \dPKn \rightarrow \S$  holds in distribution as $n \rightarrow \infty$. To this end, the key idea is to use the fact that $\S \cap \dPKn=\S \cap \dFKn$. Indeed, by  Theorem~\ref{thm:cvlam} (iii) for $ \dFKn$ (this was established in Section~\ref{sssec:iiiF}), $\dot{\mathcal{F}}^{(n)}_{K_{n}} \rightarrow \mathbf{L}_{\infty}$. By \eqref{eq:isolated}, 
there exists an endpoint of some non-trivial chords of $\mathbf{L}_{\infty}$ arbitrarily close to any fixed point of $\S$,
which implies that   $\S \cap \dot{\mathcal{F}}^{(n)}_{K_{n}} \rightarrow \S$ in distribution as $n \rightarrow \infty$, and hence  $\S \cap \dot{\mathcal{P}}^{(n)}_{K_{n}} \rightarrow \S$ in distribution as $n \rightarrow \infty$. This completes the proof.
\qed
\medskip

\begin{remark}~
\begin{itemize}
\item In view of the previous results, we have established that a condensation phenomenon occurs in the dual tree $ \mathcal{T}(\PKn)$ when either $K_{n} \rightarrow \infty$ and $\tfrac{K_{n}}{\sqrt{n}} \rightarrow 0$, or $\tfrac{n-K_{n}}{\sqrt{n}} \rightarrow 0$. Indeed, when  $\tfrac{n-K_{n}}{\sqrt{n}} \rightarrow 0$, the fact that the convergence $\dPKn {\rightarrow} \S$ holds in distribution indicates that the dual tree  $ \mathcal{T}(\PKn)$ contains a unique (black) vertex of macroscopic degree (of order $n$)  such that the subtrees grafted on it all have size $o(n)$.  When $K_{n} \rightarrow \infty$ and $\tfrac{K_{n}}{\sqrt{n}} \rightarrow 0$, the same phenomenon happens with condensation on a white vertex.  It seems difficult to establish these facts directly (such as in \cite{AL11,Jan12,Kor15}).
\item If $K \geq 1$ is fixed, as $n \rightarrow \infty$, we believe that $(\dot{\mathcal{P}}^{(n)}_{K},\dot{\mathcal{F}}^{(n)}_{K})$ converge jointly in distribution to the same limit, a collection of $K$ i.i.d.~uniformly distributed points on $\S$, and that this could be established by using Proposition~\ref{prop:lawproduct}.
\end{itemize}
\end{remark}


\begin{thebibliography}{DMWZZ04}

\bibitem[ACEH16]{ACEH16}
Alexander Alexandrov, Guillaume Chapuy, Bertrand Eynard, and John Harnad,
  \emph{{Weighted Hurwitz numbers and topological recursion: an overview}},
  preprint arXiv:1610.09408, 2016.

\bibitem[AHRV07]{AHRV07}
Omer Angel, Alexander~E. Holroyd, Dan Romik, and B\'alint Vir\'ag, \emph{Random
  sorting networks}, Adv. Math. \textbf{215} (2007), no.~2, 839--868.
  \MR{2355610}

\bibitem[AL11]{AL11}
In\'es Armend\'ariz and Michail Loulakis, \emph{Conditional distribution of
  heavy tailed random variables on large deviations of their sum}, Stochastic
  Process. Appl. \textbf{121} (2011), no.~5, 1138--1147. \MR{2775110}

\bibitem[Ald94a]{Ald94a}
David Aldous, \emph{Recursive self-similarity for random trees, random
  triangulations and {B}rownian excursion.}, Ann. Probab. \textbf{22} (1994),
  no.~2, 527--545.

\bibitem[Ald94b]{Ald94b}
\bysame, \emph{Triangulating the circle, at random.}, Amer. Math. Monthly
  \textbf{101} (1994), no.~3, 223--233.

\bibitem[AP98]{AP98}
David Aldous and Jim Pitman, \emph{The standard additive coalescent}, Ann.
  Probab. \textbf{26} (1998), no.~4, 1703--1726. \MR{1675063}

\bibitem[BD06]{BD06}
Nathana\"el Berestycki and Rick Durrett, \emph{A phase transition in the random
  transposition random walk}, Probab. Theory Related Fields \textbf{136}
  (2006), no.~2, 203--233. \MR{2240787}

\bibitem[BDFG04]{BDFG04}
J{\'e}r{\'e}mie Bouttier, Philippe Di~Francesco, and Emmanuel Guitter,
  \emph{Planar maps as labeled mobiles}, Electron. J. Combin. \textbf{11}
  (2004), no.~1, Research Paper 69, 27 pp. (electronic). \MR{MR2097335
  (2005i:05087)}

\bibitem[Ber96]{Ber96}
Jean Bertoin, \emph{L\'evy processes}, Cambridge Tracts in Mathematics, vol.
  121, Cambridge University Press, Cambridge, 1996. \MR{1406564 (98e:60117)}

\bibitem[Ber11]{Ber11}
Nathana\"el Berestycki, \emph{Emergence of giant cycles and slowdown transition
  in random transpositions and {$k$}-cycles}, Electron. J. Probab. \textbf{16}
  (2011), no. 5, 152--173. \MR{2754801}

\bibitem[Ber18]{Ber16}
Gabriel Berzunza, \emph{On scaling limits of multitype {G}alton-{W}atson trees
  with possibly infinite variance}, ALEA Lat. Am. J. Probab. Math. Stat.
  \textbf{15} (2018), no.~1, 21--48.

\bibitem[Bet18]{Bet17}
J\'er\'emie Bettinelli, \emph{Convergence of uniform noncrossing partitions
  toward the {B}rownian triangulation}, Sém. Lotharigien Combin. \textbf{80B}
  (2018), Article \#38, 12pp., FPSAC Proceedings.

\bibitem[Bia05]{Biane2004}
Philippe Biane, \emph{Nombre de factorisations d'un grand cycle}, S{\'e}m.
  Lothar. Combin. \textbf{51} (2004/05), Art. B51a, 4 pp.

\bibitem[Bia97]{Biane1997}
\bysame, \emph{{Some properties of crossings and partitions}}, Discrete Math.
  \textbf{175} (1997), no.~1-3, 41--53. \MR{MR1475837 (98h:05020)}

\bibitem[Bia02]{Biane2002}
\bysame, \emph{Parking functions of types a and b}, Electron. J. Combin.
  \textbf{9} (2002), no.~1, Note \#7, 5 pp.

\bibitem[Bil68]{Bil68}
Patrick Billingsley, \emph{Convergence of probability measures, first edition},
  Wiley, New-York, 1968.

\bibitem[BK00]{BK00}
J\"urgen Bennies and G\"otz Kersting, \emph{A random walk approach to
  {G}alton-{W}atson trees}, J. Theoret. Probab. \textbf{13} (2000), no.~3,
  777--803. \MR{1785529}

\bibitem[CGH{\etalchar{+}}96]{CGHJK96}
R.~M. Corless, G.~H. Gonnet, D.~E.~G. Hare, D.~J. Jeffrey, and D.~E. Knuth,
  \emph{On the {L}ambert {$W$} function}, Adv. Comput. Math. \textbf{5} (1996),
  no.~4, 329--359. \MR{1414285}

\bibitem[CK14]{CK14}
Nicolas Curien and Igor Kortchemski, \emph{Random non-crossing plane
  configurations: a conditioned {G}alton-{W}atson tree approach}, Random
  Structures Algorithms \textbf{45} (2014), no.~2, 236--260. \MR{3245291}

\bibitem[CK15]{CK15}
\bysame, \emph{Percolation on random triangulations and stable looptrees},
  Probab. Theory Related Fields \textbf{163} (2015), no.~1-2, 303--337.
  \MR{3405619}

\bibitem[CL16]{CL16}
Lo\"\i{c} Chaumont and Rongli Liu, \emph{Coding multitype forests: application
  to the law of the total population of branching forests}, Trans. Amer. Math.
  Soc. \textbf{368} (2016), no.~4, 2723--2747. \MR{3449255}

\bibitem[CLG11]{CLGrecursive}
Nicolas Curien and Jean-Fran{\c{c}}ois Le~Gall, \emph{Random recursive
  triangulations of the disk \textit{via} fragmentation theory}, Ann. Probab.
  \textbf{39} (2011), no.~6, 2224--2270.

\bibitem[CUB11]{CUB11}
Lo\"\i{c} Chaumont and Ger\'onimo Uribe~Bravo, \emph{Markovian bridges: weak
  continuity and pathwise constructions}, Ann. Probab. \textbf{39} (2011),
  no.~2, 609--647. \MR{2789508}

\bibitem[CW13]{CWmht}
Nicolas Curien and Wendelin Werner, \emph{The {M}arkovian hyperbolic
  triangulation}, J. Eur. Math. Soc. \textbf{15} (2013), no.~4, 1309--1341.

\bibitem[Dau18]{dauvergne2018archimedean}
Duncan Dauvergne, \emph{The archimedean limit of random sorting networks},
  preprint arXiv:1802.08934, 2018.

\bibitem[D{\'e}n59]{Den59}
J{\'o}zsef D{\'e}nes, \emph{The representation of a permutation as the product
  of a minimal number of transpositions, and its connection with the theory of
  graphs}, Magyar Tud. Akad. Mat. Kutat\'o Int. K\"ozl. \textbf{4} (1959),
  63--71. \MR{0115936}

\bibitem[DMWZZ04]{DMZZ04}
Persi Diaconis, Eddy Mayer-Wolf, Ofer Zeitouni, and Martin P.~W. Zerner,
  \emph{The {P}oisson-{D}irichlet law is the unique invariant distribution for
  uniform split-merge transformations}, Ann. Probab. \textbf{32} (2004),
  no.~1B, 915--938. \MR{2044670}

\bibitem[DS81]{DS81}
Persi Diaconis and Mehrdad Shahshahani, \emph{Generating a random permutation
  with random transpositions}, Z. Wahrsch. Verw. Gebiete \textbf{57} (1981),
  no.~2, 159--179. \MR{626813}

\bibitem[Dur10]{Dur10}
Rick Durrett, \emph{Probability: theory and examples}, fourth ed., Cambridge
  Series in Statistical and Probabilistic Mathematics, vol.~31, Cambridge
  University Press, Cambridge, 2010. \MR{2722836}

\bibitem[EG87]{EG87}
Paul Edelman and Curtis Greene, \emph{Balanced tableaux}, Adv. Math.
  \textbf{63} (1987), no.~1, 42--99.

\bibitem[ELSV01]{ELSV}
Torsten Ekedahl, Sergei Lando, Michael Shapiro, and Alek Vainshtein,
  \emph{Hurwitz numbers and intersections on moduli spaces of curves}, Invent.
  Math. \textbf{146} (2001), no.~2, 297--327.

\bibitem[F{\'e}r12]{Fer12}
Valentin F{\'e}ray, \emph{{Partial Jucys-Murphy elements and star
  factorizations}}, Eur. J. Combin. \textbf{33} (2012), 189--198.

\bibitem[FK18]{FK18}
Valentin F\'eray and Igor Kortchemski, \emph{Trajectories in random minimal
  transposition factorizations}, Preprint arXiv:1810.07586, 2018.

\bibitem[FS09]{FS09}
Philippe Flajolet and Robert Sedgewick, \emph{Analytic combinatorics},
  Cambridge University Press, Cambridge, 2009. \MR{2483235}

\bibitem[GJ99a]{GJ99B}
I.P. Goulden and Daniel~M. Jackson, \emph{The number of ramified coverings of
  the sphere by the double torus, and a general form for higher genera}, J.
  Combin. Th. Ser. A \textbf{88} (1999), no.~2, 259 -- 275.

\bibitem[GJ99b]{GJ99A}
\bysame, \emph{A proof of a conjecture for the number of ramified coverings of
  the sphere by the torus}, J. Combin. Th. Ser. A \textbf{88} (1999), no.~2,
  246 -- 258.

\bibitem[GJ09]{GouldenJackson2009}
I.~P. Goulden and Daniel~M. Jackson, \emph{{Transitive powers of
  {Y}oung-{J}ucys-{M}urphy elements are central}}, Journal of Algebra
  \textbf{321} (2009), no.~7, 1826--1835.

\bibitem[GP93]{GP93}
I.~P. Goulden and S.~Pepper, \emph{Labelled trees and factorizations of a cycle
  into transpositions}, Discrete Math. \textbf{113} (1993), no.~1-3, 263--268.
  \MR{1212884}

\bibitem[GY02]{GY02}
Ian Goulden and Alexander Yong, \emph{Tree-like properties of cycle
  factorizations}, J. Combin. Theory Ser. A \textbf{98} (2002), no.~1,
  106--117. \MR{1897927}

\bibitem[Hoe63]{Hoeffding}
Wassily Hoeffding, \emph{Probability inequalities for sums of bounded random
  variables}, J. Amer. Stat. Assoc. \textbf{58} (1963), no.~301, 13--30.

\bibitem[Hur91]{Hur91}
Adolf Hurwitz, \emph{Ueber {R}iemann'sche {F}l\"achen mit gegebenen
  {V}erzweigungspunkten}, Math. Ann. \textbf{39} (1891), no.~1, 1--60.
  \MR{1510692}

\bibitem[IR09]{IrvingRattan2009}
John Irving and Amarpreet Rattan, \emph{{Minimal factorizations of permutations
  into star transpositions}}, Discrete Mathematics \textbf{309} (2009), no.~6,
  1435--1442.

\bibitem[Jac88]{Jac88}
Daniel Jackson, \emph{Some combinatorial problems associated with products of
  conjugacy classes of the symmetric group}, J. Combin. Theory Ser A
  \textbf{49} (1988), 363--369.

\bibitem[Jan12]{Jan12}
Svante Janson, \emph{Simply generated trees, conditioned {G}alton-{W}atson
  trees, random allocations and condensation}, Probab. Surv. \textbf{9} (2012),
  103--252. \MR{2908619}

\bibitem[JS03]{JS03}
Jean Jacod and Albert~N. Shiryaev, \emph{Limit theorems for stochastic
  processes}, second ed., Grundlehren der Mathematischen Wissenschaften
  [Fundamental Principles of Mathematical Sciences], vol. 288, Springer-Verlag,
  Berlin, 2003. \MR{1943877 (2003j:60001)}

\bibitem[Kal81]{Kal81}
Olav Kallenberg, \emph{Splitting at backward times in regenerative sets}, Ann.
  Probab. \textbf{9} (1981), no.~5, 781--799. \MR{628873}

\bibitem[Kal02]{Kal02}
\bysame, \emph{Foundations of modern probability}, second ed., Probability and
  its Applications (New York), Springer-Verlag, New York, 2002. \MR{1876169}

\bibitem[KM16]{KM16}
Igor Kortchemski and Cyril Marzouk, \emph{Triangulating stable laminations},
  Electron. J. Probab. \textbf{21} (2016), Paper No. 11, 31. \MR{3485353}

\bibitem[KM17]{KM17}
\bysame, \emph{Simply {G}enerated {N}on-{C}rossing {P}artitions}, Combin.
  Probab. Comput. \textbf{26} (2017), no.~4, 560--592. \MR{3656342}

\bibitem[Kor14]{Kor14}
Igor Kortchemski, \emph{Random stable laminations of the disk}, Ann. Probab.
  \textbf{42} (2014), no.~2, 725--759. \MR{3178472}

\bibitem[Kor15]{Kor15}
\bysame, \emph{Limit theorems for conditioned non-generic {G}alton-{W}atson
  trees}, Ann. Inst. Henri Poincar\'e Probab. Stat. \textbf{51} (2015), no.~2,
  489--511. \MR{3335012}

\bibitem[Kyp06]{Kyp06}
Andreas~E. Kyprianou, \emph{Introductory lectures on fluctuations of {L}\'evy
  processes with applications}, Universitext, Springer-Verlag, Berlin, 2006.
  \MR{2250061}

\bibitem[LG05]{LG05}
Jean-Fran{\c{c}}ois Le~Gall, \emph{Random trees and applications}, Probability
  Surveys (2005), 245--311.

\bibitem[LGM11]{LGM09}
Jean-Fran{\c{c}}ois Le~Gall and Gr{\'e}gory Miermont, \emph{Scaling limits of
  random planar maps with large faces}, Ann. Probab. \textbf{39} (2011), no.~1,
  1--69.

\bibitem[LGP08]{LGP08}
Jean-Fran{\c{c}}ois Le~Gall and Fr{\'e}d{\'e}ric Paulin, \emph{Scaling limits
  of bipartite planar maps are homeomorphic to the 2-sphere}, Geom. Funct.
  Anal. \textbf{18} (2008), no.~3, 893--918. \MR{MR2438999}

\bibitem[Lin92]{Lin92}
Torgny Lindvall, \emph{Lectures on the coupling method}, Wiley Series in
  Probability and Mathematical Statistics: Probability and Mathematical
  Statistics, John Wiley \& Sons, Inc., New York, 1992, A Wiley-Interscience
  Publication. \MR{1180522}

\bibitem[Mie01]{Mie01}
Gr\'egory Miermont, \emph{Ordered additive coalescent and fragmentations
  associated to {L}evy processes with no positive jumps}, Electron. J. Probab.
  \textbf{6} (2001), no.\ 14, 33. \MR{1844511}

\bibitem[Mie08]{Mie08b}
\bysame, \emph{Invariance principles for spatial multitype {G}alton-{W}atson
  trees}, Ann. Inst. Henri Poincar\'e Probab. Stat. \textbf{44} (2008), no.~6,
  1128--1161. \MR{2469338}

\bibitem[Mil77]{Mil77}
P.~W. Millar, \emph{Zero-one laws and the minimum of a {M}arkov process},
  Trans. Amer. Math. Soc. \textbf{226} (1977), 365--391. \MR{0433606}

\bibitem[MM03]{MM03}
Jean-Fran{\c{c}}ois Marckert and Abdelkader Mokkadem, \emph{The depth first
  processes of {G}alton-{W}atson trees converge to the same {B}rownian
  excursion}, Ann. Probab. \textbf{31} (2003), no.~3, 1655--1678. \MR{MR1989446
  (2004g:60120)}

\bibitem[MM07]{MM07}
Jean-Fran{\c{c}}ois Marckert and Gr{\'e}gory Miermont, \emph{Invariance
  principles for random bipartite planar maps}, Ann. Probab. \textbf{35}
  (2007), no.~5, 1642--1705. \MR{MR2349571 (2008k:05191)}

\bibitem[Mos89]{Mos89}
Paul Moszkowski, \emph{A solution to a problem of {D}\'enes: a bijection
  between trees and factorizations of cyclic permutations}, European J. Combin.
  \textbf{10} (1989), no.~1, 13--16. \MR{977175}

\bibitem[Nev86]{Nev86}
Jacques Neveu, \emph{Arbres et processus de {G}alton-{W}atson}, Ann. Inst. H.
  Poincar\'e Probab. Statist. \textbf{22} (1986), no.~2, 199--207. \MR{850756
  (88a:60150)}

\bibitem[NS06]{NS06}
Alexandru Nica and Roland Speicher, \emph{Lectures on the combinatorics of free
  probability}, London Mathematical Society Lecture Note Series, vol. 335,
  Cambridge University Press, Cambridge, 2006. \MR{2266879}

\bibitem[Oko00]{Oko00}
Andrei Okounkov, \emph{{Toda equations for Hurwitz numbers}}, Math. Res. Lett.
  \textbf{7} (2000), no.~4, 447--453.

\bibitem[Pak99]{Pak1999}
Igor Pak, \emph{{Reduced decompositions of permutations in terms of star
  transpositions, generalized {C}atalan numbers and k-ary trees}}, Discrete
  Math \textbf{204} (1999), 329--335.

\bibitem[Pit06]{Pit06}
Jim Pitman, \emph{Combinatorial stochastic processes}, Lecture Notes in
  Mathematics, vol. 1875, Springer-Verlag, Berlin, 2006, Lectures from the 32nd
  Summer School on Probability Theory held in Saint-Flour, July 7--24, 2002,
  With a foreword by Jean Picard. \MR{2245368 (2008c:60001)}

\bibitem[Sch05]{Scr05}
Oded Schramm, \emph{Compositions of random transpositions}, Israel J. Math.
  \textbf{147} (2005), 221--243. \MR{2166362}

\bibitem[SSV97]{SSV97}
Boris Shapiro, Michael Shapiro, and Alek Vainshtein, \emph{Ramified coverings
  of $s_2$ with one degenerate branching point and enumeration of edge-ordered
  graphs}, Adv. in Math. Sci. \textbf{34} (1997), 219--228.

\bibitem[Sta84]{Sta84}
Richard~P. Stanley, \emph{{On the number of reduced decompositions of elements
  of Coxeter groups}}, European J. Combin. \textbf{5} (1984), no.~4, 359--372.

\bibitem[The18+]{Thev18}
Paul Thevenin, work in progress.

\bibitem[T{\'o}t93]{Tot93}
B\'alint T{\'o}th, \emph{Improved lower bound on the thermodynamic pressure of
  the spin {$1/2$} {H}eisenberg ferromagnet}, Lett. Math. Phys. \textbf{28}
  (1993), no.~1, 75--84. \MR{1224836}

\bibitem[UB14]{UB14}
Ger\'onimo Uribe~Bravo, \emph{Bridges of {L}\'evy processes conditioned to stay
  positive}, Bernoulli \textbf{20} (2014), no.~1, 190--206. \MR{3160578}

\end{thebibliography}

\newcommand{\etalchar}[1]{$^{#1}$}
\providecommand{\bysame}{\leavevmode\hbox to3em{\hrulefill}\thinspace}
\providecommand{\MR}{\relax\ifhmode\unskip\space\fi MR }
\providecommand{\MRhref}[2]{%
  \href{http://www.ams.org/mathscinet-getitem?mr=#1}{#2}
}
\providecommand{\href}[2]{#2}

\end{document}